\theoremstyle{plain}\newtheorem{Theorem}{Theorem}[section]
\theoremstyle{plain}\newtheorem{Corollary}[Theorem]{Corollary}
\theoremstyle{plain}\newtheorem{Lemma}[Theorem]{Lemma}
\theoremstyle{plain}\newtheorem{Definition}[Theorem]{Definition}
\theoremstyle{plain}\newtheorem{Conjecture}[Theorem]{Conjecture}
\theoremstyle{plain}
\theoremstyle{plain}
\theoremstyle{plain}
\theoremstyle{plain}\newtheorem{Example}[Theorem]{Example}
\theoremstyle{plain}
\theoremstyle{plain}\newtheorem*{Theorem*}{Theorem}
\newtheorem*{rep@theorem}{\rep@title}
\newcommand{\newreptheorem}[2]{%
\newenvironment{rep#1}[1]{%
 \def\rep@title{#2 \ref{##1}}%
 \begin{rep@theorem}}%
 {\end{rep@theorem}}}
\theoremstyle{plain}\newreptheorem{theorem}{Theorem}
\theoremstyle{remark}\newtheorem{remark}[Theorem]{Remark}
\theoremstyle{remark}
\numberwithin{equation}{section}
\newcommand{\bC}{\mathbb{C}}
\newcommand{\bK}{\mathbb{K}}
\newcommand{\bP}{\mathbb{P}}
\newcommand{\bQ}{\mathbb{Q}}
\newcommand{\bR}{\mathbb{R}}
\newcommand{\bS}{\mathbb{S}}
\newcommand{\bV}{\mathbb{V}}
\newcommand{\bZ}{\mathbb{Z}}
\newcommand{\clB}{\mathcal{B}}
\newcommand{\clC}{\mathcal{C}}
\newcommand{\clD}{\mathcal{D}}
\newcommand{\clF}{\mathcal{F}}
\newcommand{\clI}{\mathcal{I}}
\newcommand{\clJ}{\mathcal{J}}
\newcommand{\clL}{\mathcal{L}}
\newcommand{\clM}{\mathcal{M}}
\newcommand{\clN}{\mathcal{N}}
\newcommand{\clO}{\mathcal{O}}
\newcommand{\clP}{\mathcal{P}}
\newcommand{\clR}{\mathcal{R}}
\newcommand{\clS}{\mathcal{S}}
\newcommand{\clU}{\mathcal{U}}
\newcommand{\clW}{\mathcal{W}}
\newcommand{\clX}{\mathcal{X}}
\newcommand{\clY}{\mathcal{Y}}
\newcommand{\clZ}{\mathcal{Z}}
\newcommand{\coker}{{\normalfont\text{coker }}}
\newcommand{\ind}{{\normalfont\text{ind }}}
\newcommand{\SU}{{\normalfont\text{SU}}}
\newcommand{\delbar}{\bar\partial}
\newcommand{\sC}{\mathscr{C}}
\newcommand{\sM}{\mathscr{M}}
\begin{document}

\author{Shaoyun Bai}
\address{Department of Mathematics, Columbia University, New York 10027, USA}
\email{sb4841@columbia.edu}
\author{Mohan Swaminathan}
\address{Department of Mathematics, Stanford University, California 94305, USA}
\email{mohans@stanford.edu}
\title[Towards an extension of Taubes' Gromov invariant]{Bifurcations of embedded curves and towards an extension of Taubes' Gromov invariant to Calabi--Yau $3$-folds}
\begin{abstract}
    We define an integer-valued virtual count of embedded pseudo-holomorphic curves of two times a primitive homology class and arbitrary genus in symplectic Calabi--Yau $3$-folds, which can be viewed as an extension of Taubes' Gromov invariant. The construction depends on a detailed study of bifurcations of moduli spaces of embedded pseudo-holomorphic curves which is partially motivated by Wendl's recent solution of Bryan--Pandharipande's super-rigidity conjecture.
\end{abstract}

\maketitle

\setcounter{tocdepth}{1}

\section{Introduction}\label{intro}

Let $(X,\omega)$ be a symplectic Calabi--Yau $3$-fold, i.e., it is a closed symplectic manifold of (real) dimension $6$ with $c_1(TX,\omega) = 0\in H^2(X,\bZ)$. After choosing an $\omega$-compatible almost complex structure $J \in \clJ(X,\omega)$, the genus $g$ Gromov--Witten invariant of $(X,\omega)$ of class $0 \neq A \in H_2(X,\bZ)$ with no marked points, denoted by $\text{GW}_{A,g}(X)$, is defined by certain counts of stable $J$-holomorphic maps into $X$ and it is independent of $J$. In general, $\text{GW}_{A,g}(X) \in \bQ$ because some $J$-holomorphic maps may possess non-trivial automorphism groups. 

On the other hand, Taubes' Gromov invariants \cite{Taubes-Gr} define integer-valued invariants for symplectic $4$-manifolds by counting \emph{embedded} pseudo-holomorphic curves. The construction heavily relies on intersection theory in dimension $4$. The goal of this paper is to partially generalize Taubes' Gromov invariants to define invariants of symplectic Calabi--Yau $3$-folds by studying moduli spaces of embedded pseudo-holomorphic curves.

\subsection{Statement of the main Theorem}
We start by introducing some notions important for our discussion. The term \emph{$J$-curve} throughout this paper stands for a simple $J$-holomorphic map with smooth and connected domain into an almost complex manifold $(X,J)$. Any closed connected embedded $2$-dimensional almost complex submanifold $C\subset X$, gives rise to a $J$-curve unique up to the reparametrization of the domain. In the following, unless otherwise specified, $(X, \omega)$ will denote a symplectic Calabi--Yau $3$-fold.
\begin{Definition}\label{emb-wall}
    Define $\normalfont\clW_\text{emb}\subset\clJ(X,\omega)$ to be the subset of almost complex structures $J$ for which there exists either a simple and non-embedded $J$-curve or two simple $J$-curves with distinct but intersecting images.
\end{Definition}
It is a basic fact (see \cite[Theorem 2.4.3]{DW-20} for example) that $\clW_\text{emb}$ is of codimension $2$ in $\clJ(X,\omega)$. Thus, along generic paths $\gamma\subset\clJ(X,\omega)$, we may assume that all simple curves are embedded and pairwise disjoint by appealing to the Sard--Smale theorem.\footnote{Strictly speaking, in order to apply the Sard--Smale theorem, we need to pass to appropriate Banach completions, refer to Section \ref{technical-transversality-remarks} for details.}
\begin{Definition}[Normal deformation operator]\label{normal-operator}
    Let $J\in\clJ(X,\omega)$ and let $C\subset X$ be a smooth closed embedded $J$-curve, i.e. the tangent bundle of $C$, written as $T_C$, is invariant under the $J$--action. The normal bundle $N_C = T_X|_C/T_C$ is then naturally a complex vector bundle. The linearized Cauchy--Riemann operator on $T_X|_C$ descends to an operator
    \begin{align}\label{eqn:normalCR}
        D^N_{C,J}:\Omega^0(C,N_C)\to\Omega^{0,1}(C,N_C)
    \end{align}
    called the \emph{normal deformation operator} of the embedded curve $C$.
\end{Definition}
Given an embedded $J$-curve $C$, the Fredholm index of the operator \eqref{eqn:normalCR} is $0$. When $\ker D^N_{C,J} = 0$, and thus $\coker D^N_{C,J} = 0$, recall that we can define the sign of $C$ as a point in the moduli space of $J$-curves as follows.
\begin{Definition}[Sign]\label{count-sign-def}
    Let $C\subset X$ be an embedded $J$-curve such that $D^N_{C,J}$ is an isomorphism. We then define the sign of the curve $C$, denoted by
    \begin{align}
        \normalfont\text{sgn}(C)\in\{-1,+1\}
    \end{align}
    as follows. We deform $D^N_{C,J}$ in a $1$-parameter family to a $\bC$-linear operator which is an isomorphism and define the sign of $C$ according to the parity of the associated spectral flow\footnote{While the term ``spectral flow" is standard, it is a slight abuse of terminology in this context as our operators are not self-adjoint and there are no eigenvalues involved. More explicitly, to define $\text{sgn}(C)$, we take a \emph{generic} $1$-parameter family $\{D_t\}_{t\in[0,1]}$ of $\bR$-linear Cauchy--Riemann operators with $D_0 = D^N_{C,J}$ and $D_1$ a $\bC$-linear isomorphism and count the (parity of the) finite number of $t\in(0,1)$ for which $D_t$ fails to be an isomorphism.}. Alternatively, we consider the straight line homotopy $\{D_t\}_{0\le t\le 1}$ from $D_0 = D^N_{C,J}$ to its $\bC$-linear part $D_1$ and trivialize the determinant line bundle
    \begin{align}
        t\mapsto\det(D_t) = \det\ker D_t\otimes(\det\coker D_t)^*
    \end{align}
    and define the sign of $C$ to be the sign of the resulting isomorphism between $\det(D_0) = \det(0)\otimes\det(0)^* = \bR$ and $\det(D_1)$ with the complex orientation.
\end{Definition}
The notion of super-rigidity requires that any holomorphic multiple cover of any $J$-curve $C \subset X$ does not admit any non-trivial normal deformation. More precisely, we have the following definition.
\begin{Definition}[Super-rigidity]\label{super-rigidity-def}
    Given an embedded $J$-curve $C\subset X$ as in Definition \ref{normal-operator}, we say that it is \emph{super-rigid} if for every non-constant holomorphic stable map $\varphi:C'\to C$, the pullback operator
    \begin{align}
        \varphi^*D^N_{C,J}:\Omega^0(C',\varphi^*N_C)\to\Omega^{0,1}(\tilde C',\tilde\varphi^*N_C)
    \end{align}
    is injective where $\tilde C'\to\tilde C'$ is the normalization of $C'$ and $\tilde\varphi:\tilde C'\to C$ is the map induced by $\varphi$. We say that a given almost complex structure $J\in\clJ(X,\omega)$ is \emph{super-rigid} if and only if $J \in \clJ(X, \omega) \setminus \normalfont\clW_\text{emb}$ and every embedded $J$-curve is super-rigid.
\end{Definition}

\begin{remark}\label{rem:nodal-CR}
    We recall the definition of $\varphi^*D^N_{C,J}$ in detail for the reader's convenience. When $C'$ is non-singular, the pullback operator $\varphi^*D^N_{C,J}$ is uniquely characterized as follows. If $D^N_{C,J} - \delbar = A\in\Omega^{0,1}(U,\text{End}_{\bR}(N_C))$ in a trivialization of $N_C$ over an open subset $U\subset C$, where $\delbar$ is the standard Cauchy--Riemann operator under this trivialization, then $\varphi^*D^N_{C,J} = \delbar + \varphi^*A$ in the corresponding trivialization of $\varphi^*N_C$ over $\varphi^{-1}(U)$. Holomorphicity of $\varphi$ ensures that $\varphi^*A$ is a $1$-form of type $(0,1)$. When $C'$ is nodal, the operator $\varphi^*D^N_{C,J}$ is (by definition) the restriction of the operator $\tilde\varphi^*D^N_{C,J}$ to the subspace $\Omega^0(C',\varphi^*N_C)\subset\Omega^0(\tilde C',\tilde\varphi'^*N_C)$ consisting of those smooth sections of $\tilde\varphi^*N_C\to\tilde C'$ which descend to continuous sections of $\varphi^*N_C\to C'$, i.e., have matching values at the two pre-images in $\tilde C'$ of each nodal point in $C'$. A nearly identical discussion can be used to define the linearized Cauchy--Riemann operator on $\varphi^*T_X$ for the stable map $\varphi$.
\end{remark}

Recent work of Wendl \cite{Wendl-19} shows that the set of super-rigid compatible almost complex structures is a Baire subset of $\clJ(X,\omega)$, verifying the generic super-rigidity conjecture of Bryan--Pandharipande \cite{bryan-pand}. An important consequence of super-rigidity is the following finiteness result. Suppose $J$ is super-rigid, then given any class $A \in H_2(X, \bZ)$ and $g \in \bZ_{\geq 0}$, there are only finitely many embedded $J$-curves of class $A$ and genus $g$ and they have disjoint images in $X$, see \cite[Corollary 1.4]{Wendl-19}. We remark that the work of Doan--Walpuski \cite[Theorem 1.6]{doan2021castelnuovo} proves this finiteness result with only an \emph{a priori} energy bound, without any assumption on the genus, though we do not need this stronger result in the present work.

Let $\overline{H}_2(X,\bZ)$ denote $H_2(X,\bZ)$ modulo its torsion subgroup\footnote{We work modulo torsion purely for notational convenience. With a bit more notation, it is possible to state all our results with $H_2(X,\bZ)$ instead of $\overline{H}_2(X,\bZ)$.}. Given a homology class $A \in \overline{H}_2(X, \bZ)$, we say it is \emph{primitive} if $\omega(A) > 0$ and we cannot write $A = kB $ for any $B \in \overline{H}_2(X, \bZ)$ and integer $k \geq 2$. For a fixed primitive homology class $A$, the works of Zinger \cite{zinger2011comparison} and Doan--Walpuski \cite{doan2019counting} show that the signed count (c.f. Definition \ref{count-sign-def}) of embedded $J$-curves of class $A$ and genus $h \geq 0$ for $J \in \clJ(X, \omega)\setminus \normalfont\clW_\text{emb}$ defines an invariant of $(X, \omega)$ independent of $J$. If $A$ is not primitive, although there are only finitely many embedded $J$-curves of class $A$ and genus $h$ for a super-rigid almost complex structure $J$, the direct signed count of all such $J$-curves may not define an invariant of $(X, \omega)$ because \emph{bifurcations} could happen when varying the almost complex structures. The main result of this paper is to define a virtual count of embedded $J$-curves of two times a primitive homology class for \emph{all genera}. We adopt the notation $\text{Gr}$ in order to be consistent with Taubes' pioneering work in dimension $4$ \cite{Taubes-Gr}.
\begin{Theorem}\label{thm:Gr-invariance}
Fix a primitive class $A\in\overline{H}_2(X,\bZ)$ and an integer $h\ge 0$. For a super-rigid $J\in\clJ(X,\omega)$, define the \emph{virtual count of embedded genus $h$ curves of class $2A$} to be the integer
  \begin{align}\label{Gr-def}
    \normalfont\text{Gr}_{2A,h}(X,\omega, J) = \sum_{C':\,2A,h}\text{sgn}(C') + \sum_{g\le h}\,\sum_{C:\,A, g}\text{sgn}(C)\cdot\text{w}_{2,h}(D^N_{C,J})
  \end{align}
where the first sum is over embedded $J$-curves $C'$ of genus $h$ and class $2A$, the second sum is over all genera $0\le g\le h$ and embedded $J$-curves $C$ of genus $g$ and class $A$, $\normalfont\text{sgn}$ is as in Definition \ref{count-sign-def} and the integers $\normalfont\text{w}_{2,h}$ are as in Definition \ref{lwc-double}.

Then, $\normalfont\text{Gr}_{2A,h}(X,\omega, J)$ is independent of the choice of super-rigid $J$ and 
\begin{align}
    \normalfont\text{Gr}_{2A,h}(X,\omega) := \normalfont\text{Gr}_{2A,h}(X,\omega, J)
\end{align} 
defines a symplectic invariant of $(X, \omega)$.
\end{Theorem}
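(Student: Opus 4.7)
The plan is to prove invariance by a wall-crossing argument along a generic path of almost complex structures. Given two super-rigid $J_0, J_1 \in \clJ(X,\omega)$, I would choose a generic smooth path $\{J_t\}_{t \in [0,1]} \subset \clJ(X,\omega)$ connecting them. Because $\clW_{\text{emb}}$ has codimension $2$, a generic path avoids it entirely, so at every $t \in [0,1]$ every simple $J_t$-curve is embedded and distinct ones have disjoint images. The parametrized moduli spaces of embedded $J_t$-curves of class $A$ or $2A$ of genus at most $h$ then form smooth $1$-manifolds away from finitely many bifurcation times $t_1 < \cdots < t_N$, and $\text{Gr}_{2A,h}(X,\omega,J_t)$ is locally constant on the complement of these times. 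The problem reduces to proving cancellation at each $t_k$.

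Bifurcations of two kinds should occur. First, an embedded $J_{t_k}$-curve of class $2A$ may acquire a one-dimensional kernel of its normal operator, producing a standard fold in which two embedded class-$2A$ curves of opposite sign are born or destroyed; this preserves the first sum of \eqref{Gr-def}. Second, for an embedded class-$A$ curve $C$, the pullback operator $\varphi^{*}D^N_{C,J_{t_k}}$ to some double cover $\varphi\colon C' \to C$ with $C'$ of genus $h$ may acquire a new kernel. This can simultaneously cause (a) birth/death of a pair of class-$A$ curves at $C$ (when the kernel is of invariant type, i.e.\ descends from a kernel of $D^N_{C,J_{t_k}}$ itself) and (b) embedded class-$2A$ curves of genus $h$ to bifurcate off the double cover $\varphi$. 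In both cases the first sum in \eqref{Gr-def} and the weighted second sum will generally jump; the crucial claim is that these jumps cancel, which is precisely what the integer $\text{w}_{2,h}$ of Definition \ref{lwc-double} is designed to guarantee.

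To establish the cancellation I would construct a local Kuranishi (obstruction-bundle) model near each bifurcation. The finite-dimensional kernel and cokernel of $\varphi^{*}D^N_{C,J_{t_k}}$, combined with the nonlinear terms of the $\delbar$-equation, yield a parametrized nonlinear map whose embedded zeros locally parametrize the nearby embedded class-$2A$ curves. A signed count of these embedded zeros on each side of $t_k$, together with the signed count of nearby embedded class-$A$ curves, should be expressible in terms of the spectral flow of $\varphi^{*}D^N_{C,J_t}$ decomposed into invariant and anti-invariant pieces. Matching this computation to Definition \ref{lwc-double} is what verifies the cancellation. Upgrading $J$-independence to symplectic invariance is then standard: I would use a Moser-type path through the space of symplectic forms and apply the same wall-crossing analysis to the two-parameter family of almost complex structures that results.

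The main obstacle is the local bifurcation analysis at these double-cover walls. The deformation theory of embedded double covers is delicate because $\varphi$ may be branched, because the kernel and cokernel decompose into invariant and anti-invariant parts with distinct geometric roles, and most importantly because one must identify \emph{embedded} zeros of the obstruction map among all simple, multiply-covered, or nodal solutions. Ensuring embeddedness requires expanding the obstruction map to sufficient order and controlling the Taylor coefficients of the bifurcation equation. Producing the precise algebraic identity matching the bifurcation count to the combinatorially defined integer $\text{w}_{2,h}$ is the heart of the paper and is where the new bifurcation analysis referenced in the abstract will be carried out.
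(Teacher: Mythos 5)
Your overall strategy — generic path, Kuranishi models at double-cover walls, matching the jump in the signed count of embedded class-$2A$ curves against the jump of $\text{w}_{2,h}$ — is exactly the approach the paper takes, so the plan is sound. However, there is one concrete gap you have not addressed, and one imprecision that would derail the bookkeeping if left as stated.

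The gap: you implicitly assume that the only bifurcations of embedded class-$2A$, genus-$h$ curves along a generic path arise from smooth double covers $\varphi\colon C'\to C$ of an embedded class-$A$ curve acquiring new kernel, together with standard folds among class-$2A$ curves. But Gromov compactness alone does \emph{not} give this. A sequence of embedded genus-$h$ class-$2A$ curves could a priori converge to a stable map with \emph{nodal} domain or with ghost bubbles attached to a multiple cover of a lower-genus curve; these limits cannot be handled by the kind of Kuranishi model you describe. Ruling them out is a genuine theorem, not a genericity bookkeeping, and it requires a dimension-counting argument comparing the thickened moduli space of covers inside the thickened moduli space of all stable maps (the paper's Theorem~\ref{compactness-result} and Lemma~\ref{nl-compactness-for-double-covers}). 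Without this step you do not know that the bifurcation loci you enumerate exhaust all bifurcations.

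The imprecision: you suggest that a bifurcation at a double cover may ``simultaneously cause birth/death of a pair of class-$A$ curves at $C$ (when the kernel is of invariant type).'' That never happens along a generic path. By the codimension analysis (Lemma~\ref{top-strata} and Theorem~\ref{stratification}), the locus where $\ker D^N_{C,J}\neq 0$ and the locus where $\ker\varphi^* D^N_{C,J}$ acquires a new (anti-invariant) element are each codimension~$1$; their intersection is codimension~$\geq 2$ and is avoided by a generic path. At a double-cover wall one has $\ker D^N_{C,J}=0$, so the class-$A$ curve persists smoothly through the wall and only the count of class-$2A$ curves jumps. This is important for the cancellation: the change in the second sum of~\eqref{Gr-def} comes entirely from $\text{w}_{2,h}(D^N_{C,J_t})$ jumping, not from births/deaths of class-$A$ curves, and the matching identity is precisely~\eqref{net-change}. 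Finally, you correctly identify that controlling Taylor coefficients of the Kuranishi map is where the hard work lies; what you should be explicit about is that this requires constructing a codimension-$\geq 2$ ``degeneracy locus'' inside the wall strata (Definition~\ref{bad-stratum-defined}) and arranging for the path to avoid it, since the standard transversality to the wall only fixes the linear term, not the higher-order coefficient that determines the pitchfork structure.
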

In other words, we construct correction terms to the naive signed count of embedded $J$-curves of class $2A$ and genus $h$ from embedded $J$-curves of class $A$ and genus $g \leq h$ to get a well-defined invariant. For a thorough discussion of the correction terms in \eqref{Gr-def}, see Section \ref{lwc-development}.

\begin{remark}[Symplectic deformation invariance]
    In fact, $\normalfont\text{Gr}_{2A,h}$ is a symplectic deformation invariant of $(X,\omega)$. To see this, one can use the fact that any $1$-parameter symplectic deformation $\{\omega_t\}$ can be accompanied by a compatible family $\{J_t\}$ of almost complex structures.
\end{remark}

\subsection{Ingredients of the proof}
The proof of Theorem \ref{thm:Gr-invariance} (carried out in Section \ref{Gr-invariance-proof}) relies on a detailed study of bifurcations of moduli spaces of embedded $J$-curves when varying $J$ in a generic $1$-parameter family. To be more precise, although super-rigidity is a generic condition for $J \in \clJ(X, \omega)$, there are codimension $1$ subsets of $\clJ(X, \omega)$ where super-rigidity fails to hold. Given a path of almost complex structures $\gamma: [-1,1] \to \clJ(X, \omega)$ such that both $\gamma(-1)$ and $\gamma(1)$ are super-rigid, we need to understand the bifurcations of moduli spaces of embedded pseudo-holomorphic curves when $\gamma$ intersects these codimension $1$ subsets. 

The first key ingredient is a \emph{necessary} condition for the occurrence of a bifurcation. For a more general and complete statement, see Theorem \ref{compactness-result}.
\begin{Theorem}[special case of Theorem \ref{compactness-result}]\label{thm:intro-compa}
    Given a sequence $t_k \rightarrow t'$ in $(-1,1)$, define $J_k := \gamma(t_k)$ and $J := \gamma(t')$ so that $J_k \rightarrow J$ in the $C^{\infty}$-topology. Suppose $h \in \bZ_{\geq 0}$ and we have a sequence of genus $h$ embedded $J_k$-holomorphic curves $\varphi_k: \Sigma^{\prime}_k \to X$ converging to a stable $J$-holomorphic map $C' \xrightarrow{\varphi} C \subset X$ in the Gromov topology. Here $C$ is an embedded $J$-curve of genus $g \leq h$ and $\varphi: C' \to C$ is a stable genus $h$ map of degree $k$ with target $C$. 
    
    Then we either have $g=h, k=1$ and $\varphi: C' \to C$ is an isomorphism, or the pullback map $\varphi^*:\ker D^N_{C,J}\to\ker\varphi^*D^N_{C,J}$ is only injective but not surjective.
\end{Theorem}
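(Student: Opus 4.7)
I would argue by contrapositive: assume $\varphi^*\colon\ker D^N_{C,J}\to\ker\varphi^*D^N_{C,J}$ is surjective, and deduce that $g=h$, $k=1$, and $\varphi$ is an isomorphism. Injectivity of $\varphi^*$ is automatic, since any $\zeta$ with $\zeta\circ\varphi=0$ vanishes on $\varphi(C')=C$. The central idea is to renormalize the convergent sequence $\{\varphi_k\}$ to extract a nonzero element $\eta\in\ker\varphi^*D^N_{C,J}$, and then show that whenever $\varphi$ is not an isomorphism, $\eta$ cannot lie in $\varphi^*\ker D^N_{C,J}$.

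I would first establish the graph representation. Gromov convergence together with embeddedness of $C$ lets us, for $k$ large and away from small neighborhoods of the nodes and ghost components of $C'$, write $\varphi_k(z)=\exp_{\varphi(z)}(\xi_k(z))$ for sections $\xi_k$ of $\varphi^*N_C$ tending to zero in $C^\infty_{\mathrm{loc}}$; near the nodes one uses the standard cylindrical-coordinate / gluing description. Projecting the $J_k$-holomorphicity equation for $\varphi_k$ onto the normal direction yields a perturbed normal Cauchy--Riemann equation
\[
\varphi^*D^N_{C,J}\,\xi_k = \mathcal{E}_k(\xi_k),
\]
with error controlled by $\|J_k-J\|_{C^1}\|\xi_k\| + \|\xi_k\|^2$ in appropriate Sobolev norms. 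Setting $\eta_k:=\xi_k/\|\xi_k\|_{C^0}$, elliptic estimates for $\varphi^*D^N_{C,J}$ combined with a concentration-compactness analysis on the neck regions (to prevent mass escaping into the nodes) produce, along a subsequence, a limit $\eta\in\ker\varphi^*D^N_{C,J}$ with $\eta\neq 0$.

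The heart of the argument is showing $\eta\notin\varphi^*\ker D^N_{C,J}$. Assume first $k\geq 2$, so $\varphi$ is a genuine branched cover. I would use a fiberwise-averaging decomposition: set $\bar\xi_k(c):=k^{-1}\sum_{p\in\varphi^{-1}(c)}\xi_k(p)$ on the complement of the branch locus, and write $\xi_k=\varphi^*\bar\xi_k+\xi_k^\perp$ with $\xi_k^\perp$ of vanishing fiber average. If $\|\xi_k^\perp\|/\|\xi_k\|$ does not tend to zero along a subsequence, rescaling $\xi_k^\perp$ by its own norm yields a nonzero limit in $\ker\varphi^*D^N_{C,J}$ with vanishing fiber average, hence not a pullback, contradicting surjectivity. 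Otherwise $\xi_k\approx\varphi^*\bar\xi_k$, so asymptotically $\varphi_k$ factors as a $k$-fold cover of the embedded curve $\exp_C(\bar\xi_k)$, contradicting the injectivity of $\varphi_k$ along two distinct preimages of a regular value. In the remaining case $k=1$ with $g<h$, a parallel rescaling picks up contributions from the limit section on the ghost components of the normalized domain, and the ``slip'' of those contributions at the attaching nodes prevents $\eta$ from being a pullback of any single section over $C$.

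The main technical obstacle is the last step in the multi-cover case: upgrading the asymptotic near-factorization $\xi_k\approx\varphi^*\bar\xi_k$ into a rigorous failure of injectivity, because the leading-order sheet separation vanishes. One must track next-order corrections and use $J_k$-holomorphicity together with the $C^0$-normalization to show that the subleading term itself separates the sheets at a definite rate, producing a non-pullback element of $\ker\varphi^*D^N_{C,J}$ and hence a contradiction with surjectivity. A conceptually cleaner but analytically more demanding alternative is an implicit function theorem on the local moduli space of pseudo-holomorphic degree-$k$ branched covers near $\varphi$: under the surjectivity hypothesis, the local structure would force every nearby $J_k$-holomorphic map of this combinatorial type to be a genuine $k$-fold cover of some deformation of $C$, directly contradicting the embeddedness of the $\varphi_k$.
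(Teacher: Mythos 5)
Your strategy --- renormalizing the graphs of $\varphi_k$ over the limit to extract a nonzero $\eta\in\ker\varphi^*D^N_{C,J}$ and then arguing $\eta$ cannot be a pullback --- is a genuinely different route from the paper's proof of Theorem~\ref{compactness-result}. The paper never extracts a limiting section. Instead (\textsection\ref{outline}--\textsection\ref{compactness-conclusion}) it builds compatible finite-dimensional thickenings of the stable-map moduli space $\overline\clM_h(X,\clF',kA)$ and of the space $\overline\clM_h(\clM_g^{\text{emb}}(X,\clF',A),k)$ of $k$-fold covers of nearby embedded genus-$g$ curves, computes that the latter sits inside the former with codimension exactly $\dim\ker\varphi^*D^N_{C,J}-\dim\ker D^N_{C,J}$, and then invokes Brouwer's invariance of domain: if that number is zero the inclusion is a local homeomorphism, so \emph{every} nearby stable genus-$h$ map of class $kA$ is a degree-$k$ cover of an embedded genus-$g$ curve, contradicting the simplicity of the $\varphi_k$ unless $g=h$, $k=1$, and $\varphi$ is an isomorphism. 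The ``conceptually cleaner alternative'' you sketch at the end is essentially this argument; the thickening is precisely what makes a degree-theoretic step possible without assuming transversality, which is why one cannot just cite a naive implicit function theorem.

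As a standalone proof, though, your rescaling route has a gap that your own ``technical obstacle'' paragraph undersells, and it is fatal in the $k=1$, $g<h$ case rather than merely hard. Suppose $C'$ consists of a component $C_0$ mapping isomorphically onto $C$ with ghost components attached. Since $\varphi|_{C_0}$ is injective, any ghost chain attaches to $C_0$ at a single node; on a ghost $G$ the bundle $\varphi^*N_C|_G$ is trivial and $\varphi^*D^N_{C,J}|_G=\bar\partial$ (the zeroth-order part is killed because $d\varphi|_G\equiv 0$), so its kernel consists only of constants. Hence every element of $\ker\varphi^*D^N_{C,J}$ is determined by its restriction to $C_0$, and $\varphi^*$ is in fact an \emph{isomorphism} on kernels. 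There is no non-pullback $\eta$ for your argument to find, and no ``slip'' at the nodes --- the matching conditions on ghosts are automatically satisfied by constants. The real content of the theorem here is that such a Gromov limit cannot arise at all from embedded curves of genus $h>g$, which is exactly what the Brouwer step delivers (the nearby $\varphi_k$ with smooth genus-$h$ domain cannot be degree-one covers of embedded genus-$g$ curves); an argument that only looks for a bad limit section structurally cannot see this. Your unresolved subleading-order analysis in the multi-cover case is a further genuine gap, and even extracting a nonzero $\eta$ in the presence of degenerating necks is delicate --- as the paper points out in Remark~\ref{compactness-remark}(a), the rescaling argument from Wendl gives only the weaker conclusion $\ker\varphi^*D^N_{C,J}\ne 0$, and only under the extra hypothesis that there is a sequence of embedded genus-$g$ curves converging to $C$.
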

Theorem \ref{thm:intro-compa} allows us to exclude the possibility of bifurcations from nodal curves (possibly with ghost components) into embedded curves once the path $\gamma$ is chosen generically. When the homology class represented by $\varphi_k: \Sigma^{\prime}_k \to X$ is primitive, the degree $k$ is then necessarily equal to $1$ and Theorem \ref{thm:intro-compa} implies that the limit $\varphi$ must be an isomorphism. In particular, the stable map $C' \xrightarrow{\varphi} C \subset X$ does not have any ghost irreducible component. Combined with Ionel--Parker's construction of Kuranishi models for birth-death bifurcations of embedded $J$-curves \cite[Theorem 6.2]{IP-GV}, one can prove the aforementioned result of Zinger and Doan--Walpuski regarding counting embedded $J$-curves representing primitive homology classes without appealing to Gromov--Witten theory as in \cite{zinger2011comparison} or the delicate gluing analysis as from \cite{doan2019counting}, see also Remark \ref{compactness-remark}. The full power of Theorem \ref{thm:intro-compa} is used in Section \ref{sec_GT}, which tells us ultimately we just need to worry about bifurcations from (branched) double covers with smooth domains in order to check the invariance of $\normalfont\text{Gr}_{2A,h}(X,\omega)$.

The second key ingredient is a \emph{sufficient} condition for bifurcations to occur. We construct Kuranishi models of moduli spaces of embedded curves based on a detailed analytical argument. The technical condition of ``elementary wall type" includes all the cases in the construction of $\normalfont\text{Gr}_{2A,h}(X,\omega)$, and in fact is satisfied by a large class of examples, see Examples \ref{exa_bif_1}--\ref{exa_bif_4}.
\begin{Theorem}[see Theorem \ref{main-bifurcation}]\label{thm_int_1}
    Let $\{J_t\}_{t\in[-1,1]}$ be a generic path in $\clJ(X,\omega)$. Assume that there exists an embedded rigid $J_0$-curve $\Sigma \hookrightarrow X$ along with a $d$-fold genus $h$ branched multiple cover $\varphi:\Sigma'\to\Sigma$ which has non-trivial normal deformations. If this cover determines an elementary wall type (see Definition \ref{elem-wall}), then $\normalfont\text{Aut}(\varphi)\subset\bZ/2\bZ$ and, moreover, the change in the signed count of embedded curves of genus $h$ and class $d[\Sigma]$ near $\varphi$ is given by $\pm 2/|\normalfont\text{Aut}(\varphi)|$.
\end{Theorem}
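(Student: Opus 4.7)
The approach is to construct an $\text{Aut}(\varphi)$-equivariant Kuranishi model for the moduli space of embedded $J_t$-curves of genus $h$ and class $d[\Sigma]$ in a neighborhood of the stable map $\varphi:\Sigma'\to\Sigma\hookrightarrow X$, and to count the net change in signed zeros of a finite-dimensional obstruction section as $t$ crosses $0$.

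First, since $\Sigma$ is rigid, it persists as a smooth family of embedded rigid $J_t$-curves $\Sigma_t$ for small $|t|$, and by Theorem \ref{thm:intro-compa} any nearby embedded $J_t$-curve of genus $h$ and class $d[\Sigma]$ that limits to $\varphi$ must arise as a perturbation of a branched cover of $\Sigma_t$ by a small normal section. I would therefore fix a universal family of arithmetic-genus-$h$, degree-$d$ branched covers of $\Sigma$ with smooth domain near $[\varphi]$, on which $\text{Aut}(\varphi)$ acts, and write the $J_t$-holomorphicity equation for a cover $\psi:\Sigma''\to\Sigma$ perturbed by a section $v\in\Omega^0(\Sigma'',\psi^*N_\Sigma)$ as the vanishing of a nonlinear Cauchy--Riemann-type map whose linearization at $(0,\varphi,0)$ is $\varphi^*D^N_{\Sigma,J_0}$. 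A standard Fredholm reduction, combined with the elementary wall type hypothesis (Definition \ref{elem-wall}), then produces a smooth, $\text{Aut}(\varphi)$-equivariant obstruction section $\mathfrak{o}$ on a slice through the parameter space, valued in a trivialization modeled on $\coker\varphi^*D^N_{\Sigma,J_0}$, whose zero set is locally homeomorphic to the desired moduli.

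Next, I would expand $\mathfrak{o}$ to leading order at $(0,\varphi)$. Genericity of $\{J_t\}$ provides a nonzero vector $\alpha=\partial_t\mathfrak{o}|_{(0,\varphi)}\in\coker\varphi^*D^N_{\Sigma,J_0}$, while the variation in the cover-moduli direction yields an $\text{Aut}(\varphi)$-equivariant homogeneous polynomial $Q:\ker\varphi^*D^N_{\Sigma,J_0}\to\coker\varphi^*D^N_{\Sigma,J_0}$, so that nearby solutions are governed by $t\alpha+Q(v)+\text{higher order}=0$. The elementary wall type assumption should ensure that $\ker\varphi^*D^N_{\Sigma,J_0}$ is complex one-dimensional, that $Q$ is quadratic and nontrivial, and that this real-algebraic equation has exactly two real solutions on one side of $t=0$ and none on the other. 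Because $\text{Aut}(\varphi)$ acts by complex linear isomorphisms on a complex line and preserves the nonzero $Q$, its image must lie in $\{\pm 1\}\subset\bC^\times$, forcing $\text{Aut}(\varphi)\subset\bZ/2\bZ$.

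Finally, I would verify that the nearby solutions correspond to genuinely embedded $J_t$-curves, using that $\Sigma_t$ is embedded together with $\text{Aut}(\varphi)$-equivariant openness of the embedded locus, and compute the sign contributed by each such curve via Definition \ref{count-sign-def}; equivariance of the reduced equation under $\text{Aut}(\varphi)$ forces the pair of solutions either to form two distinct orbits (when $\text{Aut}(\varphi)$ is trivial) or to be swapped into a single orbit (when $\text{Aut}(\varphi)=\bZ/2\bZ$), and in either case both solutions have the same contributing sign, yielding the claimed net change $\pm 2/|\text{Aut}(\varphi)|$. The principal difficulty I anticipate is the precise extraction of the equivariant quadratic obstruction $Q$ from the elementary wall type data, since this requires controlling second-order variations of the normal operator along the branched-cover moduli and checking that the leading-order normal form for $\mathfrak{o}$ is robust against higher-order terms, a task made delicate by the interplay of the branch-point geometry with the spectral data of $\varphi^*D^N_{\Sigma,J_0}$.
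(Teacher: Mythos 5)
Your outline captures the broad strategy — an $\text{Aut}(\varphi)$-equivariant finite-dimensional reduction at $(0,\varphi)$ followed by a Taylor expansion analysis — but there are substantive gaps.

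First, $\ker\varphi^*D^N_{\Sigma,J_0}$ is $\bR$-one-dimensional, not $\bC$-one-dimensional: the normal deformation operator is only $\bR$-linear, and the elementary-wall-type condition gives $\dim_\bR\ker\varphi^*D^N_{\Sigma,J_0} = 1$ via the identification $\ker\varphi^*D^N_{\Sigma,J_0} = (\theta^*)^H\otimes_\bR\ker D^N_{\psi,\theta}$ (Lemma \ref{ker-coker-final}). Hence $\text{Aut}(\varphi)$ acts on a \emph{real} line, so your argument that it ``acts by complex linear isomorphisms on a complex line preserving a quadratic $Q$'' starts from a false premise. The proof of $\text{Aut}(\varphi)\subset\bZ/2\bZ$ is in fact representation-theoretic: a nontrivial element of $N_G(H)/H$ acting trivially on $(\theta^*)^H$ would force the holomorphic multisection $\hat\sigma$ determined by a generator of $\ker\varphi^*D^N_{\Sigma,J_0}$ to descend through an intermediate cover, contradicting its simplicity (Lemma \ref{simple-v-multiple-cover}); moreover the case where $\hat\sigma$ \emph{is} multiply covered has to be excluded separately, since there the local contribution to the embedded count is zero rather than $\pm 2/|\text{Aut}(\varphi)|$ (\textsection\ref{mult-kura}).

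Second — and this is the real issue — neither ``$Q$ is quadratic'' nor ``$Q$ is nontrivial'' follows from what you assume. Your model $t\alpha+Q(v)+\text{(higher order)}=0$ also drops the $r$ complex branched-cover moduli directions: the Kuranishi domain is $\bR_t\oplus T_\varphi\clM_h(\Sigma,d)\oplus\ker\varphi^*D^N_{\Sigma,J_0}$, which is needed to balance $\dim_\bR\coker\varphi^*D^N_{\Sigma,J_0}=1+2r$. After factoring the Kuranishi map, the wall crossing is governed by the leading Taylor term $c\varsigma^n$ of the $t$-coordinate along the branch of the zero set parametrizing new curves, and $n$ can be any integer with $1\le n\le d$ — in particular, equivariance forces $n$ to be \emph{even} when $\text{Aut}(\varphi)=\bZ/2\bZ$, so a genuinely quadratic picture never occurs there, and for $n$ odd you get one solution on each side of $t=0$ (with opposite signs), not two on one side. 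Most importantly, proving $c\ne 0$ for some such $n$ is not a consequence of $\gamma$ being transverse to the wall stratum: transversality controls only the $t$-derivative of the obstruction, not its higher $\varsigma$-derivatives, and nothing a priori prevents the Kuranishi map from being flat along $\ker\varphi^*D^N_{\Sigma,J_0}$ to all orders. This non-flatness is exactly Lemma \ref{taylor-nonva}, and it requires $\gamma$ to additionally avoid the codimension-$\ge 2$ degeneracy locus inside the wall (Definition \ref{bad-stratum-defined}, Lemmas \ref{baire-taylor} and \ref{main-technical-result}), which rests on a delicate pointwise non-vanishing argument (Lemmas \ref{algebraic-lemma}--\ref{variation-of-taylor}). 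Your ``principal difficulty'' paragraph rightly senses where the work lies, but the problem is more fundamental than controlling higher-order corrections to a quadratic model — it is showing that a leading nonzero Taylor coefficient exists at all.
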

The notion of ``generic path" deserves some further comments. The work of Wendl \cite{Wendl-19}, recalled in Section \ref{super-rigidity-recall}, specifies the codimension $1$ subsets (``walls"\footnote{These consist of almost complex structures violating super-rigidity (Definition \ref{super-rigidity-def}).}) of $\clJ(X, \omega)$ where a bifurcation could occur. Away from a subset of higher codimension, these subsets have manifold structures and we require that the path $\{J_t\}_{t\in[-1,1]}$ only intersects this manifold locus and the intersection is transverse. However, this notion of transversality is \emph{not} sufficient for our discussion. In fact, we define another subset of the walls with positive codimension, called the \emph{degeneracy locus}, in Definition \ref{bad-stratum-defined} and the path $\{J_t\}_{t\in[-1,1]}$ is required to avoid the degeneracy locus as well. The condition of ``elementary wall type" is needed in this step to guarantee the existence and abundance of such paths. Avoiding the degeneracy locus allows us to guarantee the non-vanishing of certain Taylor coefficients of the Kuranishi maps which leads to Theorem \ref{thm_int_1}. 

For instance, suppose we have a path $\{J_t\}_{t \in [-1,1]} \subset \clJ(X, \omega)$ and an embedded $J_0$-holomorphic torus $T \subset X$ with $\ker(D_{T,J_0}^{N}) = 0$, i.e. the moduli space of $J_0$-curves is cut out transversely at $T$. Suppose there exists an unbranched holomorphic double cover from another torus $\varphi: T' \to T$ such that the normal deformation operator $\varphi^*D_{T,J_0}^{N}$ satisfies $\dim_{\bR} \ker(\varphi^*D_{T,J_0}^{N}) = 1$.
Using Kuranishi reduction, one can show that the parametric moduli space of $J_t$-holomorphic genus $1$ maps in class $2[T]$ (near the $J_0$-holomorphic map $T'\xrightarrow{\varphi} T\subset X$) is locally modeled on 
\begin{equation}\label{kur-local-model}
    x \cdot (a x^2 + bt + \cdots) = 0 \text{ for some } a,b \in \bR
\end{equation}
for $(x,t) \in \bR \times [-1,1]$ with a $\bZ/2\bZ$-action given by $(x,t) \to (-x, t)$. Wendl's result allows us to ensure that $b \neq 0$ by choosing the path $\{J_t\}_{t \in [-1,1]}$ generically. However, this is \emph{not} sufficient for our application. We argue that one can choose the path generically so that we have $a \neq 0$ as well. When $a$ and $b$ are both non-zero, the higher order terms in \eqref{kur-local-model} can be removed by a suitable coordinate change and the relevant moduli space here is indeed described by the ``pitchfork bifurcation".

We remark that our arguments fill in a technical gap in Taubes' paper \cite[Lemma 5.18]{Taubes-Gr} and the applicability of our result goes beyond the bifurcation coming from double unbranched covers of tori as in \cite{Taubes-Gr}. As far as we know, this is the first result studying equivariant bifurcations of pseudo-holomorphic curves in dimension $6$.

The above two ingredients give a complete characterization of the moduli spaces of embedded pseudo-holomorphic curves of class $2A$ and any genus along a generic path of almost complex structures connecting two given super-rigid almost complex structures. The third key ingredient is the construction of the correction terms in \eqref{thm:Gr-invariance}, which are called \emph{linear wall crossing invariants} in Definition \ref{lwc-double}. These correction terms are constructed by studying the spaces of $\bR$-linear Cauchy--Riemann operators on the normal bundles of embedded $J$-curves in a symplectic Calabi--Yau $3$-fold and they should be thought of as a version of ``equivariant spectral flow". This algebraic invariant is used to compensate the change in the signed count of embedded curves of class $2A$ when a bifurcation happens and it serves as the last piece of the construction of $\normalfont\text{Gr}_{2A,h}(X,\omega)$. We remark that these linear wall-crossing invariants only suffice for our definition of $\normalfont\text{Gr}_{2A,h}(X,\omega)$. We plan to investigate generalization of these invariants for more general homology classes in future work. See \cite[Section 6.2]{bai2020equivariant} for related constructions in a different context, in which more complicated corrections are presented due to the occurrence of more complicated bifurcations.

Constructions of this kind probably date back to Walker's extension of the $\SU(2)$-Casson invariant to rational homology $3$-spheres \cite{casson-walker} and similar ideas appear in other literature of gauge theory, including the $\SU(3)$-Casson invariants for integer homology $3$-spheres \cite{boden1998the} and the Seiberg--Witten invariants for homology $S^1 \times S^3$ \cite{mrowka2011seiberg}. Recent work of Eftekhary \cite{eftekhary2020counting} applies similar ideas to a counting problem of periodic orbits. However, in our case, even the definition of the wall crossing numbers requires some work (see Section \ref{lwc-development}).

\subsection{Relations with other work}
There are some works in the literature (besides Taubes' Gromov invariant) which have close relations with the current work.
\subsubsection{Gopakumar--Vafa invariants}
Given a symplectic Calabi--Yau $3$-fold $(X, \omega)$, it was conjectured by Gopakumar--Vafa \cite{gopakumar1998m} that the Gromov--Witten generating function of $X$ satisfies the formula
\begin{align}\label{eqn_GV}
    \sum_{\substack{A \neq 0 \\ g \geq 0}} \text{GW}_{A,g}(X) t^{2g-2} q^{A} = \sum_{\substack{A \neq 0 \\ h \geq 0}} n_{A ,h}(X) \sum_{k=1}^{\infty} \frac{1}{k} \left(2\sin\left(\frac{kt}{2}\right)\right)^{2h-2} q^{kA},
\end{align}
where the ``BPS numbers" $n_{A ,h}(X)$ are integers and $n_{A ,h}(X) = 0$ for a fixed $A \in H_2(X, \bZ)$ if $h$ is sufficiently large. The integrality the BPS numbers was proved by Ionel--Parker \cite{IP-GV} and the finiteness has been established by the recent work of Doan--Ionel--Walpuski \cite{doan2021gopakumar}.

However, Ionel--Parker's proof of $n_{A ,h}(X) \in \bZ$ does not illustrate the geometric meaning of these Gopakumar--Vafa invariants. Their proof proceeds as follows. \cite[Equation (8.1)]{IP-GV} shows that the Gromov--Witten generating function satisfies
\begin{align}\label{eqn:elem_clust}
    \sum_{A \neq 0, g \geq 0} \text{GW}_{A,g}(X) t^{2g-2} q^{A} = \sum_{A \neq 0, g \geq 0} e_{A,g}(X) \cdot \text{GW}^\text{elem}_g (q^A, t)
\end{align}
where $\text{GW}^\text{elem}_g$ is some universal generating series and $e_{A,g}(X) \in \bZ$ is regarded as a ``\emph{virtual count of embedded clusters in $X$}". They also show that $n_{A ,h}(X)$ and $e_{A,g}(X)$ actually determine each other via the formula \cite[Equation (8.9)]{IP-GV}
\begin{align}\label{e-vs-GV}
    n_{A ,h}(X) = \sum_{\substack{A = dB \\ B \in \bZ_{>0}}} \sum_{g=0}^h n_{d,h}(g) \cdot e_{B,g}(X)
\end{align}
where $n_{d,h}(g) \in \bZ$ are certain universal coefficients determined by $d,h,g$. Therefore the integrality of $n_{A ,h}(X)$ is a result of integrality of $e_{A,g}(X)$.
\begin{Conjecture}\label{conj:intro}
The invariant $\normalfont\text{Gr}_{2A,h}(X,\omega)$ is equal to the virtual count of embedded clusters $e_{2A, h}$ from \eqref{eqn:elem_clust} for all $h \geq 0$ where $A$ is a primitive homology class.
\end{Conjecture}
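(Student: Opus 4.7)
The plan is to evaluate both invariants at a fixed super-rigid $J \in \clJ(X,\omega)$ and to match their contributions curve-by-curve. Concretely, the work of Zinger and Doan--Walpuski (recalled in the introduction) identifies the primitive-class invariant $e_{A,g}(X)$ with the signed count of embedded genus $g$ curves in class $A$ at a super-rigid $J$. Combined with Theorem \ref{thm:intro-compa} and the compactness properties in Section \ref{sec_GT}, Gromov compactness for the class $2A$ moduli space at a super-rigid $J$ produces only embedded genus $h$ curves in class $2A$ together with branched double covers of embedded curves of class $A$ and genus $g\le h$; in particular, there are no ghost bubbles to absorb into clusters. This is exactly the same decomposition underlying both $\text{Gr}_{2A,h}$ and the class-$2A$ specialization of \eqref{eqn:elem_clust}.

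The primary, ``class $2A$ embedded'' contributions on each side coincide: on the $\text{Gr}$-side this is the first sum in \eqref{Gr-def}, and on the Ionel--Parker side it is the tautological cluster contribution of a rigid embedded genus $h$ curve in class $2A$, both equal to $\sum_{C'} \text{sgn}(C')$ at a super-rigid $J$. The remaining task is therefore to match the ``class $A$ correction'' contributions. Writing out \eqref{eqn:elem_clust} restricted to class $2A$ and comparing with the expected decomposition
\begin{align}
    e_{2A,h}(X) = \sum_{C':\,2A,h}\text{sgn}(C') + \sum_{g\le h}\,\sum_{C:\,A,g} \text{sgn}(C)\cdot \alpha_{2,h}(D^N_{C,J}),
\end{align}
one extracts a universal ``double cover correction coefficient'' $\alpha_{2,h}(D^N_{C,J})$ that depends only on the normal deformation operator of $C$. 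The heart of the proof is then the purely analytic identification
\begin{align}\label{eqn:proposal-key}
    \alpha_{2,h}(D^N_{C,J}) = \text{w}_{2,h}(D^N_{C,J}),
\end{align}
i.e.\ the linear wall-crossing invariant of Definition \ref{lwc-double} agrees with the local cluster contribution of double covers to the genus $h$, class $2A$ Gromov--Witten invariant.

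To prove \eqref{eqn:proposal-key}, I would proceed by a bifurcation argument rather than a direct formal series computation. Since both $\text{Gr}_{2A,h}$ and $e_{2A,h}$ are invariants of $(X,\omega)$ (the first by Theorem \ref{thm:Gr-invariance}, the second by Ionel--Parker), the difference $\text{Gr}_{2A,h}-e_{2A,h}$ is invariant and vanishes identically once it vanishes on a single representative of each deformation class. Using Theorem \ref{thm_int_1} to trace how $\text{Gr}_{2A,h}$ changes across an elementary wall of double-cover type, and using Ionel--Parker's Kuranishi model \cite[Theorem 6.2]{IP-GV} to trace how $e_{2A,h}$ changes across the same wall, one reduces \eqref{eqn:proposal-key} to a local identity between the spectral flow underlying $\text{w}_{2,h}$ (see Section \ref{lwc-development}) and the local obstruction bundle Euler number governing the cluster count. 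Both sides are then computable on the universal model of a pitchfork bifurcation described near \eqref{Gr-def}, where they should be seen to agree term by term in the Taylor expansion of the Kuranishi map.

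The main obstacle is the last step: a clean computation of $e_{2A,h}$ on the Ionel--Parker cluster side requires unpacking the definition of $\text{GW}^\text{elem}_g$ in \eqref{eqn:elem_clust} and matching its coefficients against an equivariant spectral flow, for branched covers of arbitrary genus $h$. For small $h$ (e.g.\ $h\in\{g,g+1\}$ with $C$ a torus or a smooth hyperelliptic curve) one can likely check \eqref{eqn:proposal-key} by hand, giving evidence in low genus; the general case appears to require either a refined Kuranishi-theoretic comparison of the two obstruction bundles or a generating-function argument that packages the identity \eqref{e-vs-GV} against both invariants simultaneously. An alternative route, avoiding direct comparison of local models, would be to introduce a common refinement of $\text{Gr}_{2A,h}$ and $e_{2A,h}$ as invariants associated to a family of $\bR$-linear Cauchy--Riemann operators and deduce the equality from a functoriality statement under deformation; this seems cleaner but presupposes a framework not yet available in the literature.
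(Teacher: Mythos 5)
This is a Conjecture in the paper, established only for $h=0$ (Theorem \ref{thm_GWGV}) and explicitly left open in general; there is no complete proof to compare against. Your outline — match the primary class-$2A$ embedded contributions, then prove that the correction coefficient attached to each class-$A$ curve $C$ equals $\text{w}_{2,h}(D^N_{C,J})$ — agrees with the overall structure of the $h=0$ proof. But the argument you give for the key coefficient identity is weaker than what the paper actually uses at $h=0$, and your identification of the remaining obstacle misses the one the paper names. At $h=0$ the proof is purely linear: the coefficient attached to $C$ depends only on $D^N_{C,J}\in\clC\clR_\bR(N_C)$, and the paper proves $e_{2,0}(C,D^N_{C,J}) - 1/2^3 = \text{w}_{2,0}(D^N_{C,J})$ by (a) the Aspinwall--Morrison base-point computation $e_{2,0}(C,\bar\partial) = 1/2^3$ at the $\bC$-linear operator $\bar\partial$ (where $\text{w}_{2,0}(\bar\partial)=0$ holds by definition), and (b) Theorem \ref{euler-change-thm}, which shows both sides jump by the same amount across every wall along a generic path of operators from $D^N_{C,J}$ to $\bar\partial$. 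It never invokes deformation invariance of $\text{Gr}$ or $e$. The appeal to invariance in your proposal — that the difference $\text{Gr}_{2A,h}-e_{2A,h}$ ``vanishes once it vanishes on a single representative'' — is circular without a computable representative in hand, and the subsequent bifurcation tracing cannot recover the \emph{local} coefficient attached to a single $C$: invariance constrains the total, while individual terms may jump compensatingly across a wall.

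You locate the obstacle for general $h$ in unpacking the universal series $\text{GW}^\text{elem}_g$, but the paper names a more specific and more serious one: Theorem \ref{euler-change-thm} is proved only under the hypothesis that the degeneracy locus $\Delta$ is a \emph{finite} subset of the open substack $\clM_h(C,d)$ of stable maps with \emph{smooth} domains. For $h>g$ the cover moduli $\overline\clM_h(C,2)$ has positive dimension and a nontrivial boundary, and there is at present no argument showing that a generic path of operators avoids degeneracies of the obstruction bundle at nodal covers, nor is the obstruction-bundle formalism (or the spectral-flow packaging behind $\text{w}_{2,h}$) developed there. Until this compactness/smooth-domain hypothesis can be removed — together with a higher-genus analogue of the Aspinwall--Morrison base-point identity — the local Kuranishi-model matching you describe cannot get started.
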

We show that Conjecture \ref{conj:intro} is true for $h=0$, see Theorem \ref{thm_GWGV}. The proof relies on a wall-crossing result of virtual Euler numbers, which might be of independent interest.
\begin{Theorem}[see Theorem \ref{euler-change-thm}]\label{thm_int_2}
    Suppose $N\to C$ is a rank $2$ complex vector bundle of degree $2g-2$ on a curve $C$ of genus $g$. Let $\clD = \{D_t\}_{t\in[-1,1]}$ be a $1$-parameter family of Cauchy--Riemann operators on $N$. Fix integers $d\ge 2$, $h\ge g$ and consider the moduli stack $\overline\clM_h(C,d)$ of stable genus $h$ maps to $C$ of degree $d$. Assume further that $\clD$ is $(d,h)$-rigid (see Definition \ref{coker-bundle-def}) except over a finite set
    \begin{align}\label{eqn_intro_delta}
        \Delta\times\{0\}\subset\clM_h(C,d)\times[-1,1]
    \end{align}
    corresponding to holomorphic maps with smooth domains and that $\clD$ is a simple wall-crossing of type $(d,h)$ at $t=0$ (see Definition \ref{simple-linear-wall-crossing}).
    Then, for small $t\ne 0$, the assignment 
    \begin{align}\label{intro-cokernel}
        [f:C'\to C]\in\overline\clM_h(C,d)\mapsto\coker(f^*D_t)
    \end{align}
    gives a well-defined vector bundle on $\overline\clM_h(C,d)$ equipped with a natural orientation.\footnote{The orientation here is defined using the modified determinant line \eqref{eqn:modified-orientation-line} and differs from the usual orientation on \eqref{intro-cokernel} when $\normalfont\text{sgn}(D_0) = -1$. See Remark \ref{further-orientation-explanation} for details.} Denote the Euler number of this bundle by $e_{d,h}(C,D_\pm)$ for small $\pm t>0$. Then, we have
    \begin{align}
        \normalfont e_{d,h}(C,D_+) - e_{d,h}(C,D_-) = \sum_{p\in\Delta}\frac{2\cdot\text{sgn}(\clD,p)}{|\text{Aut}(p)|}
    \end{align}
    where the numbers $\normalfont\text{sgn}(\clD,p)\in\{-1,+1\}$ are determined by the local behavior of $\clD$ near $p$.
\end{Theorem}
The main obstacle to proving Conjecture \ref{conj:intro} in general is to drop the assumption that $\Delta$ is a finite subset and corresponds to stable maps with smooth domains in the above statement.

When $(X,\omega)$ is further assumed to be a smooth projective variety, Maulik--Toda \cite{maulik2018gopakumar} propose a sheaf-theoretic definition of the Gopakumar--Vafa invariants, based on earlier works of Hosono--Saito--Takahashi \cite{hosono2001relative} and Kiem--Li \cite{kiem2012categorification}. Despite the beauty of the definition, it is not known how to prove the deformation invariance of Maulik--Toda's ansatz and the proof of formula \eqref{eqn_GV} using this definition seems to be out of reach in general. Moreover, the sheaf-theoretic definition relies crucially on the algebraicity of $X$, making it hard to generalize it to general symplectic Calabi--Yau $3$-folds (equipped with not necessarily integrable almost complex structures). Conjecture \ref{conj:intro} proposes a geometric definition of the Gopakumar--Vafa invariants of two times a primitive homology class via the formula \eqref{e-vs-GV} by counting embedded pseudo-holomorphic curves. 
\begin{remark}
Conjecture 7.4.5 in Cox--Katz's book \cite{cox1999mirror} conjectures that the genus $0$ BPS numbers should come from virtual counts of embedded genus $0$ $J$-curves, with correction terms coming from genus $0$ $J$-curves of lower degree. Later on, Katz proposes an algebro-geometric counterpart of this conjecture in \cite{katz2008genus} using Donaldson--Thomas invariants. Our conjecture and result are consistent with these expectations.
\end{remark}

\subsubsection{Counting calibrated submanifolds}
For a given symplectic manifold $(X, \omega)$ and an $\omega$-compatible almost complex structure $J$, almost complex submanifolds, in particular embedded $J$-curves, define \emph{calibrated submanifolds} of $X$. Another example of calibrated submanifolds are associative submanifolds of $7$-dimensional Riemannian manifolds with holonomy group $G_2$.

There is a proposal from Doan--Walpuski \cite{DWas} which defines weights from counting solutions to gauge-theoretic equations on the associative submanifolds such that the weighted count of associatives defines an invariant. In this work, a symplectic definition of Pandharipande--Thomas invariant is discussed, based on a weighted count of embedded pseudo-holomorphic curves with weights again coming from counts of gauge-theoretic equations.

Our formula \eqref{Gr-def} could also be viewed as a weighted count of embedded pseudo-holomorphic curves which defines an invariant of $(X, \omega)$. This definition bypasses the difficulty coming from compactness issues of moduli spaces of generalized Seiberg--Witten equations which enters the proposal in \cite{DWas} and has a closer relation with the Gromov--Witten invariants.

\subsubsection{Study of multi-valued sections}
Given an embedded $J$-curve $C$ in a symplectic Calabi--Yau $3$-fold and a holomorphic (branched) cover $\varphi: C' \to C$, any non-zero element $s \in \ker(\varphi^* D_{C,J}^N)$ defines a \emph{multi-valued section} of the normal bundle $N$ of $C$ in $X$ by associating any point $x \in C$ with the values of $s$ over the set $\varphi^{-1}(x)$. Our bifurcation result (Theorem \ref{thm_int_1}) can be interpreted as a sufficient condition to perturb a (holomorphic) multi-valued section into a ``nearby" embedded pseudo-holomorphic curve in $X$. Moreover, the discussion in Section \ref{lwc-development} which is required for the definition of linear wall crossing numbers studies some generic behaviors of these multi-valued sections.

As speculated in \cite{donaldson2019deformations}, $2$-valued harmonic $1$-forms are important for special Lagrangian geometry. Our study of multi-valued sections might have some applicability in this context, see Remark \ref{rmk_slag} and Remark \ref{rmk_mult_slag}.

\subsection{Technical remarks concerning transversality arguments}\label{technical-transversality-remarks}

To carry out the numerous generic transversality arguments in this paper (and in many other places in the theory of pseudo-holomorphic curves), one needs to appeal to the Sard--Smale theorem (for Fredholm maps of second countable \emph{Banach} manifolds) after showing that certain universal moduli spaces are smooth (respectively, $C^\infty$ subvarieties as in \cite[Appendix C]{Wendl-19}) by showing that the linearized operators associated to their defining equations are surjective (respectively, satisfy a lower bound on their rank).

Let $f:\clX\to\clY$ be the map for which we wish to prove that the set $\clY_\text{reg}\subset\clY$ of regular values is a Baire subset (with $\clY$ being a space of smooth objects and both $\clX$, $\clY$ being second countable). In our applications, this will be the space of smooth almost complex structures on a manifold or Cauchy--Riemann operators on a fixed bundle or spaces of paths/homotopies in the aforementioned spaces or, in our most complicated situation, a space of embedded pseudo-holomorphic curves equipped with the data of a branched cover of the domain (Definition \ref{bad-stratum-defined}). The first step of the proof is always to establish \emph{formal regularity}, i.e., surjectivity (or rank lower bounds) for the linearized operators of the defining equations of $\clX$ over $\clY$ at the level of smooth objects (i.e. without taking any Banach space completions or restricting to any Banach subspace).

Now, for most applications (including all the applications in this paper), one wishes to prove $C^\infty$-genericity statements (e.g. ``simple $J$-holomorphic maps are unobstructed for a $C^\infty$-generic $J$"). However, the space $\clY$ does not form a Banach manifold and thus, we cannot directly appeal to the Sard--Smale theorem. Fortunately, there is a well-known technique to overcome this difficulty (i.e. to upgrade formal regularity into genericity of regular values) which we briefly outline below. For the complete details and non-trivial examples of the application of this technique, the reader is referred to the very clear and detailed exposition in \cite{Wendl-19} (specifically the proofs of Theorems 5.26 and D therein). Going from formal regularity to the statement that regular values are generic involves showing that $\clY_\text{reg}$ is a countable intersection of dense open subsets of $\clY$. 

For this, we use the popular trick (due to Taubes) of exhausting $\clX$ by a sequence of subspaces $\clX^1\subset\clX^2\subset\cdots$ such that $f|_{\clX^N}:\clX^N\to\clY$ is proper for all $N$. We then define $\clY^N_\text{reg}\subset\clY$ to be the subset of points $y\in\clY$ for which every point in $f^{-1}(y)\cap\clX^N$ is regular. Now, since regularity is an open property on $\clX$ and $f|_{\clX^N}$ is proper, it follows that $\clY^N_\text{reg}$ is open. As a consequence, $\clY_\text{reg} = \bigcap_{N\ge 1}\clY^N_\text{reg}$ is a countable intersection of open subsets of $\clY$. The exact construction of the subsets $\clX^N\subset\clX$ is straightforward but somewhat tedious and we refer the reader to the proofs of Theorem 5.26 and Theorem D in \cite{Wendl-19} for details.

 We are left to argue that each $\clY^N_\text{reg}\subset\clY$ is dense. For this, we fix any reference point $y_\text{ref}\in\clY$ introduce Floer's $C_\epsilon$ spaces centered at $y_\text{ref}$, which we denote by $\clY_\epsilon(y_\text{ref})$. These spaces are Banach manifolds, consist entirely of smooth objects close to $y_\text{ref}$ but carry a topology depending on the parameter $\epsilon$ which is much finer than the $C^\infty$ topology. For any $x_\text{ref}\in \clX_\text{ref} = f^{-1}(y_\text{ref})$, formal regularity and the implicit function theorem imply that for all sufficiently small $\epsilon$, the space $\clX_\epsilon(y_\text{ref}) = f^{-1}(\clY_\epsilon(y_\text{ref}))$ is a Banach manifold (or $C^\infty$ subvariety of expected codimension) near $x_\text{ref}$. By the second countability of $\clX_\text{ref}$, we can choose a uniform $\epsilon$ which works for all $x_\text{ref}\in\clX_\text{ref}$. Thus, we obtain an open neighborhood $\clX_\epsilon(y_\text{ref})^\text{reg}$ of $\clX_\text{ref}$ in $\clX_\epsilon(y_\text{ref})$ such that the restriction of $f$ as a map $\clX_\epsilon(y_\text{ref})^\text{reg}\to\clY_\epsilon(y_\text{ref})$ is a smooth map of Banach manifolds. Now, the Sard--Smale theorem shows that we can find a sequence $y_k\in\clY_\epsilon(y_\text{ref})$ of regular values of this map converging to $y_\text{ref}$ in $\clY$. For any $N\ge 1$, the properness of $\clX^N$ over $\clY$ and the openness of $\clX_\epsilon(y_\text{ref})^\text{reg}\subset\clX_\epsilon(y_\text{ref})$ now show that we must have $y_k\in\clY^N_\text{reg}$ for all sufficiently large $k$ (depending on $N$). Since $y_\text{ref}$ was arbitrary, this shows that each $\clY^N_\text{reg}$ is dense in $\clY$. 

In view of the above, we adopt the convention of only proving formal regularity in all transversality proofs that arise in this paper (and omitting the use of Floer's $C_\epsilon$ spaces and Taubes' trick to upgrade from formal regularity to genericity of regular values). Consistent with this, we use the notions of codimension of a subset of $\clY$ (or a $C^\infty$ subvariety of $\clY$), the index of the map $f$ etc in this formal sense as a shorthand for the more detailed proof scheme above. The corresponding notions for maps of Banach manifolds have standard definitions (a subset is of codimension $\ge k$ if it is covered by the images of countably many Fredholm maps with indices $\le -k$; $C^\infty$ subvarieties of codimension $\ge k$ are defined as in \cite[Appendix C]{Wendl-19} and we note that they are, in particular, subsets of codimension $\ge k$ by \cite[Proposition C.1]{Wendl-19}). The above formal notions will translate to the corresponding notions for Banach manifolds when we apply the implicit function theorem in the above proof scheme to a suitable $C_\epsilon$ space.

\subsection{Structure of the paper}

We end the introduction by outlining the structure of this paper. In Section \ref{recall}, we recall the necessary background on (twisted) Cauchy--Riemann operators and super-rigidity (following \cite{Wendl-19}). Section \ref{compactness-section} establishes a necessary condition for bifurcations to occur (Theorem \ref{compactness-result}) while Section \ref{bifurcation} gives a sufficient condition (Theorem \ref{main-bifurcation}). In Section \ref{sec_GT}, we apply the results of the previous two sections to construct the extension of Taubes' Gromov invariant  for symplectic Calabi--Yau $3$-folds (counting curves of genus $h\ge 0$ lying in two times a primitive homology class) and prove the main result of this paper (Theorem \ref{thm:Gr-invariance}). Finally, in Section \ref{sec_GV}, we prove a result on the virtual Euler numbers of certain obstruction bundles (Theorem \ref{euler-change-thm}) which allows us to identify our invariant with the Gopakumar--Vafa invariant when the genus $h$ is equal to $0$ (Theorem \ref{thm_GWGV}). 

\subsection*{Acknowledgements.} We would like to thank Chris Wendl for helpful correspondences and for pointing out an error in an earlier version of Lemma \ref{ker-coker-final}. We would also like to thank Eleny Ionel for a careful reading of an earlier version of this paper and for several useful suggestions on improving the exposition. We are also grateful to Aleksander Doan, Sridhar Venkatesh and Thomas Walpuski for useful discussions. Moreover, the authors would like to thank their Ph.D. advisor John Pardon for his constant support and encouragement. Finally, the authors thank the referees for their meticulous reading and suggestions, which improved the exposition substantially. 

\subsection*{Conventions.} Unless otherwise stated, all vector spaces, vector bundles and their determinants are defined over $\mathbb{R}$. Dimensions, Hom-spaces and tensor products without any subscripts should be understood to be over ${\mathbb{R}}$. $\overline{H}_2(X,\bZ)$ will denote $H_2(X,\bZ)$ modulo torsion.

\section{Review of embedded curves and super-rigidity}\label{recall}

In this section, we will summarize the background on super-rigidity in a form relevant for our discussion in sections \ref{bifurcation}--\ref{sec_GT}. The notation closely follows that of \cite[Section 2]{Wendl-19}. We will begin by recalling some basic facts about branched covers of curves and Cauchy--Riemann operators (in \textsection\ref{covers-recall}) and then move onto super-rigidity (in \textsection\ref{super-rigidity-recall}).

\subsection{Cauchy--Riemann operators and Galois covers}\label{covers-recall}

We will consistently use the word `curve' to mean `irreducible smooth projective algebraic curve over $\bC$' (i.e. a closed connected smooth Riemann surface). A `morphism' between curves will mean a `non-constant holomorphic map'.

\begin{Definition}[Function field]
    Let $\Sigma$ be a curve. Define $\bC(\Sigma)$ to be the \emph{function field} of $\Sigma$ (i.e., the field of meromorphic functions on $\Sigma$). Given a degree $d\ge 1$ morphism of curves $\varphi:\Sigma'\to\Sigma$, we get the associated degree $d$ field extension $\varphi^*:\bC(\Sigma)\hookrightarrow\bC(\Sigma')$. It is a basic fact from algebraic geometry that this is a contravariantly functorial one-to-one correspondence (i.e. an equivalence of categories) between curves and fields of transcendence degree $1$ over $\bC$.
\end{Definition}
\begin{Definition}[Galois closure]
    Let $\varphi:\Sigma'\to\Sigma$ be a morphism of curves. We say that $\varphi$ is \emph{Galois} if and only if the associated field extension is Galois. In general, $\varphi$ may not be Galois and we can form a Galois closure, a morphism $\tilde\Sigma\to\Sigma'\xrightarrow{\varphi}\Sigma$, by taking a Galois closure of the function field extension $\varphi^*:\bC(\Sigma)\to\bC(\Sigma')$. For a Galois cover $\tilde\Sigma\to\Sigma$, we define the Galois group $\normalfont\text{Gal}(\tilde\Sigma/\Sigma)$ to be the group of automorphisms of $\tilde\Sigma$ over $\Sigma$ (or equivalently the Galois group of the associated function field extension).
\end{Definition}

Given a morphism $\varphi:\Sigma'\to\Sigma$ of curves, we call its critical points as \emph{ramification points} and its critical values as \emph{branch points}. The map $\varphi$ restricts to a covering map with connected domain over the complement of (the pre-images of) the branch points, and this is called \emph{the associated covering space}. It is a basic fact that $\varphi:\Sigma'\to\Sigma$ is Galois if and only if the associated covering space is normal. Moreover, the associated covering space of a Galois closure of $\varphi$ is a normal closure of the associated covering space of $\varphi$.

\begin{Definition}[Branched covers and Galois groups]\label{cover-combinatorial-type}
    Let the following data be given: integers $g\ge 0$ and $d\ge 2$ and \emph{branching data} ${\bf b} = ({\bf b}_1,\ldots,{\bf b}_r)$ where each ${\bf b}_i = (b_i^1,\ldots,b_i^{q_i})$ is a tuple of positive integers adding up to $d$ with $b_i^1\ge\cdots\ge b_i^{q_i}$ and $b_i^1\ge 2$. Define 
    \begin{align}
        b = \sum_{i=1}^r\sum_{j=1}^{q_i}(b_i^j-1)
    \end{align}
    and define $h$ using the formula $2h - 2 = b + d(2g-2)$. Assume that $h$ is a non-negative integer. Now, let $\varphi:\Sigma'\to\Sigma$ be a morphism of curves of \emph{type} $(g,{\bf b})$, i.e., $h = \normalfont\text{genus}(\Sigma')$, $g = \normalfont\text{genus}(\Sigma)$, $d = \deg(\varphi)$ and $\varphi$ has exactly $r$ branched points $w_1,\ldots,w_r\in\Sigma$ with $\varphi^{-1}(w_i) = \{\zeta_i^1,\ldots,\zeta_i^{q_i}\}$ and $\varphi$ is $b_i^j$-to-$1$ in a punctured neighborhood of each $\zeta_i^j\in\Sigma'$. We define the \emph{generalized automorphism group} of $\varphi$ to be the Galois group of a Galois closure of $\varphi$. The isomorphism class of the generalized automorphism group is independent of the choice of Galois closure.
    
    Given a finite group $G$ and a genus $g$ curve $\Sigma$, define $\clM^d_{{\bf b},G}(\Sigma)$ to be the orbifold of pairs $(\varphi:\Sigma'\to\Sigma,\iota)$, up to isomorphism, where $\varphi$ is a morphism of type $(g,{\bf b})$ and $\iota$ is an isomorphism of $G$ with the generalized automorphism group of $\varphi$ (defined using a Galois closure $\tilde\Sigma\to\Sigma'\xrightarrow{\varphi}\Sigma$). Under $\iota$, $\normalfont\text{Gal}(\tilde\Sigma/\Sigma')\subset\text{Gal}(\tilde\Sigma/\Sigma)$ corresponds to $H\subset G$ which is called \emph{subgroup of $G$ corresponding to $\varphi$}. The orbifold structure is explained in Lemma \ref{orbifold-of-G-covers} below.
\end{Definition}

\begin{Lemma}\label{orbifold-of-G-covers}
    $\clM^d_{{\bf b},G}(\Sigma)$ is naturally an orbifold.
\end{Lemma}
\begin{proof}
    A point of $\clM^d_{{\bf b},G}(\Sigma)$ explicitly consists of a morphism $\varphi:\Sigma'\to\Sigma$ of type $(g,{\bf b})$, a Galois closure $\tilde\varphi:\tilde\Sigma\to\Sigma$ for $\varphi:\Sigma'\to\Sigma$ (with a factorization $\tilde\Sigma\to\Sigma'\to\Sigma$) and an isomorphism $\iota:G\xrightarrow{\simeq}\text{Gal}(\tilde\Sigma/\Sigma)$. An isomorphism between points $(\varphi,\tilde\varphi,\iota)$ and $(\phi,\tilde\phi,\hat\iota)$ is the data of an isomorphism of the Galois covers $\tilde\varphi$ and $\tilde\phi$ over $\Sigma$, which descends to an isomorphism between $\varphi$ and $\phi$ over $\Sigma$ and is compatible with the isomorphisms $\iota$ and $\hat\iota$. 
    
    To describe the orbifold structure of $\clM^d_{{\bf b},G}(\Sigma)$, we show how to get local charts near any given point $(\varphi,\tilde\varphi,\iota)$. Using \cite[Example 3.6]{Wendl-19}, and the discussion immediately preceding it, we obtain a family $\tau\in B^{2r}\mapsto[\Sigma'_\tau\xrightarrow{\varphi_\tau}\Sigma]$ parametrizing all morphisms to $\Sigma$ of type $(g,{\bf b})$ near $\varphi:\Sigma'\to\Sigma$. Further, by construction, this family comes with a (smooth) family of Galois closures $\tilde\Sigma_\tau\xrightarrow{\tilde\varphi_\tau}\Sigma$ (called \emph{minimal regular presentations} in \cite{Wendl-19}). The given isomorphism $\iota:G\xrightarrow{\simeq}\text{Gal}(\tilde\Sigma/\Sigma)$ uniquely deforms to isomorphisms $\iota_\tau:G\xrightarrow{\simeq}\text{Gal}(\tilde\Sigma_\tau/\Sigma)$. The family $\{(\varphi_\tau,\tilde\varphi_\tau,\iota_\tau)\}_{\tau\in B^{2r}}$ almost gives us the desired orbifold chart for $\clM^d_{{\bf b},G}(\Sigma)$ near $(\varphi,\tilde\varphi,\iota)$ except that it doesn't account for the isomorphisms of $\text{Gal}(\tilde\Sigma_\tau/\Sigma)$ with $G$ other than $\iota_\tau$ (as well as the action of the group of automorphisms of $\tilde\Sigma_\tau/\Sigma$ which descend to $\Sigma'_\tau/\Sigma$ on these data). Let $H\subset G$ be the subgroup corresponding to $\text{Gal}(\tilde\Sigma/\Sigma')\subset\text{Gal}(\tilde\Sigma/\Sigma)$ under $\iota$. The group of automorphisms of $\tilde\Sigma/\Sigma$ which descend to $\Sigma'/\Sigma$ gets identified under $\iota$ with the normalizer $N_G(H)$ of $H\subset G$. 
    
    Using $\{(\varphi_\tau,\tilde\varphi_\tau,\iota_\tau)\}_{\tau\in B^{2r}}$, it now follows that $[(B^{2r}\times\text{Aut}(G)) / N_G(H)]$ provides a local orbifold chart for $\clM^d_{{\bf b},G}(\Sigma)$ near $(\varphi,\tilde\varphi,\iota)$ where $N_G(H)$ acts on $B^{2r}$ trivially and acts on the finite set $\text{Aut}(G)$ via the group homomorphism $N_G(H)\to\text{Aut}(G)$ which maps any $\gamma\in N_G(H)$ to conjugation by $\gamma$. Observe that the orbifold structure on $\clM^d_{{\bf b},G}(\Sigma)$ is \emph{purely ineffective} with every point having isotropy group isomorphic to the center $Z_G$ of $G$. In particular, the underlying coarse space of the orbifold is actually a manifold, i.e., has no quotient singularities.
\end{proof}

We will next discuss Cauchy--Riemann operators on curves and their pullbacks along Galois covers. Denote by $E$ a complex vector bundle over a curve $\Sigma$. Recall that an \emph{$\mathbb{R}$-linear Cauchy--Riemann operator} is a first-order real linear partial differential operator $D: \Omega^0(\Sigma,E)\to\Omega^{0,1}(\Sigma,E)$ such that for any smooth function $f \in C^{\infty}(\Sigma, \mathbb{R})$ and section $s \in \Omega^0(\Sigma,E)$, we have
\begin{equation}
    D(fs) = (\overline{\partial}f) s + f D(s),
\end{equation}
where $\overline{\partial}f = \frac12 (df + i\cdot df \circ j)$ and $j$ denotes the complex structure on $\Sigma$.
\begin{Definition}[Serre dual]\label{serre}
    Let $D:\Omega^0(\Sigma,E)\to\Omega^{0,1}(\Sigma,E)$ be an $\bR$-linear Cauchy--Riemann operator. Define the \emph{Serre dual bundle} of $E$ to be 
    \begin{align}\label{eqn:dual-E}
        E^\dagger = \Lambda^{1,0}T^*_\Sigma\otimes_\bC E^*
    \end{align}
    and the \emph{Serre dual operator} $D^\dagger$ to be the $\bR$-linear Cauchy--Riemann operator on $E^\dagger$ determined by the following condition: for all $s\in\Omega^0(\Sigma,E)$ and $s^\dagger\in\Omega^0(\Sigma,E^\dagger)$,
    \begin{align}\label{eqn:serre-dual-operator}
        \normalfont\text{Re}\left(\int_\Sigma \langle Ds,s^\dagger\rangle + \langle s,D^\dagger s^\dagger\rangle\right) = 0.
    \end{align}
    In \eqref{eqn:serre-dual-operator}, the $\bC$-linear pairing $\langle Ds,s^\dagger\rangle\in\Omega^{1,1}(\Sigma,\bC)$ between the sections $Ds\in\Omega^{0,1}(\Sigma,E)$ and $s^\dagger\in\Omega^0(\Sigma,E^\dagger) = \Omega^{1,0}(\Sigma,E^*)$ is defined by the wedge product followed by the natural pairing between $E$ and and its $\bC$-linear dual $E^*$. The $\bC$-linear pairing $\langle s,D^\dagger s^\dagger\rangle$ between the sections $s\in\Omega^0(\Sigma,E)$ and $D^\dagger s^\dagger\in\Omega^{0,1}(\Sigma,E^\dagger) = \Omega^{1,1}(\Sigma,E^*)$ is defined analogously. 
    
    The $\bR$-linear pairing $\normalfont\text{Re}\int_\Sigma\langle\cdot,\cdot\rangle$ gives the \emph{Serre duality} isomorphisms
    \begin{align}
        \ker D^\dagger &= (\coker D)^* \\
        \ker D &= (\coker D^\dagger)^*.
    \end{align}
\end{Definition}
\begin{Definition}[Twisted operators]\label{twisted-def}
    Let $D:\Omega^0(\Sigma,E)\to\Omega^{0,1}(\Sigma,E)$ be an $\bR$-linear Cauchy--Riemann operator on the curve $\Sigma$ and let $\psi:\tilde\Sigma\to\Sigma$ be a Galois cover with Galois group $G = \normalfont\text{Gal}(\tilde\Sigma/\Sigma)$. We can then form the pullback operator
    \begin{align}\label{galois-pullback}
        \psi^*D:\Omega^0(\tilde\Sigma,\psi^*E)\to\Omega^{0,1}(\tilde\Sigma,\psi^*E)
    \end{align}
    which is $G$-equivariant with respect to the natural linear $G$-actions on the source and target. Let $\theta$ be a linear $G$-representation over $\bR$. We define the \emph{twisted operator}
    \begin{equation}
    D_{\psi,\theta}: \normalfont\text{Hom}^G(\theta^*,\Omega^0(\tilde\Sigma,\psi^*E)) \to \normalfont\text{Hom}^G(\theta^*,\Omega^{0,1}(\tilde\Sigma,\psi^*E))
    \end{equation}
    by applying the functor $\normalfont\text{Hom}^G(\theta^*,-) = (\theta\otimes-)^G$ to \eqref{galois-pullback}.
    The operator $D_{\psi,\theta}$ is naturally linear over the endomorphism algebra $\bK_\theta = \normalfont\text{End}^G(\theta)$.
\end{Definition}
\begin{remark}
    This definition is seen to be equivalent to the twisted operators from \cite[\textsection 2.2 and \textsection 3.4]{Wendl-19} as follows. We may regard the elements of $\text{Hom}^G(\theta^*,\Omega^0(\tilde\Sigma,\psi^*E)) = (\theta\otimes\Omega^0(\tilde\Sigma,\psi^*E))^G$ as sections of $(\tilde\Sigma\times V)/G$ on the orbifold $\tilde\Sigma/G$. The natural map $\tilde\Sigma/G\to\Sigma$ is an isomorphism except over the branch points. The quotient $(\tilde\Sigma\times V)/G$ defines a vector bundle on the complement of the branch points and coincides with the twisted bundle of \cite{Wendl-19}. The advantage of our formulation over the one in \cite{Wendl-19} is that it avoids the need to introduce punctures and weighted Sobolev spaces.
\end{remark}
\begin{remark}\label{invariant-sections}
    Suppose $G\to G'$ is a quotient map between groups, corresponding to a factorization $\tilde\Sigma\xrightarrow{\varphi}\Sigma'\xrightarrow{\psi'}\Sigma$ of $\psi$ as the composition of two Galois covers $\varphi$, $\psi'$ and $G' = \normalfont\text{Gal}(\Sigma'/\Sigma)$. Also, suppose that $\theta'$ is a $G'$-representation and that $\theta$ is its pullback to $G$. Then, $\varphi^*$ gives a natural map of complexes from $D_{\psi',\theta'}$ to $D_{\psi,\theta}$ and this map is seen to be a quasi-isomorphism (using the removable singularity theorem near the branch points of $\varphi$ for $D$ and $D^\dagger$). Similarly, if $H\subset G$ is any subgroup and $\tilde\Sigma' = \tilde\Sigma/H$ with the associated morphism $\psi':\tilde\Sigma'\to\Sigma$, then the natural pullback map from $(\psi')^*D$ to $(\psi^*D)^H$ is also a quasi-isomorphism (i.e. an isomorphism on kernels and cokernels).
\end{remark}

The following result, which appears in \cite[Remark 1.4.7]{DW-20} in a slightly different form, will also be useful for the discussion below.

\begin{Lemma}\label{operator-decomp}
    Let $\theta_i$ for $i=1,\ldots,p$ be an enumeration of all the irreducible representations of $G$ over $\bR$ with $\bK_i = \normalfont\text{End}^G(\theta_i)$. Then, the tautological map
    \begin{align}
        \bigoplus_i\theta_i^*\otimes_{\bK_i}D_{\psi,\theta_i} \to \psi^*D
    \end{align}
    of $G$-equivariant two term complexes is a quasi-isomorphism.
\end{Lemma}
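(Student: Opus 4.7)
The lemma is essentially the isotypic decomposition of an $\bR[G]$-module, applied to the smooth sections of the pulled-back bundles.

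The first step is the following purely algebraic fact: for any finite group $G$ and any $\bR[G]$-module $V$ (not necessarily finite-dimensional), the evaluation map
\[
\bigoplus_{i}\theta_{i}^{*}\otimes_{\bK_{i}}\mathrm{Hom}^{G}(\theta_{i}^{*},V)\longrightarrow V,\qquad f\otimes s\longmapsto s(f),
\]
is a $G$-equivariant isomorphism. This is a consequence of the semisimplicity of $\bR[G]$ (Maschke) together with the real form of Schur's lemma, which identifies $\mathrm{End}^{G}(\theta_{i})$ with the division algebra $\bK_{i}\in\{\bR,\bC,\bH\}$ and forces $G$-equivariant maps between distinct $\theta_{i}^{*}$-isotypic components to vanish.

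The second step is to apply this decomposition to the two $\bR[G]$-modules $V_{0}=\Omega^{0}(\tilde\Sigma,\psi^{*}E)$ and $V_{1}=\Omega^{0,1}(\tilde\Sigma,\psi^{*}E)$, on which $G=\mathrm{Gal}(\tilde\Sigma/\Sigma)$ acts via its deck transformations and their tautological lifts to $\psi^{*}E$. The pulled-back operator $\psi^{*}D$ is $G$-equivariant because $D$ itself is defined downstairs on $\Sigma$, so it preserves the isotypic decomposition. By Definition \ref{twisted-def}, its restriction to the $\theta_{i}^{*}$-summand is precisely $\mathrm{id}_{\theta_{i}^{*}}\otimes_{\bK_{i}}D_{\psi,\theta_{i}}$.

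Combining these two steps, the tautological map in the lemma is in fact an \emph{isomorphism} of two-term complexes (and hence, a fortiori, a quasi-isomorphism). The only work beyond invoking semisimplicity of $\bR[G]$ is the bookkeeping of the $\bK_{i}$-module structures (right $\bK_{i}$-module on $\theta_{i}^{*}$ via $\mathrm{End}^{G}(\theta_{i}^{*})\cong\bK_{i}^{\mathrm{op}}$ and left $\bK_{i}$-module on $\mathrm{Hom}^{G}(\theta_{i}^{*},V_{k})$ via postcomposition), but no analytic input on Cauchy--Riemann operators is required beyond the tautological equivariance of $\psi^{*}D$. Consequently the main ``obstacle'' is purely notational rather than mathematical.
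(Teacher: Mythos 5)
Your proof is correct, and it uses the same core ingredient as the paper (Maschke's theorem for $\bR[G]$ combined with Schur's lemma over $\bR$), but applies it at a different level. The paper's proof invokes the isotypic decomposition only for the finite-dimensional $\bR[G]$-modules $\ker\psi^*D$ and $\coker\psi^*D$, together with the (implicit) exactness of the functors $\theta_i^*\otimes_{\bK_i}(-)$ and $\mathrm{Hom}^G(\theta_i^*,-)$, which identifies the (co)kernels of the two complexes and hence gives a quasi-isomorphism directly. You instead apply the decomposition to the full section spaces $\Omega^0(\tilde\Sigma,\psi^*E)$ and $\Omega^{0,1}(\tilde\Sigma,\psi^*E)$, using that semisimplicity of $\bR[G]$ makes every module (not just finite-dimensional ones) a direct sum of simples, and that $\mathrm{Hom}^G(\theta_i^*,-)$ commutes with arbitrary direct sums because $\theta_i^*$ is finite-dimensional. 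This yields the strictly stronger conclusion that the tautological map is an \emph{isomorphism} of two-term complexes, of which quasi-isomorphism is a trivial consequence, and it avoids having to mention Fredholmness of $\psi^*D$ at all. Both arguments are sound; yours is cleaner conceptually at the small cost of verifying that the isotypic decomposition persists for the (Fréchet) spaces of smooth sections, which you correctly observe is a formality since the isotypic projectors are finite averages over $G$ and hence preserve smoothness.
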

\begin{proof}
    This is because for any finite dimensional $\bR[G]$-module $V$, the tautological map $\bigoplus_i\theta_i^*\otimes_{\bK_i}\text{Hom}^G(\theta_i^*,V)\to V$ is an isomorphism of $\bR[G]$-modules by Maschke's theorem. We apply this to the (co)kernels of the operators in question.
\end{proof}

Given a family of curves and a corresponding family of Cauchy--Riemann operators, we introduce the following definition which measures the variation of the Cauchy--Riemann operators on a particular finite-dimensional vector space. It will be used later in the analysis of Kuranishi maps.

Let $\pi:\clC\to\clB$ be a smooth family of curves, i.e., $\pi$ is a proper submersion between two manifolds $\clC$ and $\clB$ with fibers of $\bR$-dimension $2$ along with a smooth endomorphism $j$ of $\ker(d\pi) = T_{\clC/\clB}$ satisfying $j^2 = -1$. Let $E\to\clC$ be a $\bC$-vector bundle and set
\begin{align}
    F = \Lambda^{0,1}T^*_{\clC/\clB,j}\otimes_\bC E.
\end{align}
Assume that we have a smooth family $\clD = \{D_b\}_{b\in\clB}$ of Cauchy--Riemann operators
\begin{align}
    D_b:\Omega^0(\clC_b,E_b)\to\Omega^0(\clC_b,F_b)
\end{align}
where $\clC_b = \pi^{-1}(b)$, $E_b = E|_{\clC_b}$ and $F_b = F|_{\clC_b}$ for $b\in\clB$.
\begin{Definition}\label{linearize-family-op}
    For any point $a\in\clB$, define the \emph{linearized map} at $a$ associated to the family $\clD$
    \begin{align}\label{linearization-inv}
        \clD'|_a:T_a\clB\to\normalfont\text{Hom}(\ker D_a,\coker D_a)
    \end{align}
    as follows. Given $\sigma\in\ker D_a$, extend it arbitrarily to a section $\tilde\sigma\in\Omega^0(\clC,E)$. Then, the section $\tilde\tau = \clD(\tilde\sigma)\in\Omega^0(\clC,F)$ defined by $\tilde\tau|_{\clC_b} = D_b(\tilde\sigma|_{\clC_b})$ for $b\in\clB$ vanishes along $\clC_a$ and thus, has a well-defined normal derivative
    \begin{align}
        \nabla\tilde\tau|_a \in\normalfont\text{Hom}(N_{\clC_a/\clC},F_a) = \text{Hom}(T_a\clB,\Omega^0(\clC_a,F_a))
    \end{align}
    where we have used the identification $N_{\clC_a/\clC} = \pi^*(T_a\clB)$ in the last equality. Now, for any $v\in T_a\clB$ we set $\clD'|_a(v)\cdot\sigma\in\coker D_a$ to be the projection of the section $\nabla\tilde\tau|_a\cdot v\in\Omega^0(\clC_a,F_a)$.  
\end{Definition}
The operator \eqref{linearization-inv} is well-defined because if $\sigma = 0$, then any choice of $\tilde\sigma$ has a well-defined normal derivative
    \begin{align}
        \nabla\tilde\sigma|_a \in\normalfont\text{Hom}(N_{\clC_a/\clC},E_a) = \text{Hom}(T_a\clB,\Omega^0(\clC_a,E_a)) 
    \end{align}
    which satisfies $D_a(\nabla\tilde\sigma|_a\cdot v) = \nabla\tilde\tau|_a\cdot v$ for all $v\in T_a\clB$. 
Note that if a finite group $G$ acts compatibly on $\clB$, $\clC$, $E$, $F$, with the action on $\clB$ being trivial, such that the family $\clD$ is $G$-equivariant, then \eqref{linearization-inv} factors through $\normalfont\text{Hom}^G(\ker D_a,\coker D_a)$.
\begin{remark}
    It can be seen from Definition \ref{linearize-family-op} that we can define the linearized map $\clD'|_a$ by first choosing suitable trivializations to regard $\clD$ as a family of $G$-equivariant operators
    \begin{align}
        b\mapsto D_b:\Omega^0(\clC_a,E_a)\to\Omega^0(\clC_a,F_a)
    \end{align}
    and setting $\clD'|_a(v)\cdot\sigma$ to be the projection of $\left[\frac d{dt}D_{\gamma(t)}\sigma|_{t = 0}\right]$ onto $\coker D_a$, where $\gamma$ is any smooth path in $\clB$ with $\gamma(0) = a$ and $\dot\gamma(0) = v$.
\end{remark}
\subsection{Super-rigidity}\label{super-rigidity-recall}
{For the remainder of this section, we assume that the manifold $X$ is a symplectic Calabi--Yau $3$-fold with symplectic form $\omega$ and, moreover, all almost complex structures under consideration will be assumed to be $\omega$-compatible.}

Fix $g,d,\bf b$ as in Definition \ref{cover-combinatorial-type} and a finite group $G$. Now, given $A\in \overline{H}_2(X,\bZ)$, define the universal moduli space
\begin{align}
    \clM^d_{{\bf b},G}(\clM_g^{*}(X,A)) \to \clJ(X,\omega)
\end{align}
to consist of tuples $(J,\Sigma,\varphi:\Sigma'\to\Sigma,\iota)$ up to isomorphism where $J\in\clJ(X,\omega)$, $\Sigma\subset X$ is an embedded $J$-curve of genus $g$ in class $A$ and $\varphi$ is a branched cover of type $(g,\bf b)$ along with an isomorphism $\iota$ between $G$ and its generalized automorphism group (this space becomes a purely ineffective smooth Banach orbifold if we impose appropriate $C_\epsilon$ regularity conditions as explained in \textsection\ref{technical-transversality-remarks}). Given any such tuple, we can then form the associated Galois closure $\psi:\tilde\Sigma\to\Sigma$ and consider the $G$-equivariant operator $\psi^*D^N_{\Sigma,J}$. Its kernel and cokernel are finite dimensional real $G$-representations.
\begin{Definition}[Loci of failure of super-rigidity]\label{failure-strata-def}
Given two nonzero finite dimensional $G$-representations $\bf k$ and $\bf c$ over $\bR$, define the universal moduli space
    \begin{align}\label{wall-to-acs}
        \clM^d_{{\bf b},G}(\clM_g^{*}(X,A);{\bf k},{\bf c}) \to\clJ(X,\omega)
    \end{align}
    to be the subset of $\clM^d_{{\bf b},G}(\clM_g^{*}(X,A))$ consisting of those tuples $(J,\Sigma,\varphi:\Sigma'\to\Sigma,\iota)$, with associated Galois cover $\psi:\tilde\Sigma\to\Sigma$, for which we have isomorphisms $\ker \psi^*D^N_{\Sigma,J}\simeq\bf k$ and $\coker\psi^*D^N_{\Sigma,J}\simeq\bf c$ of $G$-representations.
\end{Definition}

\begin{remark}
    Observe that if the domain of \eqref{wall-to-acs} is non-empty, then $\bf k$ must in fact be a $G$-sub-representation of $\bf c$ and, therefore, $\text{Hom}^G({\bf k},{\bf c})\ne 0$. Indeed, when we decompose $\psi^*D^N_{\Sigma,J}$ as in Lemma \ref{operator-decomp}, each twisted operator appearing in the decomposition has index $\le 0$ by \cite[Lemma 2.15]{Wendl-19}.
\end{remark}

We now state the version of \cite[Theorem D]{Wendl-19} that we will need for the rest of the article. Some immediate corollaries are also explained below.
\begin{Theorem}[Stratification]\label{stratification}
    In the setting of Definition \ref{failure-strata-def}, away from an exceptional subset of infinite codimension of $\clJ(X,\omega)$, the subspace 
    \begin{align}\label{stratification-inclusion}
        \clM^d_{{\bf b},G}(\clM_g^{*}(X,A);{\bf k},{\bf c}) \subset \clM^d_{{\bf b},G}(\clM_g^{*}(X,A))    
    \end{align}
    is a sub-orbifold\footnote{Again, as explained in \textsection\ref{technical-transversality-remarks}, this should be interpreted to mean that we get a Banach sub-orbifold after passing to suitable $C_\epsilon$ completions.} of codimension given by $\dim_\bR\normalfont\text{Hom}^G({\bf k},{\bf c})$.
\end{Theorem}

\begin{remark}\label{reconciling-formulations}
We explain in detail why our version (Theorem \ref{stratification}) follows from the somewhat differently formulated \cite[Theorem D]{Wendl-19}. Arguing exactly as in the proof of Lemma \ref{orbifold-of-G-covers}, we find that the natural forgetful map from our universal moduli space $\clM^d_{{\bf b},G}(\clM_g^*(X,A))$ to Wendl's universal moduli space $\clM^d_{\bf b}(\clM_{g,m}(A;\ell_1,\ldots,\ell_m))$, with $m = \ell_1 = \cdots = \ell_m = 0$, is a finite-to-one local diffeomorphism. The finite fibers correspond to the different choices of isomorphism between the fixed group $G$ and the generalized automorphism groups of the covers in question. 

Keeping track of this explicit isomorphism allows us to globally define the loci $\clM^d_{{\bf b},G}(\clM_g^{*}(X,A);{\bf k},{\bf c})$ for a pair of non-trivial $G$-representations ${\bf k},{\bf c}$ as in Definition \ref{failure-strata-def}. Since \cite{Wendl-19} doesn't record the isomorphism of the generalized automorphism group with $G$ explicitly, the analogous spaces $P({\bf k},{\bf c})$  are defined only locally (see \cite[Section 3.5]{Wendl-19} for their definition). It is then proved in \cite[Lemma 3.24]{Wendl-19} that $P({\bf k},{\bf c})$ is locally characterized by constancy of the dimension of the kernel of the normal deformation operator pulled back to a Galois closure. By Remark 2.14 and the discussion immediately following Definition 6.1 in \cite{Wendl-19}, we see that $P({\bf k},{\bf c})$ are exactly the ``walls" to which the codimension formula of \cite[Theorem D]{Wendl-19} applies. Finally, observe that the codimension formula (introduced in \cite[Definition 2.11]{Wendl-19}) coincides with the dimension of $\text{Hom}^G({\bf k},{\bf c})$. In fact, for any point $p =(J,\Sigma,\varphi:\Sigma'\to\Sigma,\iota)$ lying in the domain of \eqref{wall-to-acs}, the vector spaces $\text{Hom}^G({\bf k},{\bf c})$ and $\text{Hom}^G(\ker\psi^*D^N_{\Sigma,J},\coker\psi^*D^N_{\Sigma,J})$ are isomorphic, where $\psi:\tilde\Sigma\to\Sigma$ is the associated Galois cover as in Definition \ref{failure-strata-def}. The latter vector space is the normal space to the inclusion \eqref{stratification-inclusion} at $p$.

The exceptional subset of infinite codimension referenced in the statement of Theorem \ref{stratification} is explained in the following remark. We conclude that Theorem \ref{stratification} is indeed a consequence of \cite[Theorem D]{Wendl-19}.
\end{remark}

\begin{remark}
    In the statement of Theorem \ref{stratification}, we are using the fact that the Petri condition (see \cite[Section 5]{Wendl-19} for more details) holds on $\clJ(X,\omega)$ away from an exceptional subset which has infinite codimension. In practice, we can ignore this exceptional set as in the next corollary.
\end{remark}
\begin{Corollary}\label{transverse-to-wall}
    Let $P$ be a second countable manifold and let $\clJ_P(X,\omega)$ denote the space of smooth $\omega$-compatible almost complex structures parametrized by $P$ with the $C^\infty$ topology. Let $\clP = \{J_s\}_{s\in P}$ be an element in $\clJ_P(X,\omega)$ for which the associated map $P\to\clJ(X,\omega)$ is transverse to the forgetful map \eqref{wall-to-acs} over a closed submanifold $Q\subset P$. Define $\clJ(X,\omega)_{\clP,Q}$ to be the space of $\clP'\in\clJ_P(X,\omega)$ which coincide with $\clP$ over $Q\subset P$. Then, the subset of $\clJ(X,\omega)_{\clP,Q}$ consisting of those $\clP'$ for which the associated map $P\to\clJ(X,\omega)$ is transverse to \eqref{wall-to-acs} is a Baire set, and therefore, dense.
\end{Corollary}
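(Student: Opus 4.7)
The plan is to run a parametric transversality argument built on Theorem \ref{stratification} together with the scheme for upgrading formal regularity to Baire genericity that is recalled in Section \ref{technical-transversality-remarks}. First, I would introduce the universal moduli space
\begin{equation*}
\clN = \bigl\{(\clP', s, [\Sigma, \varphi, \iota]) : \clP' \in \clJ(X,\omega)_{\clP,Q},\ s \in P,\ (J'_s, \Sigma, \varphi, \iota) \in \clM^d_{{\bf b},G}(\clM_g^*(X, A); {\bf k}, {\bf c})\bigr\},
\end{equation*}
equipped with the projection $\pi: \clN \to \clJ(X,\omega)_{\clP,Q}$ forgetting all but $\clP'$. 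Unwinding the definitions, an element $\clP'$ is a regular value of $\pi$ if and only if the associated map $P \to \clJ(X,\omega)$ is transverse to the forgetful map \eqref{wall-to-acs}, so proving the corollary amounts to showing that regular values of $\pi$ form a Baire subset of $\clJ(X,\omega)_{\clP,Q}$.

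Next, I would establish that $\clN$ is formally a Fr\'echet sub-orbifold of $\clJ(X,\omega)_{\clP,Q} \times P \times \clM^d_{{\bf b},G}(\clM_g^*(X, A))$ of codimension $\dim_\bR \normalfont\text{Hom}^G({\bf k}, {\bf c})$, and consequently that $\pi$ is a Fredholm map of index $\dim P - \dim_\bR \normalfont\text{Hom}^G({\bf k}, {\bf c})$. For this, it suffices to verify that the evaluation map
\begin{equation*}
\text{ev}: \clJ(X,\omega)_{\clP,Q} \times P \times \clM^d_{{\bf b},G}(\clM_g^*(X, A)) \to \clM^d_{{\bf b},G}(\clM_g^*(X, A)),\quad (\clP', s, [\Sigma, \varphi, \iota]) \mapsto (J'_s, \Sigma, \varphi, \iota),
\end{equation*}
is transverse to the wall $\clM^d_{{\bf b},G}(\clM_g^*(X, A); {\bf k}, {\bf c})$. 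Over points with $s \in Q$ this is immediate from the hypothesis that $\clP$ is transverse to \eqref{wall-to-acs} along $Q$. Over points with $s \notin Q$, first-order variations of $\clP'$ obtained by multiplying a prescribed element of $T_{J'_s}\clJ(X,\omega)$ by a bump function on $P$ supported near $s$ and vanishing on $Q$ produce arbitrary variations of $J$ at $s$; combined with the surjectivity of the variational map onto $\normalfont\text{Hom}^G({\bf k}, {\bf c})$ that underlies the proof of Theorem \ref{stratification}, this yields the transversality of $\text{ev}$ with the wall (modulo the infinite-codimension exceptional subset, which we may safely ignore as in Section \ref{technical-transversality-remarks}).

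Finally, I would apply the Sard--Smale theorem to $\pi$ via Floer's $C_\epsilon$ spaces centered at any $\clP_{\text{ref}} \in \clJ(X,\omega)_{\clP,Q}$ and the Taubes exhaustion trick, exactly as recalled in Section \ref{technical-transversality-remarks}, to upgrade the formal Fredholm setup to Baire genericity of $C^\infty$ regular values of $\pi$, completing the proof. The main obstacle is the bump-function construction that gives submersivity of the $J$-component of $\text{ev}$ in the $\clP'$-direction at points with $s \notin Q$; this step is standard, relying on the well-known fact that any element of $T_J \clJ(X,\omega)$ can be realized by a compactly supported variation of $\omega$-compatible almost complex structures on $X$. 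Once this is in hand, the remainder of the argument is a direct invocation of the machinery reviewed in Section \ref{technical-transversality-remarks}.
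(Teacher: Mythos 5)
The paper states this result as an unproven corollary of Theorem \ref{stratification}, so there is no in-paper proof to compare against; your parametric-transversality argument is the natural and essentially correct one. The strategy --- set up a universal moduli space over $\clJ(X,\omega)_{\clP,Q}$, show formal regularity by bump-function variations of $\clP'$ away from the closed set $Q$ plus automaticity over $Q$ via the hypothesis, identify regular values of $\pi$ with the desired transversality, and upgrade to Baire genericity via the $C_\epsilon$/Taubes scheme from \textsection\ref{technical-transversality-remarks} --- is sound.

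Two formalization slips should be corrected, though neither undermines the argument. First, $\clN$ does not have codimension $\dim_\bR\text{Hom}^G({\bf k},{\bf c})$ inside the triple product $\clJ(X,\omega)_{\clP,Q}\times P\times\clM^d_{{\bf b},G}(\clM_g^*(X,A))$: a point of $\clM^d_{{\bf b},G}(\clM_g^*(X,A))$ already carries its own almost complex structure $J$, so the constraint $(J'_s,\Sigma,\varphi,\iota)\in\clM^d_{{\bf b},G}(\clM_g^*(X,A);{\bf k},{\bf c})$ forces $J=J'_s$ and $\Sigma$ to be $J'_s$-holomorphic, which are conditions of infinite codimension; relatedly, the map $\text{ev}$ as written is not defined on the whole product. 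The right ambient is the ``un-walled'' universal moduli space $\hat\clN$ of tuples $(\clP',s,\Sigma,\varphi,\iota)$ with $\Sigma$ a $J'_s$-curve and $\varphi$ of the fixed combinatorial type but without the kernel/cokernel constraint, and $\text{ev}:\hat\clN\to\clM^d_{{\bf b},G}(\clM_g^*(X,A))$ is $(\clP',s,\Sigma,\varphi,\iota)\mapsto(J'_s,\Sigma,\varphi,\iota)$; then $\clN=\text{ev}^{-1}(\text{wall})$ has the stated codimension in $\hat\clN$. Second, the Fredholm index of $\pi$ should be $\dim P+2r-\dim_\bR\text{Hom}^G({\bf k},{\bf c})$ rather than $\dim P-\dim_\bR\text{Hom}^G({\bf k},{\bf c})$, since the fiber of $\clM^d_{{\bf b},G}(\clM_g^*(X,A))\to\clJ(X,\omega)$ has real dimension $2r$ (index $0$ for the embedded curve, plus $2r$ for the moduli of branched covers). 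With these corrections, the bump-function argument for $s\notin Q$, the automatic transversality over $Q$, the linear-algebra equivalence between regular values of $\pi$ and fiberwise transversality, and the Sard--Smale upgrade all go through as you describe.
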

\begin{Definition}[Wall]\label{wall-strata-def}
    For $\delta = (g,d,{\bf b},G,{\bf k},{\bf c},A)$ as in Definition \ref{failure-strata-def}, in particular with ${\bf k}$ and ${\bf c}$ being nonzero $G$-representations, define
    \begin{align}
        \clW^d_{{\bf b},G}(g,A,{\bf k},{\bf c})\subset\clJ(X,\omega)
    \end{align}
    to be the image of the forgetful map \eqref{wall-to-acs}. Define the \emph{wall} $\clW\subset\clJ(X,\omega)$ to be the set of all $J$ which are not super-rigid. We then have
    \begin{align}\label{wall-decomp}
        \clW = \normalfont\clW_\text{emb}\cup\bigcup_{\delta}\clW^d_{{\bf b},G}(g,A,{\bf k},{\bf c})
    \end{align}
    by passing to Galois covers in Definition \ref{super-rigidity-def}. We refer to each term in the union in \eqref{wall-decomp} as a \emph{stratum} of the wall $\clW$.
\end{Definition}

\begin{remark}
    Even though we find it convenient to occasionally speak of the wall strata as if they were submanifolds of $\clJ(X,\omega)$, in practice we will be using the maps \eqref{wall-to-acs} rather than their images (the wall strata) in all our arguments. For example, when we discuss transversality of a path in $\clJ(X,\omega)$ to a wall stratum, we actually mean that the path is transverse to the corresponding map \eqref{wall-to-acs}. Similarly, when we speak of the normal space to the wall inside $\clJ(X,\omega)$ at a point, we will actually mean the normal space of the map \eqref{wall-to-acs} at a point where it is an immersion.
\end{remark}

We will now determine a way to separate $\clW$ into those strata which have codimension $1$ in $\clJ(X,\omega)$ and those which have codimension $\ge 2$. We do this essentially by rewording the proof of \cite[Theorem A]{Wendl-19} for $X$ and following the remarks from \cite[\textsection 2.4]{Wendl-19}.

\begin{Lemma}[Wall: codimension 1 strata]\label{top-strata}
    Away from a codimension $2$ subset of $\clJ(X,\omega)$, the codimension $1$ stratum of $\clW \subset J(X,\omega)$ is a union of those strata $\clW^d_{{\bf b},G}(g,A,{\bf k},{\bf c})$ which are non-empty such that
    \begin{enumerate}[\normalfont(i)]
        \item each $b_i^j\in\{1,2\}$, i.e., each ramification point is simple and,
        \item the $G$-representation $\bf k$ is faithful, irreducible and of real type, i.e., its equivariant endomorphism algebra $\normalfont\text{End}^G({\bf k})$ is isomorphic to $\bR$.
    \end{enumerate}
\end{Lemma}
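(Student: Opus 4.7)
The plan is to combine Theorem~\ref{stratification} with a dimension count for the forgetful map and then analyze the resulting codimension formula to extract conditions (i) and (ii), following the strategy of \cite[Section 2.4]{Wendl-19} adapted to the Calabi--Yau $3$-fold setting. First, for a super-rigid $J$, the fiber of the forgetful map $\pi\colon\clM^d_{{\bf b},G}(\clM_g^*(X,A))\to\clJ(X,\omega)$ over $J$ consists of triples $(\Sigma,\varphi,\iota)$ with $\Sigma\subset X$ a rigid embedded $J$-curve of genus $g$ in class $A$ and $\varphi\colon\Sigma'\to\Sigma$ a type-$(g,{\bf b})$ cover equipped with an identification of its generalized automorphism group with $G$. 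The moduli of such covers has dimension equal to the number $r$ of branch values, so $\pi$ has Fredholm index $r$. Combined with Theorem~\ref{stratification}, the codimension of $\clW^d_{{\bf b},G}(g,A,{\bf k},{\bf c})$ in $\clJ(X,\omega)$ equals $\dim_\bR\text{Hom}^G({\bf k},{\bf c})-r$ away from the infinite-codimension Petri exceptional set.

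To characterize the codimension-one case, I would apply Lemma~\ref{operator-decomp} to decompose $\psi^*D^N_\Sigma=\bigoplus_i\theta_i^*\otimes_{\bK_i}D_{\psi,\theta_i}$ where the $\theta_i$ range over irreducible real $G$-representations and $\bK_i=\text{End}^G(\theta_i)$. Writing ${\bf k}=\bigoplus_i\theta_i^{m_i}$ and ${\bf c}=\bigoplus_i\theta_i^{n_i}$, the codimension-one condition reads $\sum_i m_in_i\dim_\bR\bK_i=r+1$, while Riemann--Roch applied to each twisted operator $D_{\psi,\theta_i}$---using $\det N_\Sigma\cong K_\Sigma$ from the Calabi--Yau condition together with a version of Serre duality adapted to ramification divisors---determines $n_i-m_i$ from the branching data. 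A case analysis then rules out any configuration in which some $\bK_i$ has complex or quaternionic type, any multiplicity $m_i\geq 2$, or more than one irreducible contributes nontrivially: each such configuration forces $\sum m_in_i\dim_\bR\bK_i\geq r+2$ and hence codimension $\geq 2$. The only remaining codimension-one case is a single irreducible $\theta$ of real type with $m=n=1$, which is precisely condition (ii). Faithfulness of $\theta$ follows from Remark~\ref{invariant-sections}, since a non-faithful ${\bf k}$ descends through $\tilde\Sigma/H\to\Sigma$ with $H=\ker(G\to\text{GL}({\bf k}))$ to a strictly smaller Galois cover whose combinatorial label captures the same subset of $\clJ(X,\omega)$.

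For condition (i), if some $b_i^j\geq 3$, I would deform the cover by splitting the non-simple ramification point into several simple ones, producing an adjacent combinatorial type ${\bf b'}$ with strictly more branch values $r'>r$. Upper semicontinuity of kernel dimensions allows one to arrange that the adjacent stratum carries the same $({\bf k},{\bf c})$, yielding codimension $\dim_\bR\text{Hom}^G({\bf k},{\bf c})-r'<1$; thus the original stratum lies in the closure of a lower-codimension one and hence in codimension $\geq 2$ after removing the generic part. The main obstacle throughout is the precise Riemann--Roch computation for the twisted operators in the branched setting: one must carefully use Serre duality adjusted by the ramification divisor to pin down the values $n_i-m_i$ and to verify the strict inequality $\sum m_in_i\dim_\bR\bK_i\geq r+2$ in every excluded configuration. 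This is the same technical crux as in the proof of \cite[Theorem A]{Wendl-19}.
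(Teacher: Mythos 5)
Your proposal follows the same general outline as the paper (invoking Theorem~\ref{stratification}, decomposing via Lemma~\ref{operator-decomp}, and counting codimensions), but there are several concrete gaps that would need to be fixed.

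\begin{enumerate}
\item \emph{Factor of two in the index of the forgetful map.} You assert that the moduli of branched covers with $r$ branch values has dimension $r$, and hence that $\pi$ has Fredholm index $r$. Each branch value lives in $\Sigma$, a real $2$-manifold, so the space of branched covers is a complex orbifold of \emph{real} dimension $2r$, and the resulting real codimension estimate is $\dim_\bR\text{Hom}^G({\bf k},{\bf c})-2r$, not $-r$. The codimension-$1$ equation must therefore read $\sum_i t_ik_ic_i = 2r+1$ (in your notation, $\sum_i m_i n_i\dim_\bR\bK_i = 2r+1$). The factor of two is essential: the paper's estimate $-2r + t_jk_j(k_j+2r)\ge 1$ with equality exactly when $t_j=k_j=1$ is what forces the real-type and multiplicity-one conclusions, and the arithmetic changes if you drop the $2$.

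\item \emph{The key index inequality is elided.} You invoke ``Riemann--Roch applied to each twisted operator \ldots together with a version of Serre duality adapted to ramification divisors'' to determine the index of $D_{\psi,\theta_i}$. The crux of the paper's argument is the nontrivial estimate $\text{ind}_{\bK_j}D^N_{\psi,\theta_j}\le -2r$ from \cite[Lemma~2.10]{Wendl-19}, \emph{valid when $\theta_j$ is faithful}, together with the fact that this inequality is \emph{strict} when $\bK_j=\bR$ unless every ramification point is simple. This inequality is not a routine Riemann--Roch computation; it uses a character-theoretic argument, and the strictness clause is precisely what gives condition (i). Your sketch does not reproduce it.

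\item \emph{Semicontinuity goes the wrong way for condition (i).} Your deformation argument for non-simple ramification relies on ``upper semicontinuity of kernel dimensions'' to carry $({\bf k},{\bf c})$ to a nearby stratum with more branch points. Upper semicontinuity tells you that the kernel can only \emph{jump up} at the degenerate limit, i.e.~$\dim\ker$ at the non-simple cover is $\ge$ the limsup over nearby simple covers. It does not say the kernel persists nontrivially on nearby strata, which is what you would need. The paper avoids this entirely: once one knows the codimension is $\ge -2r + t_jk_j(k_j+2r)$ and equality forces $\text{ind}_{\bK_j}D^N_{\psi,\theta_j} = -2r$, the strictness of the index inequality from \cite[Lemma~2.10]{Wendl-19} immediately rules out non-simple ramification without any deformation argument.

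\item \emph{Faithfulness needs the minimality of $d$.} You derive faithfulness by descending a non-faithful ${\bf k}$ through $\tilde\Sigma/K\to\Sigma$ with $K = \ker(G\to\text{GL}({\bf k}))$, asserting this ``captures the same subset of $\clJ(X,\omega)$.'' That is not enough: one also has to rule out the possibility that the smaller cover already exhibits a nontrivial kernel at the same $J$, which is where the paper's ``choose $d$ minimal'' step enters. The paper observes $K\not\subset H$ (since $H$ has trivial normal core) and deduces that $D^N_{\Sigma,J}$ pulled back to $\tilde\Sigma/HK$ would already have a nontrivial kernel, contradicting minimality of $d$. Moreover, the order of the argument matters: faithfulness of $\theta_j$ is a \emph{hypothesis} of \cite[Lemma~2.10]{Wendl-19}, so it must be established before the index estimate is available, not derived afterward from the codimension arithmetic.
\end{enumerate}

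In short, your high-level strategy is sound and parallel to the paper's, but the dimension count has an error, the central index inequality is hand-waved, the semicontinuity step for (i) is in the wrong direction, and the faithfulness argument is incomplete without the minimality of $d$.
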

\begin{proof}
    Begin by noting that $\clW_\text{emb}$ is of codimension $2$ in $\clJ(X,\omega)$. Now, take $J$ in $\clW\setminus\clW_\text{emb}$ and not in the exceptional set from Theorem \ref{stratification}. This means that we can find a minimum possible $d\ge 1$, an embedded $J$-curve $\Sigma\subset X$ and a $d$-fold branched cover $\varphi:\Sigma'\to\Sigma$ such that $\ker\varphi^*D^N_{\Sigma,J}\ne 0$. Let $\psi:\tilde\Sigma\to\Sigma$ be a Galois closure of $\varphi$ and let $G = \text{Gal}(\tilde\Sigma/\Sigma)$ while $H = \text{Gal}(\tilde\Sigma/\Sigma')$. Let $\theta_i$ for $i=1,\ldots,p$ denote the complete list of irreducible representations of $G$ over $\bR$. For each $i$, recall that we have the twisted operators $D^N_{\psi,\theta_i}$, as in \cite[\textsection 2.2 and \textsection 3.4]{Wendl-19} and Definition \ref{twisted-def}, whose kernels (and similarly cokernels) are related to $\psi^*D^N_{\Sigma,J}$ by the following formulas
    \begin{align}
        \label{twisted-kernel}
        \ker D^N_{\psi,\theta_i} &= \text{Hom}^G(\theta_i^*,\ker\psi^*D^N_{\Sigma,J})\\
        \label{kernel-decomp}
        \ker\psi^*D^N_{\Sigma,J} &= \bigoplus_i\theta_i^*\otimes_{\bK_i}\ker D^N_{\psi,\theta_i}
    \end{align}
    where $\bK_i = \text{End}^G(\theta_i)$, and \eqref{twisted-kernel}, \eqref{kernel-decomp} are isomorphisms of $\bK_i$-modules and $G$-representations respectively. Now, the assumption implies that we can find an $1\le j\le p$ such that $(\theta_j^*)^H\ne 0$ and $\ker D^N_{\psi,\theta_j}\ne 0$. We contend that $\theta_j$ must be a faithful $G$-representation. If not, let $(1)\ne K\lhd G$ be such that $\theta_j$ descends to $G/K$. Note that $K\not\subset H$ (since $H$ has trivial normal core) and thus, $D^N_{\Sigma,J}$ pulled back to $\tilde\Sigma/HK$ has a non-trivial kernel (contradicting the minimality of $d$). Now that we know that $\theta_i$ is faithful, we can use \cite[Lemma 2.15]{Wendl-19} to get
    \begin{align}
        \text{ind}_{\bK_j}D^N_{\psi,\theta_j} \le -2r
    \end{align}
    where $r$ is the number of distinct branch points of $\varphi$. Moreover, this estimate is strict in case $\bK_j = \bR$ unless all the ramification points of $\varphi$ are simple (i.e. they're all locally $2$-to-$1$). Now, set $t_i = \dim_\bR\bK_i$, $k_i = \dim_{\bK_i}\ker D^N_{\psi,\theta_i}$ and $c_i = \dim_{\bK_i}\coker D^N_{\psi,\theta_i}$ and note that the codimension of $\clW^d_{{\bf b},G}(g,A,{\bf k},{\bf c})$ near $J$ is
    \begin{align}
        \ge -2r + \sum_i t_ik_ic_i \ge -2r + t_jk_j(k_j + 2r)\ge 1
    \end{align}
    using Theorem \ref{stratification}. Note that the shifting $-2r$ comes from the dimension of the moduli space of branched covers. (Here, $\bf b$ is constructed using $\varphi$, $A = [\Sigma]$, $g = \text{genus}(\Sigma)$, ${\bf k} = \ker\psi^*D^N_{\Sigma,J}$ and ${\bf c} = \coker\psi^*D^N_{\Sigma,J}$.) Moreover, to obtain the equality in the last estimate, we must have $t_j = k_j = 1$, $k_i = 0$ for all $j\ne i$ (this last part follows since $\text{ind}_{\bK_i}D^N_{\psi,\theta_i}\le 0$ by \cite[Lemma 2.15]{Wendl-19}) and $\text{ind}_{\bK_j}D^N_{\psi,\theta_j} = -2r$. \cite[Lemma 2.15]{Wendl-19} now implies that all the ramification points of $\varphi$ must be locally $2$-to-$1$.
\end{proof}

\begin{remark}
    The reader may observe that in the proof of Lemma \ref{top-strata} we concluded that the branched cover in question must have \emph{simple ramification} using \cite[Lemma 2.15]{Wendl-19} which seems to make the stronger claim of \emph{simple branching}. This is a result of a difference in terminology. Indeed, the proof of \cite[Lemma 2.15]{Wendl-19} (given in \cite[Cororally 4.2]{Wendl-19}) shows that near each ramification point the cover is locally $2$-to-$1$ and but \emph{does not show that there is a unique ramification point lying over each branch point}. Example \ref{non-elem-branched} exhibits some branched covers with simple ramification (but not simple branching) allowed by Lemma \ref{top-strata}.
\end{remark}
\section{A necessary condition for bifurcations}\label{compactness-section}

In this section, we will describe a necessary condition for a sequence of simple pseudo-holomorphic curves (defined with respect to a sequence of almost complex structures) to converge to a multiple cover of an embedded curve. 

\emph{We emphasize that the main result of this section (Theorem \ref{compactness-result}) is independent of \cite{Wendl-19} and holds for almost complex manifolds in any dimension.}

Fix an even dimensional manifold $X^{2n}$ for the rest of this section. Let 
\begin{align}
    \clF:V\to\clJ(X)    
\end{align}
be a smooth family of almost complex structures on $X$ parametrized by a smooth finite dimensional manifold $V$ (i.e. $\clF$ defines a smooth section $\pi_X^*\text{End}(T_X)$ with $\clF^2 = -\text{id}$ with $\pi_X:V\times X\to X$ being the projection). Given a smooth projective curve $C$, denote by $\overline\clM_h(C,k)$ the moduli stack of genus $h$ stable maps of degree $k$ with target $C$. 
\begin{Theorem}[Necessary condition for bifurcation]\label{compactness-result}
    Let $x_\nu\to x$ be a convergent sequence in $V$, with $J_\nu := \clF(x_\nu)$ for $\nu\ge 0$ and $J := \clF(x)$. Let $h\ge 0$ be an integer and suppose that we have a sequence $(J_\nu,\varphi_\nu:\Sigma'_\nu\to X)$ of simple $J_\nu$-curves of genus $h$ converging, in the Gromov topology, to a stable map 
    \begin{align}
        (J,C'\xrightarrow{\varphi} C\subset X)    
    \end{align}
    with $C$ being a smooth embedded $J$-curve of genus $g\le h$ and $\varphi:C'\to C$ being an element of $\overline\clM_h(C,k)$ for some integer $k\ge 1$. Then, exactly one of the following must be true.
    \begin{enumerate}[\normalfont(i)]
        \item We have $g = h$, $k = 1$ and $\varphi:C'\to C$ is an isomorphism.
        \item\label{nec_cond_bif} The natural pullback map
            \begin{align}\label{kernel-pullback}
            \varphi^*:\ker D^N_{C,J}\to\ker\varphi^*D^N_{C,J}
            \end{align}
        is injective but not surjective.
    \end{enumerate}
\end{Theorem}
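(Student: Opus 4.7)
\emph{Strategy.} First note that the pullback map $\varphi^*:\ker D^N_{C,J}\to\ker\varphi^*D^N_{C,J}$ is always injective: any nonzero $\zeta\in\ker D^N_{C,J}$ vanishes on a proper closed subset of $C$, and $\varphi$ restricted to any non-ghost component of $C'$ is surjective onto $C$, so $\varphi^*\zeta\neq 0$. Thus the dichotomy of the theorem really says: either case (i) holds, or $\varphi^*$ is non-surjective. The plan is to argue the contrapositive — assuming we are not in case (i) and that $\varphi^*$ is surjective, I would derive a contradiction by showing that no sequence $\varphi_\nu$ of simple $J_\nu$-curves of genus $h$ can Gromov-converge to $\varphi$.

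\emph{Rescaling limit.} The main analytical step is to produce, from the sequence $\varphi_\nu$, a distinguished nonzero element $\hat\eta\in\ker\varphi^*D^N_{C,J}$ recording the direction of approach of $\varphi_\nu$ to $\varphi$. Using Gromov compactness, I would identify the smooth locus of each $\Sigma'_\nu$ with the smooth locus of $C'$ (together with gluing-parameter data near each node) and write $\varphi_\nu=\exp_\varphi(\xi_\nu)$ on compact sets away from the nodes; a standard gauge-fixing puts $\xi_\nu$ into the normal directions as $\eta_\nu\in\Gamma(\varphi^*N_C)$. To control errors coming from $J_\nu - J$ and the varying complex structure on $\Sigma'_\nu$, I would compare $\varphi_\nu$ not to $\varphi$ itself but to a $J_\nu$-holomorphic $k$-fold cover of $C$ produced by an implicit function theorem argument (absorbing any obstruction into the $J_\nu - J$ variation). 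The $J_\nu$-holomorphicity of $\varphi_\nu$ then yields a linearized equation of the form $\varphi^*D^N_{C,J}\,\eta_\nu = o(\|\eta_\nu\|)$. Normalizing $\hat\eta_\nu := \eta_\nu/\|\eta_\nu\|$ in a norm that also sees the smoothing parameters at the nodes and applying elliptic bootstrapping produces a subsequential weak limit $\hat\eta\in\ker\varphi^*D^N_{C,J}$ with $\hat\eta\neq 0$.

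\emph{Descent and contradiction.} By the assumed surjectivity, $\hat\eta = \varphi^*\zeta$ for some $\zeta\in\ker D^N_{C,J}$. Applying the implicit function theorem to $D^N_{C,J}$ at $C$ with leading-order direction $\zeta$ produces, for each sufficiently large $\nu$, a $J_\nu$-holomorphic embedded curve $C_\nu\subset X$ close to $C$, and the descended form of $\hat\eta$ lifts to a $J_\nu$-holomorphic branched cover $\tilde\varphi_\nu:C'\to C_\nu$ of type $(g,\mathbf{b})$ close to $\varphi$. A local uniqueness statement for the parametric Kuranishi model near $\varphi$, in the spirit of \cite[Theorem 6.2]{IP-GV}, would then force $\varphi_\nu = \tilde\varphi_\nu$ for $\nu$ large. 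But $\tilde\varphi_\nu$ is an honest $k$-fold cover: if $k\geq 2$ this contradicts simplicity of $\varphi_\nu$, while if $k=1$ then $\tilde\varphi_\nu$ is an isomorphism onto $C_\nu$, which rules out any ghost components in its Gromov limit and contradicts our assumption that case (i) fails.

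\emph{Main obstacle.} The hardest step will be the rescaling procedure near the nodes and ghost components of $C'$. On a ghost component, $\varphi$ is constant and $\eta_\nu$ primarily encodes the smoothing/gluing parameter of $\Sigma'_\nu$ rather than a pointwise deviation of the image, so one must choose the rescaling norm and the comparison family with care in order for the rescaled gluing parameters to survive in the limit as a nontrivial constant section of $\varphi^*N_C$ on the ghost, matching correctly across the nodes. A secondary but essential obstacle is making the descent-uniqueness step in the previous paragraph fully rigorous: it implicitly invokes a parametric Kuranishi chart for the moduli of $J_t$-holomorphic stable maps in a neighborhood of $\varphi$, of the sort provided (in the birth-death setting) by Ionel--Parker.
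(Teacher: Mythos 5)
The paper does not prove this theorem by rescaling; it uses a completely different, dimension-counting argument.  After enlarging the finite-dimensional parameter family $V$ to a larger $V'$ on which embedded genus-$g$ curves are transversally cut out, the authors thicken both the moduli space $\overline\clM_h(\clM_g^{\text{emb}}(X,\clF',A),k)$ of covers-of-embedded-curves and the moduli space $\overline\clM_h(X,\clF',kA)$ of stable maps by finite-dimensional obstruction spaces $E$, $E'$ (chosen so that $E$ kills $\coker D\bar\partial|_\varphi$ along $C$ and $E'$ kills the quotient $\coker\varphi^*D^N_{C,J}/\coker D^N_{C,J}$), and then show the natural inclusion of one thickened moduli space into the other is a continuous injection between topological manifolds whose codimension is exactly $\dim\ker\varphi^*D^N_{C,J}-\dim\ker D^N_{C,J}$.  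If this quantity were zero, Brouwer's Invariance of Domain would force the inclusion to be open, so that every nearby stable map is a cover of a nearby embedded curve, contradicting the existence of the simple sequence $\varphi_\nu$ unless $g=h$, $k=1$.  Your strategy — extract a nonzero element of $\ker\varphi^*D^N_{C,J}$ from the approach direction of $\varphi_\nu$ by rescaling, then show it descends and invoke a local uniqueness statement to force $\varphi_\nu$ to be a cover — is a genuinely different route.

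There is, however, a real gap in the proposal as written.  At two points you invoke the implicit function theorem on the embedded curve $C$: once to produce the ``reference'' $J_\nu$-holomorphic $k$-fold cover, and once more to deform $C$ with leading-order direction $\zeta\in\ker D^N_{C,J}$ into a $J_\nu$-holomorphic embedded curve $C_\nu$.  Both uses require $D^N_{C,J}:\Omega^0(C,N_C)\to\Omega^{0,1}(C,N_C)$ to be \emph{surjective}, but the theorem assumes nothing of the sort.  The phrase ``absorbing any obstruction into the $J_\nu-J$ variation'' is the place where this is being swept under the rug: $J_\nu-J$ is given to us, confined to the image of the finite-dimensional family $\clF:V\to\clJ(X)$, and there is no reason its linearization should surject onto $\coker D^N_{C,J}$.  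The paper deals with this exactly by enlarging the parameter family from $V$ to $V'$ before comparing moduli spaces, and then restricting back at the very end.  The paper's own Remark~\ref{compactness-remark}(a) flags this issue explicitly: the Wendl-type rescaling argument gives only the weaker conclusion $\ker\varphi^*D^N_{C,J}\ne 0$, and only under the additional hypothesis that a sequence of embedded $J_\nu$-curves $C_\nu\to C$ exists (which is guaranteed when $D^N_{C,J}$ is onto, but not otherwise).  Finally, the ``local uniqueness for the parametric Kuranishi model'' you invoke in the descent-and-contradiction step is not an implicit-function-theorem statement once the obstruction is nontrivial; when one unpacks what has to be proved — that the Kuranishi zero set near $\varphi$ consists \emph{only} of covers — one is really appealing to a dimension-count plus invariance of domain, i.e.\ to the argument the paper actually runs.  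To close these gaps you would need to incorporate some version of the paper's $V\subset V'$ enlargement and replace ``local uniqueness'' by a precise openness statement for the inclusion of covers into stable maps.
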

The alternative (\ref{nec_cond_bif}) provides a necessary condition for bifurcations of the moduli spaces of embedded pseudo-holomorphic curves to occur. Before proceeding to the proof, we make a few remarks comparing this to similar statements in the literature and point out a natural extension of this result.
\begin{remark}\label{compactness-remark}
    \begin{enumerate}[\normalfont (a)]
        \item If we further suppose that there exists a sequence $\{ C_\nu \}$ of embedded $J_\nu$-curves of genus $g$ converging to $C$ (this is ensured, for example, if $D^N_{C,J}$ is surjective), then the rescaling argument used to prove \cite[Proposition B.1]{Wendl-19} yields the weaker conclusion $\ker\varphi^*D^N_{C,J}\ne 0$.
        \item Theorem \ref{compactness-result} rules out the possibility of a sequence of simple curves of genus $h$ converging in the Gromov topology to an embedded curve of genus $g<h$ with some ghost components attached. In particular, if $A\in \overline{H}_2(X,\mathbb Z)$ is a primitive homology class in a $6$-dimensional symplectic manifold $(X,\omega)$ with $c_1(A) = 0$, then for any path $\gamma$ in $\clJ_{\normalfont\text{emb}}(X,\omega)$ (in the sense of \cite[Definition 2.36]{doan2019counting}), the moduli space $\clM_h^{\normalfont\text{emb}}(X,\gamma,A)$ of embedded genus $h$ curves of class $A$ is compact. This can be used to recover \cite[Theorem 1.5(1)]{doan2019counting}.
        \item The restriction in Theorem \ref{compactness-result} to finite dimensional families of almost complex structures suffices for all applications we are aware of but it should be possible to extend our proof to the infinite dimensional case as well. We would have to replace the appeal to Brouwer's Invariance of Domain (as in \textsection\ref{compactness-conclusion}) by a more careful analysis of the differential of the inclusion from Lemma \ref{compatible-thick}. To deal with the lack of differentiability of the moduli space near curves with nodal domains, one would have to use suitable relative tangent bundles associated to canonical ``rel--$C^\infty$ structures" on the moduli spaces as in \cite{rel-smooth}.
    \end{enumerate}
\end{remark}

The remainder of this section is devoted to the proof of Theorem \ref{compactness-result}. The main idea of the argument is given in \textsection\ref{outline} and involves counting dimensions of moduli spaces. However, there are some issues related to transversality which we need to overcome before this outline can be turned into a rigorous proof. In \textsection\ref{compactness-prep}--\textsection\ref{compactness-big-sec}, we show how to tackle these issues and complete the proof in \textsection\ref{compactness-conclusion}. 

\subsection{Outline of the argument}\label{outline}
Set $A = [C]\in H_2(X,\bZ)$ and let
\begin{align}\label{small}
    \clM^\text{emb}_g(X,\clF,A)
\end{align}
be the moduli space of pairs $(y,\Sigma)$, where $y\in V$ and $\Sigma\subset X$ is an embedded $\clF(y)$-curve of genus $g$ in class $A$. Also consider the moduli space 
\begin{align}\label{big}
    \overline\clM_h(X,\clF,kA)
\end{align}
consisting of pairs $(y,\varphi':\Sigma'\to X)$, where $y\in V$ and $\varphi'$ is a stable $\clF(y)$-holomorphic map in $X$ of genus $h$ and class $k[C]$ and the moduli space
\begin{align}\label{cover}
    \overline\clM_h(\clM^\text{emb}_g(X,\clF,A),k)
\end{align}
consisting of triples $(y,\Sigma,\psi:\Sigma'\to\Sigma)$, where $(y,\Sigma)$ lies in \eqref{small} and $(\Sigma',\psi)$ lies in $\overline\clM_h(\Sigma,k)$. There is a natural inclusion of \eqref{cover} inside \eqref{big}, given by
\begin{align}\label{cover-big-inclusion}
    (y,\Sigma,\psi:\Sigma'\to\Sigma)\mapsto(y,\Sigma'\xrightarrow{\psi}\Sigma\subset X),    
\end{align}
and the proof of Theorem \ref{compactness-result} will follow by showing that codimension of \eqref{cover} inside \eqref{big} at the point $(x,C'\xrightarrow{\varphi} C\subset X)$ is given by the dimension of the cokernel of \eqref{kernel-pullback}. However, both \eqref{big} and \eqref{cover} are not smooth in general therefore the notion of codimension is not well-defined. To remedy the situation, we will need to thicken the moduli spaces (in the sense of \cite[Definition 9.2.3]{Pardon-VFC}) compatibly so that \eqref{cover-big-inclusion} becomes an inclusion of manifolds, at least near the point $(x,\varphi)$, whose codimension can then be computed.

\subsection{Preparation}\label{compactness-prep}
\begin{Lemma}[Transversality for embedded curves]\label{trans-for-emb}
    We can find a smooth manifold $V'$ containing $V$ as a submanifold and a smooth family $\clF':V'\to\clJ(X)$ extending $\clF$ such that the moduli space
    \begin{align}
        \clM^{\normalfont\text{emb}}_g(X,\clF',A)
    \end{align}
    is a smooth manifold of the expected dimension $ = \dim V' + \normalfont\text{ind } D^N_{C,J}$ near $(x,C)$.
\end{Lemma}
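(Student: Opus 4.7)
The plan is to enlarge $V$ by adjoining a finite-dimensional space of perturbations of the almost complex structure that is just big enough to hit the cokernel of the normal operator $D^N_{C,J}$, and then invoke the implicit function theorem. Set $J := \clF(x)$. The moduli space $\clM^{\normalfont\text{emb}}_g(X,\clF,A)$ is locally (near $(x,C)$) the zero set of a smooth section of an appropriate Banach bundle whose vertical linearization at $(x,C)$ decomposes as
\begin{align*}
    D^N_{C,J}\oplus L:\Omega^0(C,N_C)\oplus T_x V\to\Omega^{0,1}(C,N_C),
\end{align*}
where $L(v)$ records the normal-component of the infinitesimal change in the Cauchy--Riemann equation induced by the variation $d\clF(x)\cdot v\in T_J\clJ(X)$ of the almost complex structure. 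The formula for $L$ is the classical one (cf.\ \cite{MS} in the simple-curve setting): it is given by a suitable projection of $\tfrac12 Y\circ dj_C$ applied to the unique holomorphic representative of $\sigma$, for $Y\in T_J\clJ(X)$.

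Next I would show that if we allowed $v$ to range over all of $T_J\clJ(X)$, the map $L$ would already be surjective onto $\coker D^N_{C,J}$. This is the standard surjectivity lemma underlying the smoothness of the universal moduli space of simple curves, and it carries over to the normal operator formulation because $C$ is embedded, so that sections of $T_X|_C$ project onto sections of $N_C$ and the cokernel of the full operator on $T_X|_C$ coincides (modulo the automatic kernel of pullback from $T_C$) with that of $D^N_{C,J}$; the argument is essentially the one in \cite[Section 3]{Wendl-19}. From this it follows that I can choose a finite-dimensional subspace $W_0\subset T_J\clJ(X)$ mapping onto $\coker D^N_{C,J}$ under $L$.

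With $W_0$ in hand, choose a small open neighborhood $W$ of $0\in W_0$ and put $V':=V\times W$, with the inclusion $V\hookrightarrow V'$ as $V\times\{0\}$. I would then construct $\clF':V'\to\clJ(X)$ by interpolating between $\clF$ and a lift of $W_0$: fix a smooth germ $\eta:W_0\to\clJ(X)$ with $\eta(0)=J$ and $d\eta(0):W_0\hookrightarrow T_J\clJ(X)$ the inclusion (using that $\clJ(X)$ is a Fr\'echet submanifold of $\Gamma(\text{End}\, TX)$ with exponential-type charts), pick a bump function $\beta$ on $V$ supported in a small neighborhood of $x$ and equal to $1$ near $x$, and set
\begin{align*}
    \clF'(y,w) := \exp_{\clF(y)}\!\bigl(\beta(y)\cdot\tau_{\clF(y)}(\eta(w)-J)\bigr),
\end{align*}
where $\tau$ is parallel transport in the Fr\'echet manifold $\clJ(X)$ (any smooth interpolation with the right differential at $(x,0)$ will do). By construction $\clF'|_{V\times\{0\}}=\clF$, while the restriction of $d\clF'$ at $(x,0)$ to the $W$-factor is the inclusion $W_0\hookrightarrow T_J\clJ(X)$.

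Finally I would apply the implicit function theorem. By the choice of $W_0$, the full linearization
\begin{align*}
    D^N_{C,J}\oplus L':\Omega^0(C,N_C)\oplus T_{(x,0)}V'\to\Omega^{0,1}(C,N_C)
\end{align*}
at $(x,0,C)$ is now surjective; standard Sobolev-space setup and elliptic regularity then give that $\clM^{\normalfont\text{emb}}_g(X,\clF',A)$ is a smooth manifold of the predicted dimension $\dim V'+\ind D^N_{C,J}$ in a neighborhood of $(x,C)$. The main (very mild) obstacle is purely cosmetic: producing the smooth extension $\clF'$ with the prescribed $1$-jet along $V\times\{0\}$ while keeping the output in $\clJ(X)$ (not in some ambient vector space); this is resolved by working in Fr\'echet charts on $\clJ(X)$ as above, and everything else is a standard application of McDuff--Salamon-style transversality.
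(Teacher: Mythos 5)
Your proposal is correct and takes essentially the same route as the paper: enlarge $V$ to $V' = V\times W_0$ with $W_0$ a finite-dimensional space of $J$-perturbations chosen so that the combined linearization $D^N_{C,J}\oplus L'$ is surjective, then apply the implicit function theorem.

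The one place where the paper is noticeably slicker is the justification that enough perturbations exist. You invoke the standard surjectivity lemma from the theory of simple curves (which, in its usual form, requires a unique-continuation argument for elements of $\coker D^N_{C,J}$ paired against compactly supported variations of $J$), and then reduce from the full operator on $T_X|_C$ to $D^N_{C,J}$. The paper instead observes directly that the \emph{pointwise} restriction/projection map $T_J\clJ = \overline{\text{End}}_J(T_X) \to \overline{\text{Hom}}_\bC(T_C,N_C) = \Omega^{0,1}(C,N_C)$ is surjective as a map of vector bundles over $C$ (since $C$ is an embedded $J$-holomorphic submanifold). Consequently any chosen finite basis of $\coker D^N_{C,J}$, lifted to representatives in $\Omega^{0,1}(C,N_C)$, can be lifted further to $T_J\clJ$ without ever invoking unique continuation. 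This gives the desired finite-dimensional $W_0\subset T_J\clJ$ immediately, sidestepping the more delicate injective-point argument altogether. Your interpolation of $\clF'$ via a bump function is also exactly what the paper means by ``it is now easy to define $V\subset V'$ and $\clF'$ extending $\clF$,'' so no gap there — just a more explicit unpacking.
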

\begin{proof}
    This is a variant of the argument in \cite[Proposition 3.2.1]{Mc-Sa}. Indeed, by the implicit function theorem, it suffices to arrange that the linearized map
    \begin{align}\label{emb-transverse-map}
        T_xV'&\to\coker D^N_{C,J} \\
        \label{emb-transverse-def}
        v'&\mapsto\textstyle\frac12(d\clF'|_x\cdot v')\circ d\iota\circ j
    \end{align}
    is surjective, where $\iota$ denotes the inclusion $C\subset X$ and $j$ denotes the restriction of $J$ to $T_C$. The right side of \eqref{emb-transverse-def} is an element of $\Omega^{0,1}(C,T_X|_C)$ which we are projecting to $\Omega^{0,1}(C,N_C)$ and then to $\coker D^N_{C,J}$. Note that the restriction map
    \begin{align}
        T_J\clJ = \overline{\text{End}}_J(T_X)\to\overline{\text{Hom}}_\bC(T_C,N_C) = \Omega^{0,1}(C,N_C)
    \end{align}
    is surjective and thus, we may lift a (finite) basis of $\coker D^N_{C,J}$ to $T_J\clJ$. It is now easy to define $V\subset V'$ and $\clF'$ extending $\clF$ such that \eqref{emb-transverse-map} is surjective.
\end{proof}

Before proceeding further, we pick some auxiliary data.
\begin{enumerate}
    \item Choose a Riemannian metric on $X$ such that we can identify the unit disc bundle in $N_C$ with a tubular neighborhood of $C\subset X$, with fiber over $p\in C$ denoted by $\Delta_p$ (a codimension $2$ submanifold with boundary of $X$).
    \item Choose an integer $m\ge 0$ and finitely many points $p_1,\ldots,p_m\in C$ (which are not the images of critical points of $\varphi$ or nodal points of $C'$) such that
    \begin{enumerate}
        \item $(C,p_1,\ldots,p_m)$ is a stable curve and,
        \item the evaluation map
        \begin{align}\label{ker-eval}
            \ker D^N_{C,J}\to\bigoplus_{i=1}^m N_{C,p_i}
        \end{align}
        is injective.
    \end{enumerate}
    Note that for all $(y,\Sigma)$ in \eqref{small} close to $(x,C)$, the curve $\Sigma$ intersects each $\Delta_i:=\Delta_{p_i}$ transversally at a unique point (and is disjoint from $\partial \Delta_i$). The submanifolds $\Delta_1,\ldots,\Delta_m$ will serve as ``stabilizing divisors" in the sense of \cite[Lemma 9.2.8]{Pardon-VFC}.
    \item Choose a complex manifold $\sM$ and a complex analytic family $\pi:\sC\to\sM$ of genus $g$ curves with sections $\sigma_1,\ldots,\sigma_m$ (corresponding to $m$ marked points which stabilize the fibers), equipped with a basepoint $*\in\sM$ and an isomorphism \begin{align}\label{central}
        i:(C,p_1,\ldots,p_m)\xrightarrow{\simeq}(\sC_*,\sigma_1(*),\ldots,\sigma_m(*))    
    \end{align}
    such that the resulting map $\sM\to\clM_{g,m}$ is \'etale at $*$ (i.e. provides a local orbifold chart near $*$). Here, $\sC_s$ denotes the curve $\pi^{-1}(s)$ for $s\in\sM$. The choice of the family $(\pi:\sC\to\sM,\sigma_1,\ldots,\sigma_m)$ is analogous to choosing a local Teichm\"uller slice at $(C,p_1,\ldots,p_m)$.
    \item For $1\le i\le m$, choose a numbering $\varphi^{-1}(p_i) = \{q_i^1,\ldots,q_i^k\}$. Note that each $q_i^j$ must be a smooth point of $C'$ with $d\varphi(q_i^j)\ne 0$. Moreover, $(C',\{q_i^j\})$ is a stable curve. Choose a complex manifold $\sM'$ and a (flat) complex analytic family $\pi':\sC'\to\sM'$ of (possibly nodal) arithmetic genus $h$ curves with sections $\tau_i^j$ (corresponding to $km$ marked points which stabilize the fibers), equipped with a basepoint $\bullet\in\sM'$ and an isomorphism
    \begin{align}
        i':(C',\{q_i^j\})\xrightarrow{\simeq}(\sC'_{\bullet},\{\sigma_i^j(\bullet)\})
    \end{align}
    such that the resulting map $\sM'\to\overline\clM_{h,km}$ is \'etale at $\bullet$ (i.e. provides a local orbifold chart near $\bullet$).
    \item Let us denote by
    \begin{align}\label{small-univ}
        \clC_g^\text{emb}(X,\clF',A) \hookrightarrow \clM_g^\text{emb}(X,\clF',A)\times X
    \end{align}
    the universal embedded curve. As a subset, it is given by the union of the subsets $\{(y,\Sigma)\}\times\Sigma$ over all $(y,\Sigma)\in\clM_g^\text{emb}(X,\clF',A)$. The first projection of \eqref{small-univ} defines a smooth family of curves (i.e., Riemann surfaces) over $\clM_g^\text{emb}(X,\clF',A)$ and thus, there is a smooth classifying map
    \begin{center}
    \begin{equation}\label{classify}
    \begin{tikzcd}
        \clC_g^\text{emb}(X,\clF',A) \arrow[d]\arrow[r,"\Psi"] & \sC \arrow[d] \\
        \clM_g^\text{emb}(X,\clF',A) \arrow[r,"\Phi"] & \sM
    \end{tikzcd} 
    \end{equation}
    \end{center}
    defined near $(x,C)$ which maps $(y,\Sigma)\in\clM_g^\text{emb}(X,\clF',A)$ to the point of $\sM$ corresponding to $(\Sigma,\Sigma\cap \Delta_1,\ldots,\Sigma\cap \Delta_m)$ and specializes to the isomorphism \eqref{central} over $(x,C)$. 
\end{enumerate}
\begin{remark}
    All the assertions for the remainder of the argument are to be understood to hold only when we are sufficiently close to the basepoints $(x,C)$, $(x,C,\varphi:C'\to C)$, $(x,C'\xrightarrow{\varphi} C\subset X)$, $*$ and $\bullet$. The size of these neighborhoods may shrink from one line to the next.
\end{remark}
\begin{Lemma}[Auxiliary embedding]
    The map
    \begin{align}\label{small-emb}
        \clM^{\normalfont\text{emb}}_g(X,\clF',A)\to V'\times \Delta_1\times\cdots\times \Delta_m
    \end{align}
    defined in a neighborhood of $(x,C)$ by
    \begin{align}
        (y,\Sigma)\mapsto(y,\Sigma\cap \Delta_1,\cdots,\Sigma\cap \Delta_m)    
    \end{align}
    is a smooth embedding near $(x,C)$.
\end{Lemma}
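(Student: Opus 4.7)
The plan is to establish smoothness and then injectivity of the differential of \eqref{small-emb} at $(x,C)$; the conclusion will then follow from the local immersion theorem, which ensures that a smooth immersion between finite-dimensional manifolds restricts to an embedding on a sufficiently small neighborhood. Smoothness is a straightforward application of the implicit function theorem to the local defining equations of $\Sigma$ and $D_i$, given that these intersect transversally in the single point $\Sigma\cap D_i$.

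To compute the differential at $(x,C)$, I identify the tangent space $T := T_{(x,C)}\clM^{\normalfont\text{emb}}_g(X,\clF',A)$ with the kernel of the linearization
\begin{align}
    L : T_xV' \oplus \Omega^0(C, N_C) \to \Omega^{0,1}(C, N_C), \quad (v, s) \mapsto D^N_{C,J}s + \tfrac{1}{2}(d\clF'|_x \cdot v) \circ d\iota \circ j.
\end{align}
The surjectivity of the map \eqref{emb-transverse-map}, arranged during the construction of $V'$, ensures $L$ is surjective and places $T$ in the exact sequence
\begin{align}
    0 \to \ker D^N_{C,J} \to T \to T_xV' \to \coker D^N_{C,J} \to 0.
\end{align}
Using the identification $T_{p_i}D_i = N_{C,p_i}$ coming from the tubular neighborhood structure on $N_C$, the differential of \eqref{small-emb} at $(x,C)$ sends $(v,s) \in T$ to $(v, s(p_1), \ldots, s(p_m))$.

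Injectivity is then immediate: if $(v,s) \in T$ lies in the kernel of this map, then $v = 0$, which combined with $L(0,s) = 0$ gives $s \in \ker D^N_{C,J}$; the vanishing $s(p_i) = 0$ for each $i$ then forces $s = 0$ by the injectivity of \eqref{ker-eval}. Thus the differential is injective at $(x,C)$, and the local immersion theorem completes the argument. The only point requiring slightly more care when writing out the proof in full is the identification of the $D_i$-component of the differential, which reduces to a first-order computation showing that the transverse intersection $\Sigma_t \cap D_i$ moves by $s(p_i) \in N_{C,p_i}$ under a normal perturbation of $\Sigma$ by $s$.
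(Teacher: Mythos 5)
Your proof is correct and follows essentially the same route as the paper: the paper likewise reduces to injectivity of the differential (via the inverse function / immersion theorem), uses the exact sequence $0\to\ker D^N_{C,J}\to T_{(x,C)}\clM^{\text{emb}}_g(X,\clF',A)\to T_xV'$, and concludes from the injectivity of \eqref{ker-eval}. Your version is somewhat more explicit about identifying the tangent space as $\ker L$ and computing the $D_i$-component of the differential as $s\mapsto s(p_i)$, but the underlying argument is identical.
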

\begin{proof}
    By the inverse function theorem, it will suffice to check that the differential
    \begin{align}
        T_{(x,C)}\clM^{\normalfont\text{emb}}_g(X,\clF',A)\to T_xV'\oplus\bigoplus_{i=1}^m N_{C,p_i}
    \end{align}
    of \eqref{small-emb} at $(x,C)$ is injective. This follows from the exact sequence
    \begin{align}
        0\to\ker D^N_{C,J}\to T_{(x,C)}\clM^{\normalfont\text{emb}}_g(X,\clF',A)\to T_xV' 
    \end{align}
    and the fact that \eqref{ker-eval} is injective.
\end{proof}
\subsection{Thickened moduli space of covers}\label{compactness-cover-sec}
As an intermediate step to tackling \eqref{cover}, we first study the moduli space
\begin{align}\label{cover-aux}
    \overline\clM_h(\sM,k)
\end{align}
which consists of pairs $(s,\psi:\Sigma'\to\sC_s)$, where $s\in\sM$ and $(\Sigma',\psi)$ is in $\overline\clM_h(\sC_s,k)$.\footnote{Recall that $\overline\clM_h(\sC_s,k)$ consists of holomorphic genus $h$ stable maps $\psi:\Sigma'\to\sC_s$ in the class $k[\sC_s]$. Observe that $\sC_s$ is smooth but $\Sigma'$ could be nodal. If $\Sigma'$ has a ghost component with positive genus, then $\overline\clM_h(\sM,k)$ will not be a smooth orbifold near $(s,\psi:\Sigma'\to\sC_s)$. In view of this, we need to thicken the moduli space by considering \eqref{cover-thick-CR}.} We note that
\begin{align}\label{eqn:local-fibre-prod-description}
    \overline\clM_h(\clM_g^\text{emb}(X,\clF',A),k) = \clM^\text{emb}_g(X,\clF',A) \times_\sM\overline\clM_h(\sM,k)
\end{align}
where $\overline\clM_h(\sM,k)\to\sM$ is the natural projection while $\clM_g^\text{emb}(X,\clF',A)\to\sM$ is the classifying map from \eqref{classify}. Since $\pi:\sC\to\sM$ is a local universal family for $\clM_{g,m}$ at $*$, the identification \eqref{eqn:local-fibre-prod-description} follows by observing that the pair $(\Phi_{(y,\Sigma)},\Psi_{(y,\Sigma)})$ of classifying maps from \eqref{classify} is completely determined for all $(y,\Sigma)$ near $(x,C)$ once we specify that it agrees with the isomorphism \eqref{central} for $(y,\Sigma) = (x,C)$ and that it is induced by the stable pointed curve obtained by intersecting $\Sigma$ with $\Delta_1,\ldots,\Delta_m$. Now, let 
\begin{align}\label{branched-linearization}
    D\delbar|_\varphi:C^\infty(C',\varphi^*T_C)\to C^\infty(\tilde C',\Omega^{0,1}_{\tilde C'}\otimes_\bC \varphi^*T_C)
\end{align}
be the linearized Cauchy--Riemann operator\footnote{See the discussion in Remark \ref{rem:nodal-CR} for more details on how to define Cauchy--Riemann operators on nodal curves.} associated to the holomorphic map $\varphi:C'\to C$, where $\tilde C'$ is the normalization of the curve $C'$. Choose $E$ to be a finite dimensional vector space and
\begin{align}\label{perturb-cover-aux}
    \lambda: E\to C^\infty(\sC'^\circ\times\sC,\Omega^{0,1}_{\sC'^\circ/\sM'}\boxtimes_\bC T_{\sC/\sM})
\end{align}
to be a linear map (where $\sC'^\circ\subset\sC'$ is the complement of the nodal points of the fibers of $\pi'$), with $\mathrm{supp}(\lambda)\to\sM'\times\sM$ being a proper map, such that its restriction along the map $C'\hookrightarrow\sC'\times\sC$ given by $i'\times (i\circ\varphi)$ yields an isomorphism
\begin{align}\label{cover-cokernel-iso}
    E = \coker D\delbar|_\varphi.
\end{align}
Now introduce the moduli space
\begin{align}\label{cover-aux-thick}
    \overline\clM_h(\sM,k)_\lambda
\end{align}
consisting of tuples $(s\in\sM,s'\in\sM',\psi:\sC'_{s'}\to\sC_s,e\in E)$ satisfying the $\lambda$-thickened Cauchy--Riemann equation (in the sense of \cite[Definition 9.2.3]{Pardon-VFC}) 
\begin{equation}
    \label{cover-thick-CR}\delbar\psi +  \lambda(e)(\cdot,\psi(\cdot)) = 0
\end{equation}
and incidence conditions
\begin{equation}
    \label{cover-incidence}\psi(\tau_i^j(s')) = \sigma_i(s)  \quad\text{transverse}  
\end{equation}
for each $1\le i\le m$ and $1\le j\le k$. The word \emph{transverse} in \eqref{cover-incidence} indicates that $d\psi|_{\tau_i^j(s')}\ne 0$. 

\begin{remark}\label{remark-cover-thick-CR-linearization}
    The linearization of \eqref{cover-thick-CR} at $(*,\bullet,i\circ\varphi\circ i'^{-1},0)$ is identified, under the isomorphisms $i$ and $i'$, with the operator
    \begin{align}\label{cover-thick-CR-linearization}
        C^\infty(C',\varphi^*T_C)\oplus E\to C^\infty(\tilde C',\Omega^{0,1}_{\tilde C'}\otimes_\bC \varphi^*T_C)
    \end{align}
    which, on the first summand, is the operator $D\delbar|_\varphi$ from \eqref{branched-linearization} and, on the second summand, is the restriction of $\lambda$ along the map $C'\hookrightarrow\sC'\times\sC$ given by $i'\times (i\circ\varphi)$. We emphasize that since the linearization is computed at a point with vanishing $E$-coordinate, no covariant derivatives of the $\lambda$-term (arising from variations in the $\psi$-coordinate) appear in the linearization. In particular, the linearization \emph{completely ignores} the values of $\lambda$ outside the image of $C'\hookrightarrow\sC'\times\sC$. The isomorphism \eqref{cover-cokernel-iso} now shows that \eqref{cover-thick-CR-linearization} is surjective.
\end{remark}

It now follows from \cite[Proposition 9.2.6 \& Appendix B]{Pardon-VFC} that \eqref{cover-aux-thick}, near the point $(*,\bullet,i\circ\varphi\circ i'^{-1},0)$, is a topological manifold of the expected dimension with the projection to $\sM$ being a topological submersion. More explicitly, the dimension of \eqref{cover-aux-thick} relative to $\sM$ is $ = \text{vdim }\overline\clM_h(C,k) + \dim E$ where we have set
\begin{align}\label{cover-vdim}
    \text{vdim }\overline\clM_h(C,k) = 2r
\end{align}
with $r = (2h-2) - k(2g-2)$ being the expected complex dimension of $\overline\clM_h(C,k)$. 

We can now define the $\lambda$-thickened moduli space
\begin{align}\label{cover-thick}
    \overline\clM_h(\clM_g^\text{emb}(X,\clF',A),k)_\lambda = \clM^\text{emb}_g(X,\clF',A)\times_\sM\overline\clM_h(\sM,k)_\lambda
\end{align}
which is also a topological manifold, near the point $(x,C,\varphi:C'\to C,0)$. Moreover, its natural projection to $\clM_g^\text{emb}(X,\clF',A)$ is a topological submersion of relative dimension $2r + \dim E$ near this point.
The loci in \eqref{cover-aux-thick} and \eqref{cover-thick} where the $E$-coordinate vanishes respectively form local \'etale charts for the orbispaces \eqref{cover-aux} and the analogue of \eqref{cover} with $\clF$ replaced by $\clF'$.

Pulling back \eqref{perturb-cover-aux} along \eqref{classify}, we obtain a thickening datum given by a linear map
\begin{align}\label{perturb-cover}
    E\to C^\infty(\sC'^\circ\times\clC_g^\text{emb}(X,\clF',A),\Omega^{0,1}_{\sC'^\circ/\sM'}\boxtimes_\bC T_{\clC_g^\text{emb}(X,\clF',A)/\clM^\text{emb}_g(X,\clF',A)})
\end{align}
which we continue to denote by the same symbol $\lambda$. We record here, for later use, that we have an embedding 
\begin{align}\label{central-branched-cover-domain-emb}
    C'\hookrightarrow\sC'\times\clC_g^\text{emb}(X,\clF',A) 
\end{align}
given by $i'\times ((x,C)\times \varphi)$ where we are identifying $\clC_g^\text{emb}(X,\clF',A)$ with a subset of $\clM_g^\text{emb}(X,\clF',A)\times X$ as in \eqref{small-univ}.  

\subsection{Thickened moduli space of stable maps}\label{compactness-big-sec}
We will now define a new thickening datum given by a linear map
\begin{align}\label{perturb-big}
    \mu: E\to C^\infty(\sC'^\circ\times X\times V'\times \Delta_1\times\cdots\times \Delta_m,\Omega^{0,1}_{\sC'^\circ/\sM'}\boxtimes_{\bC} T_{\clX'/V'})
\end{align}
which will be obtained from the thickening datum $\lambda$ of \eqref{perturb-cover} via the procedure described in the next paragraph. Before that, we first explain the notations in \eqref{perturb-big}. Firstly, $T_{\clX'/V'}$ denotes the relative tangent bundle of the family $\clX' = X\times V'\to V'$ of almost complex manifolds defined by $\clF'$ (i.e. it is the vertical tangent bundle of $X\times V'\to V'$ with fiberwise almost complex structures specified by the family $\clF'$). Secondly, the bundle $\Omega^{0,1}_{\sC'^\circ/\sM'}\boxtimes_{\bC} T_{\clX'/V'}$ is defined on $\sC'^\circ\times (X\times V')$ and the right side of \eqref{perturb-big} is to be interpreted as the space of sections of this bundle pulled back along the coordinate projection $\sC'^\circ\times X\times V'\times \Delta_1\times\cdots\times \Delta_m\to\sC'^\circ\times X\times V'$.

We now explain how $\mu$ is obtained from $\lambda$. Combining the embeddings \eqref{small-univ} and \eqref{small-emb}, we get compatible embeddings of submanifolds
\begin{center}
\begin{equation}\label{extension-emb}
\begin{tikzcd}
    \clC_g^\text{emb}(X,\clF',A) \arrow[r] \arrow[d] & X\times V'\times \Delta_1\times\cdots\times \Delta_m \arrow[d] \\
    \clM_g^\text{emb}(X,\clF',A) \arrow[r] & V'\times \Delta_1\times\cdots\times \Delta_m
\end{tikzcd}
\end{equation}
\end{center}
which are fiberwise pseudo-holomorphic, i.e., their vertical differential induces a fiberwise injective $\mathbb C$-linear map from $T_{\clC_g^\text{emb}(X,\clF',A)/\clM^\text{emb}_g(X,\clF',A)}$ to the pullback of $T_{\clX'/V'}$. Let $\clZ\subset X\times V'\times\Delta_1\times\cdots\times\Delta_m$ denote the (locally closed) submanifold which is the image of $\clC_g^\text{emb}(X,\clF',A)$ under the embedding \eqref{extension-emb}. Via \eqref{extension-emb}, we regard $\lambda$ from \eqref{perturb-cover} as a section of $\Omega^{0,1}_{\sC'^\circ/\sM'}\boxtimes_\bC T_{\clX'/V'}$ defined over the submanifold $\sC'^\circ\times\clZ$. Using a partition of unity argument (concerning extension of a vector bundle section from a submanifold to the ambient manifold), we can now find a linear map $\mu$ as in \eqref{perturb-big} such that the restriction $\mu|_{\sC'^\circ\times\clZ}$ agrees with the linear map $\lambda$ coming from \eqref{perturb-cover} in a neighborhood (in $\sC'^\circ\times\clZ$) of the image of the map 
\begin{align}\label{central-branched-cover-domain-emb-2}
    C'\hookrightarrow\sC'^\circ\times\clZ
\end{align}
given by \eqref{central-branched-cover-domain-emb} followed by $\text{id}_{\sC'}\times\eqref{extension-emb}$.

Next, choose $E'$ to be a finite dimensional vector space and
\begin{align}
    \mu': E'\to C^\infty(\sC'^\circ\times X\times V',\Omega^{0,1}_{\sC'^\circ/\sM'}\boxtimes_{\bC}T_{\clX'/V'})
\end{align}
to be a linear map inducing an isomorphism
\begin{align}\label{normal-obstruction}
    E' = \frac{\coker \varphi^*D^N_{C,J}}{\coker D^N_{C,J}}
\end{align}
when restricted along the map $C'\hookrightarrow\sC'\times X\times V'$ given by $i'\times\varphi\times x$. Here, we view $\coker D^N_{C,J}$ as a linear subspace of $\coker \varphi^*D^N_{C,J}$ via the pullback along $\varphi$.

Now, we may define the $(\mu\oplus\mu')$-thickened moduli space
\begin{align}\label{big-thick}
    \overline\clM_h(X,\clF',kA)_{\mu,\mu'}
\end{align}
to consist of all tuples $(y\in V',s'\in\sM',f:\sC'_{s'}\to X,e\in E,e'\in E')$ such that
\begin{align}
    \label{big-homology}
    f_*[\sC'_{s'}] = kA\\
    \label{big-incidence} r_i^j := f(\tau_i^j(s'))\in \Delta_i \quad\text{transverse intersection} \\
    \label{big-CR}
    \delbar_{\clF'(y)} f + \mu(e)(\cdot,f(\cdot),y,r_1^1,\ldots,r_m^1) + \mu'(e')(\cdot,f(\cdot),y) = 0.
\end{align}
The words \emph{transverse intersection} in \eqref{big-incidence} indicate that the image of $df|_{\tau_i^j(s')}$ and $T_{r_i^j}\Delta_i$ together span $T_{r_i^j}X$. Note that the expected dimension of \eqref{big-thick} is
\begin{align}
    (n-3)(2-2h) + 2c_1(kA) &+\dim(E\oplus E') + \dim V' \\
    = (n-1)(2-2h) + 2k(c_1(A) + 2g - 2) + 2r &+ \dim(E\oplus E') + \dim V'\\
    = \text{ind }\varphi^*D^N_C + \text{vdim }\overline\clM_h(C,k) &+ \dim(E\oplus E') + \dim V'
\end{align}
where $r$ is as in \eqref{cover-vdim}.
Moreover, the locus in \eqref{big-thick} where the $(E\oplus E')$-coordinates vanish gives a local \'etale chart for the orbispace $\overline\clM_h(X,\clF',kA)$ near $(x,\bullet,\varphi\circ i'^{-1},0,0)$.

\begin{Lemma}
    The moduli space $\overline\clM_h(X,\clF',kA)_{\mu,\mu'}$ is a topological manifold of the expected dimension near $(x,\bullet,\varphi\circ i'^{-1},0,0)$.
\end{Lemma}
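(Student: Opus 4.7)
The plan is to verify the linearized version of the system \eqref{big-homology}--\eqref{big-CR} at the basepoint $p_0 = (x,\bullet,\varphi\circ i'^{-1},0,0)$ is surjective with kernel of the expected dimension, and then apply the topological implicit function theorem of \cite[Proposition 9.2.6 \& Appendix B]{Pardon-VFC} exactly as was done for \eqref{cover-aux-thick} in \textsection\ref{compactness-cover-sec}. The source of the linearized operator $L$ decomposes as
\begin{align}
    T_{p_0} \;=\; T_xV' \oplus T_\bullet\sM' \oplus \Omega^0(\tilde C',\varphi^*T_X) \oplus E \oplus E',
\end{align}
while the target, using the tubular neighborhood splitting $T_X|_C = T_C\oplus N_C$ to write $\varphi^*T_X = \varphi^*T_C\oplus \varphi^*N_C$ (the domain being the normalization $\tilde C'$), decomposes as
\begin{align}
    \Omega^{0,1}(\tilde C',\varphi^*T_C)\oplus\Omega^{0,1}(\tilde C',\varphi^*N_C) \;\oplus\; \bigoplus_{i,j} T_{p_i}X/T_{r_i^j}D_i,
\end{align}
where the last summand records \eqref{big-incidence}. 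The incidence part is automatically surjective because each $q_i^j$ is a smooth point with $d\varphi(q_i^j)\ne 0$, and adding \eqref{big-incidence} to $L$ does not affect the cokernel analysis on the Cauchy--Riemann part.

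The tangential component of $L$ recovers, up to variations in $\sM$ induced by the classifying map \eqref{classify}, the linearized Cauchy--Riemann operator $D\delbar|_\varphi$ of the cover $\varphi:C'\to C$, perturbed by the thickening $\mu$; since $\mu$ was defined by extending $\lambda$ along the embedding \eqref{extension-emb} and \eqref{cover-cokernel-iso} gives $E = \coker D\delbar|_\varphi$, this tangential component is surjective. Its kernel, after factoring out the $\sM$-directions, contributes the relative dimension $\text{vdim}\,\overline\clM_h(C,k) + \dim E$ over the base $\clM_g^{\text{emb}}(X,\clF',A)\times_\sM\sM'$, matching the dimension count already performed for \eqref{cover-aux-thick}.

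The normal component of $L$ is the operator $\varphi^*D^N_{C,J}$, twisted by the $V'$-variation in the normal direction via \eqref{emb-transverse-def} and by the perturbation $\mu'$. By the choice of $V'$, the image of $T_xV'$ under \eqref{emb-transverse-def} surjects onto $\coker D^N_{C,J}$; by construction \eqref{normal-obstruction}, the image of $E'$ surjects onto the quotient $\coker\varphi^*D^N_{C,J}/\coker D^N_{C,J}$. Together with $\varphi^*D^N_{C,J}$ itself, these cover $\coker\varphi^*D^N_{C,J}$, so the normal component is also surjective. Combining the two components, $L$ is Fredholm and surjective, and a direct index calculation gives
\begin{align}
    \dim\ker L \;=\; \ind\varphi^*D^N_C + \text{vdim}\,\overline\clM_h(C,k) + \dim V' + \dim(E\oplus E'),
\end{align}
which is exactly the expected dimension of \eqref{big-thick}.

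The only real obstacle is that $\sC'_\bullet = C'$ is nodal, so one cannot invoke the smooth implicit function theorem directly to pass from infinitesimal surjectivity to a manifold chart. This is precisely the issue addressed by the topological implicit function theorem in \cite[Appendix B]{Pardon-VFC}, which handles the gluing parameters at the nodes and converts the surjectivity of $L$ into a topological manifold structure of the expected dimension on \eqref{big-thick} near $p_0$. Applying it here, in exactly the same way as for \eqref{cover-aux-thick} (and noting that $\mu$ and $\lambda$ agree on a neighborhood of the image of \eqref{extension-emb}, which is what will be used for the comparison in the next step), yields the claim.
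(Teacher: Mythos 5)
Your proof is correct and follows essentially the same route as the paper: reduce to surjectivity of the linearized operator and invoke Pardon's topological implicit function theorem, split the Cauchy--Riemann linearization into tangential ($\varphi^*T_C$) and normal ($\varphi^*N_C$) components, and kill the respective cokernels using the $\mu$-thickening via \eqref{cover-cokernel-iso}, the $V'$-variation via surjectivity of \eqref{emb-transverse-map}, and the $\mu'$-thickening via \eqref{normal-obstruction}. The only additions you make beyond the paper's terse proof are the explicit handling of the incidence conditions \eqref{big-incidence} and the index count confirming the expected dimension, both of which are implicit in the paper's appeal to the general framework of \cite[Proposition 9.2.6 \& Appendix B]{Pardon-VFC} together with the dimension count carried out just before the lemma statement.
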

\begin{proof}
    By \cite[Proposition 9.2.6 \& Appendix B]{Pardon-VFC}, it is enough to check that the relevant linearized operator at $(x,\bullet,\varphi\circ i'^{-1},0,0)$ is surjective. Via $i'$, this is identified with the operator
    \begin{align}\label{big-linearization-explicit}
        C^\infty(C',\varphi^*T_X)\oplus T_xV'\oplus E\oplus E'\to C^\infty(\tilde C',\Omega^{0,1}_{\tilde C'}\otimes_\bC \varphi^*T_X)
    \end{align}
    which can be described explicitly as follows. On the first summand, \eqref{big-linearization-explicit} is given by the linearized Cauchy--Riemann operator $D\delbar_J|_{\varphi}$ on $\varphi^*T_X$. On the second summand, corresponding to variation of the almost complex structure on $X$, the operator \eqref{big-linearization-explicit} is given by the expression
    \begin{align}\label{emb-transverse-def-pullback}
        v'\mapsto\textstyle\frac12(d\clF'|_x\cdot v')\circ d\varphi\circ j_{C'},
    \end{align}
    i.e., the pullback under $\varphi$ of the expression \eqref{emb-transverse-def}. On the third summand, since $\mu$ was constructed to agree with $\lambda$ on the image of \eqref{central-branched-cover-domain-emb-2}, the operator \eqref{big-linearization-explicit} coincides with the second component of \eqref{cover-thick-CR-linearization} followed by the inclusion $\varphi^*T_C\subset\varphi^*T_X$. In more detail, exactly as in Remark \ref{remark-cover-thick-CR-linearization}, it is important to notice that the values of $\mu$ at points not on the image of \eqref{central-branched-cover-domain-emb-2}, and also the covariant derivatives of $\mu$, are irrelevant for computing the linearization, since we are at a point where the $(E\oplus E')$-coordinates vanish. Finally, on the fourth summand, the operator \eqref{big-linearization-explicit} is given by restricting $\mu'$ along the embedding $C'\hookrightarrow\sC'\times X\times V'$ given by $i'^{-1}\times\varphi\times x$ (again, as in Remark \ref{remark-cover-thick-CR-linearization}, only the restriction of $\mu'$ to the image of this embedding is relevant for computing the linearization).
    
    As the linearized Cauchy--Riemann operator $D\delbar_J|_\varphi$ on $\varphi^*T_X$ restricts to $D\delbar|_{\varphi}$ from \eqref{branched-linearization} on $\varphi^*T_C$ and induces $\varphi^*D^N_{C,J}$ on the quotient $\varphi^*N_C$, we can show surjectivity of \eqref{big-linearization-explicit} as follows. By Remark \ref{remark-cover-thick-CR-linearization} and the explicit description of \eqref{big-linearization-explicit} in the previous paragraph, we see that the subspace $C^\infty(C',\varphi^*T_C)\oplus E$ maps onto the subspace $C^\infty(\tilde C',\Omega^{0,1}_{\tilde C'}\otimes_\bC \varphi^*T_C)$ under \eqref{big-linearization-explicit}. Thus, we are left to show that the induced operator
    \begin{align}\label{big-linearization-reduced}
        C^\infty(C',\varphi^*N_C)\oplus T_xV'\oplus E'\to C^\infty(\tilde C',\Omega^{0,1}_{\tilde C'}\otimes_\bC \varphi^*N_C)
    \end{align}
    on quotients is surjective. The operator \eqref{big-linearization-reduced} can be described explicitly as follows. On the first summand, it is given by $\varphi^*D^N_{C,J}$ while on the second and third summands, it is the projection to $\varphi^*N_C$ of the corresponding components of \eqref{big-linearization-explicit}. Thus, we need to show that the map
    \begin{align}\label{big-linearization-reduced-2}
        T_xV'\oplus E'\to\coker \varphi^*D^N_{C,J}
    \end{align}
    induced from \eqref{big-linearization-reduced} is surjective. Restricted to the first summand, \eqref{big-linearization-reduced-2} maps onto the image of $\varphi^*:\coker D^N_{C,J}\to\coker\varphi^*D^N_{C,J}$ by the argument of Lemma \ref{trans-for-emb}. The isomorphism \eqref{normal-obstruction} now concludes the proof.
\end{proof}
\begin{Lemma}[Compatibility of thickenings]\label{compatible-thick}
    The assignment
    \begin{align}
    \begin{aligned}
        \overline\clM_h(\clM_g^{\normalfont\text{emb}}(X,\clF',A),k)_\lambda &\to \overline\clM_h(X,\clF',kA)_{\mu,\mu'}  \\
        \left((y,\Sigma),(s,s',\psi:\sC'_{s'}\to\sC_s,e)\right)&\mapsto(y,s',\Psi_{(y,\Sigma)}^{-1}\circ\psi:\sC'_{s'}\to\Sigma\subset X,e,0)
    \end{aligned}
    \end{align}
    defines a continuous injective map from a neighborhood of $((x,C),(*,\bullet,i\circ\varphi\circ i'^{-1},0))$ to a neighborhood of $(x,\bullet,\varphi\circ i'^{-1},0,0)$. Here $\Psi$ is the classifying map from \eqref{classify}.
\end{Lemma}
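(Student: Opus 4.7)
The proof is essentially a verification: I need to check that the proposed assignment produces a valid point of $\overline\clM_h(X,\clF',kA)_{\mu,\mu'}$, and that it is continuous and injective on a neighborhood of the specified basepoint. I will organize this into three blocks.

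\textbf{Well-definedness.} Fix a point $\bigl((y,\Sigma),(s,s',\psi,e)\bigr)$ in a small neighborhood of the basepoint, so that $s=\Phi(y,\Sigma)$ by the fiber product structure and $\Psi_{(y,\Sigma)}\colon\Sigma\xrightarrow{\simeq}\sC_s$ is a biholomorphism by \eqref{classify}. Set $f := \Psi_{(y,\Sigma)}^{-1}\circ\psi\colon\sC'_{s'}\to\Sigma\subset X$. The plan is to check each of the defining equations. Equation \eqref{big-homology} holds because $\psi$ has degree $k$ onto $\sC_s$ and $\Sigma$ represents the class $A$. For \eqref{big-incidence}, the marked-point condition $\psi(\tau_i^j(s'))=\sigma_i(s)$ combined with $\sigma_i(s)=\Psi_{(y,\Sigma)}(\Sigma\cap D_i)$ gives $f(\tau_i^j(s'))=\Sigma\cap D_i\in D_i$; transversality at this intersection follows from the transversality of $\Sigma\pitchfork D_i$ together with $d\psi(\tau_i^j(s'))\ne 0$ (which holds near the basepoint since $d\varphi(q_i^j)\ne 0$). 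For the $\lambda$-thickened Cauchy--Riemann equation, I use that by construction $\mu$ restricts to $\lambda$ on $\sC'^\circ\times\clC_g^{\mathrm{emb}}(X,\clF',A)$ via the embedding \eqref{extension-emb}, and that $\Psi_{(y,\Sigma)}$ is a $\clF(y)$-holomorphic isomorphism onto its fiber. Transporting \eqref{cover-thick-CR} through $\Psi_{(y,\Sigma)}^{-1}$ yields \eqref{big-CR} with $e'=0$, provided the argument $r_1^1,\ldots,r_m^1$ of $\mu(e)$ matches the base coordinate at which the restriction of $\mu$ equals $\lambda$; this is exactly the content of how $\mu$ was built via partition of unity in \textsection\ref{compactness-big-sec}.

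\textbf{Injectivity.} Suppose two nearby points $\bigl((y_a,\Sigma_a),(s_a,s'_a,\psi_a,e_a)\bigr)$ for $a=1,2$ map to the same image. Comparing coordinates in $V'\times\sM'\times E\times E'$ immediately gives $y_1=y_2$, $s'_1=s'_2$, $e_1=e_2$ and $e'=0$. Moreover, $\Psi_{(y_1,\Sigma_1)}^{-1}\circ\psi_1 = \Psi_{(y_2,\Sigma_2)}^{-1}\circ\psi_2$ as maps $\sC'_{s'_1}\to X$, so their images agree; but the image of $\Psi_{(y_a,\Sigma_a)}^{-1}\circ\psi_a$ is $\Sigma_a$, hence $\Sigma_1=\Sigma_2$. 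Consequently $s_a=\Phi(y_a,\Sigma_a)$ coincide and $\psi_a = \Psi_{(y_a,\Sigma_a)}\circ f$ are equal.

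\textbf{Continuity and main obstacle.} Continuity is immediate once well-definedness is established, because $\Psi$ is smooth in the embedded curve parameter and $\psi\mapsto\Psi^{-1}\circ\psi$ varies continuously in the relevant Gromov-type topology. The only delicate point I expect is the compatibility of perturbations needed in the verification of \eqref{big-CR}: one must carefully trace the identifications of $\lambda$ (a section over $\sC'^\circ\times\sC$) with $\mu$ (a section over $\sC'^\circ\times X\times V'\times D_1\times\cdots\times D_m$) through the fiberwise embedding \eqref{extension-emb}, and check that the extra base arguments $(y,r_1^1,\ldots,r_m^1)$ in $\mu$ are precisely those determined by the data $(y,\Sigma)$ on the image of the embedding. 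This bookkeeping, rather than any analytic difficulty, is the main thing to get right.
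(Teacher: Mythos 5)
Your proof is correct and follows essentially the same route as the paper's: well-definedness is reduced to checking \eqref{big-homology}--\eqref{big-CR}, with the key step being the identity $\Psi_{(y,\Sigma)}^*\left[\lambda(e)(\cdot,\psi(\cdot))\right] = \mu(e)(\cdot,\Psi_{(y,\Sigma)}^{-1}\circ\psi(\cdot),y,\hat p_1,\ldots,\hat p_m)$, which holds by construction of $\mu$ via \eqref{extension-emb} together with the observation that $r_i^1 = \Psi_{(y,\Sigma)}^{-1}\circ\psi(\tau_i^1(s')) = \hat p_i$. The paper declares continuity and injectivity ``evident'' where you spell them out, but this is only a difference of exposition, not substance.
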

\begin{proof}
    We first show that the map is well-defined. Let us assume that $\left((y,\Sigma),(s,s',\psi:\sC'_{s'}\to\sC_s,e)\right)$ is a point in \eqref{cover-thick}. Setting $\{\hat p_i\} = \Delta_i\cap\Sigma$, this means that we have $\Phi(y,\Sigma) = s$, a biholomorphism
    \begin{align}
        \Psi_{(y,\Sigma)}:(\Sigma,\hat p_1,\ldots,\hat p_m)\xrightarrow{\simeq}(\sC_s,\sigma_1(s),\ldots,\sigma_m(s))
    \end{align}
    of stable curves and that the equations \eqref{cover-thick-CR} and \eqref{cover-incidence} are satisfied. We then need to verify that the tuple $(y,s',\Psi_{(y,\Sigma)}^{-1}\circ\psi:\sC'_{s'}\to\Sigma\subset X,e,0)$ satisfies the equations \eqref{big-homology}--\eqref{big-CR}. If $(y,\Sigma)$ is sufficiently close to $(x,C)$, then we have $[\Sigma] = [C] = A$ and thus, \eqref{big-homology} is satisfied. \eqref{cover-incidence} has \eqref{big-incidence} as an immediate consequence. To deduce \eqref{big-CR} from \eqref{cover-thick-CR}, it will suffice to check that
    \begin{align}\label{equality-of-thickenings}
        \Psi_{(y,\Sigma)}^*\left[\lambda(e)(\cdot,\psi(\cdot))\right] = \mu(e)(\cdot,\Psi_{(y,\Sigma)}^{-1}\circ\psi(\cdot),y,\hat p_1,\ldots,\hat p_m)
    \end{align}
    since $\hat p_i = \Psi^{-1}_{(y,\Sigma)}(\sigma_i(s)) =  \Psi_{(y,\Sigma)}^{-1}\circ\psi(\tau_i^1(s'))$ for $1\le i\le m$. By construction, $\mu|_{\sC'^\circ\times\clZ}$ agrees with $\lambda$ via the embedding \eqref{extension-emb} in a neighborhood of the image of \eqref{central-branched-cover-domain-emb-2} and now \eqref{equality-of-thickenings} follows from this immediately. Note that continuity and injectivity are evident and this concludes the proof.
\end{proof}
\begin{Corollary}
    The map from Lemma \ref{compatible-thick} is an inclusion of codimension given by
    \begin{align}\label{codim}
        \dim \ker\varphi^*D^N_{C,J} - \dim\ker D^N_{C,J}\ge 0.
    \end{align}
\end{Corollary}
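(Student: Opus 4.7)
The plan is to combine the dimension counts already established for the two thickened moduli spaces appearing in Lemma~\ref{compatible-thick}, simplify using the isomorphism \eqref{normal-obstruction}, and then invoke Brouwer's invariance of domain to promote the continuous injection to a topological inclusion.

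Concretely, from Section~\ref{compactness-cover-sec} the thickened cover moduli space $\overline\clM_h(\clM_g^{\text{emb}}(X,\clF',A),k)_\lambda$ has local dimension $\dim V' + \ind D^N_{C,J} + 2r + \dim E$ near the base point, since $\clM^{\text{emb}}_g(X,\clF',A)$ has dimension $\dim V' + \ind D^N_{C,J}$ and the projection onto it is a topological submersion of relative dimension $2r + \dim E$. Section~\ref{compactness-big-sec} gives the stable map thickened space $\overline\clM_h(X,\clF',kA)_{\mu,\mu'}$ local dimension $\ind \varphi^*D^N_{C,J} + 2r + \dim(E\oplus E') + \dim V'$. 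The difference of these two dimensions equals
\[
\ind\varphi^*D^N_{C,J} - \ind D^N_{C,J} + \dim E'.
\]
Substituting $\dim E' = \dim\coker\varphi^*D^N_{C,J} - \dim\coker D^N_{C,J}$ from \eqref{normal-obstruction} and expanding the indices, the cokernel contributions cancel and this difference collapses to exactly $\dim\ker\varphi^*D^N_{C,J} - \dim\ker D^N_{C,J}$, which is the desired codimension.

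To upgrade the continuous injection of Lemma~\ref{compatible-thick} to an embedding of topological manifolds of this codimension, I would apply Brouwer's invariance of domain. The essential input is that both the source and target are topological manifolds of the stated dimensions near the chosen base points, as established in the previous two subsections. Local properness of the injection follows from Gromov compactness applied to sequences in the source converging to the base point, allowing us to restrict to a small closed neighborhood that maps homeomorphically onto a compact (hence locally closed) subset of the image. Finally, the nonnegativity of the codimension is automatic: the pullback map $\varphi^*:\ker D^N_{C,J}\to\ker\varphi^*D^N_{C,J}$ is injective because any $\sigma\in\ker D^N_{C,J}$ with $\varphi^*\sigma = 0$ vanishes on $\varphi(C')$, which equals $C$ since $\varphi$ is non-constant and $C$ is connected, forcing $\sigma = 0$. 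The main subtlety I expect is verifying the local properness near points where the domain of the limiting stable map is nodal, but this is already subsumed in the topological manifold structure obtained in Sections~\ref{compactness-cover-sec}--\ref{compactness-big-sec}.
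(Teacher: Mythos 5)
Your dimension count is exactly the paper's proof: compute $\dim\overline\clM_h(\clM_g^{\text{emb}}(X,\clF',A),k)_\lambda$ and $\dim\overline\clM_h(X,\clF',kA)_{\mu,\mu'}$, take the difference, and simplify using \eqref{normal-obstruction} so the cokernel terms cancel, leaving $\dim\ker\varphi^*D^N_{C,J}-\dim\ker D^N_{C,J}$. The additional Brouwer/properness discussion is not needed for this corollary (the paper uses the term \emph{inclusion} informally and only invokes invariance of domain in the following subsection, in the equidimensional case), and your explicit verification of the nonnegativity via injectivity of $\varphi^*$ is correct but left implicit in the paper.
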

\begin{proof}
    The dimension of \eqref{cover-thick} is given by
    \begin{align}
        \dim V' + \text{ind }D^N_{C,J} + \text{vdim }\overline\clM_h(C,k) + \dim E
    \end{align}
    while the dimension of \eqref{big-thick} is given by
    \begin{align}
        \dim V' + \text{ind }\varphi^*D^N_{C,J} + \text{vdim }\overline\clM_h(C,k) + \dim E + \dim E'
    \end{align}
    and thus, from \eqref{normal-obstruction}, we find that the codimension of \eqref{cover-thick} in \eqref{big-thick} is precisely given by \eqref{codim}.
\end{proof}
\subsection{Concluding the proof of Theorem \ref{compactness-result}}\label{compactness-conclusion}
We will now prove Theorem \ref{compactness-result} by contradiction. To this end, assume that \eqref{codim} is zero. Then, the map from Lemma \ref{compatible-thick} is an injective continuous map between topological manifolds of the same dimension and thus, by Brouwer's Invariance of Domain (\cite[Theorem 2B.3]{Hatcher}), its image must cover an open neighborhood of $(x,\bullet,\varphi\circ i'^{-1},0,0)$. Restricting this over $V\times 0\times 0\subset V'\times E\times E'$, we conclude that
\begin{align}
    \overline\clM_h(\clM_g^\text{emb}(X,\clF,A),k) = \overline\clM_h(X,\clF,kA)
\end{align}
in a neighborhood of $(x,\varphi:C'\to C\subset X)$. This is a contradiction to the existence of the sequence $(J_\nu,\varphi_\nu:\Sigma'_\nu\to X)$ of simple $J_\nu$-curves of genus $h$ converging to $(J,C'\xrightarrow{\varphi} C\subset X)$ unless $h = g$ and $k = 1$ (in which case $\varphi$ must also be an isomorphism). \qed

\section{Bifurcation analysis}
\label{bifurcation}

We begin by describing the class of bifurcations that we will treat in this article. We restrict throughout this section to (branched) covers with smooth and connected domains. Given any $\delta = (g,d,{\bf b},G,{\bf k},{\bf c},A)$ as in Definition \ref{failure-strata-def}, let $r,h$ and $H\subset G$ be the number of branch points, domain genus and the (conjugacy class of) subgroup corresponding (in the sense of Definition \ref{cover-combinatorial-type}) to the $d$-fold covers in question. We remind the reader that, as in \textsection\ref{super-rigidity-recall}, we will be assuming throughout this section that $(X,\omega)$ is a symplectic Calabi--Yau $3$-fold.
\begin{Definition}[Elementary wall type]\label{elem-wall}
    We say that $\delta$ is an \emph{elementary wall type} if it satisfies conditions {\normalfont(1)--(2)} below.
    \begin{enumerate}[\normalfont(1)]
        \item $d\ge 2$ and either $r=0$, i.e., unbranched covers, or for each $1\le i\le r$, we have $b_i^1 = 2$ and $b_i^j = 1$ for $1<j\le q_i$, i.e., branched covers with only simple \emph{branch points}.
        \item The $G$-representation ${\bf k}$ is faithful, irreducible and of real type. Moreover, if $r = 0$, then we have $\dim_{\bR}{\bf k}^H\le 1$.
    \end{enumerate}
    In this case, the subset $\clW^d_{{\bf b},G}(g,A,{\bf k},{\bf c})\subset\clJ(X,\omega)$ from Definition \ref{wall-strata-def} is called an \emph{elementary wall}.
    If we also have ${\bf k}^H = 0$, then we say that the elementary wall type $\delta$ is \emph{trivial}. 
\end{Definition}

    We now briefly explain the conditions appearing in Definition \ref{elem-wall}. Note that condition {\normalfont(1)} in Definition \ref{elem-wall} is a strengthening of condition {\normalfont(i)} in Lemma \ref{top-strata}: we ask for a unique critical point of order $2$ over each branch point. This has the consequence that any small deformation of a morphism $\varphi:\Sigma'\to\Sigma$ of curves of type $(g,{\bf b})$ is again of the same type.
    On the other hand, condition {\normalfont (2)} in Definition \ref{elem-wall} is a strengthening of condition {\normalfont (ii)} in Lemma \ref{top-strata}. The relevance of the $\dim_\bR{\bf k}^H\le 1$ requirement in condition {\normalfont (2)} is that it prevents the pulled back normal Cauchy--Riemann operator from having a kernel of dimension greater than one. This is important in both the unbranched and branched cases but, in the latter case, it turns out to be a formal consequence of the rest of Definition \ref{elem-wall} (see Lemma \ref{ker-coker-final} below for more details).
    Finally, using \cite[Appendix B]{Wendl-19} and Lemma \ref{operator-decomp}, the non-triviality of $\delta$ is a necessary condition for a bifurcation to occur (see Lemma \ref{trivial-case} below for more details).

Definition \ref{elem-wall} covers many cases, including the following examples. The examples given below are abstract and do not necessarily appear as elementary wall types associated to a given Calabi--Yau $3$-fold.
\begin{Example}\label{exa_bif_1}
    Any degree $2$ morphism $\Sigma'\to\Sigma$ of curves gives an elementary wall type $\delta$, with $G = \bZ/2\bZ$ and $\bf k$ being the unique one-dimensional non-trivial representation of $\bZ/2\bZ$. A result analogous to Theorem \ref{main-bifurcation} below in the case of unbranched double covers of tori in dimension $4$ appears in \cite{Taubes-Gr}.
\end{Example}

\begin{Example}\label{exa_bif_2}
    For any $d\ge 2$, consider a degree $d$ morphism $\Sigma'\to\Sigma$ with distinct simple branch points with generalized automorphism group being the whole symmetric group $S_d$. This gives rise to an elementary wall type with $G = S_d$, $H = S_{d-1}$ and $\bf k$ being the subrepresentation $\{(x_1,\ldots,x_d):\sum_i x_i = 0\}$ of the tautological representation of $S_d$ on $\bR^d$. The representation $\bf k$ is faithful and the following computation of the character of its complexification ${\bf k}_\bC = {\bf k}\otimes_\bR\bC$
    \begin{align}\label{character-comp-1}
        |\chi_{{\bf k}_\bC}|^2 + 1 = |\chi_{\bC^d}|^2 = \frac{1}{|S_d|}\sum_{\sigma\in S_d}\left|\{i\in\{1,\ldots,d\}:\sigma(i) = i\}\right|^2 = 2
    \end{align}
    shows that ${\bf k}_\bC$ is irreducible implying that ${\bf k}$ itself is irreducible and of real type. In the first equality of \eqref{character-comp-1}, we have split the tautological $S_d$-representation $\bC^d$ into the trivial representation and ${\bf k}_\bC$ while for the last equality we have used Burnside's orbit-counting lemma for the diagonal action of $S_d$ on $\{1,\ldots,d\}\times\{1,\ldots,d\}$ which has precisely two orbits.
    
    Note that covers with generalized automorphism group $S_d$ always exist if
    \begin{align}
        2h-2 - d(2g - 2) = r \ge 2d-2
    \end{align}
    where $h = \normalfont\text{genus}(\Sigma')$ and $g = \normalfont\text{genus}(\Sigma)$. Explicitly, if $x_1,\ldots,x_r$ are distinct points in $\Sigma$, then we may define $\Sigma'$ to be the $d$-fold branched cover of $\Sigma$, associated to the $d$-fold unbranched cover of $\dot\Sigma := \Sigma\setminus\{x_1,\ldots,x_r\}$ given by the permutation representation
    \begin{align}
        \rho&:\pi_1(\dot\Sigma) \\ &= \langle a_1,b_1,\ldots,a_g,b_g,c_1,\ldots,c_r : \textstyle\prod_{i=1}^g[a_i,b_i]\prod_{j=1}^r c_j = 1\rangle\to S_d \\
        &\rho(a_i) = \rho(b_i)  = 1 \quad\normalfont\text{for $1\le i\le g$} \\
        &\rho(c_{2j-1}) = \rho(c_{2j}) = (j\, j+1) \quad\normalfont\text{for $1\le j\le d-1$} \\
        &\rho(c_k) = (d-1\,d) \quad\normalfont\text{for $2d-2< k\le r$}.
    \end{align}
\end{Example}
    
\begin{Example}\label{exa_bif_3}
    For any $n\ge 1$, take $d = 2n+1$ and consider a degree $d$ unbranched cover $\Sigma'\to\Sigma$ of a genus $g\ge 2$ curve given by the permutation representation
    \begin{align}
        \rho:\pi_1(\Sigma_g) &= \langle a_1,b_1,\ldots,a_g,b_g : \textstyle\prod_{i=1}^g[a_i,b_i] = 1\rangle\to S_{2n+1}\\
        \rho(a_1) &= \rho(b_1) =  (1\,2)(3\,4)\cdots(2n-1\,2n)\\
        \rho(a_2) &= \rho(b_2) = (2\,3)(4\,5)\cdots(2n\,2n+1)
    \end{align}
    and $\rho(a_i) = \rho(b_i) = 1$ for $i>2$. Note that the generalized automorphism group $\normalfont\text{im }\rho = G\subset S_{2n+1}$ of this cover is isomorphic to the dihedral group
    \begin{align}
        D_{2n+1} = \langle\tau,\theta:\tau^2 = \theta^{2n+1} = 1, \tau\theta\tau^{-1} = \theta^{-1}\rangle
    \end{align}
    with $\tau = (1\,2)(3\,4)\cdots(2n-1\,2n)$ and $\tau\theta = (2\,3)(4\,5)\cdots(2n\,2n+1)$. The subgroup $H$ is given by $\langle\tau\rangle\subset D_{2n+1}$. The standard real $2$-dimensional faithful representation $\bf k$ which realizes $D_{2n+1}$ as the isometry group of a regular $(2n+1)$-gon in $\bR^2$ gives rise to a non-trivial elementary wall type. Indeed, if the complexification ${\bf k}_\bC$ of $\bf k$ splits as the direct sum of $D_{2n+1}$-invariant complex lines, then the relation $\tau\theta\tau^{-1} = \theta^{-1}$ forces that $\langle\theta\rangle$ must act trivially on both, which is absurd. We conclude, as in Example \ref{exa_bif_2}, that $\bf k$ is irreducible and of real type. Also, ${\bf k}^H$ is $1$-dimensional over $\bR$ as it can be identified with an axis of symmetry of the regular $(2n+1)$-gon.
\end{Example}

\begin{Example}\label{exa_bif_4}
    Let $n\ge 2$ be an integer and let $d = 2n$. For integers $g,s\ge 1$, define a degree $d$ branched cover $\Sigma'\to\Sigma$ of a genus $g$ curve with $2s$ distinct simple branch points $\{q_1,\ldots,q_{2s}\}\subset\Sigma$ by the permutation representation
    \begin{align}
        \rho&:\pi_1(\Sigma\setminus\{q_1,\ldots,q_{2s}\})
        \\ &= \langle a_1,b_1,\ldots,a_g,b_g,c_1,\ldots,c_{2s}:\textstyle\prod_{i=1}^g[a_i,b_i]\prod_{j=1}^{2s}c_j = 1\rangle\to S_d \\
        &\rho(a_i) = \rho(b_i) = (1\,2\,\cdots \,n)(n+1\,\cdots\, 2n),\quad\rho(c_j) = (1\,\,n+1)
    \end{align}
    for all $1\le i\le g$ and $1\le j\le 2s$. Note that the generalized automorphism group $\normalfont\text{im }\rho = G\subset S_d$ is isomorphic to $(\bZ/2\bZ)^n\rtimes\bZ/n\bZ$ via
    \begin{align}
        ((0,0,\ldots,0),1)&\mapsto (1\,2\,\cdots \,n)(n+1\,\cdots\, 2n) \\
        ((1,0,\ldots,0),0)&\mapsto (1\,\,n+1)
    \end{align}
    and the subgroup $H\subset G$ is given by $(\bZ/2\bZ)^{n-1}\times\{0\}$. The group $G$ has a faithful representation ${\bf k}=\bR^n$, where $\bZ/n\bZ$ acts by cyclically permuting the coordinates while $(\alpha_1,\ldots,\alpha_n)\in(\bZ/2\bZ)^n$ acts by the matrix $\normalfont\text{diag}[(-1)^{\alpha_1},\ldots,(-1)^{\alpha_n}]$, and this gives rise to a non-trivial elementary wall type. As before, to establish that $\bf k$ is irreducible and of real type, we will show that the complexification ${\bf k}_\bC$ is irreducible. Indeed, we have
    \begin{align}
        |\chi_{{\bf k}_\bC}|^2 = \frac1{n2^n}\sum_{k=0}^n\binom{n}{k}(n-2k)^2 = 1
    \end{align}
    where the first equality holds because all elements in $G\setminus(\bZ/2\bZ)^n$ have zero trace on $\bf k$ and the second equality comes by computing $(z\partial_z)^2(z+z^{-1})^n$ at $z=1$ in two ways: by direct differentiation or by first using the binomial expansion.
\end{Example}

The next two examples are included for completeness and their purpose is to show that the conditions in Definition \ref{elem-wall} are strictly stronger than the conditions of Lemma \ref{top-strata}.

\begin{Example}[Non-elementary, unbranched]\label{non-elem-unbranched}
    Consider the unbranched cover $\Sigma''\to\Sigma$ given by the Galois closure of the unbranched cover $\Sigma'\to\Sigma$ from Example \ref{exa_bif_3}. Let $\bf k$ be the $2$-dimensional representation of the group $G = D_{2n+1}$ described in Example \ref{exa_bif_3}. It is immediate that the conditions of Lemma \ref{top-strata} are satisfied but we do not have an elementary wall type since the subgroup $H\subset G$ is trivial in this case implying that $\dim_\bR{\bf k}^H = 2$.
\end{Example}

\begin{Example}[Non-elementary, branched]\label{non-elem-branched}
    For any $n\ge 2$, take $d = 2n+1$ and four distinct points $x_1,\ldots,x_4$ on $\Sigma = \bP^1$. Consider the $d$-fold branched cover $\Sigma'\to\Sigma$ associated to the unbranched cover of $\dot\Sigma = \Sigma\setminus\{x_1,\ldots,x_4\}$ given by the permutation representation
    \begin{align}
        \rho:\pi_1(\dot\Sigma) &= \langle a_1,\ldots,a_4:\textstyle\prod_{i=1}^4 a_i = 1\rangle\to S_{2n+1} \\
        \rho(a_1) &= \rho(a_2) = (1\,2)(3\,4)\cdots(2n-1\,2n)\\
        \rho(a_3) &= \rho(a_4) = (2\,3)(4\,5)\cdots(2n\,2n+1).
    \end{align}
    Taking the same group $G = D_{2n+1}$ and representation $\bf k$ as in Example \ref{exa_bif_3}, we see that the conditions of Lemma \ref{top-strata} are satisfied but we do not have an elementary wall type since $\Sigma'\to\Sigma$ does not have simple branching.
\end{Example}

Let $J_+,J_-\in\clJ(X,\omega)$ be super-rigid almost complex structures (which will remain fixed for the rest of this section). Define $\clP(J_-,J_+)$ to be the space of smooth paths $\gamma:[-1,1]\to\clJ(X,\omega)$ with $\gamma(\pm1) = J_\pm$. Before giving the statement of our main result on bifurcations, we need some preliminaries which we now turn to.
\begin{Lemma}[Generic transversality to wall strata]\label{baire-transverse}
    There is a Baire subset
    \begin{align}\label{baire-set-transverse}
        \clB'\subset\clP(J_-,J_+)
    \end{align}
    such that any $\gamma\in\clB'$ has the following property. The image $\gamma([-1,1])\subset\clJ(X,\omega)$ avoids $\clW_{\normalfont\text{emb}}$ and $\gamma$ is transverse to each of the maps $\eqref{wall-to-acs}$. In particular, whenever the Fredholm index of \eqref{wall-to-acs} is $\le -2$, the image $\gamma([-1,1])$ avoids the image $\clW^d_{{\bf b},G}(g,A,{\bf k},{\bf c})$ of \eqref{wall-to-acs}.
\end{Lemma}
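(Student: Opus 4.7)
The plan is to construct $\clB'$ as a countable intersection of Baire subsets, one for $\clW_{\text{emb}}$ and one for each combinatorial type $\delta = (g,d,{\bf b},G,{\bf k},{\bf c},A)$. First I would note that these combinatorial types form a countable collection (each ingredient -- non-negative integer $g$, integer $d\ge 2$, branching data $\bf b$, isomorphism class of finite group $G$, isomorphism classes of finite-dimensional real $G$-representations $\bf k, \bf c$, and homology class $A\in H_2(X,\bZ)$ -- lies in a countable set), so any countable intersection of Baire subsets indexed by them remains Baire.

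For each $\delta$, I would invoke Corollary \ref{transverse-to-wall} with parameter space $P = [-1,1]$, boundary locus $Q = \{-1,+1\}$, and any fixed reference path $\clP_{\text{ref}}\in\clP(J_-,J_+)$. The super-rigidity assumption on $J_\pm$ means these endpoints lie in $\clJ(X,\omega)\setminus\clW$, so they avoid the image of \eqref{wall-to-acs} entirely and the required transversality over $Q$ is automatic (the fibers being empty). The corollary then produces a Baire subset $\clB_\delta\subset\clP(J_-,J_+)$ of paths transverse to \eqref{wall-to-acs}. A parallel argument -- using the fact from \cite[Theorem 2.1.18]{DW-20} that $\clW_{\text{emb}}$ is the union of the images of Fredholm maps of index $\le -2$ (coming from universal moduli of simple non-embedded curves and pairs of distinct intersecting simple curves) -- yields a Baire subset $\clB_{\text{emb}}$ of paths whose image misses $\clW_{\text{emb}}$. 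Similarly, the exceptional set of infinite codimension appearing in Theorem \ref{stratification} contributes a Baire subset $\clB_\infty$.

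Setting $\clB' = \clB_{\text{emb}}\cap\clB_\infty\cap\bigcap_\delta\clB_\delta$ produces the required Baire subset. The final assertion about Fredholm index $\le -2$ then follows by dimension count: for $\gamma\in\clB'$, transversality of $\gamma:[-1,1]\to\clJ(X,\omega)$ to \eqref{wall-to-acs} exhibits the pullback $[-1,1]\times_{\clJ(X,\omega)}\clM^d_{{\bf b},G}(\clM_g^*(X,A);{\bf k},{\bf c})$ as an orbifold of dimension $1 + \text{ind}(f)$, where $f$ is the map in \eqref{wall-to-acs}; when $\text{ind}(f)\le -2$ this dimension is negative and the pullback must be empty, so $\gamma([-1,1])$ avoids the image $\clW^d_{{\bf b},G}(g,A,{\bf k},{\bf c})$.

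The main obstacle is the standard one of turning formal regularity of the universal moduli spaces into genericity of regular values in the Fr\'echet space $\clJ(X,\omega)$; this has already been packaged into Corollary \ref{transverse-to-wall} via Taubes' exhaustion and Floer's $C_\epsilon$-space trick as described in \textsection\ref{technical-transversality-remarks}, so the lemma reduces to combining these off-the-shelf inputs with the countability observation above. A secondary point worth flagging is that the path space $\clP(J_-,J_+)$ has prescribed boundary values, but this is precisely the scenario handled by the $Q$-parameter in Corollary \ref{transverse-to-wall}, and super-rigidity of $J_\pm$ makes the boundary hypotheses cost-free.
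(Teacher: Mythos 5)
Your proposal is correct and takes essentially the same approach as the paper's one-paragraph proof: decompose $\clW$ into its countably many strata, apply Sard--Smale (packaged as Corollary \ref{transverse-to-wall} with $P=[-1,1]$, $Q=\{-1,+1\}$) to each stratum together with $\clW_\text{emb}$, noting that super-rigidity of $J_\pm$ makes the endpoint transversality hypothesis vacuous, and conclude the final avoidance claim by a dimension count against codimension $\ge 2$. You have merely unpacked what the paper compresses into the phrase ``applying the Sard--Smale theorem (countably many times) to suitable universal moduli spaces over $\clP(J_-,J_+)$.''
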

\begin{proof}
    Note that $J_\pm$ are not contained in the wall $\clW$ (see Definition \ref{wall-strata-def}). Applying the Sard--Smale theorem (countably many times) to suitable universal moduli spaces over $\clP(J_-,J_+)$, see e.g., \cite[Section 2.10]{DW-20}, we obtain a Baire subset $\clB'\subset\clP(J_-,J_+)$ consisting of paths $\gamma$ which are transverse to $\clW_\text{emb}$ as well as to each of the maps \eqref{wall-to-acs}. Since $\clW_\text{emb}\subset\clJ(X,\omega)$ is a subset of codimension $\ge 2$ (as are those $\clW^d_{{\bf b},G}(g,A,{\bf k},{\bf c})$ for which \eqref{wall-to-acs} has Fredholm index $\le -2$), transversality implies that $\gamma([-1,1])$ avoids all of these.
\end{proof}
    Let $\gamma\in\clB'$ be a path from $J_-$ to $J_+$. Suppose that we have $t_0\in(-1,1)$ and $p = (\gamma(t_0),\Sigma,\varphi:\Sigma'\to\Sigma,\iota)\in\clM^d_{{\bf b},G}(\clM^*_g(X,A);{\bf k},{\bf c})$ with $\delta = (g,d,{\bf b},G,{\bf k},{\bf c},A)$ being an elementary wall type. Let $\psi:\tilde\Sigma\to\Sigma$ be a Galois closure of $\varphi$ with $H\subset G$ being such that $\Sigma'=\tilde\Sigma/H$. Introduce the abbreviation
    \begin{align}
        D = D^N_{\Sigma,\gamma(t_0)}.
    \end{align}
    Lemma \ref{baire-transverse} guarantees that $\gamma$ and \eqref{wall-to-acs} are transverse at $(t_0,p)$. When $\delta$ is also assumed to be non-trivial (in the sense of Definition \ref{elem-wall}), this yields a canonical isomorphism
    \begin{align}\label{abstract-wc-iso}
        T_{t_0}\gamma\xrightarrow{\simeq}\normalfont\text{Hom}(\ker\varphi^*D,\coker\varphi^*D)/T_\varphi\clM^d_{{\bf b},G}(\Sigma)
    \end{align}
    between the tangent space of $\gamma$ at $t_0$ and the normal space of the map \eqref{wall-to-acs} at $p$, where we have used Remark \ref{reconciling-formulations} to compute the normal space\footnote{In more detail, Remark \ref{reconciling-formulations} shows that the normal space of the inclusion \eqref{stratification-inclusion} is $\text{Hom}^G(\ker\psi^*D,\coker\psi^*D)$. To obtain the normal space of \eqref{wall-to-acs} from this, we must quotient $\text{Hom}^G(\ker\psi^*D,\coker\psi^*D)$ by the vertical tangent space at $p$ of the projection obtained by the composition $\clM^d_{{\bf b},G}(\clM_g^{*}(X,A))\to\clM_g^{*}(X,A)\to\clJ(X,\omega)$. Since we have $\ker D^N_{\Sigma,\gamma(t_0)} = (\ker\psi^*D)^G = {\bf k}^G = 0$, the second arrow is seen to be a local diffeomorphism at $(\gamma(t_0),\Sigma)$. The vertical tangent space of the first arrow is seen to be $T_\varphi\clM^d_{{\bf b},G}(\Sigma)$. Another version of this argument also appears in Lemma \ref{lem:transverse-wall-lift} below.}
    and Lemma \ref{ker-coker-final} (proved later in this section) to make the natural identification 
    \begin{align}
        \text{Hom}^G(\ker\psi^*D,\coker\psi^*D) = \normalfont\text{Hom}(\ker\varphi^*D,\coker\varphi^*D)
    \end{align} 
    given by restriction to the $H$-fixed subspace.
    Orient the two sides of \eqref{abstract-wc-iso} by identifying $T_{t_0}\gamma$ with $\mathbb R$ in the obvious way, using the complex structure on the orbifold $\clM^d_{{\bf b},G}(\Sigma)$ and using the standard orientation on the determinant line $\det(\varphi^*D)$, which is obtained via the spectral flow when we deform $\varphi^*D$ to a $\mathbb C$-linear Cauchy--Riemann operator.
\begin{Definition}[Sign of wall crossing]\label{wc-sign-def}
    With notation as above, we call the pair $(t_0,p)$ an \emph{elementary wall crossing} along $\gamma$. When $\delta$ is non-trivial, the \emph{sign} of this wall crossing is defined to be the sign of the isomorphism \eqref{abstract-wc-iso} and is denoted by
    \begin{align}
        \normalfont\text{sgn}(\gamma,t_0;p) \in \{-1,+1\}.
    \end{align}
    We call the wall crossing \emph{positive} or \emph{negative} in accordance with its sign.
\end{Definition}
If $\delta = (g,d,{\bf b},G,{\bf k},{\bf c},A)$ is an elementary wall type for which \eqref{wall-to-acs} has Fredholm index $\le -1$, then we have a natural codimension $\ge 1$ subspace
\begin{align}\label{bad-stratum-statement}
    \clM^d_{{\bf b},G}(\clM^*_g(X,A);{\bf k},{\bf c})_\spadesuit \subset \clM^d_{{\bf b},G}(\clM^*_g(X,A);{\bf k},{\bf c})
\end{align}
consisting of points which are ``degenerate". (See Definition \ref{bad-stratum-defined} for details; for the proof of the codimension $\geq 1$ property, see Lemma \ref{main-technical-result}.) Denote its image under \eqref{wall-to-acs} by $\clW^d_{{\bf b},G}(g,A,{\bf k},{\bf c})_\spadesuit$ and note that it is a subset of codimension $\ge 2$ in $\clJ(X,\omega)$.
\begin{Lemma}[Generic non-degeneracy]\label{baire-taylor}
    There is a Baire subset
    \begin{align}\label{baire-set-taylor}
        \clB''\subset\clP(J_-,J_+)
    \end{align}
    with the property that every $\gamma\in\clB''$ has image disjoint from $\clW^d_{{\bf b},G}(g,A,{\bf k},{\bf c})_\spadesuit$ for each elementary wall type $\delta = (g,d,{\bf b},G,{\bf k},{\bf c},A)$ for which the map \eqref{wall-to-acs} has Fredholm index $\le -1$.
\end{Lemma}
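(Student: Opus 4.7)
The plan is to derive Lemma \ref{baire-taylor} from the Sard--Smale theorem applied to a suitable family of universal moduli spaces together with a countable intersection argument. Since the data $\delta = (g,d,{\bf b},G,{\bf k},{\bf c},A)$ is discrete, there are only countably many elementary wall types to consider, so it suffices to produce, for each such $\delta$ with \eqref{wall-to-acs} of Fredholm index $\le -1$, a Baire subset $\clB''_\delta\subset\clP(J_-,J_+)$ consisting of paths whose image avoids $\clW^d_{{\bf b},G}(g,A,{\bf k},{\bf c})_\spadesuit$, and then take $\clB'' = \bigcap_\delta\clB''_\delta$.

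For a fixed such $\delta$, the codimension $\ge 1$ statement for $\spadesuit$ inside the wall stratum (part of Definition \ref{bad-stratum-defined} and Lemma \ref{main-technical-result}), combined with the hypothesis that \eqref{wall-to-acs} has Fredholm index $\le -1$, formally implies that the composed forgetful map
\begin{align}
\clM^d_{{\bf b},G}(\clM^*_g(X,A);{\bf k},{\bf c})_\spadesuit\longrightarrow\clJ(X,\omega)
\end{align}
is a $C^\infty$-subvariety morphism (in the sense of \cite[Appendix C]{Wendl-19}) of Fredholm index $\le -2$. Consequently, its image has codimension $\ge 2$ in $\clJ(X,\omega)$ in the formal sense of \textsection\ref{technical-transversality-remarks}.

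To turn this codimension count into the desired genericity statement for paths, I would follow the scheme sketched in \textsection\ref{technical-transversality-remarks}: introduce the parametric universal moduli space
\begin{align}
\clU_\delta = \bigl\{(\gamma,t,q)\in\clP(J_-,J_+)\times(-1,1)\times\clM^d_{{\bf b},G}(\clM^*_g(X,A);{\bf k},{\bf c})_\spadesuit : \gamma(t)\text{ is the image of }q\bigr\},
\end{align}
establish formal regularity for its defining equations at the level of smooth data, and then apply Taubes' exhaustion trick with Floer's $C_\epsilon$-spaces centered at reference paths to upgrade this to a $C^\infty$-generic statement. The formal regularity step reduces to verifying that $\omega$-compatible deformations of $J$ supported in a tubular neighborhood of the embedded curve $\Sigma\subset X$ surject onto the cokernel of the linearized equations cutting out $\spadesuit$; this surjectivity should follow from the same local perturbation technology used in the proof of Theorem \ref{stratification} (following \cite[\textsection 5]{Wendl-19}), with the extra freedom afforded by the elementary-wall-type hypothesis guaranteeing that every relevant obstruction component can be hit. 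The Sard--Smale theorem combined with the fact that a generic one-parameter family of almost complex structures misses a codimension $\ge 2$ locus then produces $\clB''_\delta$ as desired.

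The main obstacle is the formal regularity for the additional equations cutting out $\spadesuit$ inside the wall stratum. These differ in character from the linear kernel/cokernel conditions of Theorem \ref{stratification}, which merely constrain the isomorphism type of $\ker\psi^*D^N_{\Sigma,J}$ and $\coker\psi^*D^N_{\Sigma,J}$ as $G$-representations: the degenerate locus is cut out by the vanishing of certain higher-order Taylor coefficients of the relevant Kuranishi map, as foreshadowed by the discussion preceding Theorem \ref{thm_int_1}. Proving surjectivity of the $J$-variation of these Taylor coefficients will require a direct computation, expressing the variation as a bilinear (or higher-order) pairing of elements of $\ker\varphi^*D^N_{\Sigma,J}$ against the infinitesimal change of $D^N_{\Sigma,J}$, and then invoking a representation-theoretic argument on $\bf k$ and $\bf k^H$ enabled by conditions (1)--(2) of Definition \ref{elem-wall}. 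This is the genuinely new input needed beyond the proof of Lemma \ref{baire-transverse}.
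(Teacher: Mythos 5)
Your proposal matches the paper's proof: both derive the lemma by observing that $\clW^d_{{\bf b},G}(g,A,{\bf k},{\bf c})_\spadesuit$ has codimension $\ge 2$ in $\clJ(X,\omega)$ (the Fredholm index $\le -1$ hypothesis for \eqref{wall-to-acs} combined with the codimension $\ge 1$ estimate of Lemma \ref{main-technical-result}) and then applying Sard--Smale to suitable universal moduli spaces over $\clP(J_-,J_+)$ countably many times. Your closing discussion of the ``main obstacle'' is really a sketch of Lemma \ref{main-technical-result}, which the paper proves separately and which you correctly cite as the needed input --- note, for your own reference, that the crucial algebraic step there is a Vandermonde determinant argument (Lemma \ref{algebraic-lemma}) rather than primarily a representation-theoretic argument on ${\bf k}^H$ --- but this does not affect the correctness of your proof of the present lemma.
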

\begin{proof}
    This follows by an application of the Sard--Smale theorem (countably many times) to suitable universal moduli spaces over $\clP(J_-,J_+)$. The key observation is that the subsets $\clW^d_{{\bf b},G}(g,A,{\bf k},{\bf c})_\spadesuit$ have codimension $\ge 2$ in $\clJ(X,\omega)$, which follows from Lemma \ref{main-technical-result} as noted above.
\end{proof}
\begin{remark}
    By definition, the subsets $\clB'$ and $\clB''$ are both invariant under smooth reparametrizations of paths. Therefore, to simplify the notation below, we will assume that in Definition \ref{wc-sign-def} we have $t_0 = 0$.
\end{remark}
\begin{Theorem}[Elementary bifurcations] \label{main-bifurcation}
        Let $\gamma\in\clP(J_-,J_+)$ be a path lying in the Baire subset $\clB'\cap\clB''$. Consider a point
        \begin{align}
            p = (J,\Sigma,\varphi:\Sigma'\to\Sigma,\iota)\in\clM^d_{{\bf b},G}(\clM^*_g(X,A);{\bf k},{\bf c})
        \end{align}
        such that $(0,p)$ is an elementary wall crossing.

        We can then find an open neighborhood $\clO$ of $\varphi:\Sigma'\to X$ in the Gromov topology and an $\epsilon>0$ such that the parametric moduli space
        \begin{align}\label{local-emb-moduli}
            \left((-\epsilon,\epsilon)\times\clO\right)\cap\clM^*_h(X,\gamma,dA)\to [-1,1]
        \end{align}
        of smooth \emph{embedded} $\gamma(t)$-curves, with $|t|<\epsilon$, of genus $h$ and class $dA$, lying in $\clO$ has the following properties. The fiber of \eqref{local-emb-moduli} over $t = 0$ is empty and \eqref{local-emb-moduli} is a smooth oriented $1$-dimensional manifold, with the projection to $(-\epsilon,0)\cup(0,\epsilon)$ being a proper submersion. Let $m_\pm$ and $n_\pm$ denote the unsigned and signed number, respectively, of points in the fiber of \eqref{local-emb-moduli} over $t$ for $\pm t>0$.
        \begin{enumerate}[\normalfont(i)]
            \item We have $m_\pm = 0$ if $\delta$ is trivial, or if $r = 0$ and $H\subset G$ is not a maximal subgroup such that ${\bf k}^H\ne 0$.
            \item If we are not in either of the cases from \emph{(i)}, then we have $\Gamma = \normalfont\text{Aut}(\varphi)\subset\bZ/2\bZ$, the estimate $|\Gamma|\cdot|m_\pm|\le 2$ and the identity
            \begin{align}
                n_+ - n_- = -\frac{2\cdot\normalfont\text{sgn}(\gamma,0;p)}{|\Gamma|}.
            \end{align}
            Moreover, if $(t_i,\Sigma'_i\subset X)$ is any sequence of points in \eqref{local-emb-moduli} with $t_i\to 0$, then the Gromov limit of this sequence exists and is given by $\varphi:\Sigma'\to X$.
        \end{enumerate}
\end{Theorem}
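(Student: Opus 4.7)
The plan is to reduce the theorem to the analysis of a finite-dimensional $\Gamma$-equivariant Kuranishi model (with $\Gamma = \text{Aut}(\varphi)$), and then to read off cases (i) and (ii) from the lowest order Taylor coefficients of that model. The two genericity assumptions on $\gamma$---transversality to wall strata from $\clB'$ and avoidance of the degeneracy locus from $\clB''$---will translate into non-vanishing conditions on the linear-in-$t$ and quadratic-in-$x$ coefficients respectively, placing us in the pitchfork normal form previewed in the introduction.

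First I would perform a Kuranishi reduction near $\varphi$. Since $\Sigma$ is a rigid embedded $\gamma(0)$-curve (the wall lies only in the normal direction) and $\overline\clM_h(\Sigma, d)$ is a smooth orbifold of expected dimension near $\varphi$, the parametric moduli space $\clM_h^*(X, \gamma, dA)$ near $\varphi$ is locally cut out by a smooth $\Gamma$-equivariant Kuranishi map
\[
    \kappa : U \times (-\epsilon, \epsilon) \to V, \qquad U = \ker \varphi^* D, \quad V = \coker \varphi^* D,
\]
where $D = D^N_{\Sigma, \gamma(0)}$, and the moduli space appears as the zero set of $\kappa$ modulo the $\Gamma$-action. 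By Lemma \ref{operator-decomp} and the faithful-irreducible-of-real-type hypothesis on $\bf k$, I identify $U \simeq {\bf k}^H$ and $V \simeq {\bf c}^H$ as $\Gamma = N_G(H)/H$ modules, and the Calabi--Yau condition $\ind \varphi^* D = 0$ gives $\dim_\bR U = \dim_\bR V$.

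Case (i) is handled essentially without analysis. If $\delta$ is trivial then ${\bf k}^H = 0$, hence $U = V = 0$, so the only stable map near $\varphi$ is $\varphi$ itself, which is not embedded; thus $m_\pm = 0$. If $r = 0$ and $H$ is not maximal with ${\bf k}^H \ne 0$, then any nonzero $\xi \in U$ is fixed by a strictly larger subgroup $H' \supsetneq H$, so by Remark \ref{invariant-sections} the corresponding normal deformation already descends to the intermediate cover $\tilde\Sigma/H' \to \Sigma$ of degree strictly less than $d$. Any resulting nearby $\gamma(t)$-holomorphic map therefore factors through a cover of smaller degree and cannot be an embedded curve of class $dA$; again $m_\pm = 0$. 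For case (ii), the elementary wall hypothesis forces $\Gamma$ to embed into the real automorphism group of the one-dimensional $\Gamma$-irreducible piece of $U$ carrying the bifurcation, so $\Gamma \subset \bZ/2\bZ$. Restricting $\kappa$ to this piece and to the Serre-dual summand of $V$ (Definition \ref{serre}, which forces $\Gamma$ to act by the same sign on both), we reduce to a smooth $f : \bR \times (-\epsilon, \epsilon) \to \bR$ that is $\Gamma$-equivariant (odd in $x$ when $\Gamma = \bZ/2\bZ$). The transversality $\gamma \in \clB'$ identifies $\partial_t f(0,0)$ with the wall-crossing direction of Definition \ref{wc-sign-def}, so it is nonzero; avoidance $\gamma \in \clB''$ translates to non-vanishing of the next-order coefficient, so in the odd case one obtains $f(x,t) = x \cdot g(x^2, t)$ with $g(s,t) = as + bt + O(s^2 + t^2)$ and $a, b \ne 0$. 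This is the pitchfork: the zero locus consists of the trivial branch $x = 0$ (giving back $\varphi$, not embedded) together with two branches $x = \pm\sqrt{-bt/a}\,(1 + o(1))$ living on exactly one side of $t = 0$. These two branches form a single $\Gamma$-orbit, hence represent a single embedded curve in $X$: embeddedness is automatic for small normal perturbations of the embedded $\Sigma$-cover, and Theorem \ref{compactness-result} rules out any Gromov-limit other than $\varphi$. Comparing the standard orientation on the implicit-function-theorem zero locus with the sign conventions in Definitions \ref{count-sign-def} and \ref{wc-sign-def} yields the identity $n_+ - n_- = -2\cdot\text{sgn}(\gamma, 0; p)/|\Gamma|$; the case $\Gamma = \{1\}$ is analogous without the oddness constraint.

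The principal technical obstacle is verifying that avoidance of the degeneracy locus (Definition \ref{bad-stratum-defined}) translates precisely into the non-vanishing of the specific coefficient $a = \partial_s g(0,0)$. This requires extracting from the second variation of the normal Cauchy--Riemann operator $\varphi^* D$ along $\ker \varphi^* D$ an intrinsic algebraic quantity whose vanishing cuts out $\spadesuit$ in positive codimension, so that the Sard--Smale argument producing $\clB''$ indeed forces this coefficient to be nonzero. This intrinsic second-variation identification, and its compatibility with the equivariant structure so that the resulting Kuranishi map really has the pitchfork form on the nose rather than up to a vanishing change of coordinates, is where the bulk of the analytic work will lie.
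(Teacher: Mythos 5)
Your plan is the same overall strategy as the paper's: pass to a $\Gamma$-equivariant Kuranishi reduction near $\varphi$, exploit the genericity from $\clB'$ and $\clB''$ to control the low-order Taylor coefficients of the reduced map, then read off the pitchfork. However, there are two concrete gaps.

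First, your Kuranishi model $\kappa: U\times(-\epsilon,\epsilon)\to V$ with $U=\ker\varphi^*D$, $V=\coker\varphi^*D$, and the claim $\dim_\bR U=\dim_\bR V$ from ``$\ind\varphi^*D=0$'' is incorrect whenever $r>0$. The Calabi--Yau condition gives $\ind D^N_{\Sigma,J}=0$, but the pullback operator along a $d$-fold genus-$h$ cover has $\ind\varphi^*D=-2r$ where $r=(2h-2)-d(2g-2)$. So $\dim\ker\varphi^*D=1$ while $\dim\coker\varphi^*D=1+2r$, and the domain of the genuine Kuranishi model must include the $2r$-real-dimensional moduli of branched covers $T_\varphi\clM_h(\Sigma,d)$. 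In the paper's setup the domain is $\bR_t\oplus T_\varphi\clM_h(\Sigma,d)\oplus\ker\varphi^*D$ and the target is $\coker\varphi^*D\simeq\bR_t\oplus\bC^r$; the wall-crossing isomorphism ${\bf w}_\varphi$ is used to factor the Kuranishi map as in \eqref{kura-std-form}, after which a scalar function $t\circ g:\ker\varphi^*D\to\bR_t$ is isolated. The $\bC^r$ directions drop out of the sign count because they are complex, but this needs to be said; ignoring the branch-cover moduli entirely breaks the reduction.

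Second, your asserted normal form $f(x,t)=x\cdot g(x^2,t)$ with $g(s,t)=as+bt+O(s^2+t^2)$ and \emph{both} $a\ne 0$, $b\ne 0$ over-states what $\gamma\in\clB''$ gives. The degeneracy locus in Definition \ref{bad-stratum-defined} is cut out by the \emph{simultaneous} vanishing of $T^{(2)}_{\hat p},\dots,T^{(d+1)}_{\hat p}$; avoiding it guarantees only that \emph{some} Taylor coefficient of order $n\le d$ (even when $\Gamma=\bZ/2\bZ$, odd when $\Gamma=\{1\}$) is nonzero, i.e.\ $(t\circ g)(\varsigma)=_n c\varsigma^n$ with $c\ne 0$ for some $1\le n\le d$, not necessarily $n=2$. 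The paper's case analysis (\textsection\ref{simple-kura}) is carried out for general $n$; the final count $n_+-n_-=-2\cdot\text{sgn}(\gamma,0;p)/|\Gamma|$ is insensitive to $n$ beyond its parity, so the conclusion survives, but the step where you invoke ``$a\ne 0$'' is not justified. You correctly identify the translation of degeneracy-locus avoidance into a nonzero Taylor coefficient as the crux; the paper's mechanism for this (Lemmas \ref{algebraic-lemma}--\ref{variation-of-taylor}) is a Vandermonde-type argument finding a good order $n\le d$ together with a normal-derivative variation formula for the $(n+1)$-st Taylor coefficient, and is genuinely nontrivial.

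Minor points: in the trivial-$\delta$ case, $\ker\varphi^*D=0$ does not make $\varphi$ the only nearby stable map---the full family of branched covers of the nearby $\Sigma_t$'s persists; the point is only that none of them are simple, which still gives $m_\pm=0$. And the claim that embeddedness of the bifurcated curves is ``automatic'' requires the rescaling argument of Lemma \ref{nearby-emb-curves}, which uses $\gamma\in\clB'$ (avoidance of $\clW_{\text{emb}}$) together with the simplicity of $\hat\sigma$; it is not automatic.
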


\begin{remark}\label{rmk_slag}
Note that for $J = \gamma(0)$, an element $\sigma \in \ker(\varphi^*D^N_{\Sigma,J})$ can be viewed as a $d$-valued holomorphic section of the bundle $N_{\Sigma}$ over the curve $\Sigma$ with singular points given by the branched points of the cover $\varphi: \Sigma' \rightarrow \Sigma$. Theorem \ref{main-bifurcation} asserts that by varying $J$ along a generic path $\gamma$, we can obtain a $1$-parameter family of solutions to the non-linear Cauchy--Riemann equations in $X$ which give rise to embedded $\gamma(t)$-holomorphic curves of class $d [\Sigma]$. We expect the proof of Theorem \ref{main-bifurcation} (which is carried out in the next section via Kuranishi models) to be useful in other similar situations. For instance, it is speculated in \cite[Section 1]{donaldson2019deformations} that a $2$-valued harmonic $1$-form defined over a special Lagrangian submanifold $M$ in a Calabi--Yau manifold $(X, \omega, \Omega)$ should give rise to a $1$-parameter family of special Lagrangian immersions $\iota_{t}: \tilde{M} \rightarrow X$, where $\tilde{M}$ is the branched double cover of $M$ branching along the singular set of the given harmonic $1$-form. The similarity is quite transparent, though the analysis needed to carry out the program in \cite{donaldson2019deformations} is still under development.
\end{remark}

The rest of this section is devoted to the proof of Theorem \ref{main-bifurcation}. To orient the reader, we start with a brief plan of the argument. In \textsection\ref{simplifications}, we deal with the trivial case of the theorem and focus our attention on the non-trivial case for the remainder. In \textsection\ref{subsec_prep} and \textsection\ref{sec_ift}, we use the implicit function theorem to construct a Kuranishi model for the moduli space of interest. The detailed analysis of the Kuranishi model, which is the heart of the proof, is carried out in \textsection\ref{kura-analysis}. The proof is completed in \textsection\ref{mult-kura} and \textsection\ref{simple-kura} (modulo a crucial technical result whose proof is carried out in \textsection\ref{generic-nonzero-taylor}).

Throughout the argument, we will be freely using the notation from the statement of Theorem \ref{main-bifurcation}. In particular, the super-rigid almost complex structures $J_\pm$, the elementary wall type $\delta = (g,d,{\bf b},G,{\bf k},{\bf c},A)$, the path $\gamma\in\clB'\cap\clB''\subset\clP(J_-,J_+)$ and the point $p = (J,\Sigma,\varphi:\Sigma'\to\Sigma,\iota)$ with $J = \gamma(0)$ will remain fixed for the remainder of this section. We will be abbreviating $D^N_{\Sigma,J}$ to $D$ and will sometimes denote the almost complex structure $\gamma(t)$ by the symbol $J_t$.
\subsection{Simplifications}\label{simplifications}

Let $r,h$ and $H\subset G$ be the number of branch points, domain genus and the (conjugacy class of) subgroup corresponding to the $d$-fold cover $\varphi$ (as in Definition \ref{elem-wall}). We will study what happens to the moduli space 
\begin{align}\label{proof-emb-curve-moduli}
    \clM^*_h(X,\gamma,dA)\to[-1,1]    
\end{align}
of embedded genus $h$ curves of class $dA$ (lying in a neighborhood of $\varphi:\Sigma'\to X$ in the Gromov topology) as we go from $t<0$ to $t>0$.

Write $\bf k = \theta^*$ for some irreducible faithful representation $\theta$ of $G$ of real type. At the point $p = (J,\Sigma,\varphi:\Sigma'\to\Sigma,\iota)\in\clM^d_{{\bf b},G}(\clM_g^{*}(X,A);{\bf k},{\bf c})$, with associated Galois closure $\psi:\tilde\Sigma\to\Sigma$, we have $\dim_\bR\ker D^N_{\psi,\theta} = 1$ and
\begin{align}\label{galois-ker}
    \ker \psi^*D = \theta^*\otimes\ker D^N_{\psi,\theta}.
\end{align}
Since $d = [G:H]\ge 2$, it follows that $G$ is non-trivial (note that we study bifurcation analysis only in the multiply covered case since \cite[Section 6]{IP-GV} already treats the corresponding analysis in the somewhere injective case). Taking $G$-invariants in \eqref{galois-ker} implies that $\ker D = 0$. There is therefore a unique embedded $\gamma(t)$-curve $\Sigma_t$ close to $\Sigma$ for each $|t|$ small enough (obtained using the inverse function theorem).

\begin{Lemma}[No bifurcations for trivial $\delta$]\label{trivial-case}
    If $\delta$ is a trivial elementary wall type, then we can find $\epsilon>0$ and a neighborhood $\clO$ of $\varphi:\Sigma'\to X$ in the Gromov topology such that \eqref{proof-emb-curve-moduli} has no points contained in $(-\epsilon,\epsilon)\times\clO$.
\end{Lemma}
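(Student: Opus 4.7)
The plan is to argue by contradiction, using Theorem \ref{compactness-result} as the main input. The key observation is that, in the trivial case, the operator $\varphi^*D$ has vanishing kernel, which will rule out both alternatives in Theorem \ref{compactness-result}.

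First I would compute $\ker\varphi^*D$ explicitly. Since $\Sigma' = \tilde\Sigma/H$ and $\varphi = \psi'$ in the notation of Remark \ref{invariant-sections}, the natural pullback map induces an isomorphism $\ker\varphi^*D \simeq (\ker\psi^*D)^H$. Combining this with \eqref{galois-ker} gives
\begin{align}
    \ker\varphi^*D \;\simeq\; (\theta^*\otimes\ker D^N_{\psi,\theta})^H \;=\; (\theta^*)^H\otimes\ker D^N_{\psi,\theta} \;=\; {\bf k}^H\otimes\ker D^N_{\psi,\theta},
\end{align}
where $H$ acts trivially on the second tensor factor. By the definition of a trivial elementary wall type, ${\bf k}^H = 0$, so $\ker\varphi^*D = 0$.

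Next, suppose for contradiction that no choice of $\epsilon > 0$ and Gromov neighborhood $\clO$ works. Taking a countable decreasing basis of neighborhoods, I can extract a sequence of embedded $\gamma(t_i)$-curves $\Sigma'_i \subset X$ of genus $h$ and class $dA$ with $t_i \to 0$ such that $\Sigma'_i$ converges in the Gromov topology to the stable map $\varphi:\Sigma'\to\Sigma\subset X$. I would then apply Theorem \ref{compactness-result} (with $V = [-1,1]$ and $\clF = \gamma$) to this sequence, obtaining one of the two alternatives there.

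Alternative (i) requires $d = 1$ and $\varphi$ to be an isomorphism, which is excluded since $d \ge 2$. Alternative (ii) requires the pullback $\varphi^*:\ker D \to \ker\varphi^*D$ to be injective but not surjective. However, the discussion preceding the lemma already establishes $\ker D = 0$ (by taking $G$-invariants in \eqref{galois-ker} and using that the faithful irreducible representation ${\bf k}$ of the non-trivial group $G$ has no invariants), and the computation above gives $\ker\varphi^*D = 0$. Thus $\varphi^*$ is the zero map between zero vector spaces, which is both injective and surjective, so alternative (ii) also fails. This contradiction proves the lemma. The argument is essentially immediate once the representation-theoretic computation of $\ker\varphi^*D$ is in hand, with all the analytic difficulty absorbed into Theorem \ref{compactness-result}.
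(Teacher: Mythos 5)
Your proof is correct, but it takes a genuinely different route from the paper's. The paper argues as follows: assuming a bifurcating sequence $\Sigma'_i$ exists, it applies the rescaling argument from \cite[Proposition~B.1]{Wendl-19} (rescaling $\Sigma'_i$ in the normal direction to the nearby embedded curves $\Sigma_{t_i}$, which exist by the inverse function theorem since $\ker D = 0$) to produce a nonzero element of $\ker\varphi^*D$; this contradicts ${\bf k}^H = 0$ via \eqref{galois-ker}. You instead invoke Theorem~\ref{compactness-result} directly: alternative~(i) fails since $d\ge 2$, and alternative~(ii) fails since both $\ker D$ and $\ker\varphi^*D$ vanish (the first by $G$-invariance as you note, the second by your computation $\ker\varphi^*D \simeq {\bf k}^H\otimes\ker D^N_{\psi,\theta} = 0$), so the pullback map is a tautological isomorphism. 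As Remark~\ref{compactness-remark}(a) indicates, the rescaling argument yields the weaker conclusion $\ker\varphi^*D\neq 0$, which is all the paper needs here; your approach brings in the full strength of Theorem~\ref{compactness-result}. There is no circularity since Theorem~\ref{compactness-result} is established in Section~\ref{compactness-section} independently of anything in Section~\ref{bifurcation}. Your version is arguably cleaner in that it cites an already-proved result of the paper rather than reaching back to an argument from \cite{Wendl-19}, and it avoids the intermediate step of producing the family $\Sigma_{t_i}$; the paper's version is more self-contained at the level of analytic ideas. Both are valid.
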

\begin{proof}
    Assume to the contrary that we can find a sequence $(t_i,\Sigma'_i\subset X)$ of embedded $\gamma(t_i)$-curves of genus $h$ and class $dA$ converging in the Gromov topology to $(0,\varphi:\Sigma'\to X)$. Now, exactly as in the proof of \cite[Proposition B.1]{Wendl-19}, we may suitably rescale $\Sigma'_i$ in the normal direction to the embedded $\gamma(t_i)$-curve $\Sigma_{t_i}$ of genus $g$ and class $A$ to obtain a nonzero element in $\ker\varphi^*D = (\ker\psi^*D)^H$ in the Gromov limit as $i\to\infty$. Now, \eqref{galois-ker} shows that ${\bf k}^H \ne 0$, which implies that $\delta$ is non-trivial.
\end{proof}

Lemma \ref{trivial-case} completes the proof of Theorem \ref{main-bifurcation} in the case when $\delta$ is trivial. Thus, for the rest of this section, we shall focus exclusively on the case when $\delta$ is non-trivial, i.e., $(\theta^*)^H\ne 0$.
We record below an important consequence of $\delta$'s non-triviality.

\begin{Lemma}\label{ker-coker-final}
    The non-triviality of $\delta$ implies that $p = (J,\Sigma,\varphi:\Sigma'\to\Sigma,\iota)\in\clM^d_{{\bf b},G}(\clM_g^{*}(X,A);{\bf k},{\bf c})$ satisfies the following.
    \begin{enumerate}[\normalfont(i)]
        \item We have $\dim_\bR(\theta^*)^H = 1$.
        \item For each irreducible representation $\rho$ of $G$ over $\bR$ with $\rho\not\simeq\theta$, we have either $\normalfont\text{ind}_\bR D^N_{\psi,\rho} = 0$ or $(\rho^*)^H = 0$.
    \end{enumerate}
    As a consequence, we have the identifications
    \begin{align}
        \label{ker-final}
        \ker\varphi^*D &= (\theta^*)^H\otimes\ker D^N_{\psi,\theta} \\
        \label{coker-final}
        \coker\varphi^*D &= (\theta^*)^H\otimes \coker D^N_{\psi,\theta}.
    \end{align}
    Moreover, the natural map
    \begin{align}\label{eq-to-non-eq-iso}
        \normalfont\text{Hom}^G(\ker\psi^*D,\coker\psi^*D)\xrightarrow{\simeq}\text{Hom}(\ker\varphi^*D,\coker\varphi^*D)
    \end{align}
    given by restriction to $H$-fixed points is an isomorphism.
\end{Lemma}
\begin{proof}
    We use Lemma \ref{operator-decomp} and Remark \ref{invariant-sections} to deduce the quasi-isomorphism
    \begin{align}\label{fixed-point-quasi-iso}
        \bigoplus_i\,(\theta_i^*)^H\otimes_{\bK_i}D^N_{\psi,\theta_i}\to\varphi^*D,
    \end{align}
    which implies that we have the identity
    \begin{align}\label{index-decomp}
        \text{ind}_\bR(\varphi^*D) = \sum_i\dim_\bR(\theta_i^*)^H\cdot\text{ind}_{\bK_i}D^N_{\psi,\theta_i}.
    \end{align}
    Now, a simple computation using the Riemann--Roch formula (combined with the Riemann--Hurwitz formula for $\varphi$ and condition (1) of Definition \ref{elem-wall}) shows that the left side of \eqref{index-decomp} is $-2r$. 
    
    On the other hand, by the first inequality in \cite[Lemma 2.15]{Wendl-19}, we know that each term on the right side of \eqref{index-decomp} is non-positive since $\text{ind}_{\mathbb{R}}D = 0$. Moreover, using the second inequality in \cite[Lemma 2.15]{Wendl-19}, we see that
    $\text{ind}_{\bR}(D^N_{\psi,\theta}) \le -2r$ since $\theta$ is faithful, irreducible, and of real type. Since $\dim_\bR(\theta^*)^H \geq 1$, the above argument implies that for all $\theta_i \neq \theta$ in \eqref{index-decomp}, the corresponding term on the right side vanishes, and the term corresponding to $\theta_i = \theta$ is equal to $-2r$. 
    
    If $r>0$, this immediately gives us (i) and (ii). If $r = 0$, then we get (ii) as above and (i) follows from Definition \ref{elem-wall} and the non-triviality of $\delta$. Now, \eqref{ker-final} and \eqref{coker-final} are formal consequences of statements (i), (ii), and \eqref{fixed-point-quasi-iso}. Finally, for the isomorphism \eqref{eq-to-non-eq-iso}, observe that
    \begin{align}
        \text{Hom}^G(\ker\psi^*D,\coker\psi^*D) 
        &= \text{Hom}(\ker D^N_{\psi,\theta},\coker D^N_{\psi,\theta}) \\
        &= \text{Hom}((\theta^*)^H\otimes\ker D^N_{\psi,\theta},(\theta^*)^H\otimes\coker D^N_{\psi,\theta})\\
        &=\text{Hom}(\ker\varphi^*D,\coker\varphi^*D)
    \end{align}
    where we use the fact that $\text{End}^G(\theta^*) = \bR$, i.e., $\theta^*$ is of real type for the first equality and $\dim_\bR(\theta^*)^H = 1$ for the second equality.
\end{proof}

To summarize the discussions above, we note for future reference that
\begin{align}
    \dim_{\mathbb{R}} \ker D &= 0, \\
    \dim_{\mathbb{R}} \ker \varphi^* D &= 1.
\end{align}
These will be used below without further comment.

\subsection{Preparation}\label{subsec_prep}

Since $\gamma\in\clB'$, we know that it is transverse to the map \eqref{wall-to-acs} at $t=0$ and $p = (J,\Sigma,\varphi:\Sigma'\to\Sigma,\iota)$. Let us clarify the meaning of this transversality statement a little further. Choose $\epsilon>0$ small enough so that for $|t|<\epsilon$ the inverse function theorem may be applied to produce an embedded $\gamma(t)$-curve $\Sigma_t$ (of genus $g$ and class $A$) close to $\Sigma$. Define
\begin{align}\label{1-param-covers}
    \clM^d_{{\bf b},G}(\Sigma,\gamma)\to(-\epsilon,\epsilon)
\end{align}
to be a $1$-parametric form of Definition \ref{cover-combinatorial-type} over $\{\Sigma_t\}_{|t|<\epsilon}$. Explicitly, it consists of pairs $(t,\varphi'':\Sigma''\to\Sigma_t)$ such that $t\in(-\epsilon,\epsilon)$ and $\varphi''$ belongs to $\clM^d_{{\bf b},G}(\Sigma_t)$ as in Definition \ref{cover-combinatorial-type} (where we are suppressing the isomorphism of $G$ with the generalized automorphism group from the notation) and is equipped with the projection $(t,\varphi'')\mapsto t$. Being a smooth orbifold (which is proved exactly as in Lemma \ref{orbifold-of-G-covers}), it has a well-defined tangent space at each point (which may be regarded as representations of the isotropy group at that point). In fact, since $\{\Sigma_t\}_{|t|<\epsilon}$ may be viewed as a $1$-parameter family of complex structures on a single smooth surface, it follows that \eqref{1-param-covers} is a smooth locally trivial bundle of orbifolds over $(-\epsilon,\epsilon)$. Thus, we have a natural short exact sequence
\begin{align}\label{1-param-branched-cover}
    0\to T_\varphi\clM^d_{{\bf b},G}(\Sigma)\to T_{(0,\varphi)}\clM^d_{{\bf b},G}(\Sigma,\gamma)\to\bR_t\to 0
\end{align}
relating the tangent spaces of the fiber, the total space and the base. 

Let $\psi:\tilde\Sigma\to\Sigma$ be a Galois closure of $\varphi$. Define
\begin{align}\label{transverse-wall-crossing}
    T_{(0,\varphi)}\clM^d_{{\bf b},G}(\Sigma,\gamma)\to\text{Hom}^G(\ker \psi^*D,\coker \psi^*D)
\end{align} 
to be the linearized map, in the sense of Definition \ref{linearize-family-op}, associated to the family of $G$-equivariant Cauchy--Riemann operators on $\clM^d_{{\bf b},G}(\Sigma,\gamma)$ given by
\begin{align}\label{G-family-op}
    p' = (t,\varphi'':\Sigma''\to\Sigma_t,\iota_t)\mapsto \psi_{p'}^*(D^N_{\Sigma_t,\gamma(t)})
\end{align}
where $p'\mapsto(\psi_{p'}:\tilde\Sigma''\to\Sigma''\to\Sigma_t)$ is a smooth family of Galois closures extending $\psi$ at $p$, obtained as in the discussion preceding \cite[Example 3.6]{Wendl-19}. Transversality of $\gamma$ and \eqref{wall-to-acs} at $(0,p)$ then has the following consequence.

\begin{Lemma}\label{lem:transverse-wall-lift}
    The map \eqref{transverse-wall-crossing} is an isomorphism.
\end{Lemma}
\begin{proof}
    Using $\ker D = \coker D = 0$ and the inverse function theorem, we may identify a neighborhood of $(0,\varphi)$ in $\clM^d_{{\bf b},G}(\Sigma,\gamma)$ with a neighborhood of $p = (J,\Sigma,\varphi:\Sigma'\to\Sigma,\iota)$ inside the pullback of $\clM^d_{{\bf b},G}(\clM_g^*(X,A))$ under the path $\gamma:(-\epsilon,\epsilon)\to\clJ(X,\omega)$. 
    
     Transversality at $(0,p)$ of the path $\gamma$ and the map \eqref{wall-to-acs} of Fredholm index $-1$ shows, by pullback along $\clM^d_{{\bf b},G}(\clM_g^*(X,A))\to\clJ(X,\omega)$, that the map $\clM^d_{{\bf b},G}(\Sigma,\gamma)\to\clM^d_{{\bf b},G}(\clM_g^*(X,A))$ and the inclusion \eqref{stratification-inclusion} have an isolated transverse intersection at $((0,\varphi),p)$. This gives an isomorphism between the tangent space $T_{(0,\varphi)}\clM^d_{{\bf b},G}(\Sigma,\gamma)$ and the normal space at $p$ of the inclusion \eqref{stratification-inclusion}. The normal space of \eqref{stratification-inclusion} at $p$ is indeed given by the right side of \eqref{transverse-wall-crossing} using Remark \ref{reconciling-formulations}. The description of this isomorphism as the linearized map \eqref{transverse-wall-crossing} from above relies on the proof of Theorem \ref{stratification} or equivalently \cite[Theorem D]{Wendl-19}. In more detail, we use the explicit description (from \cite[Section 3.5]{Wendl-19}) of the (surjective) linearization of the equations defining the inclusion \eqref{stratification-inclusion} and this completes the proof.
\end{proof}

Next we will study the stable map moduli space $\clM_h(X,\gamma,dA)$ locally near $(0,\varphi)$ as an intermediate step to understanding \eqref{local-emb-moduli}. The strategy is to use the implicit function theorem to first produce a Kuranishi model and then explicitly analyze the resulting Kuranishi map. Before proceeding to this, let us establish the notations for the domain and target of the Kuranishi map (which are respectively the kernel and cokernel of the corresponding linearized operator).
\subsubsection{Domain of the Kuranishi model}\label{domain}
To describe the domain explicitly, let us denote the inclusion of $\Sigma$ in $X$ by $i$ and the linearized Cauchy--Riemann operator on $i^*T_X$ (resp. $\varphi^*T_X$) by $\tilde D_i$ (resp. $\tilde D_\varphi = \varphi^*\tilde D_i$). Moreover, write $K = \dot\gamma(0)$. The domain of the Kuranishi map is then an open neighborhood of $0$ in the vector space 
\begin{align}
    T^\text{Zar}_{(0,\varphi)}\clM_h(X,\gamma,dA)   
\end{align}
which consists of ``cocycle" triples $(\xi,y,t)$, where $\xi\in\Omega^0(\Sigma',\varphi^*T_X)$, $y\in\Omega^{0,1}(\Sigma',T_{\Sigma'})$ and $t\in\bR$ are such that 
\begin{align}\label{1-param-linearization}
    \tilde D_\varphi\xi + \textstyle\frac12(J\circ d\varphi\circ y + tK\circ d\varphi\circ j_{\Sigma'}) = 0
\end{align}
modulo the ``coboundary" triples $(\frac12 d\varphi(V),j_{\Sigma'}\cdot\bar\partial V,0)$ where $V\in\Omega^0(\Sigma',T_{\Sigma'})$ and $\bar\partial$ stands for the Cauchy-Riemann operator on $\Omega^0(\Sigma',T_{\Sigma'})$. This vector space contains
\begin{align}T^\text{Zar}_\varphi\clM_h(X,J,dA)\end{align} 
which is the subspace of $T^\text{Zar}_{(0,\varphi)}\clM_h(X,\gamma,dA)$ defined by setting $t=0$. 

Similarly, we can consider the parametrized moduli space of $d$--fold genus $h$ branched covers of $\{ \Sigma_t \}_{|t|<\epsilon}$ denoted by $\clM_h(\Sigma,\gamma,d)$. Its (Zariski) tangent space at $(0,\varphi)$ is denoted by $T_{(0,\varphi)}\clM_h(\Sigma,\gamma,d)$. 
The fiber of $\clM_h(\Sigma,\gamma,d)$ over $t = 0$ is
$\clM_h(\Sigma,d)$ and its tangent space at $\varphi$ is written as $T_\varphi \clM_h(\Sigma,d)$.

\begin{Lemma}\label{lemma:big-diagram}
The vector spaces introduced above fit into the following commutative diagram with exact rows and columns (recall that $D = D^N_{\Sigma,J}$).
\begin{center}
\begin{equation}\label{eqn:big-diagram}
\begin{tikzcd}
    & 0 \arrow[d] & 0 \arrow[d] \\
    0 \arrow[r] & T_\varphi \clM_h(\Sigma,d) \arrow[r] \arrow[d] & T_{(0,\varphi)}\clM_h(\Sigma,\gamma,d) \arrow[r] \arrow[d] & \bR_t \arrow[d, equal] \arrow[r] & 0\\
    0 \arrow[r] & T^{\normalfont\text{Zar}}_\varphi\clM_h(X,J,dA) \arrow[r] \arrow[d] & T^{\normalfont\text{Zar}}_{(0,\varphi)}\clM_h(X,\gamma,dA) \arrow[r] \arrow[d] & \bR_t \arrow[r] & 0\\
    & \ker(\varphi^*D) \arrow[r, equal] \arrow[d] & \ker(\varphi^*D) \arrow[d] \\
    & 0 & 0 
\end{tikzcd}
\end{equation}
\end{center}
\end{Lemma}
\begin{proof}

We begin by explaining the maps appearing in \eqref{eqn:big-diagram}. All the maps between the (Zariski) tangent spaces are given by inclusions. The horizontal maps to $\bR_t$ are induced by the projection to the $t$-coordinate. This implies the first two rows form a commutative diagram. The vertical maps to $\ker(\varphi^*D)$ are defined more indirectly and we explain them later.

As in \eqref{1-param-covers}, the natural projection $\clM_h(\Sigma,\gamma,d)\to(-\epsilon,\epsilon)$ is a smooth locally trivial fiber bundle of orbifolds and its fiber over $t = 0$ is $\clM_h(\Sigma,d)$. Linearizing this projection at $(0,\varphi)$ yields the exactness of the first row. For the second row, observe that the map to $\bR_t$ is surjective since the corresponding map on the first row is surjective and the diagram commutes. The definition of $T^\text{Zar}_\varphi\clM_h(X,J,dA)$ now shows that the second row is also exact. Moreover, the compatible short exact sequences of the first two rows induce a canonical isomorphism
\begin{align}\label{tangent-normal-quotient}
    \frac{T^\text{Zar}_\varphi\clM_h(X,J,dA)}{T_\varphi\clM_h(\Sigma,d)} \xrightarrow{\simeq} \frac{T^\text{Zar}_{(0,\varphi)}\clM_h(X,\gamma,dA)}{T_{(0,\varphi)}\clM_h(\Sigma,\gamma,d)}
\end{align}
of quotients. It will suffice to identify the quotient on the left side of \eqref{tangent-normal-quotient} with $\ker(\varphi^*D)$ to simultaneously define the vertical maps to $\ker(\varphi^*D)$ in \eqref{eqn:big-diagram} and establish commutativity and exactness of the two columns. 

To determine the left side of \eqref{tangent-normal-quotient}, we use the long exact sequence associated to the short exact sequence (of Cauchy--Riemann operators on) $0\to \varphi^*T_{\Sigma}\to\varphi^*T_X\to \varphi^*N_\Sigma\to 0$.
\end{proof}

In view of condition (1) of Definition \ref{elem-wall}, we have an isomorphism
\begin{align}\label{eqn:distinct-branch-pts-iso}
    T_\varphi\clM^d_{{\bf b},G}(\Sigma) \xrightarrow{\simeq} T_\varphi\clM_h(\Sigma,d)
\end{align}
induced by the natural map $\clM^d_{{\bf b},G}(\Sigma)\to\clM_h(\Sigma,d)$. Indeed, recall that any small deformation of the morphism $\varphi:\Sigma'\to\Sigma$ is again of type $(g,{\bf b})$. It follows that the natural map from the short exact sequence \eqref{1-param-branched-cover} to the upper row short exact sequence in \eqref{eqn:big-diagram} is an isomorphism and, in particular, this gives
\begin{align}\label{eqn:distinct-branch-pts-iso-parametric}
    T_{(0,\varphi)}\clM^d_{{\bf b},G}(\Sigma,\gamma) \xrightarrow{\simeq} T_{(0,\varphi)}\clM_h(\Sigma,\gamma,d).
\end{align}
In fact, the following diagram
\begin{center}
\begin{equation}\label{tangent-diagram}
\begin{tikzcd}
    T_{(0,\varphi)}\clM^d_{{\bf b},G}(\Sigma,\gamma) \arrow[r, "\simeq"] \arrow[d, "\simeq"] & \text{Hom}^G(\ker\psi^*D,\coker\psi^*D) \arrow[d, "\simeq"] \\
    T_{(0,\varphi)}\clM_h(\Sigma,\gamma,d) \arrow[r,"{\bf w}_\varphi"] & \text{Hom}(\ker\varphi^*D,\coker\varphi^*D)
\end{tikzcd}
\end{equation}
\end{center}
is commutative and, as explained below, all its arrows are isomorphisms. The left vertical arrow is the isomorphism \eqref{eqn:distinct-branch-pts-iso-parametric}, the right vertical arrow comes by restricting to the $H$-fixed subspace (which is an isomorphism by Lemma \ref{ker-coker-final}). The top and bottom horizontal arrows come by considering the families of operators \eqref{G-family-op} and $(t,\varphi'':\Sigma''\to\Sigma_t)\mapsto\varphi''^*(D^N_{\Sigma_t,\gamma(t)})$ on $\clM^d_{{\bf b},G}(\Sigma,\gamma,d)$ and $\clM_h(\Sigma,\gamma,d)$ respectively. The top horizontal arrow was shown to be an isomorphism in Lemma \ref{lem:transverse-wall-lift} and commutativity of \eqref{tangent-diagram} implies that the bottom horizontal arrow must be an isomorphism as well. For future reference, we will label the bottom horizontal arrow of this commutative diagram as ${\bf w}_\varphi$.

Observe that both sides of the isomorphism ${\bf w}_\varphi$ have natural orientations. Indeed, we have the following.
\begin{enumerate}
    \item $\clM_h(\Sigma,\gamma,d)$ is fibered over the oriented interval $(-\epsilon,\epsilon)$ with complex fibers of complex dimension $r$. Recall that $r = (2h-2) - d(2g-2)$ is the number of branch points of $\varphi$.
    \item An orientation of the vector space
    \begin{align}\label{hom-space-orientation}
        \text{Hom}(\ker\varphi^*D,\coker\varphi^*D) = (\ker\varphi^*D)^*\otimes\coker\varphi^*D
    \end{align}
    is the same as an orientation of its determinant line
    \begin{align}
       (\ker\varphi^*D)^{\otimes 1+2r} \otimes\det(\coker\varphi^*D)
    \end{align}
    where we recall that $\dim\ker\varphi^*D = 1$ and $\dim\coker\varphi^*D = 1 + 2r$.
    Since the tensor square of a $1$-dimensional space is canonically oriented, it follows that we simply need an orientation of the determinant line 
    \begin{align}
        \det(\varphi^*D) = \det(\ker\varphi^* D)\otimes\det(\coker\varphi^* D)^*
    \end{align}
    of the Cauchy--Riemann operator $\varphi^*D$. This can be defined in the usual manner by homotoping $D$ to a $\bC$-linear Cauchy--Riemann operator and using the spectral flow.
\end{enumerate}

Thus, the isomorphism ${\bf w}_\varphi$ has a well-defined sign. Notice that the isomorphism \eqref{abstract-wc-iso} for $t_0=0$, whose sign is referenced in Theorem \ref{main-bifurcation}, can be derived as follows from ${\bf w}_\varphi$ by passing to the quotient $\bR_t$ of $T_{(0,\varphi)}\clM_h(\Sigma,\gamma,d)$, using the short exact sequence 
\begin{align}
    0\to T_\varphi\clM_h(\Sigma,d)\to T_{(0,\varphi)}\clM_h(\Sigma,\gamma,d)\to\bR_t\to 0
\end{align}
and making the identification $T_{(0,\varphi)}\clM^d_{{\bf b},G}(\Sigma,\gamma)\xrightarrow{\simeq}T_{(0,\varphi)}\clM_h(\Sigma,\gamma,d)$ given by \eqref{eqn:distinct-branch-pts-iso-parametric}. In particular, we have
\begin{align}
    \text{sgn}({\bf w}_\varphi) = \text{sgn}(\gamma,0;p).
\end{align}
Without loss of generality, we will assume for the rest of this section that $(0,p)$ is a positive wall-crossing, i.e., $\text{sgn}({\bf w}_\varphi) = +1$.
 This assumption can be achieved by reversing the direction of $\gamma$ if it is not already satisfied.
\subsubsection{Target of the Kuranishi model}\label{target}
The target of the Kuranishi map is the space $E_{(0,\varphi)}\clM_h(X,\gamma,dA)$ of all $\eta\in\Omega^{0,1}(\Sigma',\varphi^*T_X)$ modulo all elements of the kind that appear on the left side of \eqref{1-param-linearization}. In fact, in view of the surjectivity of $T^\text{Zar}_{(0,\varphi)}\clM_h(X,\gamma,dA)\to\bR_t$, which follows from \eqref{eqn:big-diagram}, we can restrict attention to just elements as in the left side of \eqref{1-param-linearization} for which $t = 0$. Using the long exact sequence associated to the short exact sequence (of Cauchy--Riemann operators on) $0\to \varphi^*T_\Sigma\to\varphi^*T_X\to\varphi^*N_\Sigma\to 0$, it now follows that we have a natural identification
\begin{align}\label{eqn:coker-varphi-D}
    E_{(0,\varphi)}\clM_h(X,\gamma,dA) = \coker\varphi^*D.
\end{align}
\subsection{Implicit function theorem}\label{sec_ift}
We will now explain how to construct the Kuranishi model for $\clM_h(X,\gamma,dA)$ near $(0,\varphi)$ using the implicit function theorem on an appropriate map $\Phi:\clX\to\clY$ of Banach spaces (defined on a neighborhood of $0$). Let $m\ge 2$ be an integer. Define $\clY$ to be the space of sections of $\Lambda^{0,1}T^*_{\Sigma'}\otimes_\bC\varphi^*T_X$ of class $L^2_m$ (i.e. $m$ derivatives in $L^2$). Note that $\clY$ has a natural action of
\begin{align}
    \Gamma=\text{Aut}(\varphi) = N_G(H)/H    
\end{align}
We define $\clX = \clX_\text{vf}\oplus\clX_\text{acs}\oplus\bR_t$, where the summands are defined as follows. Choose a finite $\Gamma$-invariant set of points $M\subset\Sigma'$ where $\varphi$ is unramified such that $\Sigma'$ has no automorphisms fixing $M$. For each $[z]\in M/\Gamma$, choose a complement $\Delta_{[z]}\subset T_{\varphi(z)}X$ for $d\varphi_z(T_z\Sigma')$. Define $\clX_\text{vf}$ to be the kernel of the $\Gamma$-equivariant evaluation map
\begin{align}
    L^2_{m+1}(\Sigma',\varphi^*T_X)\to\bigoplus_{z\in M} (T_{\varphi(z)}X)/\Delta_{[z]}.
\end{align}
Define $\clX_\text{acs}$ to be a (finite dimensional) $\Gamma$-invariant complement of the map
\begin{align}
    \Omega^0_M(\Sigma',T_{\Sigma'})\xrightarrow{\bar\partial}\Omega^{0,1}(\Sigma',T_{\Sigma'})
\end{align}
consisting of sections compactly supported in $\Sigma'\setminus M$. Here, $\Omega^0_M(\Sigma',T_{\Sigma'})$ is the space of vector fields on $\Sigma'$ which vanish on $M$.

To define the map $\Phi:\clX\to\clY$, we need to fix the following choices.
\begin{enumerate}
    \item A metric on $X$ with its associated exponential map $\exp$.
    \item A $J$-linear connection $\nabla$ on $T_X$.
    \item A smooth $1$-parameter family of complex vector bundle isomorphisms
    \begin{align}
        I_{X,t}:(T_X,J)\xrightarrow{\simeq}(T_X,J_t)
    \end{align}
    with $I_{X,t} = \text{id} + \frac12JA_{X,t}$, where $A_{X,0} = 0$ and $A_{X,t}$ is $J$-antilinear for all $t\in\bR$ with $|t|<\epsilon$ (after shrinking $\epsilon$ if necessary). Explicitly, we can define the endomorphism $A_{X,t}$ by the formula
    \begin{align}\label{explicit-antilinear-perturb}
        A_{X,t} = -2J(\text{id} + JJ_t)(\text{id}-JJ_t)^{-1} = -2J(J_t - J)(J_t + J)^{-1}.    
    \end{align}
    Note that we have $\dot\gamma(0) = \frac{d}{dt}|_{t=0} J_t= \frac{d}{dt}|_{t=0} A_{X,t} = K$.
    \item A smooth family $y\mapsto j_y$ of almost complex structures  on $\Sigma'$ parametrized by $y\in\clX_\text{acs}$ with $\frac d{d\tau}|_{\tau = 0} j_{\tau y} = y$ and $j_0 = j_{\Sigma'}$ being the given almost complex structure on $\Sigma'$. The concrete choice we shall work with is given by declaring the following to be an isomorphism of complex vector bundles
    \begin{align}
        I_{\Sigma',y} = \text{id} + \textstyle\frac12 j_{\Sigma'}y:(T_{\Sigma'},j_{\Sigma'})\to (T_{\Sigma'},j_y)
    \end{align}
    for $y\in\clX_\text{acs}$ with small $|y|$.
\end{enumerate}

We can now define the map $\Phi:\clX\to\clY$ for small $(\xi,y,t)\in\clX$ as
\begin{align}\label{fredholm-map-wc}
    \Phi(\xi,y,t) = PT_{\nabla}\left[I_{X,t}^{-1}\circ(\bar\partial_{J_t,j_y}\exp_\varphi\xi)\circ I_{\Sigma',y}\right].
\end{align}
Note that $\Phi$ is a smooth map of Banach manifolds (see e.g. the discussion in \cite[page 43]{Mc-Sa} or \cite[Section B.5]{Pardon-VFC}).
Here, the symbol $\exp_\varphi\xi$ denotes the map $z\mapsto\exp_{\varphi(z)}\xi(z)$ of regularity $L^2_{m+1}$ (i.e. $m+1$ derivatives in $L^2$), the symbol $\bar\partial_{J_t,j_y}$ denotes the non-linear Cauchy--Riemann operator 
\begin{align}
u \mapsto \textstyle\frac12(du + J_t \circ du \circ j_y)
\end{align}
and $PT_{\nabla}$ denotes the backward parallel transport along the paths $\tau\mapsto\exp_{\varphi(z)}{\tau\xi(z)}$ induced by the connection $\nabla$. Clearly, $\Phi$ is $\Gamma$-equivariant, $\Phi(0) = 0$ and it is immediate that a neighborhood of $(0,\varphi)\in\clM_h(X,\gamma,dA)$ is identified homeomorphically with a neighborhood of $[0]\in\Phi^{-1}(0)/\Gamma$.

To apply the implicit function theorem to $\Phi$, we first note that the linearization $\tilde D = D\Phi(0):\clX\to\clY$ is given by the following formula (compare \eqref{1-param-linearization} from \textsection\ref{domain})
\begin{align}\label{eqn:tilde-D}
    \tilde{D}(\xi,y,t) = \tilde D_\varphi\xi + \textstyle\frac12(J\circ d\varphi\circ y + tK\circ d\varphi\circ j_{\Sigma'})
\end{align}
where recall (from \textsection\ref{domain}) that $\tilde D_\varphi$ denotes the  linearized Cauchy--Riemann operator on $\varphi^*T_X$ (defined with $j_{\Sigma'}$ and $J$ held fixed).
We can now readily see that we have natural identifications
\begin{align}
    \ker\tilde D &= T^\text{Zar}_{(0,\varphi)}\clM_h(X,\gamma,dA) \\
    \label{eqn:coker-tilde-D}\coker\tilde D &= E_{(0,\varphi)}\clM_h(X,\gamma,dA).
\end{align}
Split the maps $\ker\tilde D\to\clX$ and $\clY\to\coker\tilde D$ by choosing a $\Gamma$-invariant complement $\clX'\subset\clX$ for $\ker\tilde D$ and a $\Gamma$-invariant lift $E\subset\clY$ of $\coker\tilde D$.
\begin{Lemma}\label{IFT}
    There are $\Gamma$-invariant neighborhoods $0\in U\subset\ker\tilde D$ and $0\in V\subset\clX'$ such that $\Phi^{-1}(E)\cap(U\times V)$ is given by the graph of a $\Gamma$-equivariant smooth function $\Psi:U\to \clX'$ satisfying $\Psi(0) = 0$ and $D\Psi(0) = 0$.
\end{Lemma}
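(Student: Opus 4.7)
The plan is a standard application of the Banach-space implicit function theorem to the map $\Phi:\clX\to\clY$, with a bit of bookkeeping to preserve $\Gamma$-equivariance. First I would verify that $\tilde D = D\Phi(0)$ is Fredholm: the component $\tilde D_\varphi$ is an elliptic $\bR$-linear Cauchy--Riemann operator on the compact curve $\Sigma'$, and the contributions from $\clX_{\text{acs}}$ and $\bR_t$ are finite-dimensional, so $\tilde D$ is Fredholm. Consequently, $\mathrm{im}\,\tilde D\subset\clY$ is closed with finite-dimensional complement. Since $\Gamma$ is finite, we can average to obtain $\Gamma$-invariant topological splittings $\clX = \ker\tilde D\oplus\clX'$ and $\clY = E\oplus\mathrm{im}\,\tilde D$, where the choice of $\clX'$ and $E$ in the statement is indeed $\Gamma$-invariant by construction.

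Next, let $\pi:\clY\to\mathrm{im}\,\tilde D$ denote the $\Gamma$-equivariant projection with kernel $E$ and define
\begin{equation}
F:U_0\times V_0\to\mathrm{im}\,\tilde D,\qquad F(u,v) = \pi\circ\Phi(u,v),
\end{equation}
for some small neighborhoods $U_0\subset\ker\tilde D$ and $V_0\subset\clX'$ of the origin. Then $F$ is smooth, $\Gamma$-equivariant, and satisfies $F(0,0)=0$. Its partial derivative in the $v$-variable at the origin is
\begin{equation}
\partial_v F(0,0) = \pi\circ\tilde D|_{\clX'} = \tilde D|_{\clX'}:\clX'\to\mathrm{im}\,\tilde D,
\end{equation}
which is a Banach space isomorphism by the choice of $\clX'$ and the closedness of $\mathrm{im}\,\tilde D$. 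The Banach implicit function theorem then produces neighborhoods $U\subset\ker\tilde D$, $V\subset\clX'$ of $0$ and a smooth map $\Psi:U\to V$ with $\Psi(0)=0$ such that the vanishing locus of $F$ in $U\times V$ coincides with $\{(u,\Psi(u)):u\in U\}$. Since $\Phi^{-1}(E)\cap(U\times V)$ is by definition the vanishing locus of $\pi\circ\Phi = F$, this gives the desired graph description.

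For $\Gamma$-equivariance of $\Psi$, I would shrink $U$ and $V$ to $\Gamma$-invariant neighborhoods (possible because $\Gamma$ is finite: replace them by $\bigcap_{g\in\Gamma}g\cdot U$ and $\bigcap_{g\in\Gamma}g\cdot V$). The $\Gamma$-equivariance of $F$ combined with the uniqueness clause of the implicit function theorem then gives $\Psi(g\cdot u) = g\cdot\Psi(u)$ for all $g\in\Gamma$ and $u\in U$. Finally, the vanishing $D\Psi(0)=0$ follows by differentiating the identity $F(u,\Psi(u))\equiv 0$ at the origin:
\begin{equation}
\partial_u F(0,0) + \partial_v F(0,0)\circ D\Psi(0) = 0,
\end{equation}
and noting that $\partial_u F(0,0) = \pi\circ\tilde D|_{\ker\tilde D} = 0$, while $\partial_v F(0,0)$ is invertible.

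There is no genuine obstacle here; the one point that deserves care is the equivariance, which is handled entirely by the finiteness of $\Gamma$ and the uniqueness statement in the implicit function theorem. All the substantial analytic work appears later, in the explicit calculation of the Taylor expansion of $\Psi$ and the analysis of the resulting Kuranishi map in \textsection\ref{kura-analysis}.
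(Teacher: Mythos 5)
Your proposal is correct and takes essentially the same approach as the paper, whose proof of this lemma is a one-line appeal to the implicit function theorem applied to the $\Gamma$-equivariant map $\Phi$. You have simply spelled out the standard details (Fredholmness of $\tilde D$, the $\Gamma$-invariant splittings already fixed in the surrounding text, the IFT applied to $\pi\circ\Phi$, equivariance via uniqueness, and $D\Psi(0)=0$ by differentiating the implicit equation), all of which are implicit in the paper's terse statement.
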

\begin{proof}
    Apply the implicit function theorem to the $\Gamma$-equivariant map $\Phi$.
\end{proof}
\begin{Definition}[Kuranishi map]\label{kura-def}
    Define the $\Gamma$-equivariant map $F:U\to E$ by
    \begin{align}
        F(\kappa) = \Phi(\kappa + \Psi(\kappa)).
    \end{align}
    This map $F$ yields the germ at $0$ of a $\Gamma$-equivariant smooth map from $\ker\tilde D$ to $\coker\tilde D$ or equivalently
    \begin{align}
        \normalfont T^\text{Zar}_{(0,\varphi)}\clM_h(X,\gamma,dA)\to E_{(0,\varphi)}\clM_h(X,\gamma,dA)
    \end{align} 
    which we call a \emph{Kuranishi map} for $(0,\varphi)\in\clM_h(X,\gamma,dA)$.
\end{Definition}
\begin{remark}
    For a more general abstract discussion of Kuranishi reduction and some of its properties, see Appendix \ref{kuranishi-appendix}.
\end{remark}

    It is clear from Lemma \ref{IFT} that there is a neighborhood $\clU'$ of $[0]\in F^{-1}(0)/\Gamma$ and a neighborhood $\clU''$ of $(0,\varphi)\in\clM_h(X,\gamma,dA)$ along with a homeomorphism 
    \begin{align}\label{kuranishi-homeo}
        \Xi:\clU'\xrightarrow{\simeq}\clU''    
    \end{align} 
    mapping $[0]$ to $(0,\varphi)$. We may assume, after shrinking $\clU'$, $\clU''$ and $\epsilon$, that we have
    \begin{align}
        \clU' = ((-\epsilon,\epsilon)\times\clO)\cap\clM_h(X,\gamma,dA)
    \end{align}
    for a neighborhood $\clO$ of $\varphi:\Sigma'\to X$ in the Gromov topology (see the discussion following \cite[Definition 9.1.3]{Pardon-VFC} for an explicit description of a basis of neighborhoods).

\subsection{Analysis of the Kuranishi map}\label{kura-analysis}
All functions in this subsection are to be understood to be germs near $0$ of smooth functions which vanish at $0$. 
\begin{Definition}
    Given two finite dimensional vector spaces $W_1,W_2$, two functions $F',F'':W_1\to W_2$ and an integer $l\ge 1$, we write
    \begin{align}
        F' \equiv_l F''
    \end{align}
    to mean that the $i^\text{th}$ derivatives of $F'-F''$ vanish at $0$ for all $0\le i\le l$. Clearly, $\equiv_l$ is an equivalence relation.
\end{Definition}
We will now analyze the Kuranishi map $F$ from Definition \ref{kura-def}. From Lemma \ref{IFT}, it follows that $F(0) = 0$ and $DF(0) = 0$. We fix some notations first. Recalling $\tilde{D}$ from \eqref{eqn:tilde-D}, let 
\begin{align}\label{intro-varsigma}
    \varsigma&:\ker\tilde D\to\ker(\varphi^*D)\\
    \label{intro-t} t&:\ker\tilde D\to\bR_t
\end{align}
be the natural projections (see the exact commutative diagram \eqref{eqn:big-diagram} for more details, recalling that $T^\text{Zar}_{(0,\varphi)}\clM_h(X,\gamma,dA) = \ker\tilde D$). Using Lemma \ref{lemma:big-diagram}, we fix a splitting
\begin{align}\label{splitting}
    \ker\tilde D = T_{(0,\varphi)}\clM_h(\Sigma,\gamma,d)\oplus \ker\varphi^*D = \mathbb R_t\oplus T_\varphi\clM_h(\Sigma,d)\oplus\ker\varphi^*D
\end{align}
compatible with the inclusions $T_{\varphi}\clM_h(\Sigma,d)\subset T_{(0,\varphi)}\clM_h(\Sigma,\gamma,d)\subset\ker\tilde D$ and the projections \eqref{intro-varsigma}, \eqref{intro-t}.

Observe that $F^{-1}(0)\subset\ker\tilde D = T^\text{Zar}_{(0,\varphi)}\clM_h(X,\gamma,dA)$ contains a $\Gamma$-invariant submanifold $Z$ (passing through $0$) corresponding to $\clU'\cap\clM_h(\Sigma,\gamma,d)$ under $\Xi$ with
\begin{align}\label{trivial-solution-tgt}
    T_0Z = T_\varphi\clM_h(\Sigma,\gamma,d)\subset\ker\tilde D.
\end{align}
Choosing $\varsigma':\ker\tilde D\to\ker(\varphi^*D)\simeq\bR$ to be a smooth function such that, near $0$, $Z = (\varsigma')^{-1}(0)$ is a regular level set, we can then find a smooth function (which we call the ``reduced" version of the Kuranishi map $F$) \begin{align}\label{eqn:tilde-D-varphi}
    F_1:\ker\tilde D\to(\ker\varphi^*D)^*\otimes\coker\tilde D = \text{Hom}(\ker\varphi^*D,\coker\varphi^*D)    
\end{align}
such that $F = \langle\varsigma',F_1\rangle$, where 
\begin{align}
    \langle,\rangle:\ker\varphi^*D\otimes \text{Hom}(\ker\varphi^*D,\coker\varphi^*D)\to\coker\varphi^*D
\end{align}
is the natural pairing. In \eqref{eqn:tilde-D-varphi}, we identify $\coker\tilde D$ with $\coker\varphi^*D$ using \eqref{eqn:coker-varphi-D} and \eqref{eqn:coker-tilde-D}. In view of \eqref{trivial-solution-tgt}, $Z$ is given by the graph of a function
\begin{align}\label{eqn:function-f}
    f:T_0Z\to\ker(\varphi^*D)
\end{align}
with $f(0) = 0$ and $Df(0) = 0$. As a result, we may take $\varsigma' = \varsigma - f$ and, in particular, we have the relation
\begin{align}
    \varsigma' \equiv_1 \varsigma.
\end{align}
Since $DF(0) = 0$, it follows that $F_1(0) = 0$. Let $F_1'$ denote the function obtained by restricting $F_1$ to the subspace $T_0Z\subset\ker\tilde D$. We then have the following.

\begin{Lemma}\label{leading-quadratic}
    The relation
    \begin{align}
        F_1' \equiv_1 {\bf w}_\varphi
    \end{align}
    holds, where ${\bf w}_\varphi$ is the isomorphism introduced in diagram \eqref{tangent-diagram}.
\end{Lemma}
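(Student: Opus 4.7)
The strategy is to extract $F_1'(0)$ and $DF_1'(0)$ from the corresponding derivatives of $F$ at the origin, and then to recognize the mixed second derivative geometrically as the family linearization of the operator defining ${\bf w}_\varphi$.

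First, I would verify $F_1(0)=0$. Differentiating the factorization $F=\langle\varsigma',F_1\rangle$ once at $\kappa=0$ and using $\varsigma'(0)=0$, $DF(0)=0$ (from Lemma \ref{IFT}), and $D\varsigma'(0)|_{\ker\varphi^*D}=\operatorname{id}$ (which follows from $\varsigma'\equiv_1\varsigma$ together with the splitting \eqref{splitting}), one obtains $\sigma\cdot F_1(0)=0$ for every $\sigma\in\ker\varphi^*D$, hence $F_1(0)=0$. For the linear part, I would apply the product rule twice to $F=\langle\varsigma',F_1\rangle$ at $0$. Since $\varsigma'(0)=F_1(0)=0$ and $D\varsigma'(0)(v)=0$ for $v\in T_0Z$ (as $v$ is tangent to the zero locus of $\varsigma'$), only one term survives, giving
\begin{equation*}
D^2F(0)(v,\sigma)\;=\;\sigma\cdot DF_1'(0)(v)\qquad\text{in}\ \coker\tilde D\cong\coker\varphi^*D
\end{equation*}
for $v\in T_0Z$ and $\sigma\in\ker\varphi^*D$. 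The lemma thereby reduces to identifying $D^2F(0)(v,\sigma)$ with $\sigma\cdot{\bf w}_\varphi(v)$.

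On the Kuranishi side, since $\Psi(0)=0$, $D\Psi(0)=0$ (Lemma \ref{IFT}) and $\tilde D$ vanishes on $\ker\tilde D$, a direct Taylor expansion of $F=\Phi\circ(\operatorname{id}+\Psi)$ gives $F(\kappa)=\operatorname{proj}_E\bigl(\tfrac{1}{2}D^2\Phi(0)(\kappa,\kappa)\bigr)+O(|\kappa|^3)$ on $\ker\tilde D$, whence $D^2F(0)(v,\sigma)=\operatorname{proj}_E D^2\Phi(0)(v,\sigma)$. On the geometric side, I would pick a smooth path $z(s)\in Z$ with $\dot z(0)=v$, realized by a family $\varphi_s\colon\Sigma'_s\to\Sigma_{t(s)}\hookrightarrow X$ of branched $\gamma(t(s))$-holomorphic covers, together with a smooth extension $\sigma(s)\in\Omega^0(\Sigma'_s,\varphi_s^*N_{\Sigma_{t(s)}})$ of $\sigma$, and form the two-parameter family $u_{s,\tau}=\exp_{\tau\sigma(s)}\circ\varphi_s$. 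Because $u_{s,0}$ is $\gamma(t(s))$-holomorphic, differentiating $\bar\partial_{\gamma(t(s))}u_{s,\tau}$ in $\tau$ at $\tau=0$ yields $\varphi_s^*D^N_{\Sigma_{t(s)},\gamma(t(s))}\sigma(s)$ modulo the image of the tangential Cauchy--Riemann operator along $\varphi_s$; a further $s$-derivative at $s=0$, projected to $\coker\varphi^*D$, is exactly ${\bf w}_\varphi(v)\cdot\sigma$ by Definition \ref{linearize-family-op}. To close the loop, I would match $u_{s,\tau}$ with the Banach-space curve $\kappa(s,\tau)+\Psi(\kappa(s,\tau))$ whose $(s,\tau)$-jet at the origin corresponds to $sv+\tau\sigma\in\ker\tilde D$, so that the same mixed partial of $\Phi$ along this curve computes $\operatorname{proj}_E D^2\Phi(0)(v,\sigma)$.

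The main obstacle will lie in this final comparison: $\Phi$ is defined via $\exp$, parallel transport along $\nabla$, and the bundle maps $I_{X,t}$ and $I_{\Sigma',y}$, while $u_{s,\tau}$ is built from the metric splitting $T_X|_\Sigma=T_\Sigma\oplus N_\Sigma$ and an independent choice of extension $\sigma(s)$. One has to verify that all such bookkeeping differences, as well as the discrepancy between any chosen lift $\xi_v$ of $v$ and the actual tangent $\dot\varphi_s|_{s=0}$, enter only through elements of $\operatorname{im}\tilde D$ and therefore vanish after projection to $E\cong\coker\varphi^*D$. Once that matching is controlled, the two computations of $D^2F(0)(v,\sigma)$ coincide and the lemma follows.
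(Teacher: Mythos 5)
Your reduction to showing $D^2F(0)(v,\sigma) \equiv D^2\Phi(0)(v,\tilde\sigma) \pmod{\text{im}\,\tilde D}$ (via $\Psi(0)=0$, $D\Psi(0)=0$ from Lemma \ref{IFT}) and then identifying this expression with ${\bf w}_\varphi(v)\cdot\sigma$ is exactly the skeleton of the paper's proof; in particular your Leibniz computation for $F=\langle\varsigma',F_1\rangle$ and the second-order Taylor expansion $F(\kappa)=\operatorname{proj}_E\bigl(\tfrac12 D^2\Phi(0)(\kappa,\kappa)\bigr)+O(|\kappa|^3)$ are both correct. Where you diverge is the final identification. The paper stays entirely inside the Banach-space picture: since $Z$ is by construction the locus in the Kuranishi chart parametrizing the multiple covers $\varphi_\lambda:\Sigma'_\lambda\to\Sigma_{t_\lambda}\subset X$, the quantity $\left(\tfrac{d}{d\lambda}D\Phi(b_\lambda+\Psi(b_\lambda))\cdot\tilde\sigma\right)|_{\lambda=0}$ modulo $\text{im}\,\tilde D$ is \emph{simultaneously} $D^2\Phi(0)\cdot b\cdot\tilde\sigma$ (chain rule plus $D\Psi(0)=0$) and ${\bf w}_\varphi(b)\cdot\sigma$ (Definition \ref{linearize-family-op} applied to the normal component of $D\Phi$ restricted along $Z$). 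You instead realize ${\bf w}_\varphi$ by building an auxiliary geometric family $u_{s,\tau}=\exp_{\tau\sigma(s)}\circ\varphi_s$ from exponentials, metric splittings, and chosen lifts, and must then reconcile $u_{s,\tau}$ with the Banach-space curve $\kappa(s,\tau)+\Psi(\kappa(s,\tau))$. You correctly flag this reconciliation as the main remaining obstacle; it is resolvable --- once the first $(s,\tau)$-derivatives at the origin agree, the mixed second partials of $\Phi$ along the two curves differ only by $D\Phi(0)(\partial^2_{s\tau}(u-w)|_0)\in\text{im}\,\tilde D$, which dies under the projection to $E$ --- but it is a detour that the paper avoids by never leaving the ambient $\Phi$-framework, since $\Phi$ already carries both the family of multiple covers (the locus $Z$) and the family of normal Cauchy--Riemann operators (the normal part of $D\Phi$ along $Z$) within it.
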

\begin{proof}
    Given any $b\in T_0Z$ and $\sigma\in\ker\varphi^*D$, we need to show that
    \begin{align}\label{wall-crossing-term-identity}
        {\bf w}_\varphi(b)\cdot\sigma = \left(\textstyle\frac d{d\lambda}F_1(b_\lambda)\cdot\tilde\sigma\right)|_{\lambda = 0}
    \end{align}
    where $\tilde\sigma\in\ker\tilde D$ is a lift of $\sigma$ with $t(\tilde\sigma) = 0$ (which exists by Lemma \ref{lemma:big-diagram}) and $\lambda\mapsto b_\lambda$ is a path in $Z$ with $b_\lambda(0) = 0$ and $\dot b_\lambda(0) = b$.
    Recall that ${\bf w}_\varphi$ comes from linearizing the family of Cauchy--Riemann operators on $\clM_h(\Sigma,\gamma,d)$ specified immediately after \eqref{tangent-diagram}. Accordingly, we have
    \begin{align}
        \label{eqn:linearization-Dphi}{\bf w}_\varphi(b)\cdot\sigma&\equiv \left(\textstyle\frac d{d\lambda}D\Phi(b_\lambda + \Psi(b_\lambda))\cdot\tilde\sigma\right)|_{\lambda = 0} \pmod{\text{im }\tilde D}\\
        \label{eqn:linearization-Dphi-2}
        &\equiv (D^2\Phi)|_0 ( b, \tilde\sigma) \pmod{\text{im }\tilde D}
    \end{align}
    where $D^2\Phi = D(D\Phi)$ denotes the second derivative of $\Phi$, and we evaluate it at $0$ along the two directions $b$ and $\tilde\sigma$. To justify \eqref{eqn:linearization-Dphi}, we argue as follows. Lemma \ref{IFT} shows that $b_\lambda + \Psi(b_\lambda)\in\clX$ is the representative of the element $(t_\lambda,\varphi_\lambda:\Sigma'_\lambda\to\Sigma_{t_\lambda})\in\clM_h(\Sigma,\gamma,d)$ corresponding to $b_\lambda\in Z$ under $\Xi$. Thus, we see that $D\Phi(b_\lambda + \Psi(b_\lambda))\cdot\tilde\sigma$ represents the application of the linearized Cauchy--Riemann operator at the $\gamma(t_\lambda)$-holomorphic map $\varphi_\lambda$ to (the parallel transport of) $\tilde\sigma$. Since the result of applying the linearized Cauchy--Riemann operator at $\varphi_\lambda$ to $\tilde\sigma$ lifts the result of applying $\varphi_\lambda^*D^N_{\Sigma_{t_\lambda},\gamma(t_\lambda)}$ to $\sigma$, \eqref{eqn:linearization-Dphi} now follows. In \eqref{eqn:linearization-Dphi-2}, we are able to disregard the contribution of $\Psi$ since, by Lemma \ref{IFT}, $\Psi(0) = 0$ and $D\Psi(0) = 0$. On the other hand,
    \begin{align}
        F_1(b_\lambda)\cdot\tilde\sigma = \textstyle\frac \partial{\partial\mu}F(b_\lambda + \mu\tilde\sigma)|_{\mu = 0}
    \end{align}
    for each $\lambda$, by the definition of $F_1$. We now compute
    \begin{align}
        \left(\textstyle\frac d{d\lambda}F_1(b_\lambda)\cdot\tilde\sigma\right)|_{\lambda = 0} &= \textstyle\frac {\partial^2}{\partial\lambda\,\partial\mu}F(b_\lambda + \mu\tilde\sigma)|_{\lambda = \mu = 0}\\
        &\equiv \textstyle\frac {\partial^2}{\partial\lambda\,\partial\mu}\Phi(b_\lambda + \mu\tilde\sigma + \Psi(b_\lambda + \mu\tilde\sigma))|_{\lambda = \mu = 0} \pmod{\text{im }\tilde D} \\
        &\equiv (D^2\Phi)|_0(b,\tilde\sigma) \pmod{\text{im }\tilde D}
    \end{align}
    which proves \eqref{wall-crossing-term-identity}, where we have again used $\Psi(0) = 0$ and $D\Psi(0) = 0$.
\end{proof}
\begin{Corollary}\label{Z'-structure}
    $Z' = F_1^{-1}(0)\subset \ker\tilde D$ is a $\Gamma$-invariant submanifold passing through $0$ and its natural projection to $\ker\varphi^*D$ is a local diffeomorphism at $0$. In other words, $Z'$ is given by the graph of a smooth function
    \begin{align}\label{eqn:function-g}
        g:\ker\varphi^*D\to T_{(0,\varphi)}\clM_h(\Sigma,\gamma,d)
    \end{align}
     such that $g(0) = 0$ and $g$ is $\Gamma$-equivariant. In particular, the submanifolds $Z$ and $Z'$ are transverse at $0\in\ker\tilde D$.
\end{Corollary}
\begin{proof}
    Apply the implicit function theorem to the $\Gamma$-equivariant map $F_1$, noting that $DF_1(0)|_{T_0Z} = {\bf w}_\varphi$ is an isomorphism.
\end{proof}

To continue with the bifurcation analysis, we need to understand the natural projection $Z'\subset\ker\tilde D\to\bR_t$, which is equivalent to understanding the smooth function
\begin{align}\label{tog-intro}
    \ker\varphi^*D\xrightarrow{g} T_{(0,\varphi)}\clM_h(\Sigma,\gamma,d)\xrightarrow{t}\bR_t.
\end{align}
This will be more convenient to do in explicit coordinates which we now introduce. Recall that $\dim_{\mathbb R}\ker\varphi^*D = 1$ and fix an element $0\ne\sigma\in\ker\varphi^*D$ for the rest of this section. Using $\sigma$, we will sometimes regard the map $\varsigma$ from \eqref{intro-varsigma} as an $\bR$-valued linear functional on $\ker\tilde D$ (with $\sigma$ corresponding to $1$). Finally, let ${\bf z}:T_\varphi\clM_h(\Sigma,d)\to\bC^r$ be a $\bC$-linear isomorphism. Thus, the splitting \eqref{splitting} now gives us explicit coordinates
\begin{align}\label{coordinates}
    \ker\tilde D = T^{\text{Zar}}_{(0,\varphi)}\clM_h(X,\gamma,dA)\xrightarrow{(t,{\bf z},\varsigma)}\bR_t\oplus\bC^r_{\bf z}\oplus\bR_\varsigma.
\end{align}
We also obtain (non-canonical) splittings $\ker\tilde D = T_{(0,\varphi)}\clM_h(\Sigma,\gamma,d)\oplus\ker \varphi^*D$ and $T_{(0,\varphi)}\clM_h(\Sigma,\gamma,d) = \bR_t\oplus T_\varphi\clM_h(\Sigma,d)$ which we will use without further comment below.

Orient the $1$-dimensional space $\ker\varphi^*D$ using $\sigma$. Then the natural orientation on $\text{Hom}(\ker\varphi^*D,\coker\varphi^*D)$ gives rise to an orientation on $\coker\varphi^*D$. Using the isomorphism ${\bf w}_\varphi$, the element $\sigma$ and the coordinates \eqref{coordinates}, we can identify
\begin{align}
    \coker\varphi^*D = \bR_t\oplus\bC^r_{\bf z}.
\end{align}
This isomorphism preserves orientation since $\text{sgn}({\bf w}_\varphi) = +1$.
With these choices of coordinates, we can write the Kuranishi map as
\begin{align}\label{kura-std-form}
    F(t,{\bf z},\varsigma) = (\text{id} + A(t,{\bf z},\varsigma))\cdot(\varsigma - f(t,{\bf z})) ((t,{\bf z}) - g(\varsigma))
\end{align}
where $f \equiv_1 0$ is as in \eqref{eqn:function-f}, $g$ is as in \eqref{eqn:function-g} and $A$ is a $(1+2r)\times(1+2r)$ real matrix valued function with $A(0) = 0$. The factorization in \eqref{kura-std-form} comes from the fact that the zeros of $F$ are exactly the points of $Z$ (zeros of the factor $\varsigma - f(t,{\bf z})$) and the points of $Z'$ (zeros of the factor $(t,{\bf z}) - g(\varsigma)$). Moreover, as noted in Corollary \ref{Z'-structure}, the submanifolds $Z,Z'\subset\ker\tilde D$ have a transverse intersection at $0$. Now, recall that the zeros of $F$ represent stable maps in $\clM_h(X,\gamma,dA)$ lying close to $(0,\varphi)$ in the Gromov topology and we are interested in determining which of these zeros correspond to \emph{embedded} curves (all the other zeros must correspond to multiple covers of embedded curves since the path $\gamma$ misses $\clW_\text{emb}$). The points in $Z$ represent multiple covers of the embedded curves $\Sigma_t$ and we are left to determine whether or not the points in $Z'\setminus\{0\}$ represent \emph{embedded} pseudo-holomorphic curves of genus $h$ and class $dA$. This depends on the section $\sigma\in\ker\varphi^*D$ as we explain below.

We begin by recalling that the normal bundle $N_{\Sigma}$ admits an almost complex structure $\hat J^0_\infty$ such that the projection $N_{\Sigma} \rightarrow \Sigma$ is pseudo-holomorphic and the element $\sigma\in\ker\varphi^*D$ can be regarded as a $\hat J^0_\infty$-holomorphic map $\hat\sigma:\Sigma'\to N_\Sigma$ (cf. \cite[Equation (B.1) and Lemma B.4]{Wendl-19}, see also Lemma \ref{CR-op-v-acs}).
\begin{Lemma}\label{simple-v-multiple-cover}
    $\hat\sigma:\Sigma'\to N_\Sigma$ is a multiply covered map if and only if there is a subgroup $H\subsetneq H'\subset G$ with $(\theta^*)^{H'} = (\theta^*)^H\ne 0$.
    Moreover, in this case, 
    \begin{enumerate}[\normalfont(1)]
        \item $\varphi$ is unramified, i.e., $r = 0$ and,
        \item if $\Sigma'\to C\to\Sigma$ is an intermediate cover to which $\sigma$ descends, then any element of $\coker\varphi^*D$ also descends to $C$. In this case, $C\to\Sigma$ also determines a non-trivial elementary wall type.
    \end{enumerate}
\end{Lemma}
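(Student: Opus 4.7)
The plan is to translate questions about $\hat\sigma$ into $G$-equivariant statements on $\tilde\Sigma$ via the Galois closure $\psi:\tilde\Sigma\to\Sigma$, using the decomposition of Lemma \ref{operator-decomp}, the identification $\coker\varphi^*D = (\coker\psi^*D)^H$ from Remark \ref{invariant-sections}, and the index bookkeeping of Lemma \ref{ker-coker-final}. The starting point is that $\ker\varphi^*D = (\theta^*)^H\otimes\ker D^N_{\psi,\theta}$ is one-dimensional, so $\sigma$ is determined up to scalar by its representation-theoretic content (an element of $\theta^*\otimes\ker D^N_{\psi,\theta}$ lifted to $\tilde\Sigma$).

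For the biconditional: if $\hat\sigma$ factors as $\Sigma'\xrightarrow{\pi}C\to N_\Sigma$ with $\deg\pi>1$ and the second map generically injective, then composing with the projection $N_\Sigma\to\Sigma$ exhibits $\Sigma'\to C\to\Sigma$ as an intermediate cover, and by Galois theory $C = H'\backslash\tilde\Sigma$ for some $H\subsetneq H'\subseteq G$. The $H'$-invariance of the lift of $\sigma$ to $\tilde\Sigma$ forces its $\theta^*$-factor to lie in $(\theta^*)^{H'}$, and combined with $\dim_\bR(\theta^*)^H = 1$ this upgrades to the equality $(\theta^*)^{H'} = (\theta^*)^H$. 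Conversely, given such $H\subsetneq H'$, the unique non-zero element of $(\theta^*)^H = (\theta^*)^{H'}$ is $H'$-invariant, so $\sigma$ descends to $C$ and $\hat\sigma$ factors through $\pi:\Sigma'\to C$ of degree $[H':H]>1$.

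For part (1), I will suppose $r>0$ and derive a contradiction. Running the index decomposition of Lemma \ref{operator-decomp} for $\varphi_C^*D$, using the hypothesis $(\theta^*)^{H'} = (\theta^*)^H$ together with Lemma \ref{ker-coker-final}(ii) and the containment $(\theta_i^*)^{H'}\subseteq(\theta_i^*)^H$, yields $\text{ind}_\bR\varphi_C^*D = -2r$. Since $\varphi$ has only simple branching, the product formula $\text{ram}(\varphi,\zeta) = \text{ram}(\varphi_C,\pi(\zeta))\cdot\text{ram}(\pi,\zeta)\le 2$ forces $\varphi_C$ to have ramification indices $\le 2$ everywhere; hence the total ramification of $\varphi_C$ equals its number of branch points $r_C$, giving $r_C = r$, and the two branch sets coincide. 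Now fix a branch $p$ of $\varphi$ and the unique order-$2$ ramification point $\zeta_1\in\Sigma'$: the identity $2 = \text{ram}(\varphi_C,\pi(\zeta_1))\cdot\text{ram}(\pi,\zeta_1)$ leaves two cases. In the first (with $\text{ram}(\varphi_C,\pi(\zeta_1))=2$ and $\text{ram}(\pi,\zeta_1)=1$), no other $\zeta_j\in\varphi^{-1}(p)$ can lie in $\pi^{-1}(\pi(\zeta_1))$ (else its $\varphi$-ramification would be $\ge 2$), so $\pi^{-1}(\pi(\zeta_1)) = \{\zeta_1\}$ with $\pi$-ramification $1$, forcing $[H':H] = \deg\pi = 1$, a contradiction. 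In the second (with $\pi(\zeta_1)$ $\varphi_C$-unramified), the ramification point $\zeta''_C$ of $\varphi_C$ over $p$ is distinct from $\pi(\zeta_1)$, so $\pi^{-1}(\zeta''_C)\subseteq\{\zeta_2,\ldots,\zeta_q\}$ (preimages unramified in $\varphi$) and the same inequality rules out every such preimage, contradicting surjectivity of $\pi$.

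For part (2), Remark \ref{invariant-sections} identifies $\coker\varphi^*D = (\coker\psi^*D)^H$ and $\coker\varphi_C^*D = (\coker\psi^*D)^{H'}$, while Lemma \ref{ker-coker-final} identifies $\coker\varphi^*D$ with the $\theta$-summand $(\theta^*)^H\otimes\coker D^N_{\psi,\theta}$ inside the decomposition of Lemma \ref{operator-decomp}. Since $(\theta^*)^{H'} = (\theta^*)^H$ by hypothesis, every element of this summand is automatically $H'$-invariant, hence lies in $\coker\varphi_C^*D$. The main obstacle is the ramification case analysis in part (1): the simple-branching condition on $\varphi$ together with multiplicativity of ramification indices under $\varphi = \varphi_C\circ\pi$ must be leveraged against the degree of $\pi$, and verifying both cases to completion, although short, requires careful tracking of which preimages can and cannot lie in a given $\pi$-fiber.
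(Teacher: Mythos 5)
Your biconditional argument and your proof of part (2) follow the same route as the paper: express everything on the Galois closure, use the isotypic decomposition and Lemma \ref{ker-coker-final} to identify (co)kernels, and read off descent from $(\theta^*)^{H'} = (\theta^*)^H$. Your derivation of $\mathrm{ind}_{\bR}\,\varphi_C^*D = -2r$ via the decomposition of Lemma \ref{operator-decomp} is also valid, though the paper gets the index equality more directly by observing that the kernel and cokernel of $\varphi^*D$ and $\varphi_C^*D$ are literally identified once $(\theta^*)^{H'} = (\theta^*)^H$.

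In part (1), however, your case analysis has a gap. You assert that \emph{``the total ramification of $\varphi_C$ equals its number of branch points $r_C$''} as a consequence of $\varphi_C$ having ramification indices $\le 2$. This inference is false in general: simple ramification only gives that total ramification equals the number of \emph{ramification points}, and several ramification points may lie over the same branch value. (E.g.\ a degree-$4$ cover with two simple ramification points over a single branch value has total ramification $2$ but only one branch point.) In our setting the conclusion does eventually hold, but establishing it requires exactly the kind of extra argument you omit. This matters because your Case 2 appeals to the unjustified conclusion that the branch sets of $\varphi$ and $\varphi_C$ coincide in order to produce the ramification point $\zeta''_C$ of $\varphi_C$ over the chosen $p$; if $p$ happened not to be a branch value of $\varphi_C$, that step fails. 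The fix is either to choose $p$ to be the image of some ramification point of $\varphi_C$ (which exists as soon as $b_{\varphi_C} = r > 0$), or — much more simply — to invoke the multiplicativity of ramification directly as the paper does: from $\varphi = \varphi_C\circ\pi$ one has
\begin{align}
b_\varphi = [H':H]\cdot b_{\varphi_C} + b_\pi,
\end{align}
and combining this with $b_\varphi = b_{\varphi_C} = r$, $[H':H]\ge 2$ and $b_\pi\ge 0$ gives $(\,[H':H]-1)\,r + b_\pi = 0$, forcing $r = b_\pi = 0$ immediately. Your entire case-by-case analysis of ramification points can therefore be replaced by this one identity, and doing so also closes the gap.
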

\begin{proof}
    Let $\tilde\Sigma\to\Sigma'=\tilde\Sigma/H\xrightarrow{\varphi}\Sigma$ be a Galois closure. 
    If there exists a subgroup $H\subsetneq H'\subset G$ with $(\theta^*)^{H'} = (\theta^*)^H\ne 0$, then $\zeta:\tilde\Sigma/H'\to\Sigma$ corresponds to a non-trivial elementary wall type. Indeed, condition (1) of Definition \ref{elem-wall} holds for $\zeta$ since it does for $\varphi$ and condition (2) follows from $\dim_\bR(\theta^*)^{H'} = \dim_{\bR}(\theta^*)^H = 1$.  Equation \eqref{ker-final} applied to $\varphi$ and $\zeta$ now yields the identification $\ker\zeta^*D = \ker\varphi^*D$ given by pullback along $\Sigma' = \tilde\Sigma/H\to\tilde\Sigma/H'$. Thus, $\hat\sigma$ (or equivalently $\sigma$) descends to $\tilde\Sigma/H'$ and thus, $\hat\sigma$ is a multiply covered map.
    
    Conversely, suppose $\hat\sigma$ is multiply covered. Then, it must descend to $\zeta:\tilde\Sigma/H'\to\Sigma$ for some subgroup $H\subsetneq H'\subset G$. This implies that we have $(\theta^*)^{H'}\ne 0$ and thus, $(\theta^*)^{H'} = (\theta^*)^H$ since the latter is $1$-dimensional by Lemma \ref{ker-coker-final}. This, in particular, shows that $\zeta$ is elementary by the argument of the previous paragraph. It follows that the (co)kernels of $\varphi^*D$ and $\zeta^*D$ are identified, using formulas \eqref{ker-final} and \eqref{coker-final}. This proves the statement about the descent of elements in $\coker\varphi^*D$. It also follows that the indices of $\varphi^*D$ and $\zeta^*D$ are the same, i.e., the number $n_\varphi$ of zeros of $d\varphi$ is the same as the number $n_\zeta$ of zeros of $d\zeta$ (when counted with multiplicity). This cannot happen with $[H':H]>1$, unless $\varphi$ is unramified.
\end{proof}
We now split our discussion into two cases depending on whether $\hat\sigma$ is a simple or multiply covered $\hat J^0_\infty$-holomorphic map.

\subsubsection{\bf Case 1 ($\hat\sigma$ is multiply covered)}\label{mult-kura}

By Lemma \ref{simple-v-multiple-cover} we can now assume that $\varphi$ is unramified and that both $\ker\varphi^*D$ and $\coker\varphi^*D$ are pulled back from some intermediate cover $\Sigma'=\tilde\Sigma/H\to C = \tilde\Sigma/H'\xrightarrow{\zeta}\Sigma$ with $\zeta$ being of degree $d'$ and $C$ being of genus $h'$. Since both $\varphi$ and $\zeta$ determine non-trivial elementary wall types, we can compare the Kuranishi models for $(0,\varphi)$ and $(0,\zeta)$ and we find that they are compatible under the natural pullback maps
\begin{align}
    T^\text{Zar}_{(0,\zeta)}\clM_{h'}(X,\gamma,d'A)&\to T^\text{Zar}_{(0,\varphi)}\clM_h(X,\gamma,dA) \\
    E_{(0,\zeta)}\clM_{h'}(X,\gamma,d'A)&\to    E_{(0,\varphi)}\clM_h(X,\gamma,dA)
\end{align}
both of which are isomorphisms. Thus, every curve close to $(0,\varphi)\in\clM_h(X,\gamma,dA)$ is a $(d/d')$-fold cover of a curve close to $(0,\zeta)\in\clM_{h'}(X,\gamma,d'A)$. Thus, the space $\clM_h^*(X,\gamma,dA)$ is empty near $(0,\varphi)\in\clM_h(X,\gamma,dA)$. This completes the proof of Theorem \ref{main-bifurcation}(i) in view of Lemma \ref{simple-v-multiple-cover}.
\subsubsection{\bf Case 2 ($\hat\sigma$ is simple)}\label{simple-kura}

\begin{Lemma}
    $\Gamma = \normalfont\text{Aut}(\varphi) \subset\bZ/2\bZ$.
\end{Lemma}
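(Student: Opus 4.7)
The plan is to exhibit an injective homomorphism $\Gamma \hookrightarrow \bZ/2\bZ$ by letting $\Gamma$ act on the $1$-dimensional kernel $\ker(\varphi^*D)$ and then using simplicity of $\hat\sigma$ to rule out a nontrivial stabilizer. Concretely, since every $\gamma \in \Gamma = N_G(H)/H$ is an automorphism of $\Sigma'$ over $\Sigma$, pullback by $\gamma$ preserves the bundle $\varphi^*N_\Sigma$ and commutes with the operator $\varphi^*D$. Hence $\Gamma$ acts $\bR$-linearly on $\ker(\varphi^*D)$. By Lemma \ref{ker-coker-final}, this kernel is one-dimensional over $\bR$, so the action factors through $\mathrm{Aut}_\bR(\bR) = \bR^\times$; finiteness of $\Gamma$ then forces the image to land in $\{\pm 1\}$.

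Next, I will identify the kernel of the resulting homomorphism $\chi:\Gamma \to \{\pm 1\}$ with the automorphism group of the holomorphic map $\hat\sigma:\Sigma'\to N_\Sigma$. For $\gamma \in \Gamma$, the condition $\chi(\gamma) = +1$ means $\gamma^*\sigma = \sigma$, which, unravelled at the level of maps into $N_\Sigma$ (using $\varphi\circ\gamma = \varphi$ so that both sides take values in the same fiber), is precisely the condition $\hat\sigma\circ\gamma = \hat\sigma$. Thus $\ker \chi \subseteq \mathrm{Aut}(\hat\sigma)$.

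Since we are in Case 2, $\hat\sigma:\Sigma'\to N_\Sigma$ is a simple $\hat J^0_\infty$-holomorphic map. Simple pseudo-holomorphic maps from connected smooth domains have trivial automorphism group, so $\mathrm{Aut}(\hat\sigma) = \{1\}$. Combining this with the inclusion above gives $\ker\chi = \{1\}$, so $\chi:\Gamma \hookrightarrow \bZ/2\bZ$ is injective, which proves $\Gamma \subset \bZ/2\bZ$.

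The only conceptual subtlety, and what I would treat most carefully, is the identification $\gamma^*\sigma = \sigma \iff \hat\sigma\circ\gamma = \hat\sigma$: one must verify that the complex structure $\hat J^0_\infty$ on $N_\Sigma$ (as in \cite[Equation~(B.1)]{Wendl-19}) is manifestly invariant under the fiberwise action induced by $\gamma$, so that the two notions of ``$\Gamma$-equivariance'' (on sections, and on maps into the total space of $N_\Sigma$) really coincide. This is a direct consequence of the fact that $\gamma$ acts through the linear bundle automorphism of $\varphi^*N_\Sigma$ covering the identity on $N_\Sigma$, together with the explicit formula for $\hat J^0_\infty$; once this is recorded, the argument is immediate.
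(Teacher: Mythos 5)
Your argument is correct and is essentially the same as the paper's: both reduce the claim to showing that $\Gamma$ acts faithfully on the $1$-dimensional space $\ker\varphi^*D$, and both derive faithfulness from the simplicity of $\hat\sigma$. The paper routes the contradiction through Lemma \ref{simple-v-multiple-cover} (a non-faithful action would give $H\subsetneq H'$ with $(\theta^*)^{H'}=(\theta^*)^H\ne 0$, so $\hat\sigma$ would be multiply covered), whereas you inline the same idea by observing directly that $\gamma^*\sigma=\sigma$ is equivalent to $\hat\sigma\circ\gamma=\hat\sigma$ and that a simple map has trivial automorphism group; these are the same observation packaged differently, since a nontrivial $\gamma\in\mathrm{Aut}(\hat\sigma)$ exactly witnesses $\hat\sigma$ factoring through $\Sigma'/\langle\gamma\rangle$. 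One small note: the final subtlety you flag about $\hat J^0_\infty$-equivariance is not actually needed. The equivalence $\gamma^*\sigma=\sigma\iff\hat\sigma\circ\gamma=\hat\sigma$ is a pointwise identity of set maps (both sides are $z\mapsto\sigma(\gamma(z))\in N_{\Sigma,\varphi(z)}$), and $\hat\sigma\circ\gamma$ is automatically $\hat J^0_\infty$-holomorphic because $\gamma$ is a biholomorphism of $(\Sigma',j_{\Sigma'})$, so no compatibility between $\gamma$ and $\hat J^0_\infty$ on the total space is required.
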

\begin{proof}
    It suffices to show that $\Gamma$ acts faithfully on the $1$-dimensional space $(\theta^*)^H$. Suppose to the contrary that there is an element $h'\in N_G(H)\setminus H$ which acts trivially on $(\theta^*)^H$. Letting $H'$ be the subgroup of $N_G(H)$ generated by $H$ and $h'$, we find that $(\theta^*)^{H'} = (\theta^*)^H \ne 0$. By Lemma \ref{simple-v-multiple-cover}, we see that this is a contradiction to the simplicity of the $\hat J^0_\infty$-holomorphic map $\hat\sigma$.
\end{proof}
\begin{Lemma}[Nearby embedded curves]\label{nearby-emb-curves}
    All points in $Z'\setminus\{0\}$ which are sufficiently close to $0\in\ker\tilde D$ are mapped into $\clM_h^*(X,\gamma,dA)$ by the map $\Xi$ from \eqref{kuranishi-homeo}.
\end{Lemma}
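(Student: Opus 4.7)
The plan is to argue by contradiction: suppose there is a sequence $z_i\in Z'\setminus\{0\}$ with $z_i\to 0$ for which $u_i := \Xi(z_i)\notin\clM^*_h(X,\gamma,dA)$. Each $u_i:\Sigma'\to X$ is a smooth $\gamma(t_i)$-holomorphic map of genus $h$ in class $dA$, and the sequence converges to $\varphi$ in the Gromov topology with $t_i\to 0$. Because $\gamma\in\clB'$ avoids $\clW_\text{emb}$ (Lemma \ref{baire-transverse}), every simple $\gamma(t_i)$-curve is automatically embedded, so if $u_i$ were simple we would already have $u_i\in\clM^*_h(X,\gamma,dA)$. Hence each $u_i$ factors as $u_i = f_i\circ p_i$, where $p_i:\Sigma'\to C_i$ is a branched cover of some degree $k_i\ge 2$ (dividing $d$) and $f_i:C_i\to X$ is a simple embedded $\gamma(t_i)$-curve of class $(d/k_i)A$. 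After passing to a subsequence, I may assume that $k_i$, the genus of $C_i$, and the combinatorial type of $p_i$ are all constant.

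In the first case $k_i = d$, the map $f_i$ is an embedded $\gamma(t_i)$-curve of class $A$ whose image converges to $\Sigma$. For $i$ large, $f_i(C_i)$ lies in a tubular neighborhood of $\Sigma$, and since it represents the class $A = [\Sigma]$, it must project diffeomorphically onto $\Sigma$; thus $C_i$ has genus $g$. Since $\ker D = 0$, the one-parameter family $\{\Sigma_t\}$ from \S\ref{simplifications} accounts for all embedded $\gamma(t)$-curves of genus $g$ and class $A$ close to $\Sigma$, forcing $f_i(C_i) = \Sigma_{t_i}$. Thus $u_i$ lies in $\clM_h(\Sigma,\gamma,d)$, whose image in the Kuranishi model is the submanifold $Z$. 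But $Z$ and $Z'$ are transverse at $0$ by Corollary \ref{Z'-structure}, so $Z\cap Z' = \{0\}$ in a neighborhood of $0\in\ker\tilde D$; this contradicts $z_i\in Z'\setminus\{0\}$.

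In the remaining case $k_i < d$, each $f_i$ is a simple $\gamma(t_i)$-curve of class $(d/k_i)A\ne A$ whose image $f_i(C_i) = u_i(\Sigma')$ converges to $\Sigma$. Using the uniform area bound $\omega([C_i]) = (d/k_i)\omega(A)$ together with Gromov compactness and compactness of the relevant Hurwitz moduli, one extracts a subsequential Gromov limit of $f_i$ and a limit of $p_i$, producing a factorization $\varphi = \varphi_*\circ p_*$ with $\varphi_*:C_*\to\Sigma$ a cover of degree $d/k_i\ge 2$ and $p_*:\Sigma'\to C_*$ a cover of degree $k_i\ge 2$. Theorem \ref{compactness-result} applied to $f_i$ (noting $\deg\varphi_*\ge 2$) yields $\ker\varphi_*^*D\ne 0$. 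By the Galois correspondence, $C_* = \tilde\Sigma/H'$ for some intermediate subgroup $H\subsetneq H'\subseteq G$, and Lemmas \ref{operator-decomp} and \ref{ker-coker-final} identify $\ker\varphi_*^*D$ with $(\theta^*)^{H'}$. Since $(\theta^*)^H$ is one-dimensional, the non-vanishing of $(\theta^*)^{H'}\subseteq(\theta^*)^H$ forces $(\theta^*)^{H'} = (\theta^*)^H\ne 0$. By Lemma \ref{simple-v-multiple-cover}, this contradicts the simplicity of $\hat\sigma$ characterizing Case 2.

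The hard part will be carrying out the Gromov-limit argument in the third paragraph rigorously, particularly when $C_*$ may become nodal. This is handled by working with a Deligne--Mumford-type compactification of the relevant Hurwitz space: even nodal limits $C_*$ arise from Galois subgroups $H'$ via the correspondence applied to the smooth part, and the identification of $\ker\varphi_*^*D$ with $(\theta^*)^{H'}$ proceeds as before.
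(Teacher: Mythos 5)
Your proof takes a genuinely different route from the paper's, and as written it has real gaps. The paper's own argument is shorter: since $Z\pitchfork Z'$ at $0$ implies that the curves $\Xi(z_i)$ are not covers of the nearby genus-$g$ curves $\Sigma_{t_i}$, the rescaling construction of \cite[Appendix B]{Wendl-19} applies and produces a Gromov limit $\hat\sigma':\Sigma'\to N_\Sigma$ representing a nonzero element of the $1$-dimensional space $\ker\varphi^*D$. Since $\hat\sigma$ is simple (the Case~2 hypothesis), $\hat\sigma'$ is also simple, and openness of simplicity forces the (rescaled, hence the original) curves to be simple for $i\gg1$, contradicting the assumption that they are multiply covered.

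Your factorization $u_i = f_i\circ p_i$ and reduction to Theorem~\ref{compactness-result} applied to the $f_i$ is conceptually reasonable, but two steps are not justified. In Case $k_i = d$, the claim that $f_i(C_i)$ "must project diffeomorphically onto $\Sigma$" does not follow from lying in a tubular neighborhood and representing class $A$: a degree-one smooth map of oriented surfaces is not automatically a diffeomorphism, and the tubular projection is not $\gamma(t_i)$-holomorphic. The correct argument applies Theorem~\ref{compactness-result} to the sequence $f_i$ and uses $\ker D = 0$ to rule out option~(ii) for a degree-one limit $\varphi_*:C_*\to\Sigma$ --- any nonzero element of $\ker\varphi_*^*D$ would vanish on the main component $\Sigma$ and hence, by matching at nodes, on every ghost component. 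More seriously, in Case $k_i < d$ your remark that "even nodal limits $C_*$ arise from Galois subgroups $H'$" is incorrect as stated: a nodal $C_*$ is not of the form $\tilde\Sigma/H'$, and the identification of $\ker\varphi_*^*D$ with $(\theta^*)^{H'}$ does not proceed "as before". The actual fix must (a) show that a nonzero element of $\ker\varphi_*^*D$ restricts nontrivially to the unique non-ghost component of $C_*$, (b) pass to the normalization of that component, and only then (c) invoke the Galois correspondence for the factorization $\Sigma'\to\tilde C_*^{(0)}\to\Sigma$; and one must separately verify that the Gromov limits of $u_i$, $f_i$, $p_i$ can be taken with compatible reparametrizations so that $\varphi = \varphi_*\circ p_*$ literally holds. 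The paper's rescaling argument avoids all of these case distinctions by moving directly into $N_\Sigma$, where the one-dimensionality of $\ker\varphi^*D$ and the simplicity hypothesis do all the work.
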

\begin{proof}
    Since $\gamma\in\clB'$, it avoids $\clW_\text{emb}$ and it is therefore enough to show that the curves in question are simple. We argue by contradiction. To this end, suppose $(t_n,\varphi_n':\Sigma'_n\to X)$ is a sequence in $\clM_h(X,\gamma,dA)$ converging to $(0,\varphi)$ such that for each $n\ge 1$, we have the following properties.
    \begin{enumerate}
        \item $(t_n,\varphi_n':\Sigma'_n\to X)$ is the image under $\Xi$ of a point $(t_n,\varphi_n,\sigma_n)\in \ker\tilde D$ lying in $Z'\setminus\{0\}$. Here, we are using the splitting \eqref{splitting}.
        \item $\varphi'_n$ is a multiply covered $J_{t_n}$-holomorphic map.
    \end{enumerate}
    The submanifolds $Z$ and $Z'$ of $\ker\tilde D$ have a transverse intersection at $0$ (by Corollary \ref{Z'-structure}) and the points of $Z$ correspond to multiple covers of the embedded $\gamma(t)$-curves $\Sigma_t$ defined in the discussion following \eqref{galois-ker}. Thus, (1) implies that, for $n\gg 1$, the $\varphi'_n$ are not covers of the embedded $\gamma(t_n)$-curves $\Sigma_{t_n}\subset X$. Arguing as in the proof of \cite[Theorem B.1]{Wendl-19}, we conclude that by rescaling the $\varphi'_n$ (and the almost complex structures $J_{t_n}$) suitably in the normal directions to $\Sigma_{t_n}$ (and passing to a subsequence), we obtain a Gromov limit $\hat\sigma':\Sigma'\to N_\Sigma$ which is $\hat J^0_\infty$-holomorphic and corresponds (under \cite[Lemma B.4]{Wendl-19}) to an element $0\ne\sigma'\in\ker(\varphi^*D)$. Since $\dim_{\mathbb R}\ker\varphi^*D = 1$ and $\hat\sigma$ is simple, it follows that $\hat\sigma'$ is also simple. Since being a simple map is an open property, it follows that the rescalings of the $\varphi'_n$ in the convergent subsequence are also simple maps for $n\gg 1$. As a result, for $n\gg 1$, the maps $\varphi'_n:\Sigma'_n\to X$ themselves are also simple $J_{t_n}$-holomorphic maps and this contradicts (2).
\end{proof}

The next statement crucially depends on the technical results of \textsection\ref{generic-nonzero-taylor} and the fact that the path $\gamma$ lies in the Baire set $\clB''$ (by Lemma \ref{baire-taylor}).

\begin{Lemma}\label{taylor-nonva}
The map \eqref{tog-intro} has a non-trivial Taylor expansion at the origin. More precisely, there exists an integer $1\le n\le d$ and a real number $c \ne 0$ with
\begin{align}\label{t-taylor-coeff}
    (t\circ g)(\varsigma) \equiv_n c\varsigma^n.
\end{align}
\end{Lemma}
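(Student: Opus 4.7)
The plan is to argue by contradiction, reducing the claim to the generic non-degeneracy built into the choice $\gamma\in\clB''$. Suppose that $(t\circ g)(\varsigma)\equiv_d 0$ at the origin. The goal is to show that this forces the point $p=(J,\Sigma,\varphi,\iota)$ to lie in the degeneracy locus \eqref{bad-stratum-statement}, contradicting Lemma \ref{baire-taylor}.

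First, I would expand $g$ order by order in $\varsigma$. Starting from the defining equation $F_1(g(\varsigma),\varsigma)\equiv 0$ and Lemma \ref{leading-quadratic}, which says that the restriction of $DF_1(0)$ to $T_{(0,\varphi)}\clM_h(\Sigma,\gamma,d)$ agrees with the isomorphism ${\bf w}_\varphi$, repeated differentiation in $\varsigma$ lets us solve recursively for $g_n:=\tfrac{1}{n!}\partial_\varsigma^n g(0)$ in terms of the $n$-jet of $F_1$ (equivalently of $\Phi$) and the lower-order coefficients $g_1,\ldots,g_{n-1}$. Composing with the $t$-projection and unwinding the auxiliary choices from \textsection\ref{sec_ift}, each $t(g_n)$ turns out to be an intrinsic $\bR$-valued multilinear expression depending only on $\sigma^{\otimes n}\in(\ker\varphi^*D)^{\otimes n}$ and on the $n$-jet at $t=0$ of the family of $G$-equivariant Cauchy--Riemann operators $\{\psi^*D^N_{\Sigma_t,\gamma(t)}\}$, read off through its $(\theta^*)^H$-isotypic component in view of \eqref{ker-final}--\eqref{coker-final}. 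In particular, $t(g_n)$ is well defined as a function on the parameter space of $1$-parameter wall-crossings.

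Second, I would show that the simultaneous vanishing $t(g_n)=0$ for all $1\le n\le d$ cuts out precisely the $\spadesuit$-stratum from Definition \ref{bad-stratum-defined} near $p$. The bound $n\le d$ is essential here and reflects the hypothesis of Case 2 in \textsection\ref{simple-kura}: the map $\hat\sigma:\Sigma'\to N_\Sigma$ is a simple $\hat J^0_\infty$-holomorphic map of generic fiber size $d$, so the ring of multi-valued holomorphic sections of $N_\Sigma$ through which the powers $\sigma^{\otimes n}$ are evaluated is, locally near a generic point of $\Sigma$, generated by $\sigma$ modulo a relation of degree $d$. This forces the Taylor coefficients of $t\circ g$ of order greater than $d$ to be polynomially determined by those of order $\le d$, so $\{t(g_1)=\cdots=t(g_d)=0\}$ already defines the full degeneracy condition. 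Once this identification is in place, the fact that $\clW^d_{{\bf b},G}(g,A,{\bf k},{\bf c})_\spadesuit\subset\clJ(X,\omega)$ has codimension $\ge 2$ and Lemma \ref{baire-taylor} together yield the contradiction: some $1\le n\le d$ must satisfy $t(g_n)\ne 0$, and we set $c:=t(g_n)$.

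The principal obstacle is the second step, which requires giving an intrinsic, path-independent description of each $t(g_n)$, verifying that its simultaneous vanishing up to order $d$ is exactly the correct definition of the $\spadesuit$-stratum, and confirming positive codimension of that stratum. This demands careful bookkeeping of the $\Gamma$-action, of the $H$-fixed subspace decomposition, and of the algebraic structure of $\hat\sigma$ as a simple $d$-valued section of $N_\Sigma$; for these reasons the detailed verification is naturally postponed to the separate subsection \textsection\ref{generic-nonzero-taylor}, and the argument above treats that verification as a black box.
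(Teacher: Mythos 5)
Your overall strategy matches the paper's: argue by contradiction, show that the vanishing of the relevant Taylor coefficients would place $p$ on the degeneracy locus from Definition \ref{bad-stratum-defined}, and then appeal to $\gamma\in\clB''$ (Lemma \ref{baire-taylor}). But you have inverted where the actual work lies, and your step 2 contains a gap.

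The paper's proof is essentially a definition-unwinding argument. Using the explicit factorization \eqref{kura-std-form}, the restriction $\chi(\varsigma)=(t\circ F)(0,\mathbf 0,\varsigma)$ is related to $(t\circ g)(\varsigma)$ by $\chi(\varsigma)=-\varsigma\bigl[(\mathrm{id}+A(0,\mathbf 0,\varsigma))g(\varsigma)\bigr]_t$, and the order-$m$ Taylor coefficients of $\chi$ are (after choosing a splitting) exactly the tensors $T^{(m)}_p$ that appear in Definition \ref{bad-stratum-defined}. So "$(t\circ g)$ has a nonzero Taylor coefficient of some order $1\le n\le d$" translates directly into "$T^{(m)}_p\ne 0$ for some $2\le m\le d+1$", i.e.\ $p\not\in\clM^d_{{\bf b},G}(\clM^*_g(X,A);{\bf k},{\bf c})_\spadesuit$, which is guaranteed by $\gamma\in\clB''$. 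That is the whole proof. Your recursive expansion of $g$ from $F_1(g(\varsigma),\varsigma)\equiv 0$, and the intermediate claim that each $t(g_n)$ is an intrinsic multilinear functional, is overhead that the factorization \eqref{kura-std-form} makes unnecessary.

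The genuine gap is in your step 2. You claim that the ring of multi-valued holomorphic sections through which $\sigma^{\otimes n}$ is evaluated is "generated by $\sigma$ modulo a relation of degree $d$", and you use this to argue that "the Taylor coefficients of $t\circ g$ of order greater than $d$ [are] polynomially determined by those of order $\le d$". Neither assertion is made in the paper, neither is needed, and the second one is not established and would require a substantial separate argument. You do not need to explain why the cutoff is $d+1$: the degeneracy locus is simply \emph{defined} (Definition \ref{bad-stratum-defined}) using the Taylor coefficients $T^{(2)},\ldots,T^{(d+1)}$, so "all of them vanish" is by fiat the membership condition you are contradicting. What actually requires the bound $d+1$ is the codimension estimate for this locus (Lemma \ref{main-technical-result}), and the relevant input there is the Vandermonde argument of Lemma \ref{algebraic-lemma}, not any structure theorem for rings of multi-valued sections.

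Relatedly, you misassign the role of \textsection\ref{generic-nonzero-taylor}: you treat it as a black box supplying the verification of your step 2, but that subsection proves Lemma \ref{main-technical-result}, which is used to establish Lemma \ref{baire-taylor}. Once Lemma \ref{baire-taylor} is granted (as both you and the paper grant it), the proof of Lemma \ref{taylor-nonva} is complete after the definition-unwinding described above; nothing further needs to be deferred.
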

\begin{proof}
    In the course of the proof, we will be using the definitions and results of \textsection\ref{generic-nonzero-taylor}, to which we refer the reader for details. Examining Definition \ref{bad-stratum-defined}, the expression \eqref{kura-std-form} for the Kuranishi map $F$ and using Remark \ref{changing-infinite-dim-trivializations} to ignore the factor $\text{id} + A(t,{\bf z},\varsigma)$ in \eqref{kura-std-form}, we see that the function $t\circ g$ has a non-trivial $d$-jet at $0$ if and only if the point $p = (J,\Sigma,\varphi:\Sigma'\to\Sigma,\iota)$ does not lie on the degeneracy locus $\clM^d_{{\bf b},G}(\clM^*_g(X,A);{\bf k},{\bf c})_\spadesuit$ from Definition \ref{bad-stratum-defined}. But this is guaranteed by the fact that $\gamma$ belongs to the Baire set $\clB''$ (see Lemma \ref{baire-taylor}). We note that $\clB''$ being a Baire set ultimately rests on the fact (proved in Lemma \ref{main-technical-result} of \textsection\ref{generic-nonzero-taylor} below) that the degeneracy locus is of codimension $\ge 1$ in $\clM^d_{{\bf b},G}(\clM^*_g(X,A);{\bf k},{\bf c})$.
\end{proof}
 
Using Lemma \ref{taylor-nonva}, we will now determine the signed count of the zeros of $F$ in $Z'$ for small $t\ne 0$. We divide into cases based on the sign of $c$ and the parity of $n$. (If $\Gamma = \bZ/2\bZ$, then $n$ must necessarily be even.)
\begin{enumerate}
    \item \emph{$c\ne 0$, $n$ odd}. The Taylor expansion \eqref{t-taylor-coeff} shows that for each small $t_0\ne 0$ we have a unique $\varsigma_0$ such that $t_0 = (t\circ g)(\varsigma_0)$. Moreover, since $\bf z$ is a complex coordinate, the sign of this zero can be determined by looking at the unique nonzero solution $\varsigma = \varsigma_0 = (c^{-1}t_0)^{1/n}$ of the simplified equation
    \begin{align}\label{model}
        \varsigma(t_0 - c\varsigma^n) = 0.
    \end{align}
    We compute of the derivative at this point
    \begin{align}
        \textstyle\frac d{d\varsigma}\,\varsigma(t_0 - c\varsigma^n)|_{\varsigma = \varsigma_0} = t_0 - (n+1)c\varsigma_0^n = -nt_0
    \end{align}
    and find that the sign of the zero is $+1$ for $t<0$ and $-1$ for $t>0$.
    \item \emph{$c>0$, $n$ even}. The Taylor expansion \eqref{t-taylor-coeff} shows that for small $t_0 > 0$, there are exactly two distinct solutions $\varsigma_0\sim\pm(c^{-1}t_0)^{1/n}$ to $t_0 = (t\circ g)(\varsigma_0)$ while there are no solutions for small $t_0 < 0$. Again, we can compute the signs of these zeros, by considering the model equation \eqref{model}, and we find that both of these zeros have the sign $-1$.
    \item \emph{$c<0$, $n$ even}. Arguing exactly as in the previous case, we find that there are exactly two zeros $\varsigma_0\sim\pm(c^{-1}t_0)^{1/n}$ for small $t_0<0$ (both with sign $+1$) and no zeros for small $t_0 > 0$.
\end{enumerate}
Taking the $\Gamma$-symmetry into account and shrinking $\clO$ and $\epsilon$ if necessary, we find that in all cases
the signed number of embedded curves in a sufficiently small Gromov neighborhood $\clO$ of $\varphi:\Sigma'\to\Sigma$ reduces by the amount $2/|\Gamma|\in\bZ$ when $\gamma$ goes from $-\epsilon<t<0$ to $0<t<\epsilon$. Except for the fact that Lemma \ref{baire-taylor} (used in the proof of Lemma \ref{taylor-nonva}) relied on Lemma \ref{main-technical-result} (proved in the following subsection), we have now completed the proof of Theorem \ref{main-bifurcation}. \qed

\begin{figure}
\centering
\includegraphics[width=0.7\textwidth]{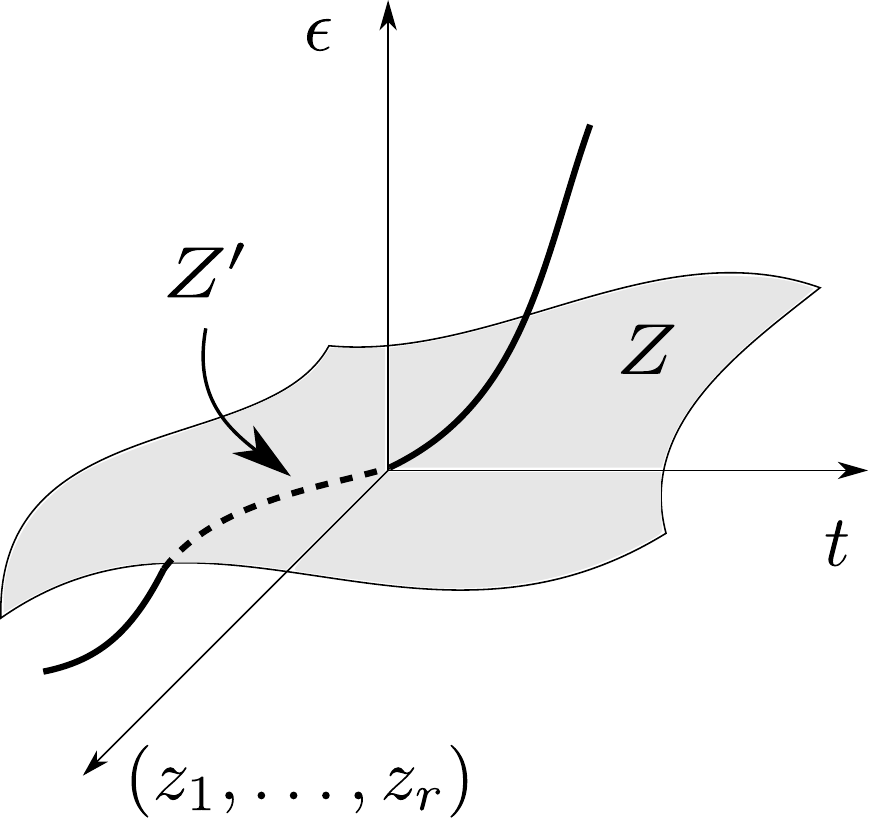}
\caption{A schematic picture of the moduli space (when $n$ is odd); see the discussion below Lemma \ref{taylor-nonva}. }
\end{figure}

\subsubsection{Non-degeneracy of Taylor expansion}
\label{generic-nonzero-taylor}
For any point 
\begin{align}
    \hat p = (\hat J,C,\hat\varphi:C'\to C,\hat\iota)\in \clM^d_{{\bf b},G}(\clM^*_g(X,A);{\bf k},{\bf c}),
\end{align} 
such that $(g,d,{\bf b},G,{\bf k}, {\bf c}, A)$ is a non-trivial elementary wall type (see Definition \ref{elem-wall}), the moduli space $\clM_h(X,\hat J,dA)$ of $\hat J$-holomorphic maps of genus $h$ and class $dA$ (up to isomorphism) can be described locally near $\hat\varphi$, in the sense of the discussion surrounding \eqref{kuranishi-homeo}, using a Kuranishi map
\begin{align}\label{cover-kuranishi}
    F_{\hat p}:T_{\hat\varphi}^\text{Zar}\clM_h(X,\hat J,dA)\to E_{\hat\varphi}\clM_h(X,\hat J,dA)
\end{align}
with the domain and target of \eqref{cover-kuranishi} being defined following \textsection\ref{domain} and \textsection\ref{target}. In particular, we have a short exact sequence and an isomorphism
\begin{align}
    \label{zar-ses} 0\to T_{\hat\varphi}\clM_h(C,d)\to &T^\text{Zar}_{\hat\varphi}\clM_h(X,\hat J, dA)\xrightarrow{\hat\varsigma}\ker\hat\varphi^*\hat D\to 0\\
    \label{ob-iso}&E_{\hat\varphi}\clM_h(X,\hat J,dA) = \coker\hat\varphi^*\hat D
\end{align}
where $\hat D = D^N_{C,\hat J}$ is the normal deformation operator of embedded $\hat J$-curve $C\subset X$. Arguing as in the discussion surrounding \eqref{eqn:tilde-D-varphi}, we express the submanifold $\hat Z\subset T^\text{Zar}_{\hat\varphi}\clM_h(X,\hat J,dA)$ corresponding to $\clM_h(C,d)$ as a regular level set $\hat Z = (\hat\varsigma')^{-1}(0)$ for a smooth map $\hat\varsigma':T^\text{Zar}_{\hat\varphi}\clM_h(X,\hat J,dA)\to\ker\hat\varphi^*\hat D$ satisfying $\hat\varsigma'\equiv_1\hat\varsigma$. Using $\hat\varsigma'$, we can write $F_{\hat p} = \langle\hat\varsigma',F_{\hat p}^\text{red}\rangle$ with
\begin{align}\label{reduced-kuranishi-map}
    F_{\hat p}^\text{red}:T^\text{Zar}_{\hat\varphi}\clM_h(X,\hat J,dA)\to\text{Hom}(\ker\hat\varphi^*\hat D,\coker\hat\varphi^*\hat D)
\end{align}
being the smooth map defined in analogy with the map $F_1$ from \eqref{eqn:tilde-D-varphi}. It follows from Corollary \ref{abstract-kuranishi-leading-order} that the linear map 
\begin{align}\label{reduced-kuranishi-linearization}
    DF^\text{red}_{\hat p}(0):T^\text{Zar}_{\hat\varphi}\clM_h(X,\hat J,dA)\to\text{Hom}(\ker\hat\varphi^*\hat D,\coker\hat\varphi^*\hat D),
\end{align}
which can be determined from $D^2F_{\hat p}(0)$, is well-defined (independent of the choices involved in the construction of the Kuranishi map $F_{\hat p}$ as well as the choice of the defining equation $\hat\varsigma'$ for $\hat Z$ satisfying $\hat\varsigma'\equiv_1\hat\varsigma$). The same argument as in Lemma \ref{leading-quadratic} shows that the restriction of $DF^{\text{red}}_{\hat p}(0)$ to the linear subspace $T_{\hat\varphi}\clM_h(C,d)$ recovers the natural linear map
\begin{align}\label{branched-cover-linearization}
    T_{\hat\varphi}\clM_h(C,d)\to\text{Hom}(\ker\hat\varphi^*\hat D,\coker\hat\varphi^*\hat D).
\end{align}
analogous to the bottom horizontal map in \eqref{tangent-diagram}. Denote the image of \eqref{branched-cover-linearization} by $V_{\hat\varphi}$. Similarly, denote the image of the linear map
\begin{align}\label{branched-cover-leading}
    T_{\hat\varphi}\clM_h(C,d)\otimes\ker\hat\varphi^*\hat D\to\coker\hat\varphi^*\hat D
\end{align}
associated to \eqref{branched-cover-linearization} by $W_{\hat\varphi}$. Note that the injectivity of \eqref{branched-cover-linearization} is equivalent to that of \eqref{branched-cover-leading} which is, in turn, equivalent to $W_{\hat\varphi}\subset \coker\hat\varphi^*\hat D$ being of codimension $1$ or, equivalently, to $V_{\hat\varphi}\subset\text{Hom}(\ker\hat\varphi^*\hat D,\coker\hat\varphi^*\hat D)$ being of codimension $1$. This is because $\dim_{\bR}\ker\hat\varphi^*\hat D = 1$ (since $\hat p$ is non-trivial elementary), the moduli space $\clM_h(C,d)$ has complex dimension $r$, and $\dim_{\bR} \coker\hat\varphi^*\hat D = 2r+1$.

Now, suppose that \eqref{branched-cover-linearization} is injective but that \eqref{reduced-kuranishi-linearization} is not an isomorphism. We further apply Kuranishi reduction to $F_{\hat p}^\text{red}$ from \eqref{reduced-kuranishi-map}. Under these assumptions, the map $DF_{\hat p}^\text{red}(0)$ from \eqref{reduced-kuranishi-linearization} must have $1$-dimensional kernel and cokernel with identifications
\begin{align}
    \label{reduced-kuranishi-ker}\ker DF_{\hat p}^\text{red}(0) &\xrightarrow{\simeq} \ker\hat\varphi^*\hat D \\
    \label{reduced-kuranishi-coker}\coker DF_{\hat p}^\text{red}(0) &= \text{Hom}(\ker\hat\varphi^*\hat D,\coker\hat\varphi^*\hat D)/V_{\hat\varphi}
\end{align}
where the isomorphism \eqref{reduced-kuranishi-ker} is induced by the projection $\hat\varsigma$ from \eqref{zar-ses}. Then \eqref{reduced-kuranishi-ker} and \eqref{reduced-kuranishi-coker} respectively prescribe the domain and target of this further Kuranishi reduction. In fact, \eqref{reduced-kuranishi-ker} provides a splitting of the short exact sequence \eqref{zar-ses} and this splitting gives a preferred choice $\hat\varsigma' = \hat\varsigma - \hat f$ of defining function for $\hat Z$, by writing $\hat Z$ as the graph of a function $\hat f:T_{\hat\varphi}\clM_h(C,d)\to\ker\hat\varphi^*\hat D$ with $\hat f(0) = 0$ and $D\hat f(0) = 0$. This preferred choice of $\hat\varsigma'$ also determines $F_{\hat p}^\text{red}$. Now, applying Kuranishi reduction (see Appendix \ref{kuranishi-appendix} for a review) to the smooth map $F_{\hat p}^\text{red}$ and using the identifications \eqref{reduced-kuranishi-ker}--\eqref{reduced-kuranishi-coker}, we obtain a corresponding Kuranishi map\footnote{To see the relevance of $G_{\hat p}$, recall from Lemma \ref{taylor-nonva} that one is interested in the Taylor expansion of $t\circ g$ at $0$. The main point is then that $G_{\hat p}$ is obtained from $F_{\hat p}^\text{red}$ in analogy with how $t\circ g$ was obtained from $F_1$ (the reduced version of $F$) in \textsection\ref{kura-analysis}.}
\begin{align}\label{fully-reduced-kuranishi-map}
    G_{\hat p}:\ker\hat\varphi^*\hat D\to\text{Hom}(\ker\hat\varphi^*\hat D,\coker\hat\varphi^*\hat D)/V_{\hat\varphi}
\end{align}
with $G_{\hat p}(0) = 0$ and $DG_{\hat p}(0) = 0$. By Corollary \ref{abstract-kuranishi-leading-order}, the leading order Taylor coefficient of $G_{\hat p}$ is well-defined independent of the choices made in constructing $G_{\hat p}$. Explicitly, if all terms of order $<n$ in the Taylor expansion of $G_{\hat p}$ at $0$ vanish for some integer $n>1$, then we may view the order $n$ term in its Taylor expansion invariantly as a linear map
\begin{align}\label{reduced-taylor-coeffs}
    D^nG_{\hat p}(0):(\ker\hat\varphi^*\hat D)^{\otimes n}\to\text{Hom}(\ker\hat\varphi^*\hat D,\coker\hat\varphi^*\hat D)/V_{\hat\varphi}
\end{align}
between $1$-dimensional (real) vector spaces.

Note that the Kuranishi map $G_{\hat p}$ is defined under the assumption that \eqref{reduced-kuranishi-linearization} fails to be surjective. To avoid confusion, we introduce linear maps 
\begin{align}\label{unreduced-taylor-coeffs}
    T^{(n)}_{\hat p}:(\ker\hat\varphi^*\hat D)^{\otimes n}\to(\coker\hat\varphi^*\hat D)/W_{\hat\varphi}
\end{align}
for integers $n>1$ as follows for subsequent discussions. For $n=2$, we define $T^{(2)}_{\hat p}$ as in \eqref{unreduced-taylor-coeffs} via the map $\ker\hat\varphi^*\hat D\to\text{Hom}(\ker\hat\varphi^*\hat D,\coker\hat\varphi^*\hat D)/V_{\hat\varphi}$ induced on quotients by $DF_{\hat p}^\text{red}(0)$ from  \eqref{reduced-kuranishi-linearization}. If $T^{(2)}_{\hat p}$ vanishes, then we are in the setting of the previous paragraph and the smooth map $G_{\hat p}$ as in \eqref{fully-reduced-kuranishi-map} can be constructed. With this in mind, whenever $n>1$ is an integer with $T^{(k)}_{\hat p} = 0$ for $2\le k\le n$, we define $T^{(n+1)}_{\hat p}$ as in \eqref{unreduced-taylor-coeffs} via $D^nG_{\hat p}(0)$.

\begin{remark}
    It is conceivable that we have $T^{(n)}_{\hat p} = 0$ for all $n>1$.
\end{remark}
\begin{Definition}[Degeneracy locus]\label{bad-stratum-defined} 
    Fix a non-trivial elementary wall type $(g,d,{\bf b},G,{\bf k}, {\bf c}, A)$.
    We define the \emph{degeneracy locus}
    \begin{align}\label{bad-stratum-inclusion}
        \clM^d_{{\bf b},G}(\clM^*_g(X,A);{\bf k},{\bf c})_\spadesuit \subset \clM^d_{{\bf b},G}(\clM^*_g(X,A);{\bf k},{\bf c})
    \end{align}
    to be the subspace consisting of all points $\hat p = (\hat J,C,\hat\varphi:C'\to C,\hat\iota)$ such that
    \begin{enumerate}[\normalfont(1)]
        \item $W_{\hat\varphi}$ from \eqref{branched-cover-leading} is a codimension $1$ subspace of $\coker\hat\varphi^*\hat D$ and,
        \item the linear maps $T^{(n)}_{\hat p}$ from \eqref{unreduced-taylor-coeffs} vanish for $2\le n\le d+1$.
    \end{enumerate}
\end{Definition}
Note that condition (1) in Definition \ref{bad-stratum-defined} is open (as it is an injectivity condition) while condition (2) is closed (as it is a vanishing condition). As a result, \eqref{bad-stratum-inclusion} is the inclusion of a locally closed subspace.

\begin{remark}
    The reader is justified in wondering why the quantities $T^{(n)}_{\hat p}$ in Definition \ref{bad-stratum-defined} need to be defined indirectly (using two Kuranishi reductions instead of one). A suggestion for a simpler definition (with the view of proving Lemma \ref{taylor-nonva}) could be to project the Taylor coefficients of $F_{\hat p}$ (or $F_{\hat p}^\text{red}$) suitably to obtain $T^{(n)}_{\hat p}$. However, we can exhibit an example showing that the non-vanishing of (one of) these is not sufficient to obtain Lemma \ref{taylor-nonva} (which is the main intended application of Definition \ref{bad-stratum-defined}) as follows. Using the notation of \eqref{kura-std-form}, suppose that we have $r=2$, ${\bf z} = (z_1,z_2)$ and
    \begin{align}
        F(t,z_1,z_2,\varsigma) =\varsigma(t-\varsigma|z_1|^2+\varsigma^5,z_1-\varsigma^2,z_2+\varsigma^3).
    \end{align}
    Solving $F = {\bf 0}$, we either get $\varsigma = 0$ or we get $z_1 = \varsigma^2$, $z_2 = -\varsigma^3$ and $t = \varsigma|z_1|^2-\varsigma^5 = 0$, violating the conclusion of Lemma \ref{taylor-nonva}. Indeed, in this case, the function $\varsigma^{-1}F(0,z_1,z_2,\varsigma)=(-\varsigma|z_1|^2+\varsigma^5,z_1-\varsigma^2,z_2+\varsigma^3)$ is the counterpart of $F^\text{red}_{\hat p}$ from \eqref{reduced-kuranishi-map} and its Kuranishi reduction $\varsigma\mapsto G_{\hat p}(\varsigma)$ is identically zero (and, in particular, has vanishing Taylor expansion).
\end{remark}

The remainder of this subsection is devoted to showing that \eqref{bad-stratum-inclusion} is an inclusion of codimension $\ge 1$ (Lemma \ref{main-technical-result}).

\begin{Lemma}\label{algebraic-lemma}
    Let $\hat p = (\hat J,C,\hat\varphi:C'\to C,\hat\iota)$ be a point of $\clM^d_{{\bf b},G}(\clM_g^*(X,A);{\bf k},{\bf c})$ such that $(g,d,{\bf b},G,{\bf k}, {\bf c}, A)$ is a non-trivial elementary wall type. Denote by $\hat D = D^N_{C,\hat J}$ the normal deformation operator of $C$. Then, for any 
    \begin{align}
        0&\ne\xi\in\ker\hat\varphi^*\hat D  \\
        0&\ne\xi'\in\ker(\hat\varphi^*\hat D)^\dagger = (\coker\hat\varphi^*\hat D)^*   
    \end{align}
    we can find an integer $1\le n\le d$ and a point $q\in C$ over which $\hat\varphi$ is unramified and the element
    \begin{align}\label{non-linear-petri-analog}
        \sum_{\tilde q\in\varphi^{-1}(q)}(\xi^{\otimes n+1}\otimes\xi')(\tilde q) \in \normalfont\text{Sym}_\bR^{n+1}N_{C,q}\otimes_\bR N^\dagger_{C,q}
    \end{align}
    is nonzero. Here, recall that the bundle $N^\dagger_C$ is defined as in \eqref{eqn:dual-E}.
\end{Lemma}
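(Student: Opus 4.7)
The plan is to prove the lemma by induction on $d$, exploiting the simple-versus-multiply-covered dichotomy for the $\hat J^0_\infty$-holomorphic map $\hat\xi:C'\to N_C$ canonically associated to the nonzero kernel section $\xi$ (via the correspondence of \cite[Lemma B.4]{Wendl-19}). The base case $d=1$ is immediate: $\hat\varphi$ is an isomorphism, and at any $q\in C$ where both $\xi$ and $\xi'$ are nonzero (a generic condition, since nonzero kernel elements of $\bR$-linear Cauchy--Riemann operators have only discrete zero sets) the element $\xi(q)^{\otimes 2}\otimes\xi'(q)$ is already nonzero.

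For the inductive step with $d>1$, first treat the case where $\hat\xi$ is somewhere injective. On a dense open subset of unramified $q\in C$, the values $\xi(\tilde q_1),\ldots,\xi(\tilde q_d)$ on the fiber $\hat\varphi^{-1}(q)=\{\tilde q_1,\ldots,\tilde q_d\}$ are $d$ distinct nonzero elements of $N_{C,q}$. Pick a generic $\bR$-linear functional $\ell:N_{C,q}\to\bR$ so that the reals $a_i:=\ell(\xi(\tilde q_i))$ are distinct and nonzero. Arguing by contradiction, assume $F_n(q):=\sum_i\xi(\tilde q_i)^{\otimes n+1}\otimes\xi'(\tilde q_i)=0$ for all $1\le n\le d$. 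Pair $F_n(q)$ with $\ell^{\otimes n+1}$ in the first factor and an arbitrary $\eta'\in(N_{C,q}^\dagger)^*$ in the second to obtain the homogeneous linear system $\sum_{i=1}^d a_i^{n+1}\eta'(\xi'(\tilde q_i))=0$ for $n=1,\ldots,d$. Its $d\times d$ coefficient matrix $[a_i^{n+1}]$ is a rescaled Vandermonde with determinant $\prod_i a_i^2\cdot\prod_{i<j}(a_j-a_i)\ne 0$, forcing $\eta'(\xi'(\tilde q_i))=0$ for every $i$ and $\eta'$, hence $\xi'(\tilde q_i)=0$. Varying $q$ over a dense set then yields $\xi'\equiv 0$, contradicting $\xi'\ne 0$.

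If instead $\hat\xi$ is multiply covered, Lemma \ref{simple-v-multiple-cover} produces an intermediate factorization $C'\xrightarrow{\pi}C''\xrightarrow{\hat\varphi''}C$ with $\deg\hat\varphi''=d'<d$, guarantees that $\hat\varphi$ is unramified, and (by part (2) of that lemma, combined with Serre duality) ensures that \emph{both} $\xi$ and $\xi'$ descend along $\pi$ to sections $\xi_1''\in\ker(\hat\varphi'')^*\hat D$ and $\xi_2''\in\ker(\hat\varphi'')^*\hat D^\dagger$. A direct fibrewise computation, using that each $\pi$-fiber has $d/d'$ elements along which both $\xi$ and $\xi'$ are constant, then gives $F_n(q)=(d/d')\cdot F_n''(q)$, where $F_n''$ denotes the analogous sum for $(\hat\varphi'',\xi_1'',\xi_2'')$. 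The inductive hypothesis on degree $d'<d$ furnishes the required $n$ and unramified $q$.

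The main obstacle is Case 2: the reduction to a smaller cover demands that $\xi'$ descend along the \emph{same} intermediate cover through which $\xi$ descends. This coupled-descent phenomenon is exactly what Lemma \ref{simple-v-multiple-cover}(2) supplies, and its proof rests on the rigid structural identities of Lemma \ref{ker-coker-final} coming from the elementary-wall context. Without this coupling, a purely anti-invariant component of $\xi'$ (relative to the intermediate deck action) could produce genuine cancellations forcing $F_n\equiv 0$ for every $n$, simultaneously invalidating the Vandermonde argument and blocking the inductive reduction.
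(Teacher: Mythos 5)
Your proposal is correct and follows essentially the same two-step strategy as the paper: reduce to the case where the kernel section $\xi$ separates generic fibers (via Lemma \ref{simple-v-multiple-cover}, using that both $\xi$ and $\xi'$ descend along a common intermediate cover in the multiply-covered case), then conclude by a shifted Vandermonde determinant argument. The only organizational differences — you phrase the reduction as a strong induction on the degree $d$ rather than the paper's one-step passage to the maximal subgroup $H'\supset H$ with ${\bf k}^{H'}={\bf k}^H$, and you argue the Vandermonde step by contradiction rather than by explicitly taking $n$ to be the first index where the matrix-vector product is nonzero — are cosmetic and do not change the substance of the argument.
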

\begin{proof}
    Choose a Galois closure $\tilde C\to \tilde C/H = C'\to C$. Enlarge the subgroup $H\subset G$ to a maximal subgroup $H'$ such that we have ${\bf k}^{H'} = {\bf k}^H\ne 0$. Applying Lemma \ref{simple-v-multiple-cover}, we see that $\ker\hat\varphi^*\hat D$ and $\coker\hat\varphi^*\hat D$ both descend to the intermediate cover $\hat\varphi':\tilde C/H'\to C$. The element defined by the formula \eqref{non-linear-petri-analog} with $\hat\varphi'$ in place of $\hat\varphi$ differs from \eqref{non-linear-petri-analog} just by a factor of $[H':H]\ge 1$. Thus, there is no loss of generality in assuming that $H' = H$.
    
    Using this assumption, it follows that for all points $q\in C$, outside of a closed and at most countable set, $\xi(\hat\varphi^{-1}(q))\subset N_{C,q}$ has $d$ distinct nonzero elements. Choose such a point $q$. By perturbing it slightly if necessary, we may also assume that $\xi'(\tilde q)\ne 0$ for some $\tilde q\in\hat\varphi^{-1}(q)$. Enumerate the pre-images of $q$ under $\hat\varphi$ as $\tilde q_1,\ldots,\tilde q_d\in C'$. Choose linear functionals
    \begin{align}
        \Lambda&:N_{C,q}\to\bR \\
        \Lambda'&:N^\dagger_{C,q}\to\bR
    \end{align}
    such that the $d$ numbers $x_i = \Lambda(\xi(\tilde q_i))$ are \emph{all distinct and nonzero} and the $d$ numbers $y_j = \Lambda'(\xi'(\tilde q_j))$ are \emph{not all zero}. This implies that the product
    \begin{align}
            \begin{bmatrix}
                x_1^2 & x_2^2 & \cdots & x_d^2 \\
                x_1^3 & x_2^3 & \cdots & x_d^3 \\
                \vdots & \vdots & \ddots & \vdots\\
                x_1^{d+1} & x_2^{d+1} & \cdots & x_d^{d+1}
            \end{bmatrix}
            \begin{bmatrix}
                    y_1 \\ y_2 \\ \vdots \\ y_d
            \end{bmatrix}
    \end{align}
    cannot be zero, since the matrix is invertible (its determinant is $\prod_{i=1}^d x_i^2$ times the Vandermonde determinant associated to the $d$ distinct numbers $x_1,\ldots,x_d$) and the column vector is nonzero. Take $n$ to be the smallest number such that the $n^\text{th}$ entry of this matrix product is nonzero. It then follows that 
    \begin{align}
        \left\langle\text{Sym}^{n+1}_\bR\Lambda\otimes_\bR\Lambda',\sum_{\tilde q\in\varphi^{-1}(q)}(\xi^{\otimes n+1}\otimes\xi')(\tilde q)\right\rangle = \sum_{i=1}^d x_i^{n+1}y_i\ne 0.
    \end{align}
    which shows that this $n$ suffices.
\end{proof}

The next remark will be useful to clarify of the statement of Lemma \ref{analytic-lemma}.

\begin{remark}\label{sections-vanishing-order}
    Let $E\to M$ be a vector bundle on a smooth manifold and let $s$ be a smooth section over $M$ of $E$. Suppose $n\ge 0$ is an integer and $x\in M$ is a point such that, in some (and therefore every) choice of local trivialization of $E$ and local coordinates centered at $x$, the section $s$ is represented by a function with vanishing $n$-jet. Then, $s$ has a well-defined $(n+1)^\text{st}$ derivative $D^{n+1}s|_x\in\text{Hom}(\text{Sym}^{n+1}T_xM,E_x)$. Moreover, a straightforward computation in local coordinates shows that for $v_0,\ldots,v_n\in T_xM$,
    \begin{align}\label{high-derivative-using-connection}
        D^{n+1}s|_x(v_0,\ldots,v_n) = (\nabla_{V_0}\cdots\nabla_{V_n}s)(x)
    \end{align}
    for any choice of connection $\nabla$ on the vector bundle $E$ and choices of vector fields $V_0,\ldots,V_n$ (extending the tangent vectors $v_0,\ldots,v_n$ at $x$). In particular, the right side of \eqref{high-derivative-using-connection} is invariant under permutation of the covariant derivatives $\nabla_{V_i}$. For $n=1$, an explicit way to see this is to use the identity
    \begin{align}\label{switching-two-derivatives}
        \nabla_{V_0}\nabla_{V_1}s = \nabla_{V_1}\nabla_{V_0}s + \nabla_{[V_0,V_1]}s + F_{\nabla}(V_0,V_1)s
    \end{align}
    where $F_\nabla$ is the curvature of $\nabla$. At $x\in M$, the last two terms on the right side of \eqref{switching-two-derivatives} vanish since the $1$-jet of $s$ at $x$ is assumed to vanish.
    
    If we have a submanifold $x\in S\subset M$ at whose points $s$ has vanishing $n$-jet, a computation in local coordinates shows that $D^{n+1}s$ descends to a linear bundle homomorphism $\text{Sym}^{n+1}N_{S/M}\to E|_S$.
\end{remark}

\begin{Lemma}\label{analytic-lemma}
    Let $\hat p$, $\xi$, $\xi'$ and $1\le n\le d$ be as in Lemma \ref{algebraic-lemma}. We can then find an element $\hat K\in T_{\hat J}\clJ(X,\omega) \subset \normalfont\overline{\text{End}}_{\hat J}(T_X)$ such that it and its first $n$ derivatives vanish along $C$ and its $\normalfont(n+1)^\text{st}$ derivative, in the directions normal to $C$, projects to a smooth $\bR$-linear bundle homomorphism
    \begin{align}\label{normal-derivative}
        \normalfont\hat K_{n+1}:\text{Sym}^{n+1}_\bR N_C\to\overline{\text{Hom}}_{\hat J}(T_C,N_C) = \Lambda^{0,1}T^*_C\otimes_\bC N_C
    \end{align}
    satisfying
    \begin{align}\label{normal-taylor-variation}
        \int_{C'}\langle\hat\varphi^*(i\hat K_{n+1}),\xi^{\otimes n+1}\otimes\xi'\rangle \ne 0
    \end{align}
    where $i$ denotes multiplication by $\sqrt{-1}$ on the fibers of $N_C$.
\end{Lemma}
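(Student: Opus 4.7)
The plan is to build $\widehat K$ by prescribing its $(n+1)^\text{st}$-order normal Taylor jet along $C$ in a small neighborhood of the point $q \in C$ produced by Lemma \ref{algebraic-lemma}, choosing the jet so that the pointwise pairing at $q$ with $\Omega_q := \sum_{i=1}^d (\xi^{\otimes n+1} \otimes \xi')(\tilde q_i)$ is nonzero, and then localizing via a bump function. The unramified property of $\hat\varphi$ at $q$ will convert the integral over $C'$ into a single integral over a small neighborhood of $q$ in $C$ whose integrand we can arrange to be nonnegative everywhere and strictly positive near $q$.

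First I would set up the local geometry. Shrinking to a connected neighborhood $V \subset C$ of $q$ over which $\hat\varphi$ is unramified, we obtain $\hat\varphi^{-1}(V) = \bigsqcup_{i=1}^d \tilde V_i$ with each $\hat\varphi|_{\tilde V_i}: \tilde V_i \to V$ a biholomorphism taking $\tilde q_i$ to $q$. Using an exponential map and a local trivialization of $N_C$, I identify a tubular neighborhood $\clU$ of $V$ in $X$ with a neighborhood of the zero section in $N_C|_V$, with coordinates $(z, w)$, and extend the splitting $T_X|_V = T_C|_V \oplus N_C|_V$ to $\clU$ by parallel transport, so sections of $\overline{\text{End}}_{\hat J}(T_X)$ on $\clU$ can be written in block form.

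Next I would pick the Taylor coefficient. Lemma \ref{algebraic-lemma} gives a nonzero $\Omega_q \in \text{Sym}^{n+1}_\bR N_{C,q} \otimes_\bR N^\dagger_{C,q}$. Composing evaluation on the $\text{Sym}^{n+1}_\bR N_{C,q}$ factor with the fiberwise contractions $N_C \otimes_\bC N^*_C \to \bC$ and $\Lambda^{0,1}T^*_C \wedge \Lambda^{1,0}T^*_C$ gives an $\bR$-bilinear pairing between $\text{Hom}_\bR(\text{Sym}^{n+1}_\bR N_{C,q}, \overline{\text{Hom}}_{\hat J}(T_{C,q}, N_{C,q}))$ and $\text{Sym}^{n+1}_\bR N_{C,q} \otimes_\bR N^\dagger_{C,q}$, non-degenerate in the first factor. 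Hence I can choose $A_q$ in the Hom-space so that $\langle i A_q, \Omega_q \rangle$ is a positive multiple of the area form at $q$ (the factor $i$ serving to correct real/imaginary parts as needed). Extending $A_q$ smoothly to a bundle homomorphism $A$ over $V$ and multiplying by a nonnegative bump function $\chi$ with $\chi(q) = 1$ gives $\widehat K_{n+1}$.

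Finally I build $\widehat K$ on $\clU$ block-wise: the $T_C \to N_C$ block is $\widehat K_{TN}(z, w) := \rho(|w|)\,\widehat K_{n+1}(z)(w^{\otimes n+1})$ for a radial cutoff $\rho$, the $N_C \to T_C$ block is forced by the $\omega$-antisymmetry $\omega(\widehat K_{TN} u, v) + \omega(u, \widehat K_{NT} v) = 0$, and the diagonal blocks vanish; extending by zero off $\clU$ produces $\widehat K \in T_{\hat J}\clJ(X, \omega)$ vanishing to order $n$ along $C$ with $(n+1)^\text{st}$ normal jet equal to $\widehat K_{n+1}$ in its $T_C \to N_C$ component. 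A change of variables over each $\tilde V_i$ via the biholomorphism $\hat\varphi|_{\tilde V_i}$ yields
\[
\int_{C'} \langle \hat\varphi^*(i \widehat K_{n+1}), \xi^{\otimes n+1} \otimes \xi' \rangle = \int_V \chi(z) \Bigl\langle i A(z), \sum_{i=1}^d (\xi^{\otimes n+1} \otimes \xi')\bigl((\hat\varphi|_{\tilde V_i})^{-1}(z)\bigr) \Bigr\rangle,
\]
whose integrand at $z = q$ is $\chi(q) \cdot \langle i A_q, \Omega_q \rangle$, a positive multiple of the area form; by shrinking $V$ and $\chi$'s support via continuity the integrand is nonnegative on $V$ and strictly positive near $q$, forcing the integral to be strictly positive. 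The main subtle point is the choice of $A_q$ in the third paragraph, requiring careful tracking of the $\bR$-versus-$\bC$ structures in the pairing; the rest of the construction is routine because the $\omega$-compatibility only determines the $N_C \to T_C$ block from the freely chosen $T_C \to N_C$ block and imposes no constraint on the latter.
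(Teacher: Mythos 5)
Your proposal is correct and follows essentially the same route as the paper: choose the bundle homomorphism pointwise at $q$ so the pairing with the element $\Omega_q$ from Lemma \ref{algebraic-lemma} is a positive multiple of the area form (using the factor $i$), localize by a nonnegative cutoff near $q$, and use the unramified structure of $\hat\varphi$ over $q$ to turn the integral over $C'$ into an integral over a neighborhood of $q$ in $C$ whose integrand is nonnegative and positive near $q$. The one place you differ is that the paper simply cites \cite[Lemma 5.26]{Wendl-19} to realize a prescribed bundle homomorphism as the $(n+1)^{\text{st}}$ normal jet of an element of $T_{\hat J}\clJ(X,\omega)$, whereas you sketch a block-form construction by hand; that sketch glosses over the $\hat J$-antilinearity of the full endomorphism off the zero section (where the $T_C\oplus N_C$ splitting is not $\hat J$-invariant), which is precisely the technical point the cited lemma handles.
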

\begin{proof}
    We first construct a bundle homomorphism $L$ of the type \eqref{normal-derivative} satisfying \eqref{normal-taylor-variation} with $L$ in place of $\hat K_{n+1}$. Choose a smooth linear bundle homomorphism
    \begin{align}
        L':\text{Sym}_\bR^{n+1}N_C\to\Lambda^{0,1}T^*_C\otimes_\bC N_C
    \end{align}
    which has positive pairing with \eqref{non-linear-petri-analog} at the point $q\in C$ yielded by Lemma \ref{algebraic-lemma}. Multiplying $L'$ by a non-negative cutoff function on $C$ supported near $q$, we can assume that its pairing with \eqref{non-linear-petri-analog} is $\ge 0$ at all points of $C$ and $>0$ at $q$. Taking $L = iL'$ completes the construction. We refer the reader to \cite[Section 3.2]{Mc-Sa} for comparison and further details.
    
    We are left to find $\hat K\in T_{\hat J}\clJ(X,\omega)$ such that it and its first $n$ derivatives vanish along $C$ and $L = \hat K_{n+1}$. To begin, note that $T_{\hat J}\clJ(X,\omega)$ consists of smooth sections of a vector sub-bundle $E\subset\overline{\text{End}}_{\hat J}(T_X)$. The argument from \cite[Lemma 5.27]{Wendl-19} shows that the natural restriction map
    \begin{align}
        E|_C\to\overline{\text{Hom}}_{\hat J}(T_C,N_C) = \Lambda^{0,1}T^*_C\otimes_\bC N_C
    \end{align} 
    is surjective. Thus, we may lift $L$ to a smooth linear bundle homomorphism $\tilde L:\text{Sym}^{n+1}_\bR N_C\to E|_C$. 
    
    Next, choose a tubular neighborhood $\nu:N_C\hookrightarrow X$ of $C$ and an isomorphism $\pi^*(E|_C) = \nu^*E$ of vector bundles on $N_C$ which is the identity on the zero section. Here, $\pi:N_C\to C$ is the vector bundle projection. Viewing $\tilde L$ as a fiberwise (real, homogeneous degree $n+1$) polynomial map $N_C\to E|_C$, we obtain a global smooth section of $\pi^*(E|_C) = \nu^*E$. Multiplying this section of $\nu^*E$ by a cutoff function of compact support which is $\equiv 1$ near the zero section yields a section $\hat K$ of $E$ on $X$. By construction, $\hat K$ and its first $n$ derivatives vanish on $C$. This shows that $\hat K$ has a well-defined $(n+1)^\text{st}$ normal derivative at $C$ (by Remark \ref{sections-vanishing-order}) which, by construction, coincides with $\tilde L$. It now follows that $L = \hat K_{n+1}$.
\end{proof}

Now, fix any point 
\begin{align}\label{fixed-bad-point}
    \hat p = (\hat J,C,\hat\varphi:C'\to C,\hat\iota)\in\clM^d_{{\bf b},G}(\clM^*_g(X,A);{\bf k},{\bf c})_\spadesuit.
\end{align}
Having fixed $\hat p$ and setting $\hat D = D^N_{C,\hat J}$, pick a section $0\ne\xi\in\ker\hat\varphi^*\hat D$ (unique up to scaling). By condition (1) of Definition \ref{bad-stratum-defined}, we can also pick a section, unique up to scaling, $0\ne\xi'\in\ker(\hat\varphi^*\hat D)^\dagger = (\coker\hat\varphi^*\hat D)^*$ which annihilates the image $W_{\hat\varphi}$  of \eqref{branched-cover-leading}. Lemmas \ref{algebraic-lemma} and \ref{analytic-lemma} then produce $1\le n\le d$ and $\hat K$ satisfying the conditions of Lemma \ref{analytic-lemma}. We can now find a smooth path $\hat\gamma:[-1,1]\to\clJ(X,\omega)$, with $\hat J_s:=\hat\gamma(s)$, satisfying the following properties.
\begin{enumerate}
    \item $\hat J_0 = \hat J$ and $\frac{d}{ds} (\hat J_s)|_{s=0} = \hat K$.
    \item The path $\hat\gamma$ fixes the $n$-jet of $\hat J$ along $C$. More precisely, we have $|\hat J_s - \hat J| = O(|\nu_C|^{n+1})$ for all $s$, where $\nu_C(\cdot) = \text{dist}(C,\cdot)$ denotes the distance to $C$ induced by the given Riemannian metric on $X$.
\end{enumerate}
Following the discussion of \textsection\ref{sec_ift}, with $\hat\gamma$ (resp. $\hat\varphi:C'\to C$) taking the place of $\gamma$ (resp. $\varphi:\Sigma'\to\Sigma$), we choose auxiliary data (recall $\{\Delta_{[z]}\}_{[z]\in M/\Gamma}$ and (1)--(4) from \textsection\ref{sec_ift}) to define Banach spaces $\hat\clX_\text{vf}\oplus\hat\clX_\text{acs}\oplus\bR_s = \hat\clX$, $\hat\clY$ and a smooth Fredholm map $\hat\Phi:\hat\clX\to\hat\clY$ (defined near $0$) by the formula
\begin{align}
    \hat\Phi(\hat\xi,\hat y,s) = PT_{\nabla}\left[\hat I_{X,s}^{-1}\circ(\bar\partial_{\hat J_s,j_{\hat y}}\exp_{\hat\varphi}\hat\xi)\circ\hat I_{C',\hat y}\right]
\end{align}
analogous to \eqref{fredholm-map-wc}. Here, we have used the notation 
\begin{align}\label{iso-along-Js-path}
    \hat I_{X,s} = \text{id} + \textstyle\frac12\hat J\hat A_{X,s}:(T_X,\hat J)\xrightarrow{\simeq}(T_X,\hat J_s)
\end{align}
to denote the family of $\bC$-linear isomorphisms with the tensor $\hat A_{X,s}$ defined by the explicit formula
\begin{align}\label{explicit-antlinear-perturb-hat}
    \hat A_{X,s} = -2\hat J(\text{id} + \hat J\hat J_s)(\text{id}-\hat J\hat J_s)^{-1} = -2\hat J(\hat J_s - \hat J)(\hat J_s + \hat J)^{-1}    
\end{align}
analogous to \eqref{explicit-antilinear-perturb}. It follows from \eqref{explicit-antlinear-perturb-hat} that $\hat A_{X,s}$ and its first $n$ derivatives vanish along $C$. Write $\hat\clX_0 = \hat\clX_\text{vf}\oplus\hat\clX_\text{acs}$ and, for each $s$, define the smooth Fredholm map
\begin{align}
    \hat\Phi_s:\hat\clX_0\to\hat\clY
\end{align}
by restricting $\hat\Phi$ to $\hat\clX_0\times\{s\}$, i.e., $\hat\Phi_s(\hat\xi,\hat y) = \hat\Phi(\hat\xi,\hat y,s)$. The next observation makes precise the intuitive claim that the $n$-jet of $\hat\Phi_0$ at $0$ depends only on the $n$-jet of $\hat J$ along $C\subset X$. In particular, it shows that we have
\begin{align}\label{path-on-wall}
    \hat p(s) = (\hat J_s,C,\hat\varphi:C'\to C,\hat\iota)\in\clM^d_{{\bf b},G}(\clM^*_g(X,A);{\bf k},{\bf c})
\end{align}
for each $s$, which defines a lift of the path $\hat\gamma$ along the map \eqref{wall-to-acs}.

\begin{Lemma}\label{truncation-of-fred-map}
    For each $s$, the smooth map 
    \begin{align}
        P_s = \hat\Phi_s - \hat\Phi_0:\hat\clX_0\to\hat\clY.    
    \end{align}
    satisfies $D^k P_s(0) = 0$ for $0\le k\le n$. For any $\tilde\xi\in\hat\clX_0$, let $\hat\xi^N$ denote its image under the projection $\hat\clX_0\to\hat\clX_{\normalfont\text{vf}}$ followed by the projection $\hat\varphi^*T_X\to\hat\varphi^*N_C$. Then, we have the identity
    \begin{align}\label{first-variation-fred-derivative}
        \Pi^N\circ\textstyle\frac{d}{ds}|_{s=0}(D^{n+1}P_s|_0(\tilde\xi,\ldots,\tilde\xi)) = -\frac12\hat\varphi^*(i\hat K_{n+1})(\hat\xi^N,\ldots,\hat\xi^N),
    \end{align}
    where $\Pi^N$ is the map on $\hat\clY$ induced by the projection $\hat\varphi^*T_X\to\hat\varphi^*N_C$.
\end{Lemma}
\begin{proof}
    Let $\tilde\xi = (\hat\xi,\hat y)\in\hat\clX_0$ be given. It suffices to show that the function
    \begin{align}
        \phi_s(\tau) = \hat\Phi_s(\tau\hat\xi,\tau\hat y)
    \end{align}
    of (small) $\tau\in\bR$ has the properties that $D^k\phi_s(0)$ for $0\le k\le n$ is independent of $s$ and that $\Pi^N\circ\frac{d}{ds}|_{s=0}D^{n+1}\phi_s(0)$ is given by the right side of \eqref{first-variation-fred-derivative}. The key idea to compute the derivatives of $\phi_s$ explicitly is to re-interpret them as covariant derivatives. For this purpose, it is convenient to assume that $\hat\xi$ is smooth (the case of general $\hat\xi$ follows by the density of smooth sections in $\hat\clX_\text{vf}$). In the argument that follows, $s$ will always range over $(-1,1)$, while $\tau$ will range over a small interval centered at $0$ (the exact size of which is irrelevant). For notational convenience, we just write $s,\tau\in\bR$ without repeatedly specifying the intervals over which $s,\tau$ range.
    
    Consider the $\bC$-vector bundle on $X\times\bR_s$ given by (the pullback of) $T_X$, where we equip the copy of $T_X$ over $X\times\{s\}$ with the almost complex structure $\hat J_s$. Using the isomorphisms $\hat I_{X,s}$ from \eqref{iso-along-Js-path}, we can regard this bundle as the pullback of $(T_X,\hat J)$ along the projection $X\times\bR_s\to X$ and this endows it with a $\bC$-linear connection by pulling back the already fixed $\bC$-linear connection $\nabla$ on $(T_X,\hat J)$. In particular, contracting with $\partial/\partial s$ annihilates the curvature form of this pullback connection. Similarly, choose a $j_0$-linear connection on $T_{C'}$ and use the isomorphisms $\hat I_{C',\hat y}$ to endow the $\bC$-vector bundle given by (the pullback of) $T_{C'}$ over $C'\times\bR_\tau$, equipped with $j_{\tau\hat y}$ over $C'\times\{\tau\}$, with a $\bC$-linear connection. 
    
    Next, consider the $\bC$-vector bundle $E\to C'\times\bR_s\times\bR_\tau$ whose fiber over a point $(z,s,\tau)$ is the space of $\bC$-anti-linear maps 
    \begin{align}\label{bundle-of-antilinear}
        (T_{C',z},j_{\tau\hat y}(z))\to(T_{X,x},\hat J_s(x)),
    \end{align}
    where $x = (\exp_{\hat\varphi}\tau\hat\xi)(z)$. $E$ has an evident $\bC$-linear connection (which we again denote by $\nabla$) induced by the previous paragraph. Explicitly, we get the bundle $E$ by taking the bundle $\tilde E$ on $C'\times\bR_\tau\times X\times\bR_s$ with fiber over a point $(z,\tau,x,s)$ given by the space of $\bC$-anti-linear maps \eqref{bundle-of-antilinear} and then pulling back under the map
    \begin{align}
        C'\times\bR_s\times\bR_\tau&\to C'\times\bR_\tau\times X\times\bR_s \\
        (z,s,\tau)&\mapsto(z,\tau,(\exp_{\hat\varphi}\tau\hat\xi)(z),s)
    \end{align}
    which sends the vector field $\partial/\partial s$ to the vector field $\partial/\partial s$. From the previous paragraph, it follows that the curvature form of $\tilde E$ (and therefore of $E$) is annihilated by contraction with $\partial/\partial s$. In particular, $[\nabla_s,\nabla_\tau] = 0$ on $E$.
    
    Now, $E$ has a smooth section $\psi$ given by
    \begin{align}
        \psi(z,s,\tau) = \delbar_{\hat J_s,j_{\tau \hat y}}(\exp_{\hat\varphi}\tau\hat\xi)|_z
    \end{align}
    vanishing along $C'\times\bR_s\times\{0\}$.
    Noting that differentiating $\phi_s(\tau)$ corresponds to \emph{covariantly} differentiating $\psi(\cdot,s,\tau)$, we will be done once we show that $\nabla_{s}\nabla^k_{\tau}\psi$ vanishes at $\tau = 0$ for each $0\le k\le n$ and that $\nabla_{s}\nabla^{n+1}_{\tau}\psi$ at $s = \tau = 0$ is given by the right side of \eqref{first-variation-fred-derivative} once we apply $\Pi^N$. Using the fact that $[\nabla_s,\nabla_\tau] = 0$ on $E$, we can instead compute $\nabla^k_\tau\nabla_s\psi$ at $\tau = 0$. Below, we first compute $\nabla_s\psi$ at $s = 0$ and then apply $\nabla_\tau$ to the result (which is now defined on $C'\times\{0\}\times\bR_\tau$) $k$ times and finally set $\tau = 0$.
    
    For an $\bR$-linear map $T$ as in \eqref{bundle-of-antilinear}, denote its $\bC$-linear (resp. $\bC$-anti-linear) part by $T^{1,0}_{s,\tau}$ (resp. $T^{0,1}_{s,\tau}$). A straightforward computation gives
    \begin{align}\label{donaldson-divisor-type-calc}
        T^{0,1}_{s,\tau} = \textstyle\frac12(\text{id} + \hat J_s\hat J)T^{1,0}_{0,\tau} + \frac12(\text{id} - \hat J_s\hat J)T^{0,1}_{0,\tau}.
    \end{align}
    Deducing $\hat I_{X,s}^{-1} = -\frac12(\hat J_s + \hat J)\hat J_s$ from \eqref{iso-along-Js-path}--\eqref{explicit-antlinear-perturb-hat}, we get
    \begin{align}\label{del-delbar-expansion}
        \hat I_{X,s}^{-1}\circ T^{0,1}_{s,\tau} - T^{0,1}_{0,\tau} = -\textstyle\frac14(\hat J_s + \hat J)(\hat J_s - \hat J)T^{1,0}_{0,\tau} - \frac14(\hat J\hat J_s + \hat J_s\hat J + 2\cdot\text{id})T^{0,1}_{0,\tau}.
    \end{align}
    In formulas \eqref{donaldson-divisor-type-calc}--\eqref{del-delbar-expansion}, we have omitted $z$ and $x = (\exp_{\hat\varphi}\tau\hat\xi)(z)$ from the notation for readability. 
    Applying \eqref{del-delbar-expansion} to $T = d(\exp_{\hat\varphi}\tau\hat\xi)|_z$, we obtain
    \begin{align}\label{key-covariant-derivative-computation}
        \nabla_s\psi(\cdot,0,\tau) = -\textstyle\frac12\hat J\hat K\cdot\partial_{\hat J,j_{\tau\hat y}}(\exp_{\hat\varphi}\tau\hat\xi)
    \end{align}
    where we have used the $\hat J$-anti-linearity of $\hat K = \frac d{ds}(\hat J_s)|_{s=0}$ to eliminate the second term of \eqref{del-delbar-expansion}. Since $\hat K$ and its first $n$ derivatives vanish along the image $C\subset X$ of $\hat\varphi$, \eqref{key-covariant-derivative-computation} shows that $\nabla^k_\tau\nabla_s\psi(\cdot,0,0)\equiv 0$ for $0\le k\le n$. The same holds even at a general nonzero $s$ since the section $\frac d{ds}\hat J_s$ and its first $n$ derivatives also vanish along $C\subset X$. Further, \eqref{key-covariant-derivative-computation} shows that any term of $\nabla^{n+1}_\tau\nabla_s\psi(\cdot,0,0)$ involving $\le n$ derivatives of $\hat K$ has to vanish. Thus, the only term in $\nabla^{n+1}_\tau\nabla_s\psi(\cdot,0,0)$ that can survive at $s=\tau=0$ is the one involving the $(n+1)^\text{st}$ derivative of $\hat K$ (in the direction of $\hat\xi$).
    
    This yields \eqref{first-variation-fred-derivative} once we recall the definition of $\hat K_{n+1}$, rewrite $\hat J$ appearing in \eqref{key-covariant-derivative-computation} as $i = \sqrt{-1}$ and recall that $\hat\varphi$ is $\hat J$-holomorphic. In more detail, the reason for why only $\hat\xi^N$ (and not the full $\hat\xi$) appears in \eqref{first-variation-fred-derivative} is because of the second paragraph of Remark \ref{sections-vanishing-order} and the reason for why $\Pi^N$ appears in \eqref{first-variation-fred-derivative} is because $\hat K_{n+1}$ is the image of the full $(n+1)^\text{st}$ derivative of $\hat K$ over $C$ under the projection $\overline{\text{End}}_{\hat J}(T_X|_C)\to\overline{\text{Hom}}_{\hat J}(T_C,N_C)$. Using the notation in the proof of Lemma \ref{analytic-lemma}, the full $(n+1)^\text{st}$ derivative of $\hat K$ over $C$ is the bundle map $\tilde L$, while $\hat K_{n+1} = L$.
\end{proof}

Lemma \ref{truncation-of-fred-map} shows that $D\hat\Phi_s(0):\hat\clX_0\to\hat\clY$ is independent of $s$ and, as a result, we get canonical identifications of kernels and cokernels
\begin{align}
    T_{\hat\varphi}^\text{Zar}\clM_h(X,\hat J_s,dA) &= T_{\hat\varphi}^\text{Zar}\clM_h(X,\hat J,dA) \\
    E_{\hat\varphi}\clM_h(X,\hat J_s,dA) &= E_{\hat\varphi}\clM_h(X,\hat J,dA)
\end{align}
and a $1$-parameter family of Kuranishi maps
\begin{align}
    F_{\hat p(s)}:T_{\hat\varphi}^\text{Zar}\clM_h(X,\hat J,dA)\to E_{\hat\varphi}\clM_h(X,\hat J,dA)
\end{align}
obtained by applying Kuranishi reduction to the maps $\hat\Phi_s$ (using the same choice of splittings of $\ker D\hat\Phi(0)\to\hat\clX_0$ and $\hat\clY\to\coker D\hat\Phi(0)$ for each $s$; see Appendix \ref{kuranishi-appendix} for details on splittings). Lemma \ref{abstract-kuranishi-high-order-perturb} applies (by virtue of Lemma \ref{truncation-of-fred-map}) to show that the maps $D^kF_{\hat p(s)}(0)$ are $s$-independent for $0\le k\le n$ and that the first variation of $D^{n+1}(F_{\hat p(s)}-F_{\hat p})(0)$ can be computed using \eqref{first-variation-fred-derivative}. Since $\hat p = \hat p(0)$ lies on the degeneracy locus, we have a preferred splitting (see the discussion surrounding \eqref{reduced-kuranishi-ker} for details)
\begin{align}\label{degen-canonical-splitting}
    T_{\hat\varphi}^\text{Zar}\clM_h(X,\hat J,dA) = T_{\hat\varphi}\clM_h(C,d)\oplus\ker\hat\varphi^*\hat D
\end{align}
of \eqref{zar-ses}. Thus, we may express the submanifold $\hat Z_s\subset T_{\hat\varphi}^\text{Zar}\clM_h(X,\hat J,dA)$ corresponding to $\clM_h(C,d)$ contained in the zero locus of $F_{\hat p(s)}$ as the graph of a smooth function
\begin{align}
    \hat f_s:T_{\hat\varphi}\clM_h(C,d)\to\ker\hat\varphi^*\hat D
\end{align}
with $\hat f_s(0) = 0$ and $D\hat f_s(0) = 0$. In fact, since the submanifolds $\hat Z_s$ are $s$-independent, so are the functions $\hat f_s$. Indeed, we have a (manifestly $s$-independent) submanifold of $\hat\clX_0$ corresponding to $\clM_h(C,d)$ and each $\hat Z_s$ is just the isomorphic image in $T_{\hat\varphi}^\text{Zar}\clM_h(X,\hat J,dA) = \ker D\hat\Phi(0)$ of this submanifold under the $s$-independent linear projection $\hat\clX_0\to\ker D\hat\Phi(0)$. This leads to a factorization of the Kuranishi map
\begin{align}\label{s-dep-reduced-factorization}
    F_{\hat p(s)} = \langle\hat\varsigma - \hat f,F^\text{red}_{\hat p(s)}\rangle
\end{align}
as explained in the paragraph preceding \eqref{fully-reduced-kuranishi-map}, where we have dropped the subscript from $\hat f_s$ in view of its $s$-independence. The following purely algebraic observation will be useful.

\begin{Lemma}\label{jet-of-reduced-kur}
    The $D^kF^{\normalfont\text{red}}_{\hat p(s)}(0)$ is independent of $s$ for $0\le k\le n-1$. After restricting to $\ker\hat\varphi^*\hat D\subset T_{\hat\varphi}^{\normalfont\text{Zar}}\clM_h(X,\hat J,dA)$, we have
    \begin{align}\label{reduced-v-unreduced-taylor}
        D^{n+1}F_{\hat p(s)}(0)|_{\ker\hat\varphi^*\hat D} = (n+1)\cdot D^n F^{\normalfont\text{red}}_{\hat p(s)}(0)|_{\ker\hat\varphi^*\hat D}
    \end{align}
    for all $s$, where we make the natural identification
    \begin{align}
        \label{trivial-id-1} &\normalfont\text{Hom}((\ker\hat\varphi^*\hat D)^{\otimes n+1},\coker\hat\varphi^*\hat D) = \\  \label{trivial-id-2} &\normalfont\text{Hom}((\ker\hat\varphi^*\hat D)^{\otimes n},\text{Hom}(\ker \hat\varphi^*\hat D,\coker \hat\varphi^*\hat D)).
    \end{align}
\end{Lemma}
\begin{proof}
    The $s$-independence of $D^kF^\text{red}_{\hat p(s)}(0)$ for $0\le k\le n-1$ is a direct consequence of \eqref{s-dep-reduced-factorization} and the $s$-independence of $D^kF_{\hat p(s)}(0)$ for $0\le k\le n$. Using coordinates $(\hat{\bf z},\hat\varsigma)$ on $T_{\hat\varphi}^{\normalfont\text{Zar}}\clM_h(X,\hat J,dA)$, similar to \eqref{kura-std-form} and respecting the direct sum decomposition \eqref{degen-canonical-splitting}, we get
    \begin{align}\label{factorization-at-z0}
        F_{\hat p(s)}({\bf 0},\hat\varsigma) = \hat\varsigma\cdot F^\text{red}_{\hat p(s)}({\bf 0},\hat\varsigma)
    \end{align}
    from \eqref{s-dep-reduced-factorization}. Differentiating \eqref{factorization-at-z0} $n+1$ times at $\hat\varsigma = 0$ yields \eqref{reduced-v-unreduced-taylor}.
\end{proof}

Lemma \ref{jet-of-reduced-kur} shows that, for $2\le k\le n$, we must have $T^{(k)}_{\hat p(s)} \equiv T^{(k)}_{\hat p} = 0$. The next observation computes $T^{(n+1)}_{\hat p(s)}$ in terms of $D^nF^\text{red}_{\hat p(s)}(0)$.

\begin{Lemma}\label{jet-of-fully-reduced-kur}
    The linear map
    \begin{align}\label{restriction-reduced-kur}
        (\ker\hat\varphi^*\hat D)^{\otimes n+1}\to(\coker\hat\varphi^*\hat D)/W_{\hat\varphi}
    \end{align}
    obtained by restricting $D^n(F^{\normalfont\text{red}}_{\hat p(s)}-F^{\normalfont\text{red}}_{\hat p})(0)$ to $\ker\hat\varphi^*\hat D$, using the identifications \eqref{trivial-id-1}--\eqref{trivial-id-2} and then passing to the quotient modulo the subspace $W_{\hat\varphi}\subset\coker\hat\varphi^*\hat D$ coincides with $T^{(n+1)}_{\hat p(s)}$.
\end{Lemma}
\begin{proof}
     Recall that $T^{(n+1)}_{\hat p} = 0$ by assumption. If $n=1$, the result follows by inspection. If $n>1$, we apply Lemma \ref{abstract-kuranishi-high-order-perturb} to the smooth maps $F^\text{red}_{\hat p(s)}$ (which have $s$-independent $(n-1)$-jet at $0$ by Lemma \ref{jet-of-reduced-kur}) to get the result.
\end{proof}

\begin{remark}\label{summarizing-taylor-coeff-computations}
    Lemmas \ref{jet-of-reduced-kur} and \ref{jet-of-fully-reduced-kur} show that the first variation of $T^{(n+1)}_{\hat p(s)}$ is a nonzero multiple of the map of the type \eqref{restriction-reduced-kur} induced by the first variation of $D^{n+1}(F_{\hat p(s)} - F_{\hat p})(0)$ which, in turn, can be computed explicitly from Lemma \ref{truncation-of-fred-map} (with the help of Lemma \ref{abstract-kuranishi-high-order-perturb}).
\end{remark}

We are now in a position to prove our main technical result.

\begin{Lemma}\label{main-technical-result}
    The inclusion \eqref{bad-stratum-inclusion} has codimension $\ge 1$.
\end{Lemma}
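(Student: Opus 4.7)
My plan is to produce, for any $\hat p_0 = (\hat J_0, C, \hat\varphi, \hat\iota)$ in the degeneracy locus, an explicit smooth path $\hat p_s$ inside the wall stratum with $\hat p_s|_{s=0} = \hat p_0$ that exits the degeneracy locus for small $s\ne 0$. The perturbation will act only on the almost complex structure, leaving $C$, $\hat\varphi$ and $\hat\iota$ fixed. Since condition (1) of Definition \ref{bad-stratum-defined} is open, it persists along the path, so I only need to arrange that condition (2) fails.

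First I would choose $0\ne\xi\in\ker\hat\varphi^*\hat D$ and a nonzero $\xi'\in\ker\hat\varphi^*\hat D^\dagger = (\coker\hat\varphi^*\hat D)^*$ annihilating the codimension-one subspace $W_{\hat\varphi}$; such a $\xi'$ exists and is unique up to scalar because $(\coker\hat\varphi^*\hat D)/W_{\hat\varphi}$ is one-dimensional by condition (1). Applying Lemma \ref{algebraic-lemma} to $(\xi,\xi')$ yields an integer $1\le n\le d$, and then Lemma \ref{analytic-lemma} produces $\widehat K\in T_{\hat J_0}\clJ(X,\omega)$ whose $\le n$-jet along $C$ vanishes identically and whose $(n+1)^\text{st}$ normal derivative satisfies \eqref{normal-taylor-variation}. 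Let $\hat J_s = \hat J_0 + s\widehat K$, modified in $s^2$ to remain $\omega$-compatible. Because $\widehat K|_C = 0$ and $\nabla\widehat K|_C = 0$, the curve $C$ stays $\hat J_s$-holomorphic, the cover $\hat\varphi$ is still holomorphic, and the operator $\hat\varphi^*D^N_{C,\hat J_s}$ coincides with $\hat\varphi^*D^N_{C,\hat J_0}$ to first order in $s$; in particular the kernel and cokernel dimensions are preserved, so $\hat p_s = (\hat J_s, C, \hat\varphi, \hat\iota)$ remains in the wall stratum $\clM^d_{{\bf b},G}(\clM^*_g(X,A);{\bf k},{\bf c})$ for $|s|$ small, with canonically identified $\ker$ and $\coker$.

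The heart of the proof is then the computation
\[
\frac{d}{ds}\bigg|_{s=0}\bigl\langle \xi',\, T^{(n+1)}_{\hat p_s}(\xi^{\otimes(n+1)})\bigr\rangle \;=\; c\cdot\int_{C'}\langle \hat\varphi^*(i\widehat K_{n+1}),\,\xi^{\otimes(n+1)}\otimes\xi'\rangle
\]
for some nonzero combinatorial constant $c$. To establish this I would write out the Kuranishi map $F_s$ as in \textsection\ref{sec_ift}, expand $\bar\partial_{\hat J_s}(\exp_{\hat\varphi}(\hat\xi))$ simultaneously in $s$ and $\hat\xi$, and note that the vanishing of the lower normal jets of $\widehat K$ forces the mixed partial $\partial_s\partial_{\hat\xi}^{n+1}F_s|_{s=0}$ to pick up contributions only from $\widehat K_{n+1}$; projecting modulo $W_{\hat\varphi}$ and pairing with $\xi'$ reduces the resulting expression to the displayed integral, by formal manipulations entirely analogous to those in \cite[\textsection 3.2]{Mc-Sa}. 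Since Lemma \ref{analytic-lemma} guarantees the integral is nonzero, for $s\ne 0$ sufficiently small either some $T^{(m)}_{\hat p_s}$ with $2\le m\le n$ is already nonzero (in which case $\hat p_s$ automatically leaves the degeneracy locus), or $T^{(n+1)}_{\hat p_s}\ne 0$. Either way $\hat p_s\notin\clM^d_{{\bf b},G}(\clM^*_g(X,A);{\bf k},{\bf c})_\spadesuit$, establishing codimension $\ge 1$.

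The main obstacle I anticipate is precisely the Taylor computation above: keeping track of which normal jets of $\widehat K$ enter each order of the Kuranishi map, and verifying that after projecting onto $(\coker\hat\varphi^*\hat D)/W_{\hat\varphi}$ and pairing with $\xi'$, the remaining term is exactly the pairing in \eqref{normal-taylor-variation} up to a nonzero universal constant. This is essentially a higher-order, multiply-covered analog of the first-order $d\clF$-pairing in standard genericity arguments for simple pseudo-holomorphic curves, and the combinatorial factor $c$ in the formula above arises from the $(n+1)$-fold symmetrization and the Jacobian factor relating $\exp_{\hat\varphi}$-based variations to normal-derivative expansions.
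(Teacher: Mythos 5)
Your proposal is correct and follows essentially the same route as the paper's proof: you invoke Lemmas \ref{algebraic-lemma} and \ref{analytic-lemma} to produce $\xi$, $\xi'$, $n$, and $\widehat K$, deform the almost complex structure in the direction $\widehat K$ while holding $C$, $\hat\varphi$, $\hat\iota$ fixed and preserving the $n$-jet of the almost complex structure along $C$ (so that the operator $D^N_{C,\hat J_s}$ and hence the wall stratum membership are preserved \emph{exactly}, not just to first order), and then compute that the directional derivative of $T^{(n+1)}$ in the direction $\widehat K$ is a non-zero multiple of $\int_{C'}\langle\hat\varphi^*(i\widehat K_{n+1}),\xi^{\otimes n+1}\otimes\xi'\rangle$, which is the content of the paper's Lemma \ref{variation-of-taylor}. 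The one small difference is that the paper records this as a rank-$\ge 1$ estimate on the linearization of the defining equations $T^{(2)}=\cdots=T^{(d+1)}=0$ so as to fit the $C^\infty$-subvariety framework of \cite[Appendix C]{Wendl-19} (needed for the Sard--Smale argument in Lemma \ref{baire-taylor}), whereas you phrase it as ``the path exits the degeneracy locus''; the computation you carry out is exactly the required rank bound, so the two formulations are equivalent in substance.
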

\begin{proof}
     Near any point of $\clM^d_{{\bf b},G}(\clM^*_g(X,A);{\bf k},{\bf c})_\spadesuit$, we can choose auxiliary data (as in \textsection\ref{sec_ift}) and define Kuranishi models $F_{\hat p}$ as in \eqref{cover-kuranishi} for all points $\hat p\in\clM^d_{{\bf b},G}(\clM_g^*(X,A);{\bf k},{\bf c})$ in this neighborhood. Thus, in this neighborhood, the inclusion \eqref{bad-stratum-inclusion} is defined by the $d$ equations 
    \begin{align}\label{local-def-eqns}
        T^{(2)}_{\hat p} = \cdots = T^{(d+1)}_{\hat p} = 0.
    \end{align}
    Given any point $\hat p$ satisfying \eqref{local-def-eqns}, we will show that the linearization of this system of $d$ equations has rank $\ge 1$. This will suffice to show that \eqref{bad-stratum-inclusion} is the inclusion of a $C^\infty$ subvariety of codimension $\ge 1$ (in the sense of \cite[Appendix C]{Wendl-19}). 
    
    For the remainder of the proof, we freely use the notation established in the discussion starting from \eqref{fixed-bad-point} and leading up to the statement of the present lemma. In particular, recall that we have $0\ne\xi\in\ker\hat\varphi^*\hat D$, $0\ne\xi'\in\ker(\hat\varphi^*\hat D)^\dagger = (\coker\hat\varphi^*\hat D)^*$ annihilating $W_{\hat\varphi}$, an integer $1\le n\le d$ and a tensor $\hat K$ satisfying the conditions of Lemma \ref{analytic-lemma}. By Remark \ref{summarizing-taylor-coeff-computations}, in order to show that the first variation of $T^{(n+1)}_{\hat p(s)}$ at $s=0$ is nonzero, it suffices to check the that the number
    \begin{align}
        \int_{C'}\langle\hat\varphi^*(i\hat K_{n+1}),\xi^{\otimes n+1}\otimes\xi'\rangle
    \end{align}
    is nonzero. This is exactly the content of Lemma \ref{analytic-lemma} which concludes the proof that \eqref{bad-stratum-inclusion} is of codimension $\ge 1$ near $\hat p$.
\end{proof}
\section{An extension of Taubes' Gromov invariant to dimension $6$}\label{sec_GT}

In this section, we will apply the results of the last two sections to prove Theorem \ref{thm:Gr-invariance}. The resulting invariant can be viewed as a partial extension of Taubes' Gromov invariant (defined in \cite{Taubes-Gr}) to symplectic Calabi--Yau $3$-folds.
\subsection{Linear wall crossing}\label{lwc-development}
The purpose of this subsection is to introduce the linear wall-crossing invariant (Definition \ref{lwc-double}), which will serve as a ``correction term" in the definition of virtual count of embedded curves of class $2A$ in equation \eqref{Gr-def}. 

Fix a smooth closed oriented $2$-manifold $\Sigma$ of genus $g$ and a complex vector bundle $\pi:N\to\Sigma$ of rank $2$ such that $c_1(N)\cdot[\Sigma] = 2g-2$. Such pairs $(\Sigma,N)$ arise by considering the normal bundle of any embedded pseudo-holomorphic curve of genus $g$ in a symplectic Calabi--Yau $3$-fold.
\begin{Definition}[Space of operators]\label{space-of-CR-on-N}
    Let $\clJ(\Sigma)$ denote the space of almost complex structures on $\Sigma$ inducing the given orientation on $\Sigma$. For $\bK\in\{\bR,\bC\}$, define the space
    \begin{align}\label{space-of-CR-on-N-eq}
        \clC\clR_\bK(N)
    \end{align}
    to consist of pairs $(j,D)$, where $j\in\clJ(\Sigma)$ and $D$ is a $\bK$-linear Cauchy--Riemann operator on $N$ with respect to $j$, i.e., 
    \begin{align}
        D:\Omega^0(\Sigma,N)\to\Omega^{0,1}_j(\Sigma,N)
    \end{align}
    is a linear first order differential operator such that for all $s\in\Omega^0(\Sigma,N)$ and $f\in C^\infty(\Sigma,\bK)$, we have $D(fs) = (\delbar_jf)s + fDs$. This is a bundle of affine spaces over the contractible space $\clJ(\Sigma)$ and therefore is also contractible. Every $(j,D)\in\clC\clR_\bK(N)$ satisfies $\ind D = 0$.
\end{Definition}
\begin{Definition}[Ordinary Brill--Noether loci]
    For $\bK\in\{\bR,\bC\}$ and any integer $k\ge 1$, define the subspace
    \begin{align}
        \clW^k_\bK(N)\subset\clC\clR_\bK(N)
    \end{align}
    to consist of those pairs $(j,D)$ for which $\dim_\bK\ker D = k$. Also define
    \begin{align}
        \clW_\bK(N) = \bigcup_{k\ge 1}\clW^k_\bK(N).
    \end{align}
    Finally, define $\clW'(N)\subset\clC\clR_\bR(N)$ to be the set of pairs $(j,D)$ such that there exists a section $0\ne\sigma\in\ker D$ and a point $z\in\Sigma$ such that $\sigma(z) = 0$.
\end{Definition}

We remind the reader of the convention that all (co)dimensions appearing below are to be understood over $\bR$, unless otherwise specified.

\begin{Lemma}\label{triv-wall-codim}
    We have the following codimension estimates on $\clW^k_\bK(N)$.
    \begin{enumerate}[\normalfont(i)]
        \item $\clW^1_\bR(N)$ is a submanifold of $\clC\clR_\bR(N)$ of codimension $1$.
        \item For each $k\ge 2$, $\clW^k_\bR(N)$ is of codimension $\ge 4$ in $\clC\clR_\bR(N)$.
        \item For each $k\ge 1$, $\clW^k_\bC(N)$ is of codimension $\ge 2$ in $\clC\clR_\bC(N)$.
    \end{enumerate}
\end{Lemma}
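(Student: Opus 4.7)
The strategy is to realize each locus $\clW^{\ge k}_\bK(N) := \bigcup_{k' \ge k}\clW^{k'}_\bK(N)$ as the image of a universal moduli space whose projection to $\clC\clR_\bK(N)$ is a Fredholm map of negative index, and then invoke the Sard--Smale theorem in the formal sense of \textsection\ref{technical-transversality-remarks}.

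The first step is to introduce the universal moduli space
\begin{align}
    \widehat{\clW}^k_\bK(N) := \{(j, D, \sigma_1, \ldots, \sigma_k) \in \clC\clR_\bK(N) \times \Omega^0(\Sigma, N)^k : D\sigma_i = 0,\ \sigma_i \text{ are } \bK\text{-linearly independent}\}
\end{align}
together with its free $GL_k(\bK)$-quotient $\widetilde{\clW}^{\ge k}_\bK(N)$, which parametrizes triples $(j, D, L)$ with $L \subset \ker D$ a $k$-dimensional $\bK$-subspace. Its projection to $\clC\clR_\bK(N)$ has image exactly $\clW^{\ge k}_\bK(N)$. Each $D \in \clC\clR_\bK(N)$ has real Fredholm index $0$ (by Riemann--Roch, using $c_1(N) \cdot [\Sigma] = 2g-2$ and $\operatorname{rank}_\bC N = 2$), so the $k$ copies of $D\sigma_i = 0$ cut out $\widehat{\clW}^k_\bK(N)$ of formal ``relative dimension $0$'' over $\clC\clR_\bK(N)$. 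Quotienting by $GL_k(\bK)$, of real dimension $k^2 \dim_\bR \bK$, yields a projection of Fredholm index $-k^2 \dim_\bR \bK$.

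The key technical step is to establish formal regularity of $\widehat{\clW}^k_\bK(N)$: at a point $(j_0, D_0, \sigma_\bullet)$, the linearization sends $(\dot D, \dot\sigma_1, \ldots, \dot\sigma_k)$ to $(D_0 \dot\sigma_i + \dot D\,\sigma_i)_i$, and surjectivity onto $\Omega^{0,1}(\Sigma, N)^k$ reduces by Serre duality to showing that for any nonzero $(\tau_i^\dagger) \in (\ker D_0^\dagger)^k$, the linear functional
\begin{align}
    \dot D \mapsto \sum_i \int_\Sigma \langle \dot D\,\sigma_i, \tau_i^\dagger\rangle
\end{align}
on variations of $D$ within $\clC\clR_\bK(N)$ is nontrivial. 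This amounts to the nonvanishing of the ``multi-valued Petri product'' $\sum_i \sigma_i \otimes \tau_i^\dagger$ as a section of an appropriate tensor bundle on $\Sigma$, and this is the main obstacle of the proof. For $k = 1$ it is automatic, since $\sigma$ and $\tau^\dagger$ vanish only on discrete subsets and so their pointwise tensor product is nonzero on an open dense set; for $k \ge 2$ one must invoke a Petri-type genericity result in the spirit of \cite[Section 5]{Wendl-19}, noting that the non-Petri locus in $\clC\clR_\bK(N)$ has infinite codimension and can therefore be absorbed into the final codimension estimate.

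Once formal regularity is established, Sard--Smale produces $\operatorname{codim}\,\clW^{\ge k}_\bK(N) \ge k^2 \dim_\bR \bK$, which yields (ii) (since $k^2 \ge 4$ for $k \ge 2$) and (iii) (since $2k^2 \ge 2$ for $k \ge 1$) via the inclusion $\clW^k_\bK(N) \subset \clW^{\ge k}_\bK(N)$. For (i), the bound $\operatorname{codim}\,\clW^{\ge 2}_\bR(N) \ge 4$ shows that $\clW^1_\bR(N) = \clW^{\ge 1}_\bR(N) \setminus \clW^{\ge 2}_\bR(N)$ is open in $\clW^{\ge 1}_\bR(N)$; near any of its points, Kuranishi reduction provides a smooth map $\psi : U \to \text{Hom}(\ker D_0, \coker D_0) \simeq \bR$ cutting out $\clW_\bR\cap U$, and the $k=1$ Petri condition (which holds automatically) guarantees $d\psi|_{D_0} \neq 0$, so $\clW^1_\bR(N)$ is a smooth codimension-$1$ submanifold.
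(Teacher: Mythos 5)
Your approach via the universal moduli space $\widehat\clW^k_\bK(N)$ and Sard--Smale is genuinely different from the paper's, which works entirely locally: near a point $(j,D)\in\clW^k_\bK(N)$ the paper invokes a Kuranishi reduction $\clS\colon\clU\to\text{Hom}_\bK(\ker D,\coker D)$ (essentially the map you sketch for part (i)) and bounds the rank of $d\clS$ from below via the Petri map $\ker D\otimes_\bK\ker D^\dagger\to\Omega^0(\Sigma,N\otimes_\bK N^\dagger)$. This directly exhibits $\clW^k_\bK(N)$ as a $C^\infty$ subvariety of codimension bounded below by the rank of the Petri map, with no universal moduli space, group quotient, or Sard--Smale step.

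The gap in your argument is the formal regularity of $\widehat\clW^k_\bK(N)$ for $k\ge 2$. As you correctly reduce it, regularity at a point with $\dim_\bK\ker D=k$ is the full \emph{injectivity} of the Petri map, and this is not unconditional: for $\bK=\bR$ and $k\ge3$, or for $\bK=\bC$ and $k\ge2$, it can fail at particular operators. Your proposed fix --- that the non-Petri locus in $\clC\clR_\bK(N)$ has infinite codimension --- is asserted without proof, and is in any case much heavier machinery than needed. Indeed, the Remark following Lemma \ref{sign-wall-codim} states explicitly that Wendl's generic Petri injectivity is not required for this lemma because the covers involved have small degree. What the paper actually uses are unconditional \emph{rank lower bounds} for the Petri map, valid at every operator: $\bK$-rank $\ge 1$ by unique continuation (yielding codimension $\ge 1$ for (i) and $\bR$-codimension $\ge 2$ for (iii)), and $\bR$-rank $\ge 2k$ when $\bK=\bR$ and $k\ge 2$, quoting \cite[Remark 1.1.15]{DW-20}, yielding codimension $\ge 2k\ge 4$ for (ii). These bounds suffice even when the Petri map is not injective, so no genericity is required. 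Your universal-moduli-space argument demands full rank $k^2\dim_\bR\bK$, which is strictly stronger than what is true in general; salvaging it would require stratifying $\clW^{\ge k}_\bK(N)$ by $\dim\ker D$ and applying a rank estimate on each stratum, which is effectively the paper's local argument.
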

\begin{proof}
    Given any element $(j,D)\in\clW^k_\bK(N)$, there exists (using \cite[Lemma 1.1.4]{DW-20} for example) a smooth function
    \begin{align}
        \clC\clR_\bK(N)\supset\clU\xrightarrow{\clS}\text{Hom}_\bK(\ker D,\coker D)
    \end{align}
    defined in a neighborhood $\clU$ of $(j,D)$ such that $\clU\cap\clW_\bK^k(N) = \clS^{-1}(0)$. Moreover, its linearization $d\clS|_{(j,D)}$ restricted to $\Omega^{0,1}_j(\Sigma,\text{Hom}_\bK(N,N))$, i.e. the space of variations of $D$ with $j$ held fixed, is given by
    \begin{align}\label{linearize-triv-wall}
        A\mapsto(\kappa\mapsto A\kappa\pmod{\text{im }D})
    \end{align}
    where $\kappa \in \ker D$.
    The statements (i)--(iii) will therefore follow by proving suitable rank estimates on this map. It suffices to prove rank estimates for the ``Petri map" (recall that $D^\dagger$ is the Serre dual of $D$ as in Definition \ref{serre})
    \begin{align}\label{triv-petri}
        \ker D\otimes_\bK\ker D^\dagger\to\Omega^0(C,N\otimes_\bK N^\dagger) \\
        s\otimes_\bK s'\mapsto(z\mapsto s(z)\otimes_\bK s'(z)).
    \end{align}
    Indeed, $A \in \Omega^{0,1}_j(\Sigma,\text{Hom}_\bK(N,N))$ from \eqref{linearize-triv-wall} naturally lies in the dual space of the right side of \eqref{triv-petri} and thus, a rank estimate for \eqref{triv-petri} will imply one for \eqref{linearize-triv-wall} by duality.
    By taking a $0\ne s\in\ker D$ and $0\ne s'\in\ker D^\dagger$ and using unique continuation for solutions of Cauchy--Riemann equations, it follows that the Petri map always has $\bK$-rank $\ge 1$. This proves (i) and (iii). Statement (ii) follows by using \cite[Remark 1.1.14]{DW-20} (which is itself based on \cite[Lemma 4.4]{Eft16}) to conclude that the rank of the Petri map is $\ge 2k$ whenever $k\ge 2$ and $\bK = \bR$.
\end{proof}
\begin{Lemma}\label{ker-with-zero}
    $\clW'(N)$ is of codimension $\ge 3$ in $\clC\clR_\bR(N)$.
\end{Lemma}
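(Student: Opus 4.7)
The plan is to imitate the universal-moduli-space argument used to prove Lemma \ref{triv-wall-codim}, augmented with a transversality analysis of the evaluation map at the vanishing point. First I will reduce to the principal stratum $\clW^1_\bR(N)$: by Lemma \ref{triv-wall-codim}(ii), $\bigcup_{k\ge 2}\clW^k_\bR(N)$ already has codimension $\ge 4$ in $\clC\clR_\bR(N)$, so it suffices to show that $\clW'(N)\cap\clW^1_\bR(N)$ has codimension $\ge 2$ inside $\clW^1_\bR(N)$.

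For this I will introduce the universal moduli space
\begin{align*}
\widetilde\clW'(N) = \bigl\{\,(j,D,[\sigma],z) : (j,D)\in\clW^1_\bR(N),\ 0\ne\sigma\in\ker D,\ z\in\Sigma,\ \sigma(z)=0\,\bigr\}\big/\bR^\times.
\end{align*}
Since $\dim_\bR\ker D=1$ on $\clW^1_\bR(N)$, one can view $\widetilde\clW'(N)$ as the zero-locus inside $\clW^1_\bR(N)\times\Sigma$ of the $\bR$-linear bundle map $e:L\to\mathrm{pr}_\Sigma^*N$ defined by $(j,D,\sigma,z)\mapsto\sigma(z)$, where $L$ denotes the pullback to $\clW^1_\bR(N)\times\Sigma$ of the tautological real line bundle with fiber $\ker D$ at $(j,D)$. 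A naive count (source rank $1$, target rank $4$, extra parameter $\dim_\bR\Sigma=2$) then predicts that the projection $\widetilde\clW'(N)\to\clW^1_\bR(N)$ is Fredholm of expected index $-(4-2)=-2$, giving the claimed codimension once transversality is established.

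The main step, and the main technical obstacle, is verifying that $e$ is transverse to the zero section. At a point $(j,D,\sigma,z)$ in the zero set, the full linearization of the defining system sends $(\delta j,\delta D,\delta\sigma,\delta z)$, subject to the wall-staying condition $D(\delta\sigma)+(\delta D)\cdot\sigma+(\text{contribution of }\delta j)=0$, to $\delta\sigma(z)+\nabla\sigma|_z\cdot\delta z\in N_z$. To reach an arbitrary $(\alpha,v)\in\Omega^{0,1}(\Sigma,N)\times N_z$, I would first pick $\delta\sigma\in\Omega^0(\Sigma,N)$ satisfying $\delta\sigma(z)=v$ and $D(\delta\sigma)(z)=\alpha(z)$; this is possible by choosing $\delta\sigma$ supported near $z$ with independently prescribed value and first-order data at $z$. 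Then $\alpha-D(\delta\sigma)$ vanishes at $z$, so a pointwise algebraic argument (on a punctured neighborhood of $z$, extended continuously across $z$) produces $\delta D\in\Omega^{0,1}(\Sigma,\mathrm{End}_\bR N)$ with $\delta D\cdot\sigma=\alpha-D(\delta\sigma)$. Any other zeros of $\sigma$ can be dealt with similarly, and the residue that remains in $\coker D$ after this procedure is absorbed using the Petri-type argument from Lemma \ref{triv-wall-codim}, whose rank estimate for the Petri map in the $\bR$-linear rank-$2$ situation is already established there. Once surjectivity of the linearization is in hand, the Sard--Smale trick described in Subsection \ref{technical-transversality-remarks} shows that $\clW'(N)$, as the image of $\widetilde\clW'(N)$ under projection to $\clC\clR_\bR(N)$, has codimension $\ge 3$.
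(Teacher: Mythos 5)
Your overall strategy matches the paper's exactly up to the transversality step: reduce to $\clW^1_\bR(N)$ via Lemma \ref{triv-wall-codim}(ii), pass to the incidence locus over $\clW^1_\bR(N)\times\Sigma$, and count $\dim_\bR N_z = 4$ against $\dim_\bR\Sigma = 2$ to get codimension $\ge 2$ in $\clW^1_\bR(N)$ and hence $\ge 3$ in $\clC\clR_\bR(N)$. The gap lies in the transversality verification, which is the only nontrivial part. Your direct construction asks for a \emph{smooth} $\delta D\in\Omega^{0,1}(\Sigma,\mathrm{End}_\bR N)$ solving $\delta D\cdot\sigma = \alpha - D(\delta\sigma) =: \beta$. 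Since $\sigma$ satisfies a Cauchy--Riemann equation, the similarity principle gives it finitely many zeros of finite positive order; near a zero of order $k$ any smooth $\delta D$ yields $\delta D\cdot\sigma = O(\text{dist}^k)$, while you have only arranged $\beta(z)=0$ (first-order vanishing at $z$, and no control at other zeros $w\neq z$). The natural candidate $\delta D$ obtained by pointwise division of $\beta$ by $\sigma$ is then $O(\text{dist}^{1-k})$ near a zero, which for $k\ge 2$ is not even bounded — so ``extended continuously across $z$'' is false as written, and the issue is not a minor smoothness technicality. A repair would require prescribing the \emph{entire} Taylor series of $\beta$ to vanish at every zero of $\sigma$ (Borel's lemma plus cutoffs, exploiting the freedom in the higher jets of $\delta\sigma$), and then checking this can be done simultaneously at all zeros while keeping $\delta\sigma(z)=v$. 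Likewise, the closing remark that a ``residue in $\coker D$'' is ``absorbed'' by the Petri estimate does not fit: if the division succeeded there is no residue, and the rank-$1$ Petri bound from Lemma \ref{triv-wall-codim} is far short of the extra rank $4$ needed here.

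The paper sidesteps the division entirely by arguing by contradiction. It fixes a normalized kernel generator $\sigma_{(j',D')}$ over a neighborhood in $\clW^1_\bR(N)$, computes that the linearization of the evaluation $\mathcal{E}(j',D',z')=\sigma_{(j',D')}(z')$, restricted to variations of $D$ alone, is $A\mapsto\Pi'(A\sigma)\in\coker D_z$ (the cokernel of $D$ restricted to sections vanishing at $z$, which packages together $N_z$ and $\coker D$), and supposes this map is not surjective. Duality then yields a nonzero distributional $\tau$ of $N^\dagger$ with $\sigma\otimes\tau\equiv 0$ and with $D^\dagger\tau$ given by evaluation at $z$; elliptic regularity forces $\tau\in L^q$ and smooth with $D^\dagger\tau=0$ on $\Sigma\setminus\{z\}$, and unique continuation for $\sigma$ and $\tau$ away from $z$ contradicts $\sigma\otimes\tau\equiv 0$. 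This handles arbitrary vanishing orders of $\sigma$ uniformly and is the analytic input your proposal still needs to supply.
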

\begin{proof}
    Since $\clW'(N)\subset\clW_\bR(N)$, Lemma \ref{triv-wall-codim}(ii) implies that it is enough to show that $\clW'(N)\cap\clW^1_\bR(N)$ is of codimension $\ge 3$ in $\clC\clR_\bR(N)$.
    To prove this, we argue as follows. Define
    \begin{align}
        \widehat\clW'(N)\subset\clC\clR_\bR(N)\times\Sigma
    \end{align}
    to consist of $(j,D,z)$ such that $(j,D)\in\clW^1_\bR(N)$ and the elements of $\ker D$ vanish at $z$. It will be enough to show that $\widehat \clW'(N)$ is a submanifold of codimension $4$ in $\clW^1_\bR(N)\times\Sigma$. Fix any point $(j,D,z)\in\widehat\clW'(N)$. Choose metrics on $\Sigma$, $N$ -- this yields an $L^2$ inner product on the space of sections of $N$. Fix a section $\sigma\in\ker D$ with $\|\sigma\|_{L^2} = 1$ (note that $\sigma$ is unique up to multiplication by $\pm 1$). Then, for every $(j',D')$ in a sufficiently small neighborhood $\clU$ of $(j,D)$ in $\clW^1_\bR(N)$, there is a distinguished element $\sigma_{(j',D')}\in\ker D'$ determined by the condition that its $L^2$ inner product with $\sigma$ is $1$. Note that $\sigma_{(j,D)} = \sigma$ by definition. We now have the smooth map
    \begin{align}
        \clU\times\Sigma&\xrightarrow{\mathcal{E}} N \\
        \label{ev-ker-elt} (j',D',z')&\mapsto\sigma_{(j',D')}(z')
    \end{align}
    which is compatible with the projection to $\Sigma$ and satisfies $\widehat\clW'(N)\cap(\clU\times\Sigma) = \mathcal{E}^{-1}(0)$. It will now suffice to show that the linearization
    \begin{align}\label{eqn:e-linear}
        d\mathcal{E}|_{(j,D,z)}:T_{(j,D)}\clW^1_\bR(N)\oplus T_z\Sigma\to N_z
    \end{align}
    is surjective. We will in fact show the stronger statement that variations in $D$ with $j$ and $z$ held constant suffice to surject onto $N_z$. First, we need to find an explicit expression for this linearization. To this end, let $\Pi:\Omega^{0,1}_j(\Sigma,N)\to\coker D$ be the natural projection and let $Q:\text{im }D\to(\ker D)^\perp$ be the right inverse of $D$ which we extend to $\Omega^{0,1}_j(\Sigma,N)$ by declaring $Q|_{(\text{im }D)^\perp} \equiv 0$.
    
    Let $A\in\Omega^{0,1}_j(\Sigma,\text{Hom}_\bR(N,N))$ be such that $A\in T_{(j,D)}\clW^1_\bR(N)$ (from the proof of Lemma \ref{triv-wall-codim}(i), this means that $\Pi A\sigma = 0$). Then, there exists a smooth path $t\mapsto D_t\in\clW^1_\bR(N)$ such that $\dot D_0 = A$. Denoting $\sigma_t = \sigma_{(j,D_t)} = \sigma + \xi_t$ with $\xi_t\perp\sigma$ and differentiating the identity $D_t\sigma_t \equiv 0$, we find
    \begin{align}
        0 = A\sigma + D\dot\xi_0
    \end{align}
    which gives $\dot\xi_0 = -QA\sigma$. 
    
    We claim that the surjectivity of \eqref{eqn:e-linear}, restricted to the subspace where variations of $j$ and $z$ are zero, will follow once we show the surjectivity of
    \begin{align}
        L:\Omega^{0,1}_j(\Sigma,\text{Hom}_\bR(N,N))\to N_z\oplus\coker D \\
        L(A):= (-(QA\sigma)(z),\Pi A\sigma).
    \end{align}
    Indeed, if $L$ is surjective, then so is its restriction $L^{-1}(N_z \oplus \{0\})\to N_z$. As recalled in the preceding paragraph, $L^{-1}(N_z\oplus\{0\})$ is identified with the subspace of $T_{(j,D)}\clW^1_\bR(N)$ where $j$ is held constant. The claim now follows by observing that $L^{-1}(N_z \oplus \{0\})\to N_z$ is the desired restriction of \eqref{eqn:e-linear}.
    
    Now, if we let $D_z:\Omega^0_z(\Sigma,N)\to\Omega^{0,1}_j(\Sigma,N)$ be the restriction of $D$ to the space of sections of $N$ vanishing at $z$ and $\Pi':\Omega^{0,1}_j(\Sigma,N)\to\coker D_z$ be the corresponding projection, we can construct a map
    \begin{align}\label{eqn:5.16}
        \coker D_z\to N_z\oplus\coker D \\
        \Pi'\eta\mapsto(-(Q\eta)(z),\Pi\eta).
    \end{align}
    It follows from the long exact sequence relating the (co)kernels of $D_z$ and $D$ with $N_z$, that the map \eqref{eqn:5.16} is an isomorphism. Indeed, since we have $\ker D_z = \ker D$ (both are spanned by $\sigma$), the long exact sequence mentioned above yields the short exact sequence
    \begin{align}\label{eqn:ev-les}
        0\to N_z\to\coker D_z\to\coker D\to 0
    \end{align}
    and the map $\Pi'\eta\mapsto (Q\eta)(z)$ splits the connecting map $N_z\to\coker D_z$ in \eqref{eqn:ev-les}.
    Under the isomorphism \eqref{eqn:5.16}, the map $L$ is simply $A\mapsto\Pi'A\sigma$. 
    
    Suppose for the sake of contradiction that $L$ is not surjective. Then, there is a nonzero distributional section $\tau$ of $N^\dagger$ such that $\langle A\sigma,\tau\rangle = 0$ for all smooth $A$ (i.e., $\sigma\otimes\tau\equiv 0$ as a distributional section of $N\otimes_\bR N^\dagger$) and $\tau|_{\text{im }D_z}\equiv 0$. The second condition implies that $D^\dagger\tau$, viewed as a functional on $\Omega^0(\Sigma,N)$, vanishes on the subspace $\Omega^0_z(\Sigma,N)$. As a result, there is a linear functional $\Lambda_\tau:N_z\to\bR$ such that for any $\sigma\in\Omega^0(\Sigma,N)$, we have $\langle D^\dagger\tau,\sigma\rangle = \Lambda_\tau(\sigma(z))$. Thus, $D^\dagger\tau$ is of class $(L_1^p)^*$ for any $2<p<\infty$ (where $L_1^p$ is the space of $L^p$ functions with one derivative in $L^p$ and we are using the Sobolev embedding $L_1^p\subset C^0$). By elliptic regularity, $\tau$ must be of class $(L^p)^* = L^q$ (where $\frac1p+\frac1q=1$) and smooth in the complement of $z$ (since it satisfies $D^\dagger\tau = 0$ here). But now $\sigma\otimes\tau\equiv 0$ with $\sigma\not\equiv 0$ and $\tau\not\equiv 0$ on the complement of $z$ contradicts the fact that $\sigma$ and $\tau$ satisfy unique continuation on the complement of $z$.
\end{proof}
Now, let $h\ge g$ be an integer such that $r:=(2h-2)-2(2g-2)\ge 0$. Then, for any $j\in\clJ(\Sigma)$, the complex orbifold $\clM_h(\Sigma,j,2)$ of genus $h$ holomorphic double covers of $(\Sigma,j)$ branched at $r$ points is non-empty. In fact, the underlying coarse moduli spaces\footnote{In this context, the coarse moduli space is just obtained by forgetting the $\bZ/2\bZ$ stabilizers in the purely ineffective orbifold $\clM_h(\Sigma,j,2)$. In particular, the coarse moduli space has no quotient singularities.} are canonically identified for all $j$ (since the isomorphism class of a cover is determined by purely topological data once $j$ is specified). Call this common coarse moduli space $M_h(\Sigma,2)$; it is naturally a smooth manifold of dimension $2r$.

Now, given $(j,D,[\varphi:\Sigma'\to\Sigma])\in\clC\clR_\bK(N)\times M_h(\Sigma,2)$, the operator $D_\varphi = \varphi^*D$ on $\varphi^*N$ admits a splitting $D_\varphi = D_\varphi^+\oplus D_\varphi^-$ into a direct sum of Cauchy--Riemann operators on isotypic subspaces corresponding to the trivial and sign representations of $\bZ/2\bZ = \text{Aut}(\varphi)$. We have $\ind D_\varphi^+ = \ind D = 0$ and $\ind D_\varphi^- = \ind D_\varphi = -2r$.
\begin{Definition}[Equivariant Brill--Noether loci]
    Given $\bK\in\{\bR,\bC\}$, an integer $k\ge 1$ and integers $h$ and $r$ as above, define the subset
    \begin{align}
        \clW^k_{2,h,\bK}(N)\subset\clC\clR_\bK(N)\times M_h(\Sigma,2)
    \end{align}
    to consist of those $(j,D,[\varphi:\Sigma'\to\Sigma])$ for which $\dim_\bK\ker D_\varphi^- = k$. Also define
    \begin{align}
        \clW_{2,h,\bK}(N) = \bigcup_{k\ge 1}\clW^k_{2,h,\bK}(N).
    \end{align}
\end{Definition}

\begin{Lemma}\label{sign-wall-codim}
    The following codimension estimates on $\clW^k_{2,h,\bK}(N)$ hold.
    \begin{enumerate}[\normalfont(i)]
        \item $\clW^1_{2,h,\bR}$ is a submanifold of $\clC\clR_\bR(N)\times M_h(\Sigma,2)$ of codimension $2r+1$.
        \item For each $k \ge 2$, $\clW^k_{2,h,\bR}$ has codimension $\ge 4r + 4$ in $\clC\clR_\bR(N)\times M_h(\Sigma,2)$.
        \item For each $k\ge 1$, $\clW^k_{2,h,\bC}$ has codimension $\ge 2r+2$ in $\clC\clR_\bC(N)\times M_h(\Sigma,2)$.
    \end{enumerate}
\end{Lemma}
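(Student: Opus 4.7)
The plan is to adapt the proof of Lemma \ref{triv-wall-codim} to the $\bZ/2$-equivariant setting induced by the sign decomposition $D_\varphi = D_\varphi^+ \oplus D_\varphi^-$. Fix a point $(j, D, [\varphi]) \in \clW^k_{2,h,\bK}(N)$ and use an equivariant version of \cite[Lemma 1.1.4]{DW-20} applied to $D_\varphi^-$ to construct a smooth function
\begin{equation*}
    \clS : \clU \to \mathrm{Hom}_\bK(\ker D_\varphi^-,\, \coker D_\varphi^-)
\end{equation*}
on a neighborhood $\clU$ of $(j, D, [\varphi])$ in $\clC\clR_\bK(N) \times M_h(\Sigma, 2)$ such that $\clU \cap \clW^k_{2,h,\bK}(N) = \clS^{-1}(0)$. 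Since $D_\varphi^-$ has Fredholm index $-2r$, the target of $\clS$ has real dimension $k(k+2r)$ in the real case and $2k(k+r)$ in the complex case. The lemma will then follow by bounding the rank of $d\clS|_{(j,D,[\varphi])}$ from below at every point of $\clW^k_{2,h,\bK}(N)$.

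The key computation is to restrict $d\clS|_{(j,D,[\varphi])}$ to variations $A \in \Omega^{0,1}_j(\Sigma,\, \mathrm{End}_\bK N)$ of $D$ alone. Parallel to \eqref{linearize-triv-wall}, this is the map
\begin{equation*}
    A \longmapsto \bigl(\kappa \longmapsto [(\varphi^*A)\kappa] \in \coker D_\varphi^-\bigr),
\end{equation*}
whose image automatically lies in $\mathrm{Hom}_\bK(\ker D_\varphi^-, \coker D_\varphi^-)$, because $\varphi^*A$ is invariant under the deck involution while $\kappa$ is anti-invariant. Dualizing, the rank of this restricted linearization equals that of the equivariant Petri-type map
\begin{equation*}
    P : \ker D_\varphi^- \otimes_\bK \ker(D_\varphi^-)^\dagger \to \Omega^0\bigl(\Sigma,\, N \otimes_\bK N^\dagger\bigr),\qquad s \otimes s' \longmapsto \bigl(z \mapsto s(\tilde z) \otimes s'(\tilde z)\bigr),
\end{equation*}
where $\tilde z \in \varphi^{-1}(z)$; the target is well-defined on $\Sigma$ rather than $\Sigma'$ because the tensor of two anti-invariants is invariant and descends under $\varphi_*$.

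For $k = 1$ (proving (i) and the $k = 1$ case of (iii)), the map $P$ is injective by the same unique-continuation argument as in Lemma \ref{triv-wall-codim}(i): any nonzero elements of $\ker D_\varphi^-$ and $\ker(D_\varphi^-)^\dagger$ have discrete zero sets, so their tensor product is nonzero on a dense subset of $\Sigma$. Thus $\clS$ is transverse at every point of $\clW^1_{2,h,\bR}(N)$, yielding the submanifold statement (i) with codimension $2r+1$, and the complex case of (iii) at $k=1$ with real codimension $2r+2$.

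The main obstacle lies in the range $k \ge 2$: here one must improve upon the non-equivariant rank bound $\mathrm{rank}\, P \ge 2k$ of \cite[Remark 1.1.15]{DW-20}, which by itself only yields codimension $\ge 2k$ and is insufficient once $r$ is large. I would approach this by pushing anti-invariant sections forward along $\varphi$ to identify $\ker D_\varphi^-$ and $\ker(D_\varphi^-)^\dagger$ with the kernels of Cauchy--Riemann operators on twisted rank-$2$ bundles $N \otimes L$ and $N^\dagger \otimes L$ on $\Sigma$, where $L$ is a line bundle with $L^{\otimes 2} \cong \clO_\Sigma(-B)$ and $B$ is the branch divisor of $\varphi$. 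After this descent, the equivariant Petri condition on $\Sigma'$ becomes an ordinary Petri condition on $\Sigma$, to which the methods of \cite[Lemma 4.4]{Eft16} apply, yielding the required rank bounds $k(k+2r)$ in the real case and $k(k+r)$ in complex rank. Combined with the dimension counts above, this delivers the codimensions $4r+4$ in (ii) (tight at $k=2$, with strictly larger codimension for $k \ge 3$) and $2r+2$ in (iii), completing the proof.
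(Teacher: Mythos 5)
Your setup is the one the paper uses, and your treatment of case (i) and the $k=1$ instance of (iii) matches the paper's: the Kuranishi-type map $\clS$, the observation that $\ker D_\varphi^-\otimes\ker(D^\dagger)_\varphi^-$ is automatically $\bZ/2\bZ$-invariant so the Petri map descends, and the unique-continuation argument giving $\bR$-rank $2r+1$. The gap is in part (ii). You propose descending the anti-invariant pieces to a twisted rank-$2$ bundle $N\otimes L$ on $\Sigma$ and then claim that the methods of \cite[Lemma 4.4]{Eft16} give \emph{full} rank $k(k+2r)$, i.e.\ injectivity of the Petri map, for every operator in $\clW^k_{2,h,\bR}(N)$. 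That is a much stronger conclusion than those methods yield. \cite[Remark 1.1.15]{DW-20} and \cite[Lemma 4.4]{Eft16} provide only a partial lower bound on the rank of the Petri map for an arbitrary $\bR$-linear Cauchy--Riemann operator; unconditional Petri injectivity is precisely the delicate condition that Wendl's theorem establishes only for \emph{generic} $J$, and it is known to fail for special (e.g.\ $\bC$-linear) operators. What the paper actually does is bypass the descent entirely and apply \cite[Remark 1.1.15]{DW-20} directly to $D_\varphi^-$ on $\Sigma'$, getting rank $\ge 2(2+2r)=4r+4$, which is exactly what the codimension bound in (ii) requires (and is sharp only at $k=2$, not for $k\ge3$). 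So you need to replace the injectivity claim with the correct partial bound, and it is then unnecessary to pass through the descended bundle at all. A smaller point: for (iii) with $k\ge 2$, the elementary unique-continuation argument already gives $\bC$-rank $\ge \dim_\bC\ker(D^\dagger)_\varphi^- = k+r \ge r+1$, so you should not route that case through the problematic injectivity claim either.
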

\begin{proof}
    The proof is a slight variation of the argument from Lemma \ref{triv-wall-codim}. As in that argument, given any point $(j,D,[\varphi:\Sigma'\to\Sigma])\in\clW^k_{2,h,\bK}$, we can find a smooth function
    \begin{align}
        \clC\clR_\bK(N)\times M_h(\Sigma,2)\supset\clU\xrightarrow{\clS}\text{Hom}_\bK(\ker D_\varphi^-,\coker D_\varphi^-)
    \end{align}
    defined on a neighborhood $\clU$ of $(j,D,[\varphi:\Sigma'\to\Sigma])$ such that $\clU\cap\clW^k_{2,h,\bK} = \clS^{-1}(0)$. Moreover, the linearization $d\clS|_{(j,D,[\varphi])}$ restricted to $\Omega^{0,1}_j(\Sigma,\text{Hom}_\bK(N,N))$, i.e. the space of variations of $D$ with $j$ and $[\varphi]$ held fixed, is given by
    \begin{align}\label{eqn:sign-wall-linearize}
        A\mapsto (\kappa\mapsto(\varphi^*A)\kappa \pmod{\text{im }D_\varphi^-})
    \end{align}
    where $\kappa \in \ker D_\varphi^-$. Thus, exactly as in Lemma \ref{triv-wall-codim}, the proofs of (i)--(iii) will follow from suitable rank estimates on this map. By duality, it is enough to prove rank estimates for the ``Petri map"
    \begin{align}\label{double-petri}
        \ker D_\varphi^-\otimes_\bK\ker((D_\varphi)^\dagger)^-&\to\Omega^0(\Sigma',\varphi^*N\otimes_\bK (\varphi^*N)^\dagger)^{\bZ/2\bZ} \\
        \sigma\otimes_\bK\sigma'&\mapsto(\zeta\mapsto\sigma(\zeta)\otimes_\bK\sigma'(\zeta))
    \end{align}
    where $(\cdot)^{\bZ/2\bZ}$ denotes the $\bZ/2\bZ = \text{Aut}(\varphi)$-invariant part.
    The key observation is that $\ker D_\varphi^-\otimes_\bK\ker((D_\varphi)^\dagger)^-$ already consists of $(\bZ/2\bZ)$-invariant elements -- indeed, $\bZ/2\bZ$ acts by the sign representation on both $\ker D_\varphi^-$ and $\ker ((D_\varphi)^\dagger)^-$ -- and so, the map \eqref{double-petri} is well-defined. Thus, we are reduced again to the argument of Lemma \ref{triv-wall-codim}(i)--(iii). More precisely, we can show by unique continuation that the Petri map has $\bR$-rank $= 2r+1$ in case of (i) and $\bC$-rank $\ge r+1$ in case of (ii). For statement (iii), we invoke \cite[Remark 1.1.14]{DW-20} (or \cite[Remark 1.3.12]{DW-20}) to conclude that the rank of the Petri map is $\ge 2(2 + 2r)$ in case of (ii).  
\end{proof}
\begin{remark}
The injectivity of Petri maps in the proofs of both Lemma \ref{triv-wall-codim} and Lemma \ref{sign-wall-codim} follows without appealing to Wendl's work on generic injectivity of Petri maps \cite{Wendl-19} because the degrees of the corresponding covers are small enough.
\end{remark}
\begin{Definition}[Co-orientation on $\clW^1_{2,h,\bR}(N)$]\label{natural-coor}
    Given a tuple $(j,D,[\varphi:\Sigma'\to\Sigma])$ lying in $\clW^1_{2,h,\bR}(N)$, its normal space in $\clC\clR_\bR(N)\times M_h(\Sigma,2)$ is canonically identified with
    \begin{align}\label{double-wall-normal}
        \normalfont\text{Hom}(\ker D_\varphi^-,\coker D_\varphi^-).
    \end{align}
    We define an orientation on \eqref{double-wall-normal} in the usual fashion, i.e., we deform $D$ in a $1$-parameter family to a $\bC$-linear operator and use the spectral flow. This assignment of orientations to the normal spaces of the forgetful map $\clW^1_{2,h,\bR}(N)\to\clC\clR_\bR(N)$, at points where this map is an immersion, is called the natural co-orientation on $\clW^1_{2,h,\bR}(N)$. Note that here we are using the natural orientation on $M_h(\Sigma,2)$ coming from the orientation on $\Sigma$.
\end{Definition}
In order to make the next definition, we will need the following well-known fact which is implicit in \cite[Appendix B]{Wendl-19} and \cite[Lemma 3.6]{doan2021castelnuovo}.
\begin{Lemma}\label{CR-op-v-acs}
    Given a complex curve $C$ (not necessarily compact) and a complex vector bundle $\pi:N\to C$ on it, there is a natural bijection $D\mapsto J_D$ between
    \begin{enumerate}[\normalfont (i)]
        \item $\bR$-linear Cauchy--Riemann operators $D:\Omega^0(C,N)\to\Omega^{0,1}(C,N)$ and,
        \item almost complex structures $J$ on the total space of $N$ which
        \begin{enumerate}[\normalfont (a)]
            \item restrict to the standard complex structure on each fiber of $\pi$,
            \item restrict to the given complex structure on $C$ along the zero section,
            \item are invariant under the $\bR^\times$ action $\lambda\cdot(z,v) = (z,\lambda v)$ on $N$ and,
            \item such that the projection $\pi$ is $J$-holomorphic.
        \end{enumerate}
    \end{enumerate}
    Under this bijection, if $C'$ is another complex curve (not necessarily compact), then a smooth map $\hat s: C'\to N$ is $J_D$-holomorphic if and only if $\varphi:=\pi\circ\hat s:C'\to C$ is holomorphic and the corresponding section $s\in\Omega^0(C',\varphi^*N)$ given by
    \begin{align}\label{eqn:curve-section}
        z\in C'\mapsto \hat s(z)\in N_{\varphi(z)}    
    \end{align}
    satisfies $(\varphi^*D)s = 0$.
\end{Lemma}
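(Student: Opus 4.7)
The plan is to work locally on a trivializing chart and use a normal-form analysis to show that almost complex structures satisfying (a)--(d) are in natural bijection with certain tensorial data, which we will then identify with $\bR$-linear Cauchy--Riemann operators.

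Fix a point of $C$ and choose a holomorphic coordinate $z$ on a neighborhood $U\subset C$ together with a holomorphic trivialization $N|_U \simeq U\times\bC^n$ with fiber coordinates $w = (w^1,\ldots,w^n)$, so that $TN|_{\pi^{-1}(U)}$ is framed by $\partial_z, \partial_{\bar z}, \partial_{w^a}, \partial_{\bar w^a}$. First I would unpack the geometric conditions. Condition (d) says $J$ preserves the vertical bundle $V=\ker d\pi$, and condition (a) says that $J|_V$ equals the standard fiberwise complex structure; together with the identity $J^2=-1$ and with (b), this forces the ``horizontal'' action to have the form $J\partial_z = i\partial_z + X$ for some smooth section $X$ of $V^{0,1}\subset V\otimes_\bR\bC$, with $X$ determined by (and determining) $J$ via complex conjugation. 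Condition (b) forces $X$ to vanish along the zero section $w=0$, and condition (c) forces $X$ to be homogeneous of degree $1$ in $(w,\bar w)$. Combined with smoothness, we conclude that
\begin{align}
    X = \bigl(A^a_{\phantom{a}b}(z,\bar z)\, w^b + B^a_{\phantom{a}b}(z,\bar z)\, \bar w^b\bigr)\, \partial_{\bar w^a}
\end{align}
for smooth matrix-valued functions $A, B$ on $U$.

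The key observation is that the local datum $(A,B)$ is precisely what it takes to describe, in our chosen trivialization, an $\bR$-linear Cauchy--Riemann operator of the form $Ds = \delbar s + \alpha(s)$, where $\alpha$ is an $\bR$-linear bundle map $N\to \Lambda^{0,1}T^*_C\otimes_\bC N$ whose $\bC$-linear and $\bC$-antilinear components are determined by $A$ and $B$. I would next check that the resulting identification $D\leftrightarrow J_D$ is independent of the trivialization. Under a change of holomorphic trivialization of $N|_U$, the transformation law for $(A,B)$ matches the transformation law for the coefficients of an $\bR$-linear CR operator relative to a local holomorphic frame; this is essentially a bookkeeping exercise using the chain rule. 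By partition-of-unity arguments and the local uniqueness established above, this gives the claimed global bijection.

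For the second assertion, I would verify directly, in the above local coordinates, that the nonlinear Cauchy--Riemann equation $J_D\circ d\hat s = d\hat s\circ j'$ for a smooth map $\hat s:C'\to N$ decouples into two pieces. Projecting via $d\pi$ and using condition (d), the ``horizontal component'' of this equation is exactly the statement that $\varphi=\pi\circ\hat s$ is a holomorphic map $C'\to C$. Given this, the ``vertical component'' of the equation involves the coefficients $A,B$ evaluated along $\hat s$ and, using the $\bR^\times$-invariance to reduce to the linear dependence on the fiber coordinates, becomes precisely $(\varphi^*D)s = 0$ for the corresponding section $s\in\Omega^0(C',\varphi^*N)$ defined by \eqref{eqn:curve-section}. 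The main potential subtlety is getting the conventions for $J_D$ (sign and factors of $i$ in how $X$ is extracted) consistent with those in the definition of the CR operator $D$; once that is pinned down, the proof is a routine local calculation.
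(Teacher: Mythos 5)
Your proposal is correct and follows essentially the same route as the paper's proof: both work in a local holomorphic trivialization of $N$, unpack conditions (a)--(d) to pin down a normal form for $J$ determined by a fiberwise $\bR$-linear datum (you via the vertical vector field $X = J\partial_z - i\partial_z$, the paper via a frame of $\Omega^{1,0}_J$, which is the dual description), identify that datum with the zeroth-order part of an $\bR$-linear Cauchy--Riemann operator, check compatibility with transition functions to globalize, and finally observe that the nonlinear CR equation for $\hat s$ decouples into the horizontal equation $\delbar\varphi = 0$ and the vertical equation $(\varphi^*D)s = 0$. The only cosmetic difference is vector-field vs.\ $1$-form bookkeeping; the substance is the same.
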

\begin{proof}
    It is enough to consider the case of the trivial bundle $U\times\bC^r\to U$ where $U\subset\bC$ is an open subset with complex coordinate $z$ and $\bC^r$ is endowed with the complex coordinates $w = (w_1,\ldots,w_r)$. Let $J$ be any almost complex structure on $U\times\bC^r$ which satisfies conditions (a)--(d). Using (a), (b) and (d), we find that the bundle $\Omega^{1,0}_J$ near $U\times 0\subset U\times\bC^r$ has a local frame consisting of $dz$ and
    \begin{align}\label{acs-frame}
        dw_i + A_i(z,w)d\bar z
    \end{align}
    for $1\le i\le r$ where the $A_i$ are smooth functions vanishing when $w_i=0$ for all $1 \leq i \leq r$. Now, (c) implies that the $A_i$ are actually $\bR$-linear in $w$, i.e., they are of the form
    \begin{align}\label{acs-frame-2}
        A_i(z,w) = \hat A_i(z)[w]
    \end{align}
    where $\hat A = (\hat A_i)_{i=1}^r:U\to\text{End}_\bR(\bC^r)$ is a smooth function. Moreover, the expression \eqref{acs-frame} for a frame of $\Omega^{1,0}_J$ is valid on the whole of $U\times\bC^r$. To this almost complex structure, we can associate the $\bR$-linear Cauchy--Riemann operator $D = \delbar + \hat A[\cdot]$ on the bundle $U\times\bC^r\to U$. Conversely, any $\bR$-linear Cauchy--Riemann operator $D$ on $U\times\bC^r$ can be written as $\delbar + \hat A[\cdot]$ for some smooth function $\hat A:U\to\text{End}_\bR(\bC^r)$ and thus, defines an almost complex structure $J_D$ on $N$, satisfying (a)--(d), by taking the $(1,0)$ forms to be spanned by $dz$ along with $dw_i + \hat A_i(z)[w]d\bar z$ for $1\le i\le r$.
    
    It is easy to see that this gives a bijective correspondence $D\mapsto J_D$ and that the bijection is preserved under the natural action of the group $C^\infty(U,GL_r(\bC))$ on both sides. Finally, for any open subset $U'\subset\bC$ and smooth map $\hat s:U'\to U\times\bC^r$ given by $\zeta\mapsto(\varphi(\zeta),s(\zeta))$ is $J_D$-holomorphic if and only if we have
    \begin{align}\label{eqn_redtext}
        \delbar\varphi = 0, \quad\delbar s + \hat A(\varphi)[s]\delbar\bar\varphi = 0
    \end{align}
    as desired. Now, the lemma extends to the general case by naturality.
\end{proof}
Let us now observe that for any $(j,D,[\varphi:\Sigma'\to\Sigma]) \in \clW^1_{2,h,\bR}(N)$, all non-zero elements of $\ker D_\varphi^-$ are non-zero scalar multiples of each other. Thus, for any $0 \neq \sigma \in \ker D_\varphi^-$, the corresponding $J_D$-holomorphic map $\hat\sigma:\Sigma'\to N$ is immersed/injective if and only if the same property holds for $a\cdot\sigma$ where $0\neq a\in \bR$. Therefore, we can make sense of the locus within $\clW^1_{2,h,\bR}$ where this immersion/injectivity property fails. The discussion which follows constructs Baire sets which avoid this locus (i.e. the immersion and injectivity property hold). These Baire sets will be used in the definition of linear wall crossing invariant (Definition \ref{lwc-double}).
\begin{Definition}
    Define the subsets
    \begin{align}
        &\normalfont\clW^1_{2,h,\bR}(N)_\text{imm}, \\
        &\normalfont\clW^1_{2,h,\bR}(N)_\text{inj}
    \end{align}
    of $\clW^1_{2,h,\bR}(N)$ to consist of exactly those $(j,D,[\varphi:\Sigma'\to\Sigma])$ for which the $J_D$-holomorphic map $\hat\sigma:\Sigma'\to N$ corresponding to a nonzero $\sigma\in\ker D_\varphi^-$ fails to be immersed or injective respectively. Also define
    \begin{align}
        \normalfont\clW^1_{2,h,\bR}(N)_\text{emb} = \normalfont\clW^1_{2,h,\bR}(N)_\text{imm}\cup \normalfont\clW^1_{2,h,\bR}(N)_\text{inj}.
    \end{align}
\end{Definition}
\begin{Lemma}\label{emb-multisection-wall}
    We have the following codimension estimates.
    \begin{enumerate}[\normalfont(i)]
        \item $\normalfont\clW^1_{2,h,\bR}(N)_\text{imm}$ has codimension $\ge 2r + 5$ in $\clC\clR_\bR(N)\times M_h(\Sigma,2)$.
        \item $\normalfont\clW^1_{2,h,\bR}(N)_\text{inj}$ has codimension $\ge 2r + 3$ in $\clC\clR_\bR(N)\times M_h(\Sigma,2)$.
    \end{enumerate}
    In particular, $\normalfont\clW^1_{2,h,\bR}(N)_\text{emb}$ has codimension $\ge 2r+3$ in $\clC\clR_\bR(N)\times M_h(\Sigma,2)$.
\end{Lemma}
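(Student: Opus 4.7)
The plan is to first reformulate each of the two conditions on $\hat\sigma$ as an intrinsic condition on the (essentially unique) section $\sigma \in \ker D^-_\varphi$, then apply a Petri-type transversality argument along the lines of Lemmas \ref{triv-wall-codim}, \ref{sign-wall-codim}, and \ref{ker-with-zero}. Fix a tuple $(j,D,[\varphi:\Sigma'\to\Sigma]) \in \clW^1_{2,h,\bR}(N)$, a nonzero $\sigma \in \ker D^-_\varphi$ (unique up to scaling) with corresponding $J_D$-holomorphic map $\hat\sigma:\Sigma'\to N$ (Lemma \ref{CR-op-v-acs}), and let $\iota:\Sigma'\to\Sigma'$ be the deck involution. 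Since $\sigma\circ\iota = -\sigma$, the section $\sigma$ vanishes at every fixed point of $\iota$, i.e., at each of the $r$ ramification points of $\varphi$. Because $d\pi\circ d\hat\sigma = d\varphi$, the map $\hat\sigma$ is automatically immersive at every non-ramification point. At a ramification point $z_*$, the vanishing $\sigma(z_*) = 0$ combined with the Cauchy--Riemann equation forces $d\sigma(z_*) \in \text{Hom}_\bC(T_{z_*}\Sigma', N_{\varphi(z_*)})$ to be $\bC$-linear, and immersion failure at $z_*$ becomes the real codimension $4$ condition $d\sigma(z_*) = 0$. Similarly, any coincidence $\hat\sigma(z_1) = \hat\sigma(z_2)$ with $z_1 \ne z_2$ forces $z_2 = \iota(z_1)$, which by anti-invariance gives $\sigma(z_1) = -\sigma(z_1) = 0$; injectivity failure of $\hat\sigma$ is therefore equivalent to $\sigma$ vanishing at some non-ramification point.

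For (i), I will introduce the universal moduli space $\widehat{\clW}^1_{\text{imm}}$ of tuples $(j,D,[\varphi],z_*)$ with $z_*$ a ramification point of $\varphi$ and $d\sigma(z_*) = 0$. The space of such tuples without the derivative condition is finite \'etale of degree $r$ over $\clC\clR_\bR(N) \times M_h(\Sigma,2)$, so it suffices to show $\widehat{\clW}^1_{\text{imm}}$ has codimension $\ge 2r+5$ in this \'etale cover. Following the template of Lemma \ref{sign-wall-codim}, I will pick a local smooth defining map valued in $\text{Hom}(\ker D^-_\varphi,\coker D^-_\varphi)\oplus\text{Hom}_\bC(T_{z_*}\Sigma', N_{\varphi(z_*)})$ whose linearization in the $A\in\Omega^{0,1}_j(\Sigma,\text{Hom}_\bR(N,N))$ direction is $A\mapsto\bigl(\kappa\mapsto\Pi(\varphi^*A)\kappa,\; d(Q(\varphi^*A)\sigma)(z_*)\bigr)$, where $Q$ is a right inverse of $D^-_\varphi$ modulo $\ker D^-_\varphi$. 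Surjectivity of this linearization will follow from a $\bZ/2$-equivariant Petri/unique-continuation argument: the first factor is surjective by Lemma \ref{sign-wall-codim}(i) using variations $A$ supported away from $z_*$, and any prescribed value of the second factor can then be realized independently using variations supported very close to $z_*$ together with the nonvanishing of the $1$-jet of $\sigma\otimes\sigma^\dagger$ at the zero $z_*$.

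For (ii), I will instead consider $\widehat{\clW}^1_{\text{inj}}$ parametrizing tuples $(j,D,[\varphi],z_1)$ with $\sigma(z_1) = 0$, living in the total space of the universal curve $\Sigma'_{\text{univ}} \to M_h(\Sigma,2)$, whose real dimension exceeds that of $\clC\clR_\bR(N)\times M_h(\Sigma,2)$ by $2$. The projection $\widehat{\clW}^1_{\text{inj}} \to \clW^1_{\text{inj}}$ is generically finite (it enumerates the discrete zero set of $\sigma$), so it suffices to show $\widehat{\clW}^1_{\text{inj}}$ has codimension $\ge 2r+5$ in its ambient space, after which projecting yields the claimed $\ge 2r+3$ bound. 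This is essentially an equivariant version of Lemma \ref{ker-with-zero}: the linearization of the defining system takes values in $\text{Hom}(\ker D^-_\varphi,\coker D^-_\varphi)\oplus N_{\varphi(z_1)}$, and surjectivity follows from unique continuation for the pointwise pairing $\sigma \otimes \sigma^\dagger$ away from its common zero locus, where $\sigma^\dagger \in \ker (D^-_\varphi)^\dagger$ is any nonzero element. The final statement on $\clW^1_{\text{emb}}$ is then the minimum of the bounds in (i) and (ii).

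The main technical difficulty will be establishing the surjectivity of the refined linearization in (i), which is a Petri-type statement for sections $\sigma$ prescribed to vanish at a specific point $z_*$. The $\bZ/2$-equivariance constrains us to $\bZ/2$-invariant variations in the Petri pairing, and at each ramification point $z_*$ the section $\sigma$ already vanishes, so the relevant Taylor expansion of the Petri map begins at first order; verifying its nonvanishing at $z_*$ is what produces the extra real codimension $4$ contribution beyond the baseline codimension $2r+1$.
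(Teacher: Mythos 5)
Your reformulation of the two failure conditions is correct and matches the paper's: at a ramification point $\zeta$ (= $z_*$), anti-invariance forces $\sigma(\zeta)=0$, the Cauchy--Riemann equation then forces $d\sigma(\zeta)$ to be $\bC$-linear, and failure of immersion at $\zeta$ is the real codimension-$4$ condition $d\sigma(\zeta)=0$; failure of injectivity is the vanishing of $\sigma$ at some unramified point. The finite \'etale cover/universal-curve incidence variety set-up and the dimension bookkeeping are also as in the paper, and (ii) is fine modulo the same issue as (i).

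The gap is in the surjectivity argument for (i). You split the linearization's target as $\text{Hom}(\ker D^-_\varphi,\coker D^-_\varphi)\oplus\text{Hom}_\bC(T_{z_*}\Sigma',N_{\varphi(z_*)})$ and propose to hit the first factor with variations $A$ supported away from $z_*$, then ``independently'' hit the second with variations supported near $z_*$. This two-step scheme does not actually decouple: a variation $A$ concentrated near $z_*$ still produces, via the nonlocal operator $Q$, a nonzero contribution $\Pi(\varphi^*A)\sigma$ to the first factor, so you cannot correct the $\text{Hom}_\bC$ component without disturbing the already-arranged part. Moreover, the stated mechanism --- ``nonvanishing of the $1$-jet of $\sigma\otimes\sigma^\dagger$ at $z_*$'' --- is false: $\sigma$ vanishes to order $\ge 2$ at $z_*$ (it vanishes by anti-invariance and you are precisely on the locus $d\sigma(z_*)=0$) and $\sigma^\dagger$ also vanishes at $z_*$ (it too is anti-invariant), so $\sigma\otimes\sigma^\dagger$ vanishes to order $\ge 3$ at $z_*$, and its $1$-jet there is identically zero. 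So the local nondegeneracy you want to invoke at $z_*$ is not available.

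The paper avoids this by packaging both conditions into a single restricted operator $(D^-_\varphi)_\zeta$ (acting on anti-invariant sections whose derivative at $\zeta$ is $\bC$-anti-linear) whose cokernel is the full $(2r+5)$-dimensional target, and then arguing by contradiction with a distributional annihilator $\tau$: equivariance and vanishing on $\operatorname{im}(D^-_\varphi)_\zeta$ force $(D_\varphi)^\dagger\tau$ to factor through the $1$-jet map at $\zeta$, elliptic regularity gives $\tau\in(L^p_1)^*$, unique continuation rules out $\operatorname{supp}\tau\not\subset\{\zeta\}$, and the regularity class $(L^p_1)^*$ (with $L^p_1\subset C^0$ but $L^p_1\not\subset C^1$) rules out $\operatorname{supp}\tau=\{\zeta\}$ by forcing $\tau$ to be a $0$-jet at $\zeta$, which is then contradicted directly. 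This regularity step is exactly what handles the degenerate behaviour of $\sigma\otimes\sigma^\dagger$ at $z_*$, and it is the missing ingredient in your proposal.
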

\begin{proof}
    The proof is just a combination of the ideas from Lemmas \ref{ker-with-zero} and \ref{sign-wall-codim}. More precisely, the proof of (ii) exactly mirrors the proof of Lemma \ref{ker-with-zero} while (i) requires a slightly more involved argument.
    \begin{enumerate}[(i)]
        \item Consider the finite cover $\tilde \clC_\text{imm}\to\clC\clR_\bR(N)\times M_h(\Sigma,2)$ which consists of tuples $(j,D,[\varphi:\Sigma'\to\Sigma])$ along with the choice of a point $\zeta\in\text{Crit }\varphi$. Within $\tilde\clC_\text{imm}$, we can consider the subset $\tilde\clW^1_{2,h,\bR}(N)_\text{imm}$ consisting of $(j,D,[\varphi:\Sigma'\to\Sigma],\zeta)$ such that $(j,D,[\varphi:\Sigma'\to\Sigma])\in\clW^1_{2,h,\bR}(N)$ and $d\hat\sigma|_\zeta = 0$ for all $0\ne\sigma\in\ker D_\varphi^-$ (where $\hat\sigma$ is the $J_D$-holomorphic map corresponding to $\sigma$ via Lemma \ref{CR-op-v-acs}). Since $\hat\sigma$ is $J_D$-holomorphic, its derivative at $\zeta$ is $\bC$-linear. Equivalently, since $\sigma\in\ker D^-_\varphi$, the derivative $(\varphi^*\nabla)\sigma|_\zeta$ (which is well-defined independent of the choice of $\nabla$ on $N$ since $\zeta\in\text{Crit }\varphi$) is $\bC$-linear. In other words, we see that $\tilde\clW^1_{2,h,\bR}(N)_\text{imm}\subset\tilde\clC_\text{imm}$ is locally given by the zeros of the map
        \begin{align}\label{eqn:5.33}
            (j,D,[\varphi:\Sigma'\to\Sigma],\zeta) &\mapsto (\varphi^*\nabla)^{1,0}\sigma_{(j,D)}|_\zeta
        \end{align}
        where $(j,D,[\varphi:\Sigma'\to\Sigma])\mapsto \sigma_{(j,D)}\ne 0$ is a (locally defined) distinguished choice of element in the $1$-dimensional vector space $\ker D^-_{\varphi}$ (specified by an $L^2$ normalization condition as in the definition the analogous map \eqref{ev-ker-elt} in the proof of Lemma \ref{ker-with-zero}).
        
        To prove the desired codimension estimate for $\clW^1_{2,h,\bR}(N)_\text{imm}$, it is sufficient to show that $\tilde\clW^1_{2,h,\bR}(N)_\text{imm}\subset\tilde\clC_\text{imm}$ is a submanifold of codimension $2r+5$. To this end, fix a point $(j,D,[\varphi:\Sigma'\to\Sigma],\zeta)\in\tilde\clW^1_{2,h,\bR}(N)_\text{imm}$. Let $\sigma$ be a nonzero element of the $1$-dimensional vector space $\ker D^-_\varphi$. Denote by $\Omega^0(\Sigma',\varphi^*N)^-_\zeta$ the subspace of $\Omega^0(\Sigma',\varphi^*N)^-$ consisting of sections whose derivative at $\zeta$ is $\bC$-anti-linear. Note that since $\zeta \in \text{Crit } \varphi$, the derivative of elements in $\Omega^0(\Sigma',\varphi^*N)^-$ is well-defined and independent of choice of connection on $N$ (since the difference of two connections on $N$ is a tensor on $\Sigma$ whose pullback along $\varphi$ vanishes at $\zeta$). Write $(D_\varphi^-)_\zeta: \Omega^0(\Sigma',\varphi^*N)^-_\zeta \to \Omega^{0,1}_j(\Sigma',\varphi^*N)^-$ for the restriction of $D_\varphi^-$ to $\Omega^0(\Sigma',\varphi^*N)^-_\zeta$ and let $\Pi'$ be the projection from $\Omega^{0,1}_j(\Sigma',\varphi^*N)^-$ to $\coker (D_\varphi^-)_\zeta$. Arguing as the proof of Lemma \ref{ker-with-zero}, we are reduced to showing that
        \begin{align}
            L:\Omega^{0,1}_j(\Sigma,\text{Hom}_\bR(N,N))\to\coker (D_\varphi^-)_\zeta \\
            L(A):=\Pi'((\varphi^*A)\sigma)
        \end{align}
        is surjective. We remark that the map \eqref{eqn:5.33} naturally leads us to consider the space $\Omega^0(\Sigma',\varphi^*N)^-_\zeta$, analogous to how the space $\Omega^0_z(\Sigma,N)$ naturally arises from the map \eqref{ev-ker-elt} in the proof of Lemma \ref{ker-with-zero}. Thus, we see that $\Omega^0(\Sigma',\varphi^*N)^-_\zeta$ plays a similar role in this proof as $\Omega^0_z(\Sigma,N)$ did in the proof of Lemma \ref{ker-with-zero}.
        
        Assume, for the sake of contradiction, that $L$ is not surjective. Then, arguing as in the proof of Lemma \ref{ker-with-zero}, we deduce the existence of a nonzero distributional section $\tau$ of $(\varphi^*N)^\dagger$ which
        \begin{enumerate}[(a)]
            \item is $\bZ/2\bZ = \text{Aut}(\varphi)$-anti-invariant,
            \item vanishes on the image of $(D_\varphi^-)_\zeta$ and,
            \item is such that $\sigma\otimes\tau\equiv 0$ as a distributional section of $\varphi^*N\otimes_\bR(\varphi^*N)^\dagger$ (for this last condition, we are using the observation that $\sigma\otimes\tau$ is invariant under $\bZ/2\bZ$ --  see the parallel argument in the proof of Lemma \ref{sign-wall-codim}).
        \end{enumerate}
        Now, note that (a) and (b) imply that $(D_\varphi)^\dagger\tau$, viewed as a functional on $\Omega^0(\Sigma',\varphi^*N)$, vanishes on the subspace $\Omega^0(\Sigma',\varphi^*N)^+\oplus\Omega^0(\Sigma',\varphi^*N)^-_\zeta$ and therefore factors through the projection 
        \begin{align}\label{c-linear-derivative}
            \Omega^0(\Sigma',\varphi^*N)&\to\text{Hom}_\bC(T_\zeta\Sigma',N_{\varphi(\zeta)})\\
            s&\mapsto(\varphi^*\nabla)^{1,0}s|_\zeta.
        \end{align}
        Thus, $(D_\varphi)^\dagger\tau$ is of class $(L_2^p)^*$ for any $2<p<\infty$ (where $L_2^p$ is the space of $L^p$ functions with two derivatives in $L^p$ and we are using the Sobolev embedding $L_2^p\subset C^1$). By elliptic regularity it now follows that $\tau$ is of class $(L_1^p)^*$ and smooth in the complement of $\zeta$ (since it satisfies $(D_\varphi)^\dagger\tau = 0$ here). Now, if the support of $\tau$ is not just the point $\zeta$, then by unique continuation for $\sigma$ and $\tau$ (on the complement of $\zeta$), we must have $\sigma\otimes\tau\not\equiv 0$. Thus, we must have $\text{supp }\tau = \{\zeta\}$. It follows that there is an integer $m\ge 0$ such that $\tau$ applied to any element of $\Omega^{0,1}_j(\Sigma',\varphi^*N)$ depends only on the $m$-jet of this element at $\zeta$. Since $\tau$ is known to be of class $(L_1^p)^*$ and $L_1^p\not\subset C^1$, it follows that $m=0$ and as a result $\tau$ acts on the space $\Omega^{0,1}_j(\Sigma',\varphi^*N)$ via evaluation at $\zeta$ followed by a linear functional
        \begin{align}
            M:(\Lambda^{0,1}T^*_{\Sigma',j}\otimes_\bC\varphi^*N)_\zeta\to\bR.    
        \end{align}
        But now, it is easy to produce a smooth section $s$ of $\varphi^*N$ such that 
        \begin{align}
            -\langle (D_\varphi)^\dagger\tau,s\rangle = \langle\tau,D_\varphi s\rangle = M(D_\varphi s|_\zeta) = M((\varphi^*\nabla)^{0,1}s|_\zeta)\ne 0
        \end{align}
        and $(\varphi^*\nabla)^{1,0}s|_\zeta = 0$. This contradicts the fact that $(D_\varphi)^\dagger\tau$ factors through the map \eqref{c-linear-derivative}.
        
        \item The proof is similar to (i) but easier and so, we only indicate the setup. Define $\tilde\clC_\text{inj}$ to be the subset of $\clC\clR_\bR(N)\times M_h(\Sigma,2)\times\Sigma$ consisting of tuples $(j,D,[\varphi:\Sigma'\to\Sigma],z)$ such that $z\not\in\varphi(\text{Crit }\varphi)$. We can then define the subset $\tilde\clW^1_{2,h,\bR}(N)_\text{inj}\subset\tilde\clC_\text{inj}$ to consist of those elements $(j,D,[\varphi:\Sigma'\to\Sigma],z)$ for which $(j,D,[\varphi])\in\clW^1_{2,h,\bR}(N)$ and $\sigma|_{\varphi^{-1}(z)}\equiv 0$ for all $0\ne\sigma\in\ker D_\varphi^-$. It can now be shown as in (i) that $\tilde\clW^1_{2,h,\bR}(N)_\text{inj}\subset\tilde\clC_\text{inj}$ is a submanifold of codimension $2r+5$. Noting that $\clW^1_{2,h,\bR}(N)_\text{inj}$ is the image of $\tilde\clW^1_{2,h,\bR}(N)_\text{inj}$ under the natural projection gives the desired result.
    \end{enumerate}
    The statement concerning $\clW^1_{2,h,\bR}(N)_\text{emb}$ follows from (i) and (ii).
\end{proof}

\begin{remark}
The codimension estimates from Lemma \ref{emb-multisection-wall} assert that the holomorphic curve associated to the kernel of a generic element in $\normalfont\clW^1_{2,h,\bR}(N)$ is embedded. The injectivity of the relevant Petri map is not hard to establish because our discussion just involves $(\bZ/2\bZ)$-equivariant Cauchy--Riemann operators from $2$-fold covers. In fact, it is possible to extend the statements in Lemma \ref{emb-multisection-wall} to non-trivial elementary walls as in Definition \ref{elem-wall}. The fact that $\dim_{\bR} \mathbf{k}^H = 1$, which follows from the definition of non-trivial elementary wall type, is necessary in order to define the holomorphic sections uniquely up to rescaling. However, one needs to adapt the proof of genetric injectivity of Petri maps from \cite{Wendl-19} to this context. 
\end{remark}
\begin{remark}\label{rmk_mult_slag}
Any element $0 \neq \sigma \in \ker D_\varphi^-$ defines a \emph{multi-valued section} of the bundle $N \to \Sigma$. The conclusion of Lemma \ref{emb-multisection-wall} can therefore be phrased as the ``generic embeddedness" of multi-valued sections. As pointed out in Remark \ref{rmk_slag}, it may be possible to use similar arguments to establish similar generic behavior for $2$-valued harmonic forms relevant to special Lagrangian geometry.
\end{remark}

The next definition introduces the notion of ``$2$-rigid" operators, paths between them and fixed endpoint homotopies between such paths. Such operators arise naturally as normal deformation operators of embedded pseudo-holomorphic curves with respect to any super-rigid almost complex structure. 
\begin{Definition}[2-rigidity]
    \begin{enumerate}[\normalfont(a)]\label{defn:2-rig}
        \item We say that $\alpha = (j,D)\in\clC\clR_\bR(N)$ is \emph{$2$-pre-rigid} if it misses the images of $\clW'(N),\clW^k_{2,h,\bR}(N)$ and $\normalfont\clW^1_{2,h,\bR}(N)_\text{emb}$ in $\clC\clR_\bR(N)$ for all $h\ge g$ and $k\ge 2$. We say that $\alpha$ is $2$-rigid if, in addition, it also misses the image of the map
        \begin{align}\label{double-wall-to-ops}
            \clW^1_{2,h,\bR}(N)\to\clC\clR_\bR(N)
        \end{align}
        for all $h\ge g$. Denote the set of $2$-rigid elements by $\clC\clR_\bR(N)_2$ and its intersection with $\clC\clR_\bC(N)\setminus\clW_\bC(N)$ by $\clC\clR_\bC(N)_2$.
        \item Given $\alpha_\pm\in\clC\clR_\bR(N)_2$, define $\clP(\alpha_-,\alpha_+)$ to be the space of smooth paths in $\clC\clR_\bR(N)$ from $\alpha_-$ to $\alpha_+$. Define
        \begin{align}
            \clP^\pitchfork(\alpha_-,\alpha_+)\subset\clP(\alpha_-,\alpha_+)
        \end{align}
        to be the set of paths which consist entirely of $2$-pre-rigid elements and are transverse to \eqref{double-wall-to-ops}.
        \item Given $\gamma_\pm\in \clP^\pitchfork(\alpha_-,\alpha_+)$, define $\clP(\gamma_-,\gamma_+)$ to be the space of smooth paths in $\clP(\alpha_-,\alpha_+)$ from $\gamma_-$ to $\gamma_+$, i.e., smooth homotopies from $\gamma_-$ to $\gamma_+$ with fixed endpoints. Define
        \begin{align}
            \clP^\pitchfork(\gamma_-,\gamma_+)\subset\clP(\gamma_-,\gamma_+)
        \end{align}
        to be the set of homotopies which consist entirely of $2$-pre-rigid elements and are transverse to \eqref{double-wall-to-ops}.
    \end{enumerate}
\end{Definition}
For the convenience of the reader, we now summarize the consequences of Lemmas \ref{ker-with-zero}, \ref{sign-wall-codim} and \ref{emb-multisection-wall} which are relevant for the next lemma: $\clW'(N)$ has codimension $\ge 3$ in $\clC\clR_\bR(N)$, the image of $\clW^k_{2,h,\bR}(N)$ in $\clC\clR_\bR(N)$ has codimension $\ge 4$ for $k\ge 2$ and the image of $\clW^1_{2,h,\bR}(N)_\text{emb}$ has codimension $\ge 3$ in $\clC\clR_\bR(N)$. Moreover, both $\clW^k_\bC(N)$ and the image of $\clW^k_{2,h,\bC}(N)$ in $\clC\clR_\bC(N)$ have codimension $\ge 2$ for all $k\ge 1$.
\begin{Lemma}\label{double-op-generic}
    We have the following genericity statements.
    \begin{enumerate}[\normalfont(i)]
        \item $\clC\clR_\bR(N)_2\subset\clC\clR_\bR(N)$ is a Baire subset.
        \item $\clC\clR_\bC(N)_2\subset\clC\clR_\bC(N)$ is a path connected Baire subset.
        \item For $\alpha_\pm\in\clC\clR_\bR(N)_2$, $\clP^\pitchfork(\alpha_-,\alpha_+)\subset\clP(\alpha_-,\alpha_+)$ is a Baire subset.
        \item For $\gamma_\pm\in P^\pitchfork(\alpha_-,\alpha_+)$, $\clP^\pitchfork(\gamma_-,\gamma_+)\subset\clP(\gamma_-,\gamma_+)$ is a Baire subset.
    \end{enumerate}
\end{Lemma}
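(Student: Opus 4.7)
The plan is to derive all four statements from the codimension estimates established in Lemmas \ref{triv-wall-codim}, \ref{ker-with-zero}, \ref{sign-wall-codim} and \ref{emb-multisection-wall}, combined with the Sard--Smale/$C_\epsilon$ scheme outlined in \textsection\ref{technical-transversality-remarks}. In every case the defining condition decomposes as a countable collection (indexed by $h \geq g$ and, where relevant, by $k \geq 1$) of incidence conditions on finite-codimensional $C^\infty$ subvarieties of $\clC\clR_\bR(N)$ or $\clC\clR_\bC(N)$, and the heart of the matter is that all of these subvarieties already have codimension sufficiently large to accommodate the dimension of the parameter space being considered.

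For (i), I will project each wall $\clW^k_{2,h,\bR}(N)$, $\clW^1_{2,h,\bR}(N)_\text{emb}$ onto $\clC\clR_\bR(N)$ and subtract the dimension $2r = \dim M_h(\Sigma,2)$ from the codimension estimates of Lemmas \ref{sign-wall-codim} and \ref{emb-multisection-wall}: this yields projected codimensions of $\ge 2r+4$ for $k\ge 2$ and $\ge 3$ respectively, while $\clW'(N)$ has codimension $\ge 3$ directly by Lemma \ref{ker-with-zero}, and the image of $\clW^1_{2,h,\bR}(N)$ has codimension exactly $1$. Taking the countable intersection over $h$ and $k$ of the complements of these closed $C^\infty$ subvarieties yields the Baire subset $\clC\clR_\bR(N)_2$. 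For (ii), the $\bC$-linear analogues $\clW^k_\bC(N)$ and the images of $\clW^k_{2,h,\bC}(N)$ all have codimension $\ge 2$ in $\clC\clR_\bC(N)$ by Lemmas \ref{triv-wall-codim} and \ref{sign-wall-codim}; since $\clC\clR_\bC(N)$ is an affine bundle over the contractible space $\clJ(\Sigma)$ and hence itself contractible, and since removing countably many closed $C^\infty$ subvarieties of codimension $\ge 2$ from a connected Banach manifold preserves path connectedness, the resulting Baire subset $\clC\clR_\bC(N)_2$ is path connected as well.

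Statements (iii) and (iv) follow by the standard parametric version of the argument. For (iii), I will form, for each of the walls above, the universal moduli space of tuples $(\gamma, t, \text{point on the wall above } \gamma(t))$ over $\clP(\alpha_-, \alpha_+)$, prove formal regularity at the level of smooth objects, and invoke Taubes' exhaustion trick with Floer $C_\epsilon$ completions as explained in \textsection\ref{technical-transversality-remarks}. Endpoint regularity is provided by the hypothesis $\alpha_\pm \in \clC\clR_\bR(N)_2$. Since a $1$-parameter family generically avoids subvarieties of codimension $\ge 2$, transversality to every wall except $\clW^1_{2,h,\bR}(N)$ (of codimension $1$) reduces to simple avoidance, while transversality to this last wall is exactly the defining condition of $\clP^\pitchfork(\alpha_-,\alpha_+)$. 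The argument for (iv) is identical with $1$-parameter families replaced by endpoint-fixed homotopies; the pointwise $2$-pre-rigidity condition still holds generically because the relevant loci have codimension $\ge 3$, and transversality to $\clW^1_{2,h,\bR}(N)$ now cuts out a generic $1$-manifold in the square. The main technical obstacle is this formal regularity check, but because the linearization of the universal incidence relation in the direction of varying the Cauchy--Riemann operator is literally the same as the Petri-type map appearing in the proofs of Lemmas \ref{triv-wall-codim}, \ref{ker-with-zero}, \ref{sign-wall-codim} and \ref{emb-multisection-wall}, no new rank estimates are required.
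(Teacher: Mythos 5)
Your proposal is correct and tracks exactly the route the paper intends (the paper's own proof is a one-line appeal to Sard--Smale plus the codimension estimates of Lemmas \ref{triv-wall-codim}, \ref{ker-with-zero}, \ref{sign-wall-codim} and \ref{emb-multisection-wall}). You fill in the right details: for (i) and (ii), subtract $\dim M_h(\Sigma,2) = 2r$ from the codimensions of the walls to estimate the codimensions of their images in $\clC\clR_\bK(N)$, note that these images are uniformly bounded below in codimension as $h$ and $k$ range, and take a countable intersection of complements; for (iii) and (iv), the parametric Sard--Smale/$C_\epsilon$ scheme with endpoint regularity supplied by the hypotheses on $\alpha_\pm$ (resp.\ $\gamma_\pm$), and the observation that the universal linearizations reduce to the same Petri maps already controlled in the cited lemmas. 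One minor point worth being aware of: the paper's prefatory summary before Lemma \ref{double-op-generic} contains a slip --- where it writes ``the image of $\clW^1_{2,h,\bR}$ has codimension $\ge 3$'' it should say $\clW^1_{2,h,\bR}(N)_\text{emb}$; your accounting (image of $\clW^1_{2,h,\bR}(N)$ has codimension $\ge 1$, entering only the $2$-rigid vs.\ $2$-pre-rigid distinction) is the correct one.
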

\begin{proof}
    This is a straightforward consequence of the Sard--Smale theorem combined with the codimension formulas/estimates from Lemmas \ref{ker-with-zero}, \ref{sign-wall-codim} and \ref{emb-multisection-wall}.
\end{proof}

\begin{Lemma}\label{compactness-for-double-cover-op}
    Fix $\alpha_\pm\in\clC\clR_\bR(N)_2$, $\gamma_\pm\in\clP^\pitchfork(\alpha_-,\alpha_+)$ and $\tilde\gamma\in\clP^\pitchfork(\gamma_-,\gamma_+)$. Consider $\tilde\gamma$ as smooth map
    \begin{align}\label{generic-homotopy}
        [-1,1]\times[-1,1]\to\clC\clR_\bR(N)
    \end{align}
    such that $\tilde\gamma(\cdot,\pm1)\equiv\gamma_\pm(\cdot)$ and $\tilde\gamma(\pm1,\cdot)\equiv\alpha_\pm$. Then, for any fixed $h\ge g$, the fiber product of \eqref{generic-homotopy} and \eqref{double-wall-to-ops} is a naturally oriented smooth $1$-manifold with boundary which is proper over $[-1,1]\times[-1,1]$ and has boundary exactly over $[-1,1]\times\pm1$.
\end{Lemma}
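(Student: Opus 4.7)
The plan is to verify that the fiber product $F$ is a smooth oriented $1$-manifold with the claimed boundary and properness properties; the last of these is the crux. For smoothness, pick any $(t_0,s_0,[\varphi_0]) \in F$. As in the proof of Lemma \ref{sign-wall-codim}, $\clW^1_{2,h,\bR}(N)$ is cut out locally near $(\tilde\gamma(t_0,s_0),[\varphi_0])$ by a smooth map $\clS$ taking values in the $(2r+1)$-dimensional space $\normalfont\text{Hom}(\ker D^-_{\varphi_0}, \coker D^-_{\varphi_0})$. The hypothesis $\tilde\gamma \in \clP^\pitchfork(\gamma_-,\gamma_+)$ makes $\clS \circ (\tilde\gamma, \mathrm{id}_{M_h})$ a submersion at $(t_0,s_0,[\varphi_0])$, and the implicit function theorem then realizes $F$ as a smooth $1$-manifold with boundary coming solely from $\partial([-1,1]^2)$. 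Pulling back the co-orientation of Definition \ref{natural-coor} along $\tilde\gamma$ and combining it with the product orientation on $[-1,1]^2$ produces the natural orientation on $F$.

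For the boundary identification: over $\{\pm 1\} \times [-1,1]$ we have $\tilde\gamma \equiv \alpha_\pm$, which, being $2$-rigid, miss the image of \eqref{double-wall-to-ops} entirely, so $F$ has no points over these edges; over $[-1,1] \times \{\pm 1\}$ we have $\tilde\gamma \equiv \gamma_\pm \in \clP^\pitchfork(\alpha_-,\alpha_+)$, which meets the codimension-$1$ image of \eqref{double-wall-to-ops} transversely in finitely many points, and these constitute exactly the boundary of $F$.

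The remaining issue, properness over $[-1,1]^2$, is the main obstacle. I would compactify $M_h(\Sigma,2)$ to the space $\overline{M_h}(\Sigma,2)$ of admissible (possibly nodal) genus-$h$ double covers of $\Sigma$, and extend $\clW^1_{2,h,\bR}(N)$ to a closed subset $\overline{\clW^1_{2,h,\bR}(N)}$ by imposing the same anti-invariant $1$-dimensional kernel condition on the normalized limit operator equipped with matching conditions at the node preimages. The boundary strata of $\overline{M_h}(\Sigma,2)$ have real codimension at least $2$; a Petri-type argument in the spirit of Lemma \ref{sign-wall-codim}, whose delicate ingredient is the injectivity of the $\bZ/2$-anti-invariant Petri map in the nodal setting (established via unique continuation on the normalization), should show that the extended wall restricted to these strata still has codimension at least $2r+1$, so that its image in $\clC\clR_\bR(N)$ has codimension at least $3$. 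A Sard--Smale application in the style of Lemma \ref{double-op-generic} then shows $\tilde\gamma$ can be taken within a Baire subset of $\clP^\pitchfork(\gamma_-,\gamma_+)$ that avoids this codimension-$3$ image; any subsequential limit in $[-1,1]^2 \times \overline{M_h}(\Sigma,2)$ of a sequence in $F$ therefore lies in $[-1,1]^2 \times M_h(\Sigma,2)$, which proves properness.
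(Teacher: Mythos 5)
Your treatment of smoothness, orientation, and boundary identification matches the paper's (which essentially dismisses those parts as "obvious from the transversality assumption"), so the substance of the comparison is in the properness argument, where you take a genuinely different route from the paper — and where your proposal has a real gap.

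The paper does not compactify $M_h(\Sigma,2)$ at all. Instead, it converts the analytic problem into a geometric one: via Lemma \ref{CR-op-v-acs}, each operator $(j,D)$ gives an almost complex structure $J_D$ on the total space of $N$ which (by \cite[Lemma 3.6]{doan2021castelnuovo}) is tamed by a fixed symplectic form near the zero section, and each nonzero $\sigma_n\in\ker(D_n)^-_{\varphi_n}$ (rescaled to have $\|\sigma_n\|_{L^\infty}=\tfrac12$) gives an embedded $J_{D_n}$-holomorphic curve $\hat\sigma_n$ in class $2[\Sigma]$ whose image is invariant under $(z,v)\mapsto(z,-v)$. One then runs Gromov compactness in $N$ and analyzes the stable limit $\hat\sigma:\Sigma'\to N$. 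Two effective components are ruled out because a pair of sections in $\ker D$ meeting at a point would land $(j,D)$ in $\clW'(N)$; a single effective component with singular normalization is ruled out because $2$-pre-rigidity forces the associated map to lie outside $\clW^1_{2,h,\bR}(N)_\text{emb}$, hence to be an embedding; and ghost components are excluded by Remark \ref{compactness-remark}(b), which is precisely the specialization of Theorem \ref{compactness-result}. This chain of implications shows directly that, over the $2$-pre-rigid locus, $\clW^1_{2,h,\bR}(N)$ is already closed, so properness holds for every $\tilde\gamma\in\clP^\pitchfork(\gamma_-,\gamma_+)$ — no further genericity needed.

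Your proposal instead invokes an auxiliary "extended wall" over the boundary strata of a compactified $\overline{M_h}(\Sigma,2)$ and perturbs $\tilde\gamma$ to avoid it. This has two problems. First, it proves a strictly weaker statement than the lemma: the lemma asserts the conclusion for \emph{every} $\tilde\gamma\in\clP^\pitchfork(\gamma_-,\gamma_+)$, but your argument only gives it for $\tilde\gamma$ in a new, smaller Baire subset of $\clP^\pitchfork(\gamma_-,\gamma_+)$ that is not contained in the $2$-pre-rigidity hypothesis; you would need to either re-define $2$-pre-rigid to include this avoidance condition (propagating the change through Definition \ref{lwc-double} and the proof of Theorem \ref{thm:Gr-invariance}), or prove that the extended wall lies in the image of walls that $2$-pre-rigid elements already avoid — and the natural proof of the latter is essentially the paper's Gromov compactness argument. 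Second, the codimension estimate "$\ge 2r+1$ over nodal strata" is asserted rather than proven: one needs to set up the correct operator on a nodal domain with matching conditions at node preimages, account for the several distinct boundary strata of the admissible-cover compactification (branch-point collisions producing separating vs.\ non-separating nodes), and verify the $\bZ/2$-anti-invariant Petri injectivity there; all of this is non-trivial and none of it is carried out. In its favor, your route would (if completed and with the weakened conclusion accepted) avoid appealing to Theorem \ref{compactness-result}, but that trade is not available here because the stated lemma needs the stronger, non-generic version of properness.
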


\begin{proof}
    The fact that the fiber product in question is a manifold with boundary (with boundary only over $[-1,1]\times\pm 1$) follows from the transversality assumption in the definition of $\clP^\pitchfork(\gamma_-,\gamma_+)$. The orientation comes from the standard orientation on $[-1,1]\times[-1,1]$ and the natural co-orientation from Definition \ref{natural-coor}. Thus, what remains to be proved is properness over $[-1,1]\times[-1,1]$ and this will be a consequence of the fact that all elements in the image of $\tilde\gamma$ are $2$-pre-rigid. To establish properness, let $p_n\to p$ be a convergent sequence in $[-1,1]\times[-1,1]$ and let $\tilde\alpha_n = (j_n,D_n,[\varphi_n:\Sigma'_n\to\Sigma])\in\clW^1_{2,h,\bR}(N)$ be a sequence such that we have $\tilde\gamma(p_n) = \alpha_n:= (j_n,D_n)$ for all $n\ge 1$, with limit $\tilde\gamma(p) = \alpha := (j,D)$. We will show that $\tilde\alpha_n$ has a subsequence converging to an element $\tilde\alpha\in\clW^1_{2,h,\bR}(N)$.
    In other words, we will use the assumption $\tilde\gamma\in\clP^\pitchfork(\gamma_-,\gamma_+)$ to argue that there is a limiting $(\bZ/2\bZ)$-anti-invariant multi-valued section associated to $\alpha = (j,D)$ which is defined over a branched cover of $\Sigma$ with smooth domain.
    
    Fix a Hermitian inner product on $N$ and let $\omega_0$ be a symplectic form on the unit ball $B_1(N)$ taming the almost complex structure $J_D$ (this exists by \cite[Lemma 3.6]{doan2021castelnuovo}). Since $\alpha_n\to\alpha$, it follows that $\omega_0$ tames $J_{D_n}$ also for $n\gg 1$.
    
    Let $\hat\sigma_n:\Sigma'_n\to N$ be the simple $J_{D_n}$-holomorphic map representing the class $2[\Sigma]$ corresponding to a section $\sigma_n\in\ker(D_n)_{\varphi_n}^-$ with $\|\sigma_n\|_{L^\infty} = \frac12$. By Gromov compactness, we can pass to a subsequence to assume that we have a limiting $J_D$-holomorphic stable map $\hat\sigma:\Sigma'\to N$ representing class $2[\Sigma]$ with $\Sigma'$ being of arithmetic genus $h$. Note that the image of each $\hat\sigma_n$ (and therefore of $\hat\sigma$) is invariant under the automorphism $\clI:(z,v)\mapsto(z,-v)$ of $N$. Define $\varphi = \pi\circ\hat\sigma:\Sigma'\to\Sigma$ and note that the corresponding section $\sigma\in\ker\varphi^*D$ satisfies $\|\sigma\|_{L^\infty} = \frac12$. Each effective component of $\Sigma'$ (i.e. an irreducible component on which $\hat\sigma$ is non-constant) must map with positive degree onto $\Sigma$ via $\varphi$. Thus, there are at most two effective components in $\Sigma'$. We now consider two cases.
    \begin{enumerate}[(a)]
        \item (There are two effective components $C_1,C_2\subset\Sigma'$.) Since $\varphi|_{C_i}:C_i\to\Sigma$ are both of degree $1$, they must be isomorphisms. Thus, $\hat s_1 := \hat\sigma|_{C_1}$ and $\hat s_2:=\hat\sigma|_{C_2}$ give us two sections $s_1,s_2\in\ker D$ at least one of which must be nonzero. If $\hat s_1\not\equiv 0$, then since the image of $\hat\sigma$ is connected invariant under $\clI$, it follows that $s_1$ must vanish at a point $z\in C_1$. This contradicts the $2$-pre-rigidity of $\alpha = (j,D)$.
        \item (There is a unique effective component $C\subset\Sigma'$.) Let $\psi:\tilde C\to C$ be the normalization. Since the image of $\hat\sigma\circ\varphi:\tilde C\to N$ is invariant under $\clI$, it follows that $0\ne\psi^*\sigma\in\ker D_{\varphi\circ\psi}^-$. Since $\alpha = (j,D)$ is $2$-pre-rigid, it follows that $\dim\ker D^-_{\varphi\circ\psi} = 1$ and the $J_D$-holomorphic map $\hat\sigma\circ\psi$ is an embedding because $\alpha$ necessarily lies in the complement of $\normalfont\clW^1_{2,h,\bR}(N)_\text{emb}$. This implies that $\psi$ is an isomorphism, $C$ is smooth and $\hat\sigma|_C$ is an embedding representing class $2[\Sigma]$. Since $\hat\sigma:\Sigma'\to N$ is the Gromov limit of the simple (in fact, embedded by $2$-pre-rigidity) $J_{D_n}$-curves $\hat\sigma_n:\Sigma'_n\to N$, Remark \ref{compactness-remark}(b) shows that $\Sigma'$ cannot have any ghost components. Thus, $\Sigma'$ is smooth and coincides with $C$. Moreover, $0\ne\sigma\in\ker D_\varphi^-$ and $\dim\ker D_\varphi^- = 1$.
    \end{enumerate}
    We have now shown that $\Sigma'$ is necessarily smooth. We conclude that the limit $\tilde\alpha = (j,D,[\varphi:\Sigma'\to\Sigma])$ lies in $\clW^1_{2,h,\bR}(N)$ as desired.
\end{proof}
\begin{Definition}[Linear wall crossing invariant]\label{lwc-double}
    Let $\alpha\in\clC\clR_\bR(N)_2$ be $2$-rigid and let $h\ge g$ be given. We then define the invariant
    \begin{align}
        \normalfont\text{w}_{2,h}(\alpha)\in\bZ
    \end{align}
    as follows. Set $r = (2h-2) - 2(2g-2)$. If $r<0$, we define $\normalfont\text{w}_{2,h}(\alpha) = 0$. 
    
    If $r\ge 0$, choose a point $\beta\in\clC\clR_\bC(N)_2$ and choose a path $\gamma\in\clP^\pitchfork(\alpha,\beta)$. Then, using Lemma \ref{compactness-for-double-cover-op} and the natural co-orientation from Definition \ref{natural-coor}, the fiber product of $\gamma$ with \eqref{double-wall-to-ops} is a compact oriented $0$-manifold and we define $\normalfont\text{w}_{2,h}(\alpha)$ to be the signed count of points in this fiber product. 
\end{Definition}
The following lemma shows that $\normalfont\text{w}_{2,h}(\alpha)$ does not depend on the auxiliary data therefore it is well-defined. 
\begin{Lemma}\label{lwc-double-well-def}
    $\normalfont\text{w}_{2,h}(\alpha)$ in Definition \ref{lwc-double} is independent of the choice of $\beta$ and $\gamma$.
\end{Lemma}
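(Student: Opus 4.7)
The case $r<0$ is immediate from the definition, so assume $r \ge 0$. The plan is the standard two-step cobordism argument: first fix $\beta$ and prove independence of $\gamma$; then promote to independence of $\beta$ by concatenating with a path lying in the complex locus $\clC\clR_\bC(N)_2$.

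For the first step, given two paths $\gamma_0, \gamma_1 \in \clP^\pitchfork(\alpha, \beta)$, I would apply Lemma \ref{double-op-generic}(iv) to produce a homotopy $\tilde\gamma \in \clP^\pitchfork(\gamma_0, \gamma_1)$. Viewed as a map $[-1,1]^2 \to \clC\clR_\bR(N)$ with $\alpha_- = \alpha$ and $\alpha_+ = \beta$ in the notation of Lemma \ref{compactness-for-double-cover-op}, that lemma guarantees that the fiber product of $\tilde\gamma$ with \eqref{double-wall-to-ops} is a compact naturally oriented $1$-manifold whose boundary lies exactly over the two edges $[-1,1]\times\{\pm 1\}$ --- the other two edges contributing no boundary precisely because $\alpha$ and $\beta$ are $2$-rigid, so they miss the image of \eqref{double-wall-to-ops}. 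The oriented boundary then realizes the difference of the signed counts along $\gamma_0$ and $\gamma_1$ as a null-cobordant oriented $0$-manifold, showing the two counts agree.

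For the second step, given $\beta_0, \beta_1 \in \clC\clR_\bC(N)_2$ with corresponding paths $\gamma_i \in \clP^\pitchfork(\alpha, \beta_i)$ for $i = 0,1$, I would use the path-connectedness from Lemma \ref{double-op-generic}(ii) to pick a smooth path $\delta$ from $\beta_0$ to $\beta_1$ lying in $\clC\clR_\bC(N)_2$. The key observation is that for any $\bC$-linear operator $D$, the $\normalfont\text{Aut}(\varphi) = \bZ/2\bZ$-action on $\varphi^*N$ is $\bC$-linear, so the isotypic operator $D_\varphi^-$ is again $\bC$-linear; hence $\dim_\bR \ker D_\varphi^-$ is always even and cannot equal $1$. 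Thus $\delta$ has image disjoint from the wall \eqref{double-wall-to-ops} and contributes nothing to the signed count. After smoothing the concatenation $\hat\gamma_0 = \gamma_0 * \delta$ at the junction $\beta_0$ via a routine cutoff reparametrization, $\hat\gamma_0 \in \clP^\pitchfork(\alpha, \beta_1)$ (both pieces are $2$-pre-rigid, and the $\delta$ piece is transverse to \eqref{double-wall-to-ops} vacuously) and has the same signed wall-crossing count as $\gamma_0$. Applying the first step to $\hat\gamma_0$ and $\gamma_1$ then identifies this with the count for $\gamma_1$, completing the proof.

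No step presents a genuine obstacle beyond the transversality and compactness input already packaged into Lemmas \ref{double-op-generic} and \ref{compactness-for-double-cover-op}; the mild technical point is only the routine smoothing of the concatenation and the observation about even-dimensionality of complex kernels.
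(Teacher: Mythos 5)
Your proposal is correct and follows essentially the same two-step cobordism strategy as the paper's proof: fix $\beta$ and use Lemma \ref{compactness-for-double-cover-op} applied to a homotopy of paths, then vary $\beta$ via a path in $\clC\clR_\bC(N)_2$ and note it contributes no wall crossings. The only minor difference is that you justify why $\delta$ misses the wall via the parity observation that $\bC$-linear operators have even-dimensional real kernels, whereas the paper's definition of $\clC\clR_\bC(N)_2 \subset \clC\clR_\bR(N)_2$ already makes this automatic; both are valid.
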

\begin{proof}
    It is enough to consider the case $r\ge 0$. Let us temporarily denote the quantity from Definition \ref{lwc-double} as $\text{w}_{2,h}(\alpha,\beta,\gamma)$ to emphasize the dependence on the choices $\beta$ and $\gamma$. 
    \begin{enumerate}[(i)]
        \item Fix the choice of $\beta$ and consider $\gamma,\gamma'\in\clP^\pitchfork(\alpha,\beta)$. Choose $\tilde\gamma\in\clP^\pitchfork(\gamma,\gamma')$ and apply Lemma \ref{compactness-for-double-cover-op} (combined with the fact that an oriented $1$-manifold with boundary has $0$ boundary points when counted with sign) to get
        \begin{align}
            \text{w}_{2,h}(\alpha,\beta,\gamma) = \text{w}_{2,h}(\alpha,\beta,\gamma').
        \end{align}
        Thus, we may write $\text{w}_{2,h}(\alpha,\beta,\gamma) = \text{w}_{2,h}(\alpha,\beta)$.
        \item Now, suppose $\beta,\beta'\in\clC\clR_\bC(N)_2$ are given. Choose a path $\delta$ in $\clC\clR_\bC(N)_2$ from $\beta$ to $\beta'$ -- such a path exists by Lemma \ref{double-op-generic} and it is, moreover, automatically in $\clP^\pitchfork(\beta,\beta')$. Now, given any $\gamma\in\clP^\pitchfork(\alpha,\beta)$, we can set $\gamma'\in\clP^\pitchfork(\alpha,\beta')$ to be the concatenation of $\gamma$ and $\delta$. Since there are no intersections of $\delta$ with \eqref{double-wall-to-ops}, it follows that
        \begin{align}
            \text{w}_{2,h}(\alpha,\beta) = \text{w}_{2,h}(\alpha,\beta,\gamma) = \text{w}_{2,h}(\alpha,\beta',\gamma') = \text{w}_{2,h}(\alpha,\beta').
        \end{align}
    \end{enumerate}
    Combining (i) and (ii), we conclude that $\text{w}_{2,h}(\alpha,\beta,\gamma)$ is independent of the choices of $\beta$ and $\gamma$.
\end{proof}

\subsection{Proof of Theorem \ref{thm:Gr-invariance}}\label{Gr-invariance-proof}
The proof of Theorem \ref{thm:Gr-invariance} is an application of the arguments of \textsection\ref{lwc-development} combined with Theorems \ref{compactness-result} and \ref{main-bifurcation}. 

The primitive class $A\in \overline{H}_2(X,\bZ)$ remains fixed for the whole argument. As explained after Definition \ref{emb-wall}, as long as $J$ is away from the subset $\normalfont\clW_\text{emb}\subset\clJ(X,\omega)$ of codimension $2$, every $J$-holomorphic stable map to $X$ is either an embedded $J$-curve or a multiple cover\footnote{In a slight abuse of terminology, we will be referring to any $J$-holomorphic stable map which is not an embedded $J$-curve as a ``multiple cover" (even if it maps with degree $1$ onto its image but has a ghost component).} (with possibly nodal domain) of an embedded $J$-curve. For such $J$, we record the following observation which will be used throughout the proof below. Since $A$ is primitive, any stable $J$-holomorphic map of class $dA$ with $d\le 2$ either represents an embedded $J$-curve of class $dA$ or a (possibly nodal) multiple cover $\varphi:C'\to C$ (with $d'=\deg(\varphi)\le d$) of an embedded $J$-curve $C$ such that $d'[C] = dA$ and $\varphi$ is not an isomorphism. We now describe the non-trivial multiple covers $\varphi$ which can appear. When $d'=1$ (in this case, $d$ is either $1$ or $2$), the domain $C'$ must have exactly one effective, i.e., non-constant component (which necessarily maps isomorphically onto $C$ under $\varphi$) and every other component is a ghost, i.e., non-effective component. Moreover, $C'$ has at least one ghost component since $\varphi$ is not an isomorphism. When $d'=2$ (in this case, $d = 2$), $C'$ has at most two effective components and $[C] = A$. Further, if $C'$ has only one effective component, then restricting $\varphi$ to (the normalization of) this component yields a (possibly branched) double cover of $C$. If, instead, $C'$ has two effective components, then $\varphi$ maps each of these isomorphically onto $C$.

Let $J_\pm\in\clJ(X,\omega)$ be super-rigid almost complex structures. We need to show that for all $h\ge 0$ we have
\begin{align}
    \text{Gr}_{2A,h}(X,\omega,J_-) = \text{Gr}_{2A,h}(X,\omega,J_+)
\end{align}
where $\text{Gr}_{2A,h}(X,\omega,J_\pm)$ is the right side of \eqref{Gr-def} for $J = J_\pm$. We will accomplish this by studying the bifurcations that occur along a suitably chosen path $\gamma:[-1,1]\to\clP(J_-,J_+)$. Before this, we need some preparatory facts analogous to Lemmas \ref{ker-with-zero}, \ref{emb-multisection-wall}, \ref{double-op-generic} and \ref{compactness-for-double-cover-op}.

\begin{Definition}
    Let $h\ge g\ge 0$ be integers.
    \begin{enumerate}[\normalfont(i)]
        \item Define $\clJ'_{A,g}\subset\clJ(X,\omega)$ to be the set of all $\omega$-compatible almost complex structures $J$ on $X$ for which there exists an embedded $J$-curve $C\subset X$ of genus $g$ in class $A$, a point $z\in C$ and $0\ne\sigma\in\ker D^N_{C,J}$ with $\sigma(z)= 0$.
        \item Define $\clW_{2,h}(\clJ_{A,g})\subset\clJ(X,\omega)$ to be the set of all $\omega$-compatible almost complex structures $J$ on $X$ for which there exists an embedded $J$-curve $C\subset X$ of genus $g$ in class $A$, a holomorphic branched double cover $\varphi:C'\to C$ with $C'$ being smooth of genus $h$ and $0\ne\sigma\in\ker(D^N_{C,J})^-_{\varphi}$ such that the corresponding $J_{D^N_{C,J}}$-holomorphic map $\hat\sigma:C'\to N_C$ defined using \eqref{eqn:curve-section} is not an embedding.
    \end{enumerate}
\end{Definition}
\begin{Lemma}[Analogue of Lemmas \ref{ker-with-zero} and \ref{emb-multisection-wall}]\label{nl-emb-multisection-wall}
    We have the following codimension estimates.
    \begin{enumerate}[\normalfont(i)]
        \item $\clJ'_{A,g}\subset\clJ(X,\omega)$ has codimension $\ge 3$ for all $g\ge 0$.
        \item $\clW_{2,h}(\clJ_{A,g})\subset\clJ(X,\omega)$ has codimension $\ge 3$ for all $h\ge g\ge 0$.
    \end{enumerate}
\end{Lemma}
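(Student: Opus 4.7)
The plan is to adapt the proofs of Lemmas \ref{ker-with-zero}, \ref{sign-wall-codim} and \ref{emb-multisection-wall} to the parametric setting over the universal embedded-curve moduli space $\clM^\text{emb}_{A,g}(X,\omega)$ of pairs $(J,C)$. This is a smooth Fr\'echet manifold whose forgetful map to $\clJ(X,\omega)$ has Fredholm index $\ind D^N_{C,J}=0$, so codimensions in the universal space project to codimensions of the same magnitude in $\clJ(X,\omega)$. The essential new input is that variations $\widehat K\in T_J\clJ(X,\omega)$ (with $C$ held fixed) surject, via restriction to $C$ and projection onto the normal directions, onto the space of variations of $D^N_{C,J}$ as a $\bR$-linear Cauchy--Riemann operator on $N_C$; this is the same surjectivity which drives the generic transversality results in Theorem \ref{stratification}. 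As a consequence, the Petri-type rank estimates in the earlier lemmas carry over without modification.

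For part (i), I would stratify $\clJ'_{A,g}$ by $k:=\dim_\bR\ker D^N_{C,J}$. For $k\geq 2$, Theorem \ref{stratification} applied with trivial Galois group and ${\bf k}={\bf c}=\bR^2$ already yields codimension $\geq 4$ in $\clJ(X,\omega)$. For $k=1$, form the universal subspace of $(J,C,z)$ with $\dim_\bR\ker D^N_{C,J}=1$ and $\sigma(z)=0$ for the generating section $\sigma$, sitting inside the universal embedded curve over the codimension $1$ stratum $\{k=1\}\subset\clM^\text{emb}_{A,g}(X,\omega)$. Running the argument of Lemma \ref{ker-with-zero} parametrically, using $J$-variations in place of direct variations of the Cauchy--Riemann operator, would identify this locus as a submanifold of codimension $5$ in the universal curve; projecting to $\clJ(X,\omega)$, whose fibers are at most $2$-dimensional (the point $z$ varies in $C$, and $C$ is generically determined by $J$ up to finite choice), yields codimension $\geq 3$.

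For part (ii), I would similarly stratify by $k:=\dim_\bR\ker (D^N_{C,J})^-_\varphi$ over the parameter space of triples $(J,C,[\varphi:C'\to C])$, whose fibers over $\clJ(X,\omega)$ have dimension $2r$ with $r=(2h-2)-2(2g-2)$. For $k\geq 2$, the analog of Lemma \ref{sign-wall-codim}(ii) produces codimension $\geq 4r+4$ in the parameter space, hence codimension $\geq 2r+4\geq 4$ in $\clJ(X,\omega)$. For $k=1$, the sub-locus where the associated $J_{D^N_{C,J}}$-holomorphic map $\hat\sigma:C'\to N_C$ fails to be an embedding has codimension $\geq 2r+3$ by the analog of Lemma \ref{emb-multisection-wall}, which yields codimension $\geq 3$ in $\clJ(X,\omega)$ after projecting.

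The main point to verify carefully is the parametric version of the Petri surjectivity used in those lemmas: as $(J,C)$ moves, the operator $D^N_{C,J}$ varies nontrivially and one must check that the image of each relevant linearized constraint map is still hit by $J$-variations alone. This reduces to the surjectivity statement from the first paragraph; the additional joint $(J,C)$-variations available in the parametric setting can only enlarge the image, so each codimension bound from Section \ref{lwc-development} transfers directly.
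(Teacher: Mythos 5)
Your argument matches the paper's one-paragraph proof: reduce the codimension estimates to rank lower bounds on the linearized constraint maps of the appropriate universal moduli spaces, and transfer the Petri-type rank estimates from Lemmas \ref{ker-with-zero} and \ref{emb-multisection-wall} using the fact that deformations of $D^N_{C,J}$ in the space of $\bR$-linear Cauchy--Riemann operators on $N_C$ can be realized by deformations of $J$ in $\clJ(X,\omega)$ (the paper cites \cite[Lemma 5.26]{Wendl-19} for exactly this; you invoke the same surjectivity that drives Theorem \ref{stratification}). Your index accounting is consistent, with the minor caveat that Theorem \ref{stratification} as stated covers only branched covers of degree $d\ge 2$, so the $k\ge 2$ stratum of part (i) should technically rest on the nonlinear analogue of Lemma \ref{triv-wall-codim}(ii) rather than on Theorem \ref{stratification}, though the underlying Petri rank estimate is the same.
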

\begin{proof}
    We first define the relevant universal moduli spaces and reduce the problem of estimating the codimension to proving a rank lower bound for the linearization of the defining equations of these universal moduli spaces. The proofs of the rank estimates from Lemmas \ref{ker-with-zero} and \ref{emb-multisection-wall} (which prove the analogous results for Cauchy--Riemann operators) apply verbatim once we combine them with \cite[Lemma 5.27]{Wendl-19} which shows that deformations in the space of Cauchy--Riemann operators can be realized by deformations in the space of $\omega$-compatible almost complex structures.
\end{proof}
\begin{Lemma}[Analogue of Lemma \ref{double-op-generic}]
    The subset $\clB^*\subset\clP(J_-,J_+)$ of paths $\gamma:[-1,1]\to\clJ(X,\omega)$ with $\gamma(\pm1) = J_\pm$ which are disjoint from $\clJ'_{A,g}$ and $\clW_{2,h}(\clJ_{A,g})$ for all $h\ge g\ge 0$ is a Baire subset.
\end{Lemma}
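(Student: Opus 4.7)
The plan is to exhibit $\clB^*$ as a countable intersection of Baire subsets of $\clP(J_-,J_+)$ and appeal to the fact that such an intersection is itself Baire (since $\clP(J_-,J_+)$ is a Fr\'echet manifold, and in particular a Baire space). For each pair $(A,g)$ with $g\ge 0$ let
\begin{align}
    \clB^*_{A,g,\mathrm{I}} = \{\gamma\in\clP(J_-,J_+):\gamma([-1,1])\cap \clJ'_{A,g} = \emptyset\},
\end{align}
and for each triple $(A,g,h)$ with $h\ge g\ge 0$ let
\begin{align}
    \clB^*_{A,g,h,\mathrm{II}} = \{\gamma\in\clP(J_-,J_+):\gamma([-1,1])\cap \clW_{2,h}(\clJ_{A,g})=\emptyset\}.
\end{align}
Then $\clB^* = \bigcap\clB^*_{A,g,\mathrm{I}}\cap\bigcap \clB^*_{A,g,h,\mathrm{II}}$, which is a countable intersection because $H_2(X,\bZ)$ is countable and the genera range over $\bZ_{\ge 0}$. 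Thus it suffices to show that each of the above pieces is Baire in $\clP(J_-,J_+)$.

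For a fixed $(A,g,h)$ the argument is a routine application of the transversality framework outlined in \textsection\ref{technical-transversality-remarks}. I would first form the universal moduli spaces over $\clJ(X,\omega)$ whose images are precisely $\clJ'_{A,g}$ and $\clW_{2,h}(\clJ_{A,g})$: the first parametrizing tuples $(J,C,z,\sigma)$ with $C$ an embedded $J$-curve of genus $g$ and class $A$, $z\in C$, and $0\ne\sigma\in\ker D^N_{C,J}$ vanishing at $z$; the second parametrizing tuples $(J,C,\varphi:C'\to C,\sigma)$ with $\varphi$ a holomorphic branched double cover of the embedded $J$-curve $C$, $C'$ of genus $h$, and $0\ne\sigma\in\ker(D^N_{C,J})^-_\varphi$ whose associated $J_{D^N_{C,J}}$-holomorphic map fails to be an embedding. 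Lemma \ref{nl-emb-multisection-wall} tells us that each of these universal spaces has formal codimension $\ge 3$ over $\clJ(X,\omega)$, so the pullback universal space over the $1$-dimensional family $\clP(J_-,J_+)$ defines a Fredholm problem of index $\le -2$.

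I would then apply the standard Taubes exhaustion and Floer $C_\epsilon$ argument described in \textsection\ref{technical-transversality-remarks}: exhaust each universal space by subsets proper over $\clP(J_-,J_+)$, introduce $C_\epsilon$-completions at a reference path, and use the Sard--Smale theorem on the resulting Banach-manifold-level Fredholm map of negative index to conclude that the set of paths which miss the given stratum is open and dense in the $C^\infty$ topology. (Openness uses properness of the exhausting pieces; density uses negativity of the Fredholm index.) Since the endpoints $J_\pm$ are super-rigid, they automatically avoid both $\clJ'_{A,g}$ and $\clW_{2,h}(\clJ_{A,g})$, so there is no obstruction to perturbing in a fixed-endpoint family. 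This produces each $\clB^*_{A,g,\mathrm{I}}$ and $\clB^*_{A,g,h,\mathrm{II}}$ as a countable intersection of open dense sets, i.e. a Baire subset; taking the countable intersection over $(A,g,h)$ then yields $\clB^*$.

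I expect the only genuinely nontrivial point to be verifying the hypotheses needed for the transversality scheme: specifically, that the universal moduli spaces in question are (away from an exceptional locus of infinite codimension) modelled by $C^\infty$ subvarieties of the expected codimension over $\clJ(X,\omega)$. This is precisely what the proof of Lemma \ref{nl-emb-multisection-wall} delivers by combining the Petri-type rank estimates of Lemmas \ref{ker-with-zero} and \ref{emb-multisection-wall} with \cite[Lemma 5.26]{Wendl-19}, which allows one to realize Cauchy--Riemann deformations by $\omega$-compatible almost complex structure deformations. Once those ingredients are in place, the countable-intersection-of-Baire-sets packaging is formal, and the statement follows.
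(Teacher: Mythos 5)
Your proposal is correct and follows the same route as the paper, which in fact compresses the whole argument into one sentence ("a direct consequence of a Sard--Smale argument applied to the codimension estimates from Lemma \ref{nl-emb-multisection-wall}"); you have simply unpacked the countable-intersection bookkeeping, the index count $\le 1-3 = -2$, and the Taubes/Floer $C_\epsilon$ upgrade that the paper delegates to its conventions in \textsection\ref{technical-transversality-remarks}. The observation that super-rigidity of $J_\pm$ ensures the endpoints already avoid both loci (so fixed-endpoint perturbation is unobstructed) is a correct and worthwhile detail that the paper leaves implicit.
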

\begin{proof}
    This is a direct consequence of a Sard--Smale argument applied to the codimension estimates from Lemma \ref{nl-emb-multisection-wall}.
\end{proof}
For the statement of the next lemma, recall the Baire set $\clB'$ from Lemma \ref{baire-transverse}.
\begin{Lemma}[Analogue of Lemma \ref{compactness-for-double-cover-op}]\label{nl-compactness-for-double-covers}
    Fix a path $\gamma\in\clB^*\cap\clB'\subset\clP(J_-,J_+)$ and integers $0\le g\le h$. 
    \begin{enumerate}[\normalfont(i)]
        \item Consider all tuples of the form $(t,\Sigma,\varphi:\Sigma'\to\Sigma)$ where $t\in[-1,1]$,  $\Sigma\subset X$ is an embedded $\gamma(t)$-curve of genus $g$ in class $A$ and $(\Sigma',\varphi)\in\overline\clM_h(\Sigma,2)$ with $\Sigma'$ being smooth. Among these, the tuples satisfying
        \begin{align}\label{eqn:discrete-set}
            \dim\ker D^N_{\Sigma,\gamma(t)} = 0,\quad\dim\ker\varphi^*D^N_{\Sigma,\gamma(t)} = 1
        \end{align}
        form a finite set.
        \item Let $t_n\to t$ in $[-1,1]$ be a convergent sequence and suppose $\Sigma'_n\subset X$ is a sequence of embedded $\gamma(t_n)$-curves of genus $h$ in class $2A$ which converge to a $\gamma(t)$-holomorphic stable map $\varphi:\Sigma'\to\Sigma\subset X$ in the Gromov topology with $\Sigma$ being an embedded $\gamma(t)$-curve of genus $g$ in class $2A/d$ for some $d\in\{1,2\}$ and $(\Sigma',\varphi)\in\overline\clM_h(\Sigma,d)$. 
    
        Then, $\Sigma'$ is necessarily smooth. If $d=1$, then $\varphi$ is an isomorphism. If $d=2$, then the operator $D^N_{\Sigma,\gamma(t)}$ is an isomorphism and we have 
        \begin{align}
            \dim\ker \varphi^*D^N_{\Sigma,\gamma(t)} = 1.
        \end{align}
    \end{enumerate}
\end{Lemma}
\begin{proof}
    The key ingredients of the proof are Theorem \ref{compactness-result} and Remark \ref{compactness-remark}(b).
    \begin{enumerate}[(i)]
        \item Since $\gamma\in\clB'$, it follows that the set \eqref{eqn:discrete-set} is discrete. It will therefore suffice to prove compactness. To this end, suppose that $(t_n,\Sigma_n,\varphi_n:\Sigma'_n\to\Sigma_n)$ is a sequence of points in this set with $t_n\to t$. As observed at the beginning of \textsection\ref{Gr-invariance-proof}, the sequence $\Sigma_n$ has (after passing to a subsequence) a Gromov limit $\Sigma$ which is either an embedded $\gamma(t)$-curve of the same genus $g$ or an embedded $\gamma(t)$-curve of lower genus with ghost components attached.
        By Remark \ref{compactness-remark}(b), the latter situation is impossible. Now, by identifying the normal bundles $N_{\Sigma_n}\to\Sigma_n$ for $n\gg 1$ with the fixed smooth vector bundle $N_\Sigma\to\Sigma$, we can repeat the argument of Lemma \ref{compactness-for-double-cover-op} to conclude the proof (the condition $\gamma\in\clB'\cap\clB^*$ is the analogue of the $2$-pre-rigidity condition used in that proof).
        \item First, suppose $\Sigma$ itself is of class $2A$. Then, Remark \ref{compactness-remark}(b) shows that $d=1$, $\varphi$ is an isomorphism and, in particular, that $\Sigma'$ is smooth. For the remainder suppose that $\Sigma$ is of class $A$ and $d = 2$. Theorem \ref{compactness-result} now yields a section $s\in\ker\varphi^*D^N_{\Sigma,\gamma(t)}$ which is not pulled back from $\Sigma$. Let $\hat s:\Sigma'\to N_\Sigma$ be the associated $J_{D^N_{\Sigma,\gamma(t)}}$-holomorphic map.
        
        In accordance with the observation made at the beginning of \textsection\ref{Gr-invariance-proof}, since $(\Sigma',\varphi)\in\overline\clM_h(\Sigma,2)$, it follows that $\Sigma'$ has at most two effective components (i.e. irreducible components of $\Sigma'$ on which $\varphi$ is non-constant). We now split into two cases.
        \begin{enumerate}[(a)]
            \item (There are two effective components $C_1,C_2\subset\Sigma'$.) Since $\varphi|_{C_i}$ has degree $1$, it must be an isomorphism for $i=1,2$. Thus, $s|_{C_i}$ yields a section $s_i\in\ker D^N_{\Sigma,\gamma(t)}$ for $i=1,2$. Since $s$ is not pulled back from $\Sigma$, it follows that $s_1\not\equiv s_2$. Since $C_1$ can be connected to $C_2$ within $\Sigma'$ by a (possibly empty) chain of ghost components, it also follows that there must be a point $z\in\Sigma$ such that $s_1(z) = s_2(z)$. The section $s_1-s_2\in\ker D^N_{\Sigma,\gamma(t)}$ now shows that $\gamma(t)\in\clJ'_{A,g}$. This contradicts $\gamma\in\clB^*$.
            \item (There is a unique effective component $C\subset\Sigma'$.) Let $\psi:\tilde C\to C$ be the normalization and set $\varphi' = \varphi\circ\psi:\tilde C\to\Sigma$. Since $\gamma(t)\not\in\clW_{2,h}(\clJ_{A,g'})$ for $g' = \text{genus}(\tilde C)$, we deduce that $\hat s\circ\psi:\tilde C\to N_\Sigma$ is an embedding. This implies that $\psi$ must be injective and therefore an isomorphism. Thus, $C$ is smooth and $\hat s|_C$ is an embedding. Since $\ker (D^N_{\Sigma,\gamma(t)})_{\varphi'}^-\ne 0$, it follows from $\gamma\in\clB'$ that this vector space must in fact be $1$-dimensional and that $D^N_{\Sigma,\gamma(t)}$ is an isomorphism. Using the inverse function theorem, we can now obtain a sequence $\Sigma_n\subset X$ of embedded $\gamma(t_n)$-curves of genus $g$ in class $A$ converging to $\Sigma$. Arguing as in \cite[Appendix B]{Wendl-19}, we can now identify tubular neighborhoods of $\Sigma_n$ for $n\gg 1$ with the normal bundle of $\Sigma$ equipped with the pullback of the almost complex structure $\gamma(t_n)$. Rescaling the images of the embedded curves $\Sigma'_n$ in $N_\Sigma$ to have distance $\frac12$ from the zero section gives a new sequence of embedded curves which are holomorphic with respect to the rescaled versions of $\gamma(t_n)$. The rescalings of the $\gamma(t_n)$ converge to $J_{D^N_{\Sigma,\gamma(t)}}$ and moreover, all these almost complex structures are tamed by a symplectic form for $n\gg 1$ (\cite[Lemma 3.6]{doan2021castelnuovo}). Therefore, after passing to a subsequence, we can extract a Gromov limit given by a $J_{D^N_{\Sigma,\gamma(t)}}$-holomorphic map $\hat s':\Sigma'\to N_\Sigma$ whose image doesn't lie in the zero section. Since $C$ is the only effective component of $\hat s'$ (since a holomorphic mapping into any normal fiber must be constant), it follows that $\hat s'$ a scalar multiple of $\hat s$. We have therefore exhibited $\hat s$ as a limit of embedded pseudo-holomorphic curves of genus $h$ and class $2[\Sigma]$. Recalling that $\hat s|_C$ is an embedding and applying Remark \ref{compactness-remark}(b) shows that we must have $C = \Sigma'$.
        \end{enumerate}
        We have now succeeded in showing that $\Sigma'$ is smooth. The remaining facts concerning the operator $D^N_{\Sigma,\gamma(t)}$ have already been proved during the course of (b) above.
    \end{enumerate}
\end{proof}

\begin{remark}
Although the proof of Lemma \ref{nl-compactness-for-double-covers} is formally parallel to the proof of Lemma \ref{compactness-for-double-cover-op}, it is crucial to note that in case (ii) we use Theorem \ref{compactness-result} to construct the multi-section $s$ over $\Sigma$ because the rescaling argument from \cite[Appendix B]{Wendl-19} or Remark \ref{compactness-remark}(a) cannot be used unless we already know that $D^N_{\Sigma,\gamma(t)}$ is an isomorphism.
\end{remark}

We can now complete the proof of Theorem \ref{thm:Gr-invariance}. Choose a path $\gamma$ from the Baire subset $\clB^*\cap\clB'\cap\clB''\subset\clP(J_-,J_+)$. (Recall the definition of the Baire set $\clB''$ from Lemma \ref{baire-taylor}.) Define the moduli space
\begin{align}\label{2A-h-mod}
    \clM_h^*(X,\gamma,2A)\to[-1,1]
\end{align}
consisting of pairs $(t,\Sigma')$, where $t\in[-1,1]$ and $\Sigma'\subset X$ is an embedded $\gamma(t)$-curve of genus $h$ in class $2A$. For each $0\le g\le h$, also consider the moduli spaces
\begin{align}\label{A-g-mod}
    \clM_g(X,\gamma,A)\to[-1,1]
\end{align}
consisting of pairs $(t,\Sigma)$, where $t\in[-1,1]$ and $\Sigma\subset X$ is an embedded $\gamma(t)$-curve curve of genus $g$ in class $A$. Both $\clM_h^*(X,\gamma,2A)$ and $\clM_g(X,\gamma,A)$ are oriented $1$-manifolds with boundary over $t = \pm 1$ since $\gamma\in\clB'$. Moreover, since $A$ is primitive and the path $\gamma$ misses $\clW_{\text{emb}}$, the moduli space $\clM_g(X,\gamma,A)$ is compact by Remark \ref{compactness-remark}(b). Since $\gamma\in\clB'\cap\clB^*$, Lemma \ref{nl-compactness-for-double-covers} shows that the set of points in $(t,\Sigma,\varphi:\Sigma'\to\Sigma)$ with $(t,\Sigma)\in\clM_g(X,\gamma,A)$ for some $g\le h$, $(\Sigma',\varphi)\in\clM_h(\Sigma,2)$ with $\Sigma'$ smooth and
\begin{align}\label{wall-points-along-path}
    \ker D^N_{\Sigma,\gamma(t)} = 0,\quad \ker\varphi^*D^N_{\Sigma,\gamma(t)} = 1
\end{align}
forms a finite set. Moreover, $\gamma$ is transverse to the map
\begin{align}\label{relevant-wall}
    \clM^2_{{\bf b}^r,\bZ/2\bZ}(\clM_0^*(X,A);\theta,\theta^{\oplus 1+2r})\to\clJ(X,\omega)    
\end{align}
at any such point $(t,\Sigma,\varphi:\Sigma'\to\Sigma)$, where we have set $r = 2h-2-2(2g-2)\ge 0$, ${\bf b}^r = ((2),\ldots,(2))$ and $\theta = $ the sign representation of $\bZ/2\bZ$.

Enumerate the times $t$ for which there exists $\varphi:\Sigma'\to\Sigma$ satisfying \eqref{wall-points-along-path} as
\begin{align}
    -1 < t_1 < \cdots < t_k < 1
\end{align}
and, for each $1\le i\le k$, enumerate the intersections with the maps \eqref{relevant-wall} as
\begin{align}
    p_{i,j} = (t_i,\Sigma_{i,j}\subset X,\varphi_{i,j}:\Sigma'_{i,j}\to\Sigma_{i,j})
\end{align}
for $1\le j\le \ell_i$, where $\ell_i$ is a positive integer depending on $i$. Theorem \ref{main-bifurcation} now provides numbers $\epsilon_i>0$ and neighborhoods $\clO_{i,j}$ of $\varphi_{i,j}:\Sigma'_{i,j}\to\Sigma_{i,j}\subset X$ and by shrinking these further we can assume that the products
\begin{align}
    \clU_{i,j} = (t_i-\epsilon_i,t_i+\epsilon_i)\times\clO_{i,j}
\end{align}
are all pairwise disjoint. We now make a series of claims.
\begin{enumerate}
    \item The $\bZ$-valued function
    \begin{align}
        (t,\Sigma)\mapsto\text{w}_{2,h}(D^N_{\Sigma,\gamma(t)})
    \end{align}
    defined on $\clM_g(X,\gamma,A)$ away from $t = t_1,\ldots,t_k$ is locally constant.
    \begin{proof}
        Indeed, fix any point $(t,\Sigma)$ such that $t\ne t_i$ for all $1\le i\le k$. Then, $D^N_{\Sigma,\gamma(t)}$ is $2$-rigid and the same is true for all nearby points $(t',\Sigma')$. Thus, any nearby $(t',\Sigma')$ can be connected to $(t,\Sigma)$ by a path of curves with $2$-rigid normal deformation operators and so, $\text{w}_{2,h}(D^N_{\Sigma,\gamma(t)}) = \text{w}_{2,h}(D^N_{\Sigma',\gamma(t')})$.
    \end{proof}
    \item The moduli space $\clM_h^*(X,\gamma,2A)\setminus\bigcup_{i,j}\clU_{i,j}$ is compact. Moreover, the restriction of \eqref{2A-h-mod} over the complement of $\{t_i\}_{1\le i\le k}$ is proper.
    \begin{proof}
        Both assertions follow directly from Lemma \ref{nl-compactness-for-double-covers}.
    \end{proof}
    \item The expression \eqref{Gr-def} is well-defined with $J = \gamma(t)$ provided $t\in[-1,1]\setminus\{t_1,\ldots,t_k\}$ is a regular value of \eqref{2A-h-mod} and each of the maps \eqref{A-g-mod}. If $t_i<t'\le t''< t_{i+1}$ are two such regular values (for some $1\le i<k$), then we have the equality
    \begin{align}\label{loc-const-Gr}
        \text{Gr}_{2A,h}(X,\omega,\gamma(t')) = \text{Gr}_{2A,h}(X,\omega,\gamma(t'')).
    \end{align}
    \begin{proof}
        Claim (2) shows that the first sum in the expression \eqref{Gr-def} is well-defined for such a $t$. The remaining sums in \eqref{Gr-def} are well-defined by Claim (1). For regular values $t'<t''$ in $(t_i,t_{i+1})$, we can now use the cobordisms given by restricting \eqref{2A-h-mod} and \eqref{A-g-mod} to $[t',t'']$ and the local constancy statement from Claim (1) to deduce \eqref{loc-const-Gr}.
    \end{proof}
    \item In the previous claim, if we instead have $t_i-\epsilon_i<t'<t_i<t''<t_i+\epsilon_i$ for some $1\le i\le k$, then \eqref{loc-const-Gr} still holds.
    \begin{proof}
        By Claim (3), we can assume $t',t''$ are arbitrarily close to $t_i$. For small $|t-t_i|$ and $\Sigma\in\{\Sigma_{i,j}:1\le j\le \ell_i\}$, define $\Sigma(t)$ to be the unique embedded $\gamma(t)$-curve close to $\Sigma$ (obtained by noting that $\ker D^N_{\Sigma,\gamma(t_i)} = 0$ and applying the inverse function theorem). For such a $\Sigma$, define the set $I_\Sigma(i) = \{1\le j\le\ell_i:\Sigma = \Sigma_{i,j}\}$, and note that we have
            \begin{align}\label{net-change}
                \sum_{j\in I_\Sigma(i)}\text{sgn}(\gamma,t_i;p_{i,j})
                 =\text{sgn}(D^N_{\Sigma,\gamma(t_i)})\cdot(\text{w}_{2,h}(D^N_{\Sigma(t''),\gamma(t'')}) - \text{w}_{2,h}(D^N_{\Sigma(t'),\gamma(t')}))
            \end{align}
            where we check orientations by using the identification
            \begin{align}
                \det(D^N_{\Sigma,\gamma(t_i)})\otimes\det(D^N_{\Sigma,\gamma(t_i)})_{\varphi_{i,j}}^- = \det(\varphi_{i,j}^*D^N_{\Sigma,\gamma(t_i)}) 
            \end{align}
            for each $j\in I_\Sigma(i)$. The quantities $\text{sgn}(\gamma,t_i;p_{i,j})$ are as in Definition \ref{wc-sign-def}. Now recall that the neighborhoods $\clU_{i,j}$ were constructed by applying Theorem \ref{main-bifurcation} to each point $p_{i,j}\in\clU_{i,j}$ and note that the isotropy group $\Gamma$ is $\bZ/2\bZ$ in all cases. Restrict \eqref{2A-h-mod} to $[t',t'']$ and note that this gives a compact $1$-manifold with boundary over the complement of $\bigcup_{i,j}\clU_{i,j}$ by Claim (2). Thus, as in Claim (3), this portion contributes equally to both sides of \eqref{loc-const-Gr}.
            
            For the remaining contribution (coming from $\bigcup_{i,j}\clU_{i,j}$), we see from Theorem \ref{main-bifurcation} that the first sum in the definition of $\text{Gr}_{2A,h}$ changes by the negative of the left side of \eqref{net-change} as we go from $t'$ to $t''$, while the second sum in the definition of $\text{Gr}_{2A,h}$ changes by the right side of \eqref{net-change} when we go from $t'$ to $t''$. These two changes cancel each other out and so, we again have \eqref{loc-const-Gr} as desired.
    \end{proof}
\end{enumerate}
Combining claims (3) and (4) and noting that $\pm1$ are regular values of \eqref{2A-h-mod} as well as each of the maps \eqref{A-g-mod} completes the proof of Theorem \ref{thm:Gr-invariance}.
\section{Application to the Gopakumar--Vafa invariants}\label{sec_GV}

In this section, we will show that the invariant from Theorem \ref{thm:Gr-invariance} coincides with the Gopakumar--Vafa invariant for genus $0$. The comparison is carried out in \textsection\ref{comparison-to-GV} and is an application of the discussion of obstruction bundles in \textsection\ref{sec_euler}. The main result of \textsection\ref{sec_euler} (Theorem \ref{euler-change-thm}) is stated in greater generality than is needed for the application in \textsection\ref{comparison-to-GV} as it may be of independent interest.
\subsection{Obstruction bundles and their Euler classes}\label{sec_euler}

Let $C$ be a smooth curve of genus $g$ and let $h\ge g$ and $d\ge 2$ be integers such that the integer
\begin{align}
    r = (2h-2)-d(2g-2)    
\end{align}
is non-negative.
The moduli stack $\overline\clM_h(C,d)$, consisting of stable holomorphic maps $f:C'\to C$ with the arithmetic genus of $C'$ being $h$ and $f_*[C'] = d[C]$, has complex virtual dimension given by $r$. Moreover, the non-empty open substack $\clM_h(C,d)\subset\overline\clM_h(C,d)$, consisting of those $[f:C'\to C]$ for which the domain $C'$ is smooth, is a complex orbifold of complex dimension $r$.
\begin{Definition}[Virtual fundamental class]
    Define the class 
    \begin{align}
        \normalfont[\overline\clM_h(C,d)]^\text{vir} \in \check{H}_{2r}(\overline\clM_h(C,d),\bQ)
    \end{align}
    to be the virtual fundamental class of this moduli space as in \cite[Section 9]{Pardon-VFC}. Note that $\overline\clM_h(C,d)$ is algebraic and so, we do not actually need to distinguish between its \v{C}ech and singular (co)homologies.
\end{Definition}
Now, let $N$ be a rank $2$ complex vector bundle on $C$ with $\deg(N) = 2g - 2$. We are interested in $\bR$-linear Cauchy--Riemann operators on $N$ because $N$ naturally arises as the normal bundle of $J$-curves of genus $g$ in a symplectic Calabi--Yau $3$-fold. Then, for any $\bR$-linear Cauchy--Riemann operator $D$ on $N$, we have
\begin{align}
    \text{ind}(D) =2\deg(N) + (2-2g)\cdot\text{rank}(N) = 0
\end{align}
by the Riemann--Roch theorem. For the Cauchy--Riemann operator $D$ and a point $[f:C'\to C]\in\overline\clM_h(C,d)$, we can consider the vector spaces
    \begin{align}
        \label{new-ker}\ker(f^*D)' &:= \coker(\ker D\xrightarrow{f^*}\ker(f^*D)) \\
        \label{new-coker}\coker(f^*D)' &:= \coker(\coker D\xrightarrow{f^*}\coker(f^*D)).
    \end{align}
Note that these spaces can also be defined as subspaces of $\ker(f^*D)$ and $\coker(f^*D)$, when $C'$ is smooth, using Remark \ref{invariant-sections} and Lemma \ref{operator-decomp}.
    
\begin{Definition}[Cokernel/obstruction bundle]\label{coker-bundle-def}
The operator $D$ is said to be \emph{$(d,h)$-rigid} if and only if for all $[f:C'\to C]\in\overline\clM_h(C,d)$, we have
    \begin{align}\label{no-new-ker}
        \ker(f^*D)' = 0.
    \end{align}
In this case, the assignment
    \begin{align}\label{new-coker-bundle}
        [f:C'\to C]\mapsto (\clN^{d,h}_{C,D})_f := \coker(f^*D)'
    \end{align}
defines a real orbi-bundle\footnote{For a proof that $\clN^{d,h}_{C,D}$ is an orbi-bundle, see Appendix \ref{obstruction-bundle-construction-details}.} $\clN_{C,D}^{d,h}\to\overline\clM_h(C,d)$ of rank $2r$, which is called the \emph{cokernel/obstruction bundle}.
\end{Definition}

\begin{remark}
    It is important to note that the condition of $(d,h)$-rigidity \emph{does not} require $\ker(f^*D) = 0$, and thus, it could be satisfied even if the operator $D$ has a non-trivial kernel on $C$. This extra generality is important for the proof of Theorem \ref{thm_GWGV} (comparison to Gopakumar--Vafa invariants) as it allows the obstruction bundle to remain well-defined even when a birth-death bifurcation occurs. 
\end{remark} 

Next, define the canonically oriented \emph{determinant line} of $(f^*D)'$ to be
    \begin{align}
        \label{eqn:modified-orientation-line}
        \det(f^*D)' &= \det\ker(f^*D)'\otimes\left(\det\coker(f^*D)'\right)^*\\
        \label{eqn:orientation-decouple}&=\det(f^*D)\otimes\det(D)^*
    \end{align}
where $\det(D)$ and $\det(f^*D)$ are the determinant lines of the Cauchy--Riemann operators $D$ and $f^*D$, which are canonically oriented by considering the spectral flow of a homotopy between $D$ and a $\bC$-linear Cauchy--Riemann operator. If $D$ is $(d,h)$--rigid, this gives rise to an orientation of the corresponding cokernel bundle. Accordingly, we have a well-defined \emph{virtual Euler number} of the cokernel bundle
    \begin{align}\label{virt-eu-num}
        e_{d,h}(C,D) = \langle \normalfont[\overline\clM_h(C,d)]^\text{vir}, e(\clN^{d,h}_{C,D})\rangle\in\bQ.
    \end{align}
More generally, suppose $P$ is a topological space and $\clD = \{D_\lambda\}_{\lambda\in P}$ is a family of real Cauchy--Riemann operators
\begin{equation}
    D_\lambda: \Omega^0(C, N) \to \Omega^{0,1}(C, N)
\end{equation}
parametrized by $P$. The family $\clD = \{D_\lambda\}_{\lambda\in P}$ is called \emph{continuous} if the coefficients of $\{D_\lambda\}_{\lambda\in P}$ (and all their derivatives in the $C$ direction) are continuous as functions on $P\times C$ once we identify the space of real Cauchy--Riemann operators with $\Omega^{0,1}(C, \text{Hom}_{\mathbb{R}}(N,N))$ by choosing a reference operator. The notion of a \emph{smooth} family $\clD = \{D_\lambda\}_{\lambda\in P}$ is defined similarly when $P$ is a smooth manifold. It is easy to see that this notion of continuity/smoothness is well-defined.
For a continuous family $\clD$, let $\hat{\Delta}\subset\overline\clM_h(C,d)\times P$ be the closed substack over which \eqref{no-new-ker} fails, with $\clD$ in place of $D$. Then, the assignment \eqref{new-coker-bundle}, with $\clD$ in place of $D$, gives a well-defined oriented rank $2r$ (orbi-)vector bundle
    \begin{align}\label{families-version-cokernel-bundle}
        \clN^{d,h}_{C,\clD}\to(\overline\clM_h(C,d)\times P)\setminus\hat{\Delta}
    \end{align}
which has an Euler class $e(\clN^{d,h}_{C,\clD})\in \check{H}^{2r}((\overline\clM_h(C,d)\times P)\setminus\hat{\Delta},\bQ)$. For a proof that $\hat\Delta$ is closed and $\clN^{d,h}_{C,\clD}$ is an orbi-bundle, see Appendix \ref{obstruction-bundle-construction-details}.

\begin{remark}\label{further-orientation-explanation}
    The determinant line defined in \eqref{eqn:modified-orientation-line}--\eqref{eqn:orientation-decouple} deserves some more explanation. We have the relation
    \begin{equation}
        \det(f^*D) = \det(f^*D)' \otimes \det(D),
    \end{equation}
    which means that to compute the Gromov--Witten contribution of a super-rigid curve $C$ with normal deformation operator $D$, one should multiply \eqref{virt-eu-num} by the sign \eqref{count-sign-def} of $C$.
\end{remark}

\begin{Definition}[Simple wall crossing]\label{simple-linear-wall-crossing}
    Let $\clD = \{D_t\}_{t\in [-1,1]}$ be a smooth family of real Cauchy--Riemann operators on $N$. We say that $\clD$ is a \emph{simple wall crossing} of type $(d,h)$ at $t = 0$ if $\ker D_0 = 0$ and there is a finite set $\Delta\subset\clM_h(C,d)$ such that 
\begin{enumerate}[\normalfont(i)]
    \item condition $\eqref{no-new-ker}$ holds on $(\overline\clM_h(C,d)\times[-1,1])\setminus(\Delta\times 0)$, and so, the vector bundle $\clN^{d,h}_{C,\clD}$ is well-defined here;
    \item for each $[f:C'\to C]\in\Delta$, the composition
    \begin{equation}\label{linear-wc-iso}
    \begin{aligned}
        T_f\clM_h(C,d)\oplus\bR_t&\to\normalfont\text{Hom}(\ker(f^*D_0),\coker(f^*D_0))\\ &\xrightarrow{\simeq}\normalfont\text{Hom}(\ker(f^*D_0)',\coker(f^*D_0)')
    \end{aligned}
    \end{equation}
    is an isomorphism, where the first map is a linearized map as in Definition \ref{linearize-family-op} while second isomorphism comes from the assumption that $\ker(D_0) = 0$, which also implies $\coker(D_0) = 0$ for index reasons.
\end{enumerate}
\end{Definition}
\begin{remark}
    Note that the set $\Delta$ in Definition \ref{simple-linear-wall-crossing} is assumed to be a subset of the locus $\clM_h(C,d)$ parametrizing holomorphic maps with smooth domains. This is consistent with equation \eqref{eqn_intro_delta} in Theorem \ref{thm_int_2}. This assumption is sufficient for the application in Section \ref{comparison-to-GV}.
\end{remark}
    
    By counting dimensions and indices, it follows that we have
    \begin{align}
        \dim_\bR\ker(f^*D_0)' &= 1 \\ 
        \dim_\bR\coker(f^*D_0)' &= 1 + 2r
    \end{align}
    over the finite set $\Delta$.
    Both sides of \eqref{linear-wc-iso} are canonically oriented by combining the discussion surrounding equation \eqref{hom-space-orientation} with the orientation on $\det(f^*D)'$. Thus, to each $[f:C'\to C]\in\Delta$, we can assign a well-defined number
    \begin{align}
        \normalfont\text{sgn}(\clD,[f:C'\to C])\in\{+1,-1\}.
    \end{align}
    which is the sign of the isomorphism \eqref{linear-wc-iso}.

We want to compute the change in the virtual Euler number $e_{d,h}(C,D_t)$ as we go from $t<0$ to $t>0$ for a simple wall crossing $\clD$ in terms of isotropy groups of the points in $\Delta$ and the signs $\{\text{sgn}(\clD,p)\}_{p\in\Delta}$. Our main result is the following.
\begin{Theorem}[Change in Euler number]\label{euler-change-thm}
    In the situation of Definition \ref{simple-linear-wall-crossing},
    \begin{align}
        [-1,1]\setminus\{0\}&\to\bQ \\
        t&\mapsto e_{d,h}(C,D_t)
    \end{align}
    is a locally constant function. Let $e_{d,h}(C,D_\pm)$ denote the value $e_{d,h}(C,D_t)$ for $\pm t>0$. Then, we have the identity
    \begin{align}\label{euler-change-formula}
        \normalfont e_{d,h}(C,D_+) - e_{d,h}(C,D_-) = \sum_{p\in\Delta}\frac{2\cdot\text{sgn}(\clD,p)}{|\text{Aut}(p)|}.
    \end{align}
\end{Theorem}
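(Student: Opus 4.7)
The plan is to compute $e_{d,h}(C,D_+)-e_{d,h}(C,D_-)$ by localizing the change in Euler number to small neighborhoods of the points of $\Delta$ and extracting a Kuranishi-type local model for $\clN^{d,h}_{C,\clD}$ near each $(p,0)\in\overline\clM_h(C,d)\times[-1,1]$. The local constancy statement on $[-1,1]\setminus\{0\}$ is essentially automatic: for any interval $(a,b)\subset[-1,1]\setminus\{0\}$, the family $\clD$ is $(d,h)$-rigid over $\overline\clM_h(C,d)\times(a,b)$, so $\clN^{d,h}_{C,\clD}$ extends as an oriented rank-$2r$ vector bundle there, and pairing its Euler class with $[\overline\clM_h(C,d)]^{\text{vir}}$ yields a homotopy-invariant quantity in $t$.

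For the main identity, I will argue that the Euler number difference localizes to a sum over $\Delta$. Since $\Delta$ lies inside the smooth orbifold locus $\clM_h(C,d)\subset\overline\clM_h(C,d)$, I can pick pairwise disjoint $\Gamma_p$-invariant orbifold ball neighborhoods $U_p$ of each $p\in\Delta$ (with $\Gamma_p = \normalfont\text{Aut}(p)$) and a small $\epsilon>0$; then $\clN^{d,h}_{C,\clD}$ is defined on all of $\overline\clM_h(C,d)\times[-\epsilon,\epsilon]$ except at the finite set $\{(p,0):p\in\Delta\}$. Representing $[\overline\clM_h(C,d)]^{\text{vir}}$ by a transverse chain-level perturbation in the sense of \cite[Section 9]{Pardon-VFC} and applying the cobordism provided by the complement of small balls around each $(p,0)$ gives the identity
\begin{equation}
    e_{d,h}(C,D_+) - e_{d,h}(C,D_-) = \sum_{p\in\Delta}\frac{1}{|\Gamma_p|}\bigl\langle e\bigl(\clN^{d,h}_{C,\clD}|_{S_p}\bigr),[S_p]\bigr\rangle,
\end{equation}
where $S_p\simeq S^{2r}$ is the boundary of a small geodesic ball around $(p,0)$ in a $\Gamma_p$-equivariant local chart $\bR^{2r}\times\bR_t$ for $U_p\times[-\epsilon,\epsilon]$.

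Around each $p=[f:C'\to C]\in\Delta$, a $\Gamma_p$-equivariant Kuranishi reduction gives the local model. Choose a $\Gamma_p$-invariant subspace $V_0$ of real dimension $1+2r$ in the target space of $(f^*D_0)'$ which surjects onto $\coker(f^*D_0)'$, and let $B\times[-\epsilon,\epsilon]$ be a $\Gamma_p$-invariant neighborhood of $(0,0)$ in $\bR^{2r}\times\bR_t$. For $(b,t)$ near $(0,0)$ the family $(f_b^*D_t)'$ reduced modulo $V_0$ has a $1$-dimensional kernel, generated by a smooth $\Gamma_p$-equivariant section $\sigma(b,t)$; applying $(f_b^*D_t)'$ to this section lands in $V_0$ and produces a smooth $\Gamma_p$-equivariant map
\begin{equation}
    \tilde\alpha:B\times[-\epsilon,\epsilon]\to V_0\simeq\bR^{1+2r},\qquad \tilde\alpha(b,t) = (f_b^*D_t)'\sigma(b,t),
\end{equation}
with $\tilde\alpha(0,0)=0$, whose derivative at the origin is the isomorphism \eqref{linear-wc-iso}, and which satisfies $\tilde\alpha^{-1}(0)=\{(0,0)\}$ by the $(d,h)$-rigidity of $\clD$ away from $\Delta\times\{0\}$. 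On the punctured neighborhood, $\clN^{d,h}_{C,\clD}$ is canonically identified with the quotient $V_0/\langle\tilde\alpha\rangle$, or equivalently, after fixing a $\Gamma_p$-invariant inner product on $V_0$, with the orthogonal complement $\tilde\alpha^{\perp}$. Pushing forward via $\tilde\alpha$, which is a local diffeomorphism of oriented sign $\normalfont\text{sgn}(\clD,p)$, identifies $\clN^{d,h}_{C,\clD}|_{S_p}$ with the tangent bundle $TS^{2r}$ up to this overall orientation sign; since $\chi(S^{2r})=2$, the local contribution equals $2\normalfont\text{sgn}(\clD,p)/|\Gamma_p|$, and summing over $p\in\Delta$ yields \eqref{euler-change-formula}.

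The main technical obstacle will be the precise interaction of the above Kuranishi model with the virtual fundamental class on the compactified moduli space $\overline\clM_h(C,d)$: the cobordism and localization arguments need to be performed at the level of chain-level representatives of the VFC, which requires constructing transverse perturbations that are smooth and $\Gamma_p$-equivariant in the Kuranishi charts around each $p\in\Delta$ and simultaneously well-behaved over the boundary strata $\overline\clM_h(C,d)\setminus\clM_h(C,d)$. Since $\Delta$ is disjoint from the boundary of the moduli space and the Kuranishi local models at the points of $\Delta$ are honest orbifold charts, this is largely a bookkeeping exercise, but carefully tracking orientations and orbifold weights through the chain-level gluing requires some care.
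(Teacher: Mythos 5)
Your proposal follows essentially the same plan as the paper's proof, which splits the argument into Lemmas \ref{reduction-to-local-contributions}, \ref{local-contribution} and \ref{lem_sphere}: localize the change in Euler number to small orbifold neighborhoods of the points of $\Delta$ via an implicit-atlas cobordism, then identify the pullback of the obstruction bundle to a sphere $S^{2r}$ in the local $\Gamma_p$-chart with $TS^{2r}$ by a finite-dimensional Kuranishi reduction whose linearization at the origin is exactly the oriented isomorphism \eqref{linear-wc-iso}, and conclude via $\chi(S^{2r})=2$ together with the orbifold weight $1/|\Gamma_p|$.

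One point worth correcting: you insist that the kernel generator $\sigma(b,t)$, and hence the map $\tilde\alpha$, be $\Gamma_p$-equivariant, but such an equivariant non-vanishing section need not exist. If $\Gamma_p$ acts nontrivially (by $-1$) on the one-dimensional space $\ker(f^*D_0)'$ --- which is precisely what happens for the double-cover wall crossings used in \textsection\ref{comparison-to-GV} --- then any $\Gamma_p$-equivariant section of the kernel line bundle is forced to vanish along the $\Gamma_p$-fixed locus $\{0\}\times[-\epsilon,\epsilon]$, contradicting the assumption that it generates. Fortunately your argument never actually invokes this equivariance: you already compute the ordinary Euler number over the \emph{cover} $S_p\simeq S^{2r}$ (not its $\Gamma_p$-quotient) and account for the orbifold weight through the explicit factor $1/|\Gamma_p|$, so the equivariance requirement can simply be dropped. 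Indeed the paper makes exactly this point at the end of the proof of Lemma \ref{local-contribution}, remarking that the identification \eqref{tgt-bundle-iso} need not be $\Gamma$-equivariant. Beyond this, the implicit-atlas bookkeeping you defer in your final paragraph is precisely the content of Lemma \ref{reduction-to-local-contributions}.
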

The local constancy assertion is obvious. The remaining assertion will be proved in the next two lemmas. We continue to use the notation from Definition \ref{simple-linear-wall-crossing}.
\begin{Lemma}[Local to global]\label{reduction-to-local-contributions}
    For each point $p\in\Delta$, choose an open neighborhood \begin{align}
        V_p\Subset\clM_h(C,d)\times(-1,1)
    \end{align}
    whose closure\footnote{$A\Subset B$ denotes the assertion that $A\subset B$ and the closure of $A$ in $B$ is compact.} has smooth orbifold boundary $\partial V_p$. Endowing $\partial V_p$ with the boundary orientation from $V_p$ and assuming that the closures of $\{V_p\}_{p\in\Delta}$ are pairwise disjoint, we have the identity
    \begin{align}
        e_{d,h}(C,D_+) - e_{d,h}(C,D_-) = \sum_{p\in\Delta}\langle[\partial V_p], e(\clN^{d,h}_{C,\clD})\rangle
    \end{align}
    where $[\partial V_p]$ denotes the standard orbifold fundamental class for each $p\in\Delta$.
\end{Lemma}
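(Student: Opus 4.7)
The plan is to recognize the identity as a virtual cobordism statement. The parametrized moduli space $\overline\clM_h(C,d) \times [-1,1]$ carries a virtual fundamental class on which the cokernel bundle $\clN^{d,h}_{C,\clD}$ is defined except at the isolated points $\Delta \times \{0\}$. By cutting out the open sets $V_p$ to form $W := (\overline\clM_h(C,d) \times [-1,1]) \setminus \bigsqcup_{p\in\Delta} V_p$, one obtains a virtual cobordism on which the cokernel bundle is globally defined, so the identity should follow from a Stokes-type cancellation on the virtual boundary of $W$.

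First, I would construct the parametrized virtual fundamental class $[\overline\clM_h(C,d) \times [-1,1]]^\text{vir}$ in the implicit atlas framework of \cite{Pardon-VFC}, noting that it is naturally the exterior product of $[\overline\clM_h(C,d)]^\text{vir}$ with the fundamental class of $[-1,1]$. Its virtual boundary is therefore $[\overline\clM_h(C,d)]^\text{vir} \times \{+1\} - [\overline\clM_h(C,d)]^\text{vir} \times \{-1\}$, and after restriction to $W$ one picks up the extra contribution $-\sum_{p\in\Delta} [\partial V_p]$, with the sign reflecting that the outward normal to $W$ along $\partial V_p$ is inward to $V_p$. Since $\Delta \times \{0\} \subset \bigsqcup_p V_p$ by hypothesis, the bundle $\clN^{d,h}_{C,\clD}$ extends to a well-defined oriented rank $2r$ vector bundle on a neighborhood of $W$, so its Euler class pairs meaningfully with the virtual boundary cycle. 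I would then invoke cobordism invariance of this pairing: choosing a transverse smooth section of $\clN^{d,h}_{C,\clD}$ over $W$, its zero locus gives a compact oriented $1$-dimensional virtual chain whose signed boundary consists of the transverse zeros at $t = \pm 1$ (summing to $e_{d,h}(C,D_+) - e_{d,h}(C,D_-)$) together with the transverse zeros on $\bigsqcup_p \partial V_p$ (summing to $-\sum_p \langle [\partial V_p], e(\clN^{d,h}_{C,\clD})\rangle$). Since the signed count of the boundary of a compact oriented $1$-manifold vanishes, rearranging yields the claimed identity.

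The main technical subtlety I anticipate is executing the Stokes-type argument precisely enough within Pardon's VFC framework to justify the virtual boundary decomposition on $W$ and the corresponding pairing identity. Fortunately, since each $V_p$ sits entirely within the smooth orbifold locus $\clM_h(C,d) \times (-1,1)$, the local boundary terms $\langle [\partial V_p], e(\clN^{d,h}_{C,\clD}) \rangle$ reduce to ordinary Euler number pairings on smooth oriented orbifolds, so the VFC machinery is only genuinely required for the endpoint pairings and for setting up the parametrized virtual cobordism itself.
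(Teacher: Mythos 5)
Your proposal is correct and follows essentially the same route as the paper: remove the disjoint neighborhoods $V_p$, view the resulting space as a virtual cobordism between the $t=\pm1$ slices and $-\sum_p[\partial V_p]$, and pair with $e(\clN^{d,h}_{C,\clD})$ using that the bundle is globally defined there and that each $V_p$ sits in the smooth orbifold locus. The technical subtlety you flag (making the Stokes-type boundary decomposition rigorous in Pardon's VFC framework) is exactly what the paper's proof handles, by gluing an implicit atlas with boundary from three pieces -- the product atlas on $\clM_I\setminus\bigcup_p\overline W_p$, trivial-obstruction orbifold-with-boundary atlases on collars $W'_p\setminus V_p$, and the compatibility on overlaps $W'_p\setminus\overline W_p$ -- and then invoking \cite[Lemmas 5.2.4--5.2.6]{Pardon-VFC}.
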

\begin{proof}
    Introduce the notation $\clM_{I} := \overline\clM_h(C,d)\times[-1,1]$. It will be enough to show that the identity
    \begin{align}
        [\overline\clM_h(C,d)]^\text{vir}\times\{+1\} - [\overline\clM_h(C,d)]^\text{vir}\times\{-1\} = \sum_{p\in\Delta}[\partial V_p]
    \end{align}
    holds in $H_*(\clM_{I}\setminus\bigcup_{p\in\Delta}V_p;\bQ)$. This will follow from \cite[Lemmas 5.2.4, 5.2.5 and 5.2.6]{Pardon-VFC} applied to the space
    \begin{align}\label{aux-cobordism}
        \clM_{I}\setminus\bigcup_{p\in\Delta}V_p
    \end{align}
    provided we show that it has an implicit atlas with boundary which, when restricted to the boundary, gives (a) the usual atlas (from section \cite[Section 9]{Pardon-VFC}) on $[\overline\clM_h(C,d)]^\text{vir}\times\{\pm1\}$ and (b) the standard orbifold atlas on each $V_p$. Constructing such an atlas is straightforward. Indeed, choose open neighborhoods
    \begin{align}
        V_p\Subset W_p \Subset W'_p\Subset \clM_h(C,d)\times(-1,1)
    \end{align}
    with $\{W'_p\}_{p\in\Delta}$ still being pairwise disjoint. Endow each $W'_p\setminus V_p$ with the natural orbifold-with-boundary atlas (with trivial obstruction spaces) as in \cite[Section 2.1.2]{Pardon-VFC} and endow the open subset
    \begin{align}
        \clM_{I}\setminus\bigcup_{p\in\Delta}\overline W_p
    \end{align}
    with the restriction of the product atlas (see \cite[Definition 6.3.1]{Pardon-VFC}) on $\clM_{I}$. These $1+|\Delta|$ open sets cover the moduli space in question. Over the open overlaps $W'_p\setminus\overline W_p\subset\clM_h(C,d)\times(-1,1)$ -- note that there are no triple overlaps -- both implicit atlases are equivalent (in the sense of \cite[Remark 2.1.5]{Pardon-VFC}). This is because the standard orbifold atlas on $\clM_h(C,d)$ is equivalent to the restriction of the implicit atlas on $\overline\clM_h(C,d)$. Thus, we can combine this cover into an implicit atlas on the space \eqref{aux-cobordism} with the desired properties.
\end{proof}
\begin{Lemma}[Local contribution]\label{local-contribution}
    In the situation of Lemma \ref{reduction-to-local-contributions}, the neighborhoods $V_p$ can be chosen such that their closures are pairwise disjoint and we have
    \begin{align}
        \normalfont\langle[\partial V_p], e(\clN^{d,h}_{C,\clD})\rangle = \frac{2\cdot\text{sgn}(\clD,p)}{|\text{Aut}(p)|}
    \end{align}
    for each $p\in\Delta$.
\end{Lemma}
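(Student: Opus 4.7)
The plan is to construct an explicit local Kuranishi model for the family of operators near each $p\in\Delta$ and then compute $\langle[\partial V_p],e(\clN^{d,h}_{C,\clD})\rangle$ by a direct signed zero count. Since $p$ lies in the smooth locus $\clM_h(C,d)$ and has finite stabilizer $\Gamma=\text{Aut}(p)$, we first pass to a $\Gamma$-equivariant uniformizer $\tilde V_p\to V_p=\tilde V_p/\Gamma$. Choosing $\tilde V_p$ to be a small $\Gamma$-invariant round ball in $T_f\clM_h(C,d)\oplus\bR_t\cong\bR^{2r+1}$ centered at $p=(f,0)$, we can ensure that the closures $\{V_p\}_{p\in\Delta}$ are pairwise disjoint by taking the ball small enough.

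Next, we apply Kuranishi reduction relative to $\Gamma$-invariant finite-dimensional subspaces $K\subset\Omega^0(C',f^*N)$ and $E\subset\Omega^{0,1}(C',f^*N)$ chosen so that $K$ lifts $\ker(f^*D_0)'=\ker(f^*D_0)\cong\bR$ (using the assumption $\ker D_0=0$) and $E$ is a complement of $\text{image}(f^*D_0)$ projecting isomorphically onto $\coker(f^*D_0)'\cong\bR^{1+2r}$. Identifying the family $\{f_q^*D_t\}$ with operators on the reference bundle via parallel transport, the reduction yields a smooth $\Gamma$-equivariant map $\hat L:\tilde V_p\to\text{Hom}(K,E)$ with $\hat L(0)=0$, and the pulled-back cokernel bundle $\tilde\clN$ is canonically identified on $\tilde V_p\setminus\{0\}$ with the rank $2r$ quotient bundle $\underline E/\text{image}(\hat L)$. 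The linearization $d\hat L|_0$ is precisely the isomorphism \eqref{linear-wc-iso} from the wall-crossing hypothesis, so its sign relative to the canonical orientations on both sides equals $\text{sgn}(\clD,p)$.

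To compute the Euler pairing, we pick a generic $e_0\in E$ and consider the section $\bar e_0$ of $\tilde\clN$ given by $(q,t)\mapsto[e_0]$. Its zero set is $(d\hat L|_0)^{-1}(\bR e_0)$, a line through the origin meeting $\partial\tilde V_p$ transversely in two antipodal points $\pm v$. At $v$ (where $\hat L(v)=\lambda e_0$ for some $\lambda>0$), a direct computation shows that the linearization of $\bar e_0$ is a negative scalar multiple of the induced quotient map $\hat L|_{v^\perp}:v^\perp\to E/\bR e_0$; since $2r$ is even, its signed degree with respect to the natural orientations equals $\text{sgn}(\det\hat L|_0)=\text{sgn}(\clD,p)$. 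At the antipodal point $-v$, both the boundary orientation on $T_{-v}\partial\tilde V_p$ and the induced orientation on the quotient fiber $\tilde\clN_{-v}=E/\bR\hat L(-v)$ are flipped relative to those at $v$ (the latter because the convention orients $\text{image}(\hat L(x))$ by the vector $\hat L(x)$ itself, whose direction reverses under $v\mapsto -v$), and these two sign-flips cancel. Hence both zeros contribute $\text{sgn}(\clD,p)$, so $\langle[\partial\tilde V_p],e(\tilde\clN)\rangle=2\,\text{sgn}(\clD,p)$, and dividing by $|\Gamma|=|\text{Aut}(p)|$ gives the claimed formula.

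The most delicate step will be the orientation bookkeeping in the final paragraph. Although the quotient spaces $\tilde\clN_v$ and $\tilde\clN_{-v}$ are both isomorphic to $E/\bR e_0$ as vector spaces, their orientations as fibers of the oriented cokernel bundle $\tilde\clN$ must be carefully tracked: they are propagated via the line $\bR\hat L(q,t)$ with the specific orientation given by the vector $\hat L(q,t)$, and this orientation reverses between the two antipodes. Making precise how this reversal combines with the antipodal flip of the boundary orientation on $S^{2r}$ so that both zeros contribute with the \emph{same} sign---and verifying this matches the spectral-flow convention used to orient $\det(f^*D)'$---is the main content of the argument.
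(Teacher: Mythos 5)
Your proof is correct and takes essentially the same approach as the paper's: a $\Gamma$-equivariant finite-dimensional (Kuranishi) reduction near $p$ yielding a map $\hat L$ whose linearization at $0$ is the isomorphism \eqref{linear-wc-iso}, followed by an Euler class computation on the boundary sphere of a $\Gamma$-invariant ball and division by $|\Gamma|$. The only cosmetic difference is that the paper's Lemma \ref{lem_sphere} first identifies the restricted cokernel bundle on the sphere with $TS_\bV$ as oriented bundles and then invokes $\chi(S^{2r})=2$, whereas you count the two antipodal zeros of a constant section of the quotient bundle directly, which is the same Poincar\'e--Hopf computation carried out in slightly more explicit terms.
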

\begin{proof}
    Fix $p = [f:C'\to C]\in\Delta\subset\clM_h(C,d)$. Since $\clM_h(C,d)$ is a smooth orbifold, we can identify a neighborhood $U_p$ of $p$ diffeomorphically with the orbifold $[\bV/\Gamma]$, where $\Gamma = \text{Aut}(C,f)$ is the isotropy group of $p$ with its natural complex representation on $\bV = T_f\clM_h(C,d)$ and the point $p$ is identified with $[0]$. Without loss of generality, we can assume that this neighborhood meets no other points of $\Delta$.
    
    Assume, for the sake of concreteness, that $\text{sgn}(\clD,p) = +1$. This can be achieved by reparametrizing the interval $[-1,1]$ in reverse if necessary. The local chart $[\bV/\Gamma]$ gives a map $\bV\xrightarrow{\nu}\clM_h(C,d)$ which, in turn, yields the following $\Gamma$-equivariant commutative diagram (with the square being a fiber square).
    \begin{center}
    \begin{tikzcd}
        C' \arrow[d] \arrow[r, hook, "j"] \arrow[rr, bend left, "f"] & \clC' \arrow[d, "\pi"] \arrow[r, "\text{ev}"] & C \\
        *\arrow[r, hook, "0"] & \bV
    \end{tikzcd}
    \end{center}
    Explicitly, $\pi:\clC'\to\bV$ is obtained via pulling back the universal curve over $\clM_h(C,d)$ to $\bV$ along the local \'etale chart $\nu$ and defines a $\Gamma$-equivariant smooth family of curves over $\bV$. The map $\text{ev}:\clC'\to C$ is the pullback along $\nu$ of the evaluation map to $C$ defined on the universal curve over $\clM_h(C,d)$, and it is a $\Gamma$-invariant smooth map which is holomorphic on each fiber of $\pi$. The maps $\pi$ and $\text{ev}$ together with the $\Gamma$-action realize the open embedding $[\bV/\Gamma]\hookrightarrow\clM_h(C,d)$. We can then define $V_p = [B_\bV/\Gamma]$, where $B_\bV$ is the ball bounded by $S_\bV$, and conclude by Lemma \ref{lem_sphere} and noting that $\langle[S_\bV],e(TS_\bV)\rangle = \chi(S_\bV) = 2$. It is useful to note that, for this last argument to work, we \emph{do not} require the isomorphism \eqref{tgt-bundle-iso} to be $\Gamma$-equivariant. 
\end{proof}
\begin{Lemma}\label{lem_sphere}
Use the same notations as in Lemma \ref{local-contribution}. For any sufficiently small sphere $S_\bV\subset\bV\times[-1,1]$ centered at $0$ (defined with respect to the orthogonal direct sum of a $\Gamma$-invariant Hermitian inner product on $\bV$ and the standard inner product on $\bR$), we have an isomorphism
    \begin{align}\label{tgt-bundle-iso}
        (\tilde\nu^*\clN^{d,h}_{C,\clD})|_{S_\bV}\simeq TS_\bV
    \end{align}
    of oriented vector bundles on $S_\bV$, where $\tilde\nu:\bV\times[-1,1]\to\clM_h(C,d)\times[-1,1]$ is given by $\tilde\nu = \nu\times{\normalfont\text{id}}$.
\end{Lemma}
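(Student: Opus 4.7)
The plan is to build a Kuranishi-type model for the family $\{f_b^*D_t\}_{(b,t)\in\bV\times[-1,1]}$ near $(0,0)$ and read off the desired isomorphism from the linearization of the Kuranishi section. Since the inclusion $\Delta\subset\clM_h(C,d)$ consists of points with smooth domains, we may shrink the neighborhood $U_p=[\bV/\Gamma]$ and work entirely with the smooth family of Cauchy--Riemann operators $\{f_b^*D_t\}$ on the smooth family of curves $\pi:\clC'\to\bV$, pulled back to $\bV\times[-1,1]$. By the usual stabilization trick (as in, e.g., \cite[Lemma 1.1.4]{DW-20}), after possibly shrinking $\bV$, we obtain a pair of $\Gamma$-equivariant smooth complex (resp.\ real) vector bundles $\widetilde K$ (of rank $1$) and $\widetilde E$ (of rank $1+2r$) on $\bV\times[-1,1]$ together with a $\Gamma$-equivariant bundle map
\begin{align}
    s:\widetilde K\to\widetilde E
\end{align}
such that $s(0,0)=0$, the fibers of $\widetilde K$ and $\widetilde E$ at $(0,0)$ are canonically identified with $\ker(f^*D)'$ and $\coker(f^*D)'$ respectively, and on the complement of $(0,0)$ the map $s$ is fiberwise injective with cokernel canonically identified with $\tilde\nu^*\clN^{d,h}_{C,\clD}$. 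Thus over $S_\bV$ we obtain the short exact sequence
\begin{align}\label{cokernel-ses}
    0\to\widetilde K|_{S_\bV}\xrightarrow{\ s\ }\widetilde E|_{S_\bV}\to \tilde\nu^*\clN^{d,h}_{C,\clD}|_{S_\bV}\to 0.
\end{align}

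Next I would compare \eqref{cokernel-ses} with the tautological short exact sequence
\begin{align}\label{tangent-ses}
    0\to\langle\vec r\rangle\to T(\bV\times\bR)|_{S_\bV}\to TS_\bV\to 0,
\end{align}
where $\vec r$ denotes the outward radial vector. Since $\bV\times[-1,1]$ is contractible, the bundles $\widetilde K$ and $\widetilde E$ are smoothly trivializable over a neighborhood of the closed ball bounded by $S_\bV$; use the canonical (complex, resp.\ real) orientations to identify
\begin{align}
    \widetilde K\simeq\underline{\ker(f^*D)'},\qquad \widetilde E\simeq\underline{\coker(f^*D)'}.
\end{align}
Under these trivializations, $s$ becomes a smooth map $\bV\times[-1,1]\to\text{Hom}(\ker(f^*D)',\coker(f^*D)')$ with $s(0,0)=0$ whose differential at the origin is precisely the isomorphism
\begin{align}
    L=\clD'|_p:\bV\oplus\bR_t\xrightarrow{\simeq}\text{Hom}(\ker(f^*D)',\coker(f^*D)')
\end{align}
from Definition \ref{simple-linear-wall-crossing}. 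For sufficiently small radius, the homotopy $s_\tau(b,t):=\tau^{-1}s(\tau b,\tau t)$ as $\tau\to 0$ shows that $s|_{S_\bV}$ is smoothly homotopic through fiberwise injections to its linearization $L$. Applying $L^{-1}$ to \eqref{cokernel-ses} identifies it (up to this homotopy) with \eqref{tangent-ses}, which yields the claimed isomorphism $(\tilde\nu^*\clN^{d,h}_{C,\clD})|_{S_\bV}\simeq TS_\bV$.

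It remains to check that this isomorphism respects orientations. Both \eqref{cokernel-ses} and \eqref{tangent-ses} carry natural orientations: on the tangent side, the radial outward orientation on $\langle\vec r\rangle$ combined with the standard orientation on $\bV\times\bR$ induces the boundary orientation on $TS_\bV$; on the cokernel side, we use the complex orientation on $\widetilde K$ together with the orientation on $\widetilde E$ coming from $\det(f^*D)'=\det\widetilde K\otimes\det\widetilde E^*$ (and hence $\det\widetilde E=\det\widetilde K\otimes\det(f^*D)'^*$) to induce the orientation on $\tilde\nu^*\clN^{d,h}_{C,\clD}$. The hypothesis $\text{sgn}(\clD,p)=+1$ means exactly that $L$ is orientation-preserving between $\bV\oplus\bR_t$ (standard orientation) and $\text{Hom}(\ker(f^*D)',\coker(f^*D)')$ (the orientation induced by $\det(f^*D)'$), and the corresponding identification of the two rank-$1$ sub-bundles $\langle\vec r\rangle$ and $s(\widetilde K)$ over $S_\bV$ is also orientation-preserving (both are the images under $L$ and its trivialization of the positively-oriented outward direction). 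A diagram chase on the two short exact sequences then shows that the induced isomorphism on quotients is orientation-preserving, completing the proof.

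The main potential obstacle is ensuring the bundle map $s$ and its stabilization can be set up $\Gamma$-equivariantly and smoothly on the whole parameter space $\bV\times[-1,1]$; however, this is standard (e.g.\ by averaging over $\Gamma$ and using the smooth dependence of the family of operators on $(b,t)$), and once this is in place the orientation bookkeeping is essentially formal.
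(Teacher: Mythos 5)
Your proof takes essentially the same approach as the paper's: reduce the family of Fredholm operators $\{f_v^*D_t\}$ to a finite-dimensional family of linear maps $L_{v,t}:\ker(f^*D_0)'\to\coker(f^*D_0)'$ (your $s:\widetilde K\to\widetilde E$), observe that $L_{0,0}=0$ and the first-order Taylor coefficient is precisely the isomorphism of Definition \ref{simple-linear-wall-crossing}, homotope away the higher-order terms over the small sphere $S_\bV$, and recognize the resulting cokernel bundle as the standard presentation of $TS_\bV$. The paper carries out the finite-dimensional reduction explicitly via a parametrix $R_{v,t}$ rather than invoking a stabilization lemma, and it phrases the final identification as a direct quotient description $B/\bR\cdot(v_0,t_0)$ rather than a diagram chase on short exact sequences, but these are presentational choices; the mechanism is identical.

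Two small inaccuracies worth flagging, neither of which affects the validity of the argument. First, $\widetilde K$ is a \emph{real} line bundle, not a complex one: the operator $D$ is only $\bR$-linear, so $\ker(f^*D)'$ is a $1$-dimensional real vector space with no canonical orientation of its own. The orientations on $\widetilde K$ and $\widetilde E$ must be chosen jointly so that the induced orientation on $\det(f^*D)' = \det\widetilde K\otimes\det\widetilde E^*$ is the canonical one from the spectral flow; you state this correctly in the second half but the ``complex orientation on $\widetilde K$'' language in the first half is wrong. (The end result is insensitive to the residual binary choice of orientation on $\widetilde K$, because flipping it forces a flip of $\widetilde E$'s orientation, and both flips cancel in the quotient.) Second, your closing worry about setting up $s$ $\Gamma$-equivariantly is unnecessary: as the paper points out at the end of Lemma \ref{local-contribution}, the isomorphism \eqref{tgt-bundle-iso} need not be $\Gamma$-equivariant, since the Euler class only depends on the isomorphism type of the bundle and the $\Gamma$-averaging is absorbed into the factor $1/|\Gamma|$ when integrating over $[S_\bV/\Gamma]$.
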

\begin{proof}   
    Let us first make the following auxiliary choices.
    \begin{enumerate}
        \item A Hermitian metric $(\cdot,\cdot)$ and a compatible $\bC$-linear connection $\nabla$ on $N$.
        \item A Riemannian metric (and therefore, an area form) on $C'$ whose conformal class coincides with the one on $C'$ induced by $j$.
        \item A smooth trivialization $\clC' = C'\times\bV$ of the bundle $\pi$. Let $\tilde f:C'\times\bV\to C$ denote the map induced by $\text{ev}$, and set $f_v:=\tilde f(\cdot,v):C'\to C$ for $v\in\bV$. Also, let $\tilde j = (j_v)_{v\in V}$ denote the induced family of almost complex structures on $C'$.
        \item A smooth family of isomorphisms
        \begin{align}
            I_v:(T_{C'},j_0)\xrightarrow{\simeq}(T_{C'},j_v).
        \end{align}
    \end{enumerate}
    We now have the following smooth family of Cauchy--Riemann operators on $N$ parametrized by $(v,t)\in\bV\times[-1,1]$
    \begin{align}
        D_{v,t}:L^2_1(C',f^*N)\to L^2(C',\Lambda^{0,1}T^*_{C',j_0}\otimes_\bC f^*N)
    \end{align}
    defined by pulling back the operator $f_v^* D_t$ on $f_v^*N$ to an operator on $f^*N$ (recall: $f = f_0$) using the $\nabla$-parallel transport along the paths $\tau\in[0,1]\mapsto f_{\tau v}(z)\in C$ for each $z\in C'$ and the isomorphism $I_v^{-1}$. Obviously, we have $D_{0,0} = f^*D_0$ on smooth sections. Since $\ker D_0 = \coker D_0 = 0$ (and this will continue to hold for small $t$), we see from \eqref{new-coker} that $\coker D_{v,t} = \coker D_{v,t} / \coker D_t = \coker(f_v^* D_t)'$ for small $t$. Moreover, for small $(v,t) \neq (0,0)$, we have $\ker D_{v,t} = 0$ (even though $\ker D_{0,0}$ is $1$-dimensional) and so, $\coker D_{v,t}$ forms a vector bundle for small $(v,t)\ne(0,0)$.
    Therefore, we have an isomorphism
    \begin{align}
        \coker D_{v,t}\simeq (\tilde\nu^*\clN^{d,h}_{C,\clD})_{v,t}
    \end{align}
    for small $(v,t)\ne (0,0)$. Orient $A = \ker(f^*D_0)'$ and $B = \coker(f^*D_0)'$ so that the induced orientation on $\det(f^*D_0)'$ is the canonical one from Definition \ref{coker-bundle-def}. Define the bounded linear map 
    \begin{align}
        R:\text{im}\,D_{0,0}\to(\ker D_{0,0})^\perp
    \end{align} 
    to be the inverse of $D_{0,0}|_{(\ker D_{0,0})^\perp}$, with $(\ker D_{0,0})^\perp\subset L^2_1$ denoting the $L^2$ orthogonal complement of $\ker D_{0,0}$. Let $\Pi_\text{im}:L^2\to\text{im}\,D_{0,0}$ be the orthogonal projection and let $\Pi_\coker$ be the quotient projection onto the cokernel. For small $(v,t)$, the quantity $\|1 - \Pi_\text{im}D_{v,t}R\|$ is small on $\text{im}\,D_{0,0}$ and so, we can define the map 
    \begin{align}
        R_{v,t} = R(\Pi_\text{im}D_{v,t}R)^{-1}:\text{im}\,D_{0,0}\to(\ker D_{0,0})^\perp    
    \end{align}
    which is the inverse of $\Pi_\text{im}D_{v,t}|_{(\ker D_{0,0})^\perp}$. It is immediate that $\ker( \Pi_\text{im}D_{v,t})$ is given by the graph of the map $- R_{v,t}\Pi_\text{im}D_{v,t}:\ker D_{0,0}\to(\ker D_{0,0})^\perp$. Define the smooth family of linear maps
    \begin{align}
        L_{v,t}&:\ker D_{0,0}\to \coker D_{0,0}\\
        L_{v,t} &= \Pi_\coker D_{v,t}(1 - R_{v,t}\Pi_\text{im}D_{v,t})
    \end{align}
    for small $(v,t)$. We now observe that that the natural maps $\ker L_{v,t} \xrightarrow{\simeq} \ker D_{v,t}$ and $\coker L_{v,t} \xrightarrow{\simeq} \coker D_{v,t}$ are isomorphisms. It follows (using the identifications $A = \ker D_{0,0}$ and $B = \coker D_{0,0}$ which hold because $\ker D_0 = \coker D_0 = 0$) that the oriented vector bundle $\tilde\nu^*\clN^{d,h}_{C,\clD}$ is given, on a punctured neighborhood of $(0,0)$, by the assignment
    \begin{align}\label{eqn:bundle-coker}
        (v,t)\mapsto\coker(L_{v,t}:A\to B)
    \end{align}
    with the bundle orientation derived from the orientations already chosen on $A$ and $B$. Note that $L_{0,0} = 0$ and that the first order Taylor expansion of $L_{v,t}$ about $(0,0)$ is given by the positively oriented linear isomorphism 
    \begin{align}\label{eqn:orientation-spehere}
        \bV\oplus\bR_t\xrightarrow{\simeq}\text{Hom}(A,B)    
    \end{align}
    from \eqref{linear-wc-iso}, which we recall is obtained from Definition \ref{linearize-family-op}. Fixing a unitary $\Gamma$-invariant metric on $\bV$ and restricting to any sufficiently small sphere $S_\bV\subset\bV\times[-1,1]$ centred at the origin, we can discard (i.e. homotope away) the higher order terms from the Taylor expansion of $L_{v,t}$. It follows that the fiber of the vector bundle \eqref{eqn:bundle-coker} at a point $(v_0,t_0)\in S_\bV$ is identified with the quotient of the oriented vector space $\bV\oplus\bR_t$ by the oriented $1$-dimensional space spanned by $(v_0,t_0)$. But this is simply the standard description of the (oriented) tangent bundle of the sphere $S_\bV\subset\bV\oplus\bR_t$, once we replace ``quotient" by ``orthogonal complement" and \eqref{tgt-bundle-iso} now follows.
\end{proof}
Note that Lemma \ref{lem_sphere} holds for any $d$ and $h$. It is ultimately a consequence of the isomorphism \eqref{linear-wc-iso} via the finite dimensional reduction argument presented above.
\begin{remark}
    A coordinate invariant way of summarizing the above proof is the following. The bundle $\clN^{d,h}_{C,\clD}\to(\overline\clM_h(C,d)\times[-1,1])\setminus(\Delta\times 0)$ extends to the real oriented blow-up of $\overline\clM_h(C,d)\times[-1,1]$ along the smooth points $\Delta\times 0$ as a vector bundle $\overline\clN^{d,h}_{C,\clD}$. Moreover, for each point $p\in\Delta$, we have a natural vector bundle isomorphism with orientation $\normalfont\text{sgn}(\clD,p)$
    \begin{align}
        \overline\clN^{d,h}_{C,\clD}|_{\bS_p} = T\bS_p
    \end{align}
    over $\bS_p$, the exceptional boundary component created by blowing up at $p$. Now, a direct cobordism argument as in Lemma \ref{reduction-to-local-contributions}, applied to the real oriented blow-up $\normalfont\text{Bl}^{+}_{\Delta\times 0}(\overline\clM_h(C,d)\times[-1,1])$ gives \eqref{euler-change-formula}.
\end{remark}

\subsection{Comparision to Gopakumar--Vafa invariants}\label{comparison-to-GV}

In this subsection, we identify Taubes' Gromov invariant in dimension $6$ (Equation \eqref{Gr-def}) with the Gopakumar--Vafa invariants in the case of genus $0$ and class $2A$ for $A\in H_2(X,\bZ)$ primitive. We omit the underlying Calabi--Yau $3$-fold $(X,\omega)$ from the notation.

\begin{Theorem}\label{thm_GWGV}
    The invariant $\normalfont\text{Gr}_{2A,0}$ satisfies the Gopakumar--Vafa formula. More precisely, define, for super-rigid $J$,
    \begin{align}
        \label{eqn:GW-A}
        \normalfont\text{GW}_{A,0} &:= \normalfont\text{Gr}_{A,0} := \sum_{C :\,A,0} \text{sgn}(D^N_{C,J}) \\
        \label{eqn:GW-2A}\normalfont \text{GW}_{2A,0} &:= \normalfont \sum_{C':\,2A,0}\text{sgn}(D^N_{C',J}) + \sum_{C:\,A,0}\text{sgn}(D^N_{C,J})e_{2,0}(C,D^N_{C,J})
    \end{align}
    where $e_{2,0}$ denotes the virtual Euler number as defined in \eqref{virt-eu-num} and the summations follow the notation of \eqref{Gr-def}.\footnote{For super-rigid $J$, this definition indeed recovers the Gromov--Witten invariants, see \cite[Theorem 1.2]{zinger2011comparison}.} Then, we have the following relation between $\normalfont\text{GW}$ and $\normalfont\text{Gr}$.
    \begin{align}\label{eqn:2A-GV-GW}
        \normalfont \text{GW}_{2A,0} =  \text{Gr}_{2A,0} + \frac1{2^3}\cdot\normalfont \text{Gr}_{A,0}.
    \end{align}
\end{Theorem}
\begin{proof}
    From \eqref{eqn:GW-A}--\eqref{eqn:GW-2A} and \eqref{Gr-def}, it is clear that \eqref{eqn:2A-GV-GW} will follow once we show that
    \begin{align}\label{eqn:euler-vs-lwc}
        e_{2,0}(C,D^N_{C,J}) - \frac1{2^3} = \text{w}_{2,0}(D^N_{C,J}).
    \end{align}
    Let $\bar\partial$ denote an operator on $N_C$ which is conjugate to the standard operator on $\clO(-1)^{\oplus2}\to\bP^1$. The Aspinwall--Morrison formula (see, for example \cite{Voisin-AM}) shows that we have the identity
    \begin{align}\label{eqn:AM}
        e_{2,0}(C,\bar\partial) = \frac1{2^3}.
    \end{align}
    Thus, we are left to prove that the difference of Euler numbers is given by the linear wall crossing invariant $\text{w}_{2,0}(D^N_{C,J})$. To compute $\text{w}_{2,0}(D^N_{C,J})$, choose a smooth path $\gamma$ lying in the Baire set $\clP^\pitchfork(D^N_{C,J},\delbar)$ from Definition \ref{defn:2-rig}. Let $-1< t_1<\cdots<t_m< 1$ be the values for which $\gamma(t_i)$ lies in the image of $\clW^1_{2,0,\bR}(N_C)\to\clC\clR_\bR(N_C)$. Using Lemma \ref{triv-wall-codim}, we can ensure that $\gamma$ has only finitely many transverse intersections (at times $-1<t'_1<\cdots<t'_n<1$) with $\clW^1_\bR(N_C)$ and, moreover, that $\gamma$ misses $\clW^k_\bR(N_C)$ for all $k\ge 2$. Moreover, we can also ensure (by Lemma \ref{lem:GV-triv-sign-ker-codim} below) that $t_i\ne t'_j$ for $1\le i\le m$ and $1\le j\le n$. Clearly, $t\mapsto e_{2,0}(C,D_t)$ is a locally constant function away from $\{t_i\}_{1\le i\le m}$ and $\{t'_j\}_{1\le j\le n}$.
    
    For any $1\le j\le n$, the operator $D_{t'_j}$ has a $1$-dimensional kernel. However, since $t'_j\ne t_i$ for all $1\le i\le m$, it follows that $D_{t'_j}$ is $2$-rigid (in the sense of Definition \ref{defn:2-rig}) and, therefore, also $(2,0)$-rigid (in the sense of Definition \ref{coker-bundle-def}). It follows that $\text{w}_{2,0}(D_t)$ remains constant for $t$ near $t'_j$. It also follows that the cokernel bundle $\clN^{d,h}_{C,D_t}$ is well-defined for all $t$ near $t'_j$ (see Appendix \ref{obstruction-bundle-construction-details}) and thus its Euler number $e_{2,0}(D_t)$ also remains constant for $t$ near $t'_j$. The reader may refer to Remark \ref{further-orientation-explanation} to recall the orientation convention and to check that $e_{2,0}(D_t)$ is indeed unchanged near $t_j'$.
    
    For any $1\le i\le m$, we have $\ker D_{t_i} = 0$ (since $t_i\ne t'_j$ for all $1\le j\le m$). For any point lying on the (necessarily transverse) fiber product of $\gamma$ and $\clW^1_{2,0,\bR}(N_C)\to\clC\clR_\bR(N_C)$, i.e., a branched double cover $\varphi:C'\to C$ (with $C'$ smooth of genus $0$) such that $\ker(D_{t_i})^-_{\varphi} = \ker\varphi^*D_{t_i} = \ker(\varphi^*D_{t_i})'$ is $1$-dimensional, we get an isomorphism
    \begin{align}\label{eqn:GV-wall-transverse}
        T_{t_i}\gamma = \bR_t\xrightarrow{\simeq}\text{Hom}(\ker (D_{t_i})^-_{\varphi},\coker (D_{t_i})^-_{\varphi})/T_{\varphi}\clM_0(C,2)
    \end{align}
    whose sign is the amount that $\varphi:C'\to C$ contributes to the change in $\text{w}_{2,0}(D_t)$ as $t$ goes from $t<t_i$ to $t>t_i$. Recall that the right side of \eqref{eqn:GV-wall-transverse} is oriented using the natural co-orientation of $\clW^1_{2,0,\bR}(N_C)$ from Definition \ref{natural-coor}. From \eqref{eqn:GV-wall-transverse}, arguing as in Lemma \ref{lem:transverse-wall-lift}, we get that the analogue
    \begin{align}\label{eqn:GV-simple-wall}
        \bR_t\oplus T_\varphi\clM_0(C,2)\xrightarrow{\simeq}\text{Hom}(\ker (\varphi^*D_{t_i})',\coker (\varphi^*D_{t_i})')
    \end{align}
    of the map \eqref{linear-wc-iso} is also an isomorphism. Thus, the family $D_t$ undergoes a simple wall crossing (in the sense of Definition) at $t = t_i$ and Theorem \ref{euler-change-thm} now shows that the contribution of $\varphi:C'\to C$ to the change in $e_{2,0}(C,D_t)$ as $t$ goes from $t<t_i$ to $t>t_i$ is precisely the sign of \eqref{eqn:GV-simple-wall}. Moreover, \eqref{eqn:GV-simple-wall} induces \eqref{eqn:GV-wall-transverse} when we pass to the quotient by $T_\varphi\clM_0(C,2)$ and so, these two maps have the same sign. Here, we are using the identification $\ker(D_{t_i})^-_{\varphi} = \ker\varphi^*D_{t_i} = \ker(\varphi^*D_{t_i})'$, and similarly for cokernels, which comes from recalling that $\ker D_{t_i} = 0$. To summarize, $\text{w}_{2,0}(C,D_t)$ and $e_{2,0}(C,D_t)$ change by the same amount as $t$ goes from $t<t_i$ to $t>t_i$.
    
    The last two paragraphs show that $e_{2,0}(D_t) - \text{w}_{2,0}(D_t)$ is a constant function in $t$ and this finishes the proof of \eqref{eqn:euler-vs-lwc} once we recall the identity \eqref{eqn:AM} and that $\text{w}_{2,0}(\delbar) = 0$.
\end{proof}

\begin{Lemma}\label{lem:GV-triv-sign-ker-codim}
    Using the notation in the proof of Theorem \ref{thm_GWGV}, the subset
    \begin{align}
        \clW_{2,0,\bR}^{1,1}(N_C)\subset\clC\clR_\bR(N_C)\times M_0(C,2)
    \end{align} 
    consisting of $(j,D,[\varphi:C'\to C])$ with
    \begin{align}
        \label{triv-ker-GV}\dim_\bR\ker D &= 1 \\
        \label{sign-ker-GV}\dim_\bR\ker D_\varphi^- &= 1
    \end{align}
    is a submanifold of real codimension $6$.
\end{Lemma}
\begin{proof}
    Let $(j,D,[\varphi:C'\to C])\in\clW^{1,1}_{2,0,\bR}(N_C)$. Using the argument which appears repeatedly in \textsection 5 (e.g. see the proofs of Lemmas \ref{triv-wall-codim} and \ref{sign-wall-codim}), it suffices to show that the linear map
    \begin{align}\label{triv-sign-ker-GV-linearization}
        \Omega^{0,1}_j(C,\text{Hom}_\bR(N,N))&\to\text{Hom}_\bR^{\bZ/2\bZ}(\ker\varphi^*D,\coker\varphi^*D) \\
        A&\mapsto(\kappa\mapsto(\varphi^*A)\kappa\pmod{\text{im }\varphi^*D})
    \end{align}
    analogous to \eqref{eqn:sign-wall-linearize} is surjective, since \eqref{triv-ker-GV}--\eqref{sign-ker-GV} imply that the right side of \eqref{triv-sign-ker-GV-linearization} is 6-dimensional over $\bR$. Exactly as in the proof of Lemma \ref{sign-wall-codim}, this is equivalent to the injectivity of the ``Petri map"
    \begin{align}\label{eqn:GV-double-petri}
        (\ker\varphi^*D\otimes_\bR\ker(\varphi^*D)^\dagger)^{\bZ/2\bZ}&\to\Omega^0(C',(\varphi^*N)\otimes_{\bR}(\varphi^*N)^\dagger)^{\bZ/2\bZ} \\
        \sigma\otimes_\bK\sigma'&\mapsto(\zeta\mapsto\sigma(\zeta)\otimes_\bK\sigma'(\zeta))
    \end{align}
    dual to \eqref{triv-sign-ker-GV-linearization} and the latter follows from, e.g., \cite[Remark 1.3.12]{DW-20}.
\end{proof}
\appendix
\setcounter{section}{0}
\section{Kuranishi reduction}\label{kuranishi-appendix}

In this appendix, we collect some useful elementary observations about Kuranishi reduction (i.e., application of the implicit function theorem in Fredholm, but not necessarily transverse, situations) for which we are not aware of appropriate references in the literature.

Let $\clX,\clY$ be (possibly finite dimensional) Banach spaces and let
\begin{align}
    \Phi:\clX\to\clY
\end{align}
be a smooth Fredholm map, defined on an open neighborhood of $0\in\clX$, such that we have $\Phi(0) = 0\in\clY$. Consider the linearization
\begin{align}
    \tilde D = D\Phi(0):\clX\to\clY
\end{align}
and define the (necessarily finite dimensional) vector spaces $T:=\ker\tilde D$ and $E:=\coker\tilde D$. Split the inclusion $T\to\clX$ and the projection $\clY\to E$ by choosing a Banach space complement $\clX'\subset\clX$ of $T$ and a linear subspace lift $E\subset\clY$. Applying the implicit function theorem now shows that $\Phi^{-1}(E)\subset\clX$ is identified (near $0\in\clX$) with the graph of a smooth map $\Psi:T\to\clX'$ defined on a neighborhood of $0\in T$. This map satisfies $\Psi(0) = 0\in\clX'$ and $D\Psi(0) = 0$. The associated \emph{Kuranishi map}
\begin{align}\label{abstract-kuranishi-map}
    F:T\to E,
\end{align}
defined near $0\in T$, is given by $F(\kappa) := \Phi(\kappa + \Psi(\kappa))$ for $\kappa\in T$. It satisfies $F(0) = 0$ and $DF(0) = 0$ by construction. We refer to the process of constructing $F$ from $\Phi$ as \emph{Kuranishi reduction} and note that it depends on the choices of splittings of the maps $T\to\clX$ and $\clY\to E$. We first investigate this dependence.

\begin{Lemma}[Splitting dependence of Kuranishi map]\label{abstract-kuranishi-invariance}
    Let $\tilde F:T\to E$ be a Kuranishi map obtained using a different choice of splittings than in \eqref{abstract-kuranishi-map}. Then, there exists a local diffeomorphism
    \begin{align}
        \psi:T\to T
    \end{align}
    with $\psi(0) = 0$ and $D\psi(0) = \normalfont\text{id}_T$ and a smooth map
    \begin{align}
        \tilde\psi:T\to \normalfont\text{End}(E) 
    \end{align}
    with $\tilde\psi(0) = \normalfont\text{id}_E$ such that, for all $\kappa\in T$ near $0$, we have
    \begin{align}\label{kuranishi-invariance}
        \tilde F(\kappa) = \tilde\psi(\kappa)F(\psi(\kappa)).
    \end{align}
\end{Lemma}
\begin{proof}
    Via the splitting used to define \eqref{abstract-kuranishi-map}, we get direct sum decompositions $\clX = \clX'\oplus T$ and $\clY = \text{im}(\tilde D)\oplus E$.
    
    Consider first the case when $\tilde F$ is obtained by the changing the splitting of $T\to\clX$ but using the same splitting of $\clY\to E$ as in \eqref{abstract-kuranishi-map}. Denote the resulting complement of $T$ by $\tilde\clX'\subset\clX$. 
    Using the implicit function theorem, we write $\Phi^{-1}(E)$ as the graph of a smooth map $\tilde\Psi:T\to\tilde\clX'$ near $0\in\clX$, with $\tilde\Psi(0) = 0$ and $D\tilde\Psi(0) = 0$. Since $\Phi^{-1}(E)$ is locally also the graph of the smooth map $\Psi:T\to\clX'$, we must have
    \begin{align}
        \Pi_{\clX'}\tilde\Psi(\kappa) &= \Psi(\kappa + \Pi_T\tilde\Psi(\kappa))
    \end{align}
    for all $\kappa\in T$ near $0$, where $\Pi_{\clX'}$ and $\Pi_T$ denote the projections onto the summands of the decomposition $\clX = \clX'\oplus T$. This immediately yields $\tilde F(\kappa) \equiv F(\psi(\kappa))$ with $\psi:T\to T$ defined by
    \begin{align}
        \psi(\kappa) = \kappa + \Pi_T\tilde\Psi(\kappa).    
    \end{align}
    Since $\tilde\Psi(0) = 0$ and $D\tilde\Psi(0) = 0$, we see that $\psi$ satisfies $\psi(0) = 0$ and $D\psi(0) = \text{id}_T$ and is thus a local diffeomorphism.
    
    Consider next the case when $\tilde F$ is obtained by keeping the same splitting of $T\to\clX$ as in \eqref{abstract-kuranishi-map} but changing the splitting of the map $\clY\to E$. Denote the resulting complement $\text{im}(\tilde D)$ by $\tilde E\subset\clY$. Consider the map
    \begin{align}
        \Phi_E:\clX\oplus E\to\clY
    \end{align}
    given by $\Phi_E(x,e) = \Phi(x) - e$. Using the implicit function theorem, we may write $\Phi_E^{-1}(\tilde E)$, near $0$, as the graph of a smooth map $\tilde\Psi_E:T\oplus E\to\clX'$. Using $\tilde\Psi_E$, we define the smooth map $\tilde F_E:T\oplus E\to E$ by the formula
    \begin{align}
        \tilde F_E(\kappa,e) = \Pi_E(\Phi(\kappa + \tilde\Psi_E(\kappa,e)) - e)
    \end{align}
    where $\Pi_E:\clY\to E$ denotes the projection. Restricting $\tilde\Psi_E$ to $T\oplus \{0\}$ recovers the map $\tilde\Psi:T\to\clX'$ whose graph coincides with $\Phi^{-1}(\tilde E)$ near $0\in\clX$ (which also shows that $\tilde F_E(\kappa,0) = \tilde F(\kappa)$ for $\kappa\in T$). Therefore, as before, we have $\tilde\Psi(0) = 0$ and $D\tilde\Psi(0) = 0$ which shows that $D\tilde F_E(0,0)$ is given by (the negative of) the projection onto $E$. Now, if $(\kappa,e)$ is a point where $\tilde F_E$ vanishes, then since $\Pi_E|_{\tilde E}:\tilde E\to E$ is an isomorphism it follows that $\Phi(\kappa + \tilde\Psi_E(\kappa,e)) - e = 0$. This, in turn, implies that we must in fact have $\tilde\Psi_E(\kappa,e) = \Psi(\kappa)$ and $e = F(\kappa)$. Conversely, if $e = F(\kappa)$, then $\Phi_E(\kappa + \Psi(\kappa),e) = \Phi(\kappa + \Psi(\kappa)) - e = 0\in\tilde E$ and thus, $\tilde\Psi_E(\kappa,e) = \Psi(\kappa)$ and $\tilde F_E(\kappa,e) = 0$. In summary, $\tilde F_E$ is a defining equation, near $(0,0)$, for the submanifold of $T\oplus E$ given by the graph of the map $F:T\to E$, i.e., the equation $\tilde F_E\ = 0$ transversely cuts out the graph of $F$ near $(0,0)$. Since $F_E(\kappa,e) :=  F(\kappa) - e$ is also a defining equation for the graph of $F$, and $DF_E(0,0) = D\tilde F_E(0,0)$, it follows that there exists a smooth map $\tilde\psi_E:T\oplus E\to\text{End}(E)$ with $\tilde\psi_E(0,0) = \text{id}_E$ such that
    \begin{align}
        \tilde F_E(\kappa,e) = \tilde\psi_E(\kappa,e)\cdot F_E(\kappa,e)
    \end{align}
    for all $(\kappa,e)\in T\oplus E$ near $(0,0)$. Restricting to $e = 0$ and defining the map $\tilde\psi:T\to GL(E)$ by $\tilde\psi(\kappa) = \tilde\psi_E(\kappa,0)$, we obtain $\tilde F(\kappa) \equiv \tilde\psi(\kappa)F(\kappa)$.
    
    Combining the arguments of the preceding two paragraphs produces $\psi,\tilde\psi$ and the identity \eqref{kuranishi-invariance} in the general case.
\end{proof}

Lemma \ref{abstract-kuranishi-invariance} has the following important consequence.

\begin{Corollary}[Invariance of leading order term]\label{abstract-kuranishi-leading-order}
    If $n\ge 1$ is an integer with $D^jF(0) = 0$ for $0\le j\le n$, then we also have $D^j\tilde F(0)$ for $0\le j\le n$. Moreover, we have the equality
    \begin{align}
        D^{n+1}F(0) = D^{n+1}\tilde F(0)\in\normalfont\text{Hom}(\text{Sym}^{n+1}T,E).
    \end{align}
\end{Corollary}

\begin{remark}\label{changing-infinite-dim-trivializations}
    It is important to note that, in the setting of Corollary \ref{abstract-kuranishi-leading-order}, the leading term $D^{n+1}F(0)$ is also invariant under replacing the map $\Phi$ by the map $x\mapsto\Phi_{\chi,\tilde\chi}(x) = \tilde\chi(x)\Phi(\chi(x))$, where $\chi:\clX\to\clX$ is any local diffeomorphism (defined near $0\in\clX$) with $\chi(0) = 0$ and $D\chi(0) = \text{id}_{\clX}$ and $\tilde\chi:\clX\to\clL(\clY)$ is any smooth map to the space of bounded linear operators on $\clY$ (defined near $0\in\clX$) with $\tilde\chi(0) = \text{id}_{\clY}$. The proof is analogous to that of Lemma \ref{abstract-kuranishi-invariance}. Note that we have $\tilde D = D\Phi(0) = D\Phi_{\chi,\tilde\chi}(0)$.
\end{remark}

We next investigate how a perturbation of $\Phi$ (vanishing to high order) affects the Kuranishi map $F$. More precisely, let $n\ge 1$ be an integer and let $P:\clX\to\clY$ be a smooth map with the property that $D^jP(0) = 0$ for $0\le j\le n$. Observe that $D^{n+1}P(0)$ defines an element of $\text{Hom}(\text{Sym}^{n+1}T,E)$ by restriction along $T\to\clX$ and projection along $\clY\to E$.

Define $\hat\Phi = \Phi + P : \clX\to\clY$. Then, $D\hat\Phi(0) = D\Phi(0) = \tilde D$ and thus, $\hat\Phi$ is still Fredholm in a neighborhood of $0\in\clX$. Thus, we may perform Kuranishi reduction on $\hat\Phi$ using the same choices of splittings of $T\to\clX$ and $\clY\to E$ as in \eqref{abstract-kuranishi-map} to obtain the Kuranishi map
\begin{align}
    \hat F:T\to E.
\end{align}
Write $Q = \hat F - F$. In this situation, we have the following useful fact.
\begin{Lemma}[Effect of high order perturbation]\label{abstract-kuranishi-high-order-perturb}
    $D^jQ(0) = 0$ for $0\le j\le n$. Moreover, we have the equality
    \begin{align}
        D^{n+1}Q(0) \equiv D^{n+1}P(0)
    \end{align}
    as elements of $\normalfont\text{Hom}(\text{Sym}^{n+1}T,E)$.
\end{Lemma}
\begin{proof}
    Let $\hat\Psi:T\to\clX'$ be the map whose graph coincides with $\hat\Phi^{-1}(E)\subset\clX$ near $0$. Since $D^j\Phi(0) = D^j\hat\Phi(0)$ for $0\le j\le n$, it follows by using the chain rule that the we also have $D^j\Psi(0) = D^j\hat\Psi(0)$ for $0\le j\le n$. This readily yields $D^jF(0) = D^j\hat F(0)$ for $0\le j\le n$. Next, we compute
    \begin{align}
        D^{n+1}Q(0) &= D^{n+1}\hat F(0) - D^{n+1}F(0) \\
        &= D^{n+1}(\hat\Phi\circ(\text{id}_T + \hat\Psi))(0) - D^{n+1}(\Phi\circ(\text{id}_T + \Psi))(0).
    \end{align}
    We break the last sum into the two following parts:
    \begin{align}
        \label{first-part-high-derivative}
        &D^{n+1}(P\circ(\text{id}_T + \hat\Psi))(0) \\
        \label{second-part-high-derivative} D^{n+1}(\Phi\circ(\text{id}_T + \hat\Psi))(0) - &D^{n+1}(\Phi\circ(\text{id}_T + \Psi))(0).
    \end{align}
    Simplifying \eqref{first-part-high-derivative} using the chain rule just yields $D^{n+1}P(0)$ since all lower derivatives of $P$ vanish at $0$ and $D\hat\Psi(0) = 0$. On the other hand, simplifying \eqref{second-part-high-derivative} using the chain rule just yields (a term proportional to) the term
    \begin{align}\label{higher-derivative-error}
        D\Phi(0)\circ D^{n+1}(\hat\Psi - \Psi)(0).
    \end{align}
    Indeed, all other terms cancel since $D^j\hat\Psi(0) = D^j\Psi(0)$ for $0\le j\le n$. But now we observe that \eqref{higher-derivative-error} lies in $\text{Hom}(\text{Sym}^{n+1}T,\text{im}(\tilde D))$ and, as a result, has zero image in $\text{Hom}(\text{Sym}^{n+1}T,E)$. This shows that we must have $D^{n+1}Q(0)\equiv D^{n+1}P(0)$ in $\text{Hom}(\text{Sym}^{n+1}T,E)$ as claimed.
\end{proof}

\section{Obstruction bundles}\label{obstruction-bundle-construction-details}

In this appendix, we show how to construct obstruction/cokernel bundles under the (somewhat non-standard) assumption of $(d,h)$-rigidity in Definition \ref{coker-bundle-def}. The proof will be given in the generality needed to establish that the condition \eqref{no-new-ker} is open and that \eqref{families-version-cokernel-bundle} defines an orbi-bundle. The argument will make use of ``thickenings" as in \textsection\ref{compactness-section} (and we use very similar notation to facilitate the comparison). For the convenience of the reader, we begin by briefly recalling the relevant setup from \textsection\ref{sec_euler}.

We fix a smooth genus $g$ curve $C$ and integers $h\ge g$ and $d\ge 2$ such that the moduli stack $\overline\clM_h(C,d)$ has (real) virtual dimension $2r\ge 0$. We also fix a complex vector bundle $N\to C$ of rank $2$ and degree $2g-2$ along with a continuous family $\clD = \{D_\lambda\}_{\lambda\in P}$ of real Cauchy--Riemann operators on $N$, parametrized by a topological space $P$. Let $\hat\Delta\subset\overline\clM_h(C,d)\times P$ consist of exactly those pairs $([\varphi:C'\to C],\lambda)$ for which the condition
\begin{align}\label{no-new-ker-appx}
    \ker(\varphi^*D_\lambda)' := \coker(\ker D_\lambda\xrightarrow{\varphi^*}\ker (\varphi^*D_\lambda))  = 0
\end{align}
fails. We will show that $\hat\Delta\subset\overline\clM_h(C,d)\times P$ is closed and that the assignment, to $([\varphi:C'\to C],\lambda)$, of the real vector space
\begin{align}\label{new-coker-appx}
    \coker(\varphi^*D_\lambda)':= \coker(\coker D_\lambda\xrightarrow{\varphi^*}\coker (\varphi^*D_\lambda))
\end{align}
defines an orbi-bundle $\clN^{d,h}_{C,\clD}$ (of rank $2r$) over the open complement of $\hat\Delta$.

To this end, fix any point $([\varphi:C'\to C],\lambda)\in\overline\clM_h(C,d)\times P$. Choose points $p_1,\ldots,p_m\in C$, which are not the images of critical points of $\varphi$ or nodal points of $C'$, such that $(C,p_1,\ldots,p_m)$ is a stable curve. For $1\le i\le m$, choose a numbering $\varphi^{-1}(p_i) = \{q_i^1,\ldots,q_i^d\}$ and observe that $(C',\{q_i^j\})$ gives a point of $\overline\clM_{h,dm}$. Choose a complex manifold $\sM'$ (equipped with a basepoint $\bullet$), a finite group $\Gamma'$ (acting on $\sM'$ holomorphically and fixing $\bullet$) and an open embedding of complex orbifolds
\begin{align}\label{local-orbi-chart-appx}
    [\sM'/\Gamma']\hookrightarrow[\overline\clM_{h,dm}/(S_d)^m]
\end{align}
mapping the image of $\bullet$ to the image of $(C',\{q_i^j\})$, where the $i^\text{th}$ factor in the $m$-fold product of $S_d$ acts on $\overline\clM_{h,dm}$ by permuting the marked points $\{q_i^j\}_{j=1}^d$. By pulling back the universal curve over the right side of \eqref{local-orbi-chart-appx} to $\sM'$, we obtain a $\Gamma'$-equivariant (flat) complex analytic family $\pi':\sC'\to\sM'$ of (possibly nodal) arithmetic genus $h$ curves with sections $\tau_i^j$ (corresponding to $dm$ marked points which stabilize the fibers). For $1\le i\le m$, the sections $\tau_i^j$ for $1\le j\le d$ are permuted among themselves by the action of $\Gamma'$. For later use, we also fix an isomorphism
\begin{align}
    i':(C',\{q_i^j\})\xrightarrow{\simeq}(\sC'_{\bullet},\{\sigma_i^j(\bullet)\}).
\end{align}
    
Define $\sM'(C)$ to be the space consisting of pairs $(s'\in\sM',\psi:\sC'_s\to C)$ with $\psi$ being a holomorphic map satisfying $\psi_*[\sC_{s'}] = d[C]$ and $\psi(\tau_i^j(s')) = p_i$ for $1\le i\le m$ and $1\le j\le d$. The space $\sM'(C)$ carries an evident action of $\Gamma'$ and the quotient $[\sM'(C)/\Gamma']$ now provides an orbispace chart for a neighborhood of $[\varphi:C'\to C]$ in $\overline\clM_h(C,d)$.\footnote{Note that the purpose of taking the quotient by $(S_d)^m$ in \eqref{local-orbi-chart-appx} is to describe $\overline{\clM}_h(C,d)$ locally near $\varphi:C'\to C$ as the quotient stack $[\sM'(C)/\Gamma']$.} We will construct the orbi-bundle with fibers \eqref{new-coker-appx} by constructing a $\Gamma'$-equivariant vector bundle on (an open subset of) $\sM'(C)\times P$, which then descends to the quotient.

Choose a finite dimensional vector space $E$ and a linear map
\begin{align}\label{downstairs-datum-appx}
    \mu:E\to\Omega^{0,1}(C,N)
\end{align}
such that the induced map $E\to\coker D_\lambda$ is surjective. Next, choose a finite dimensional $\Gamma'$-representation $E'$ and a $\Gamma'$-equivariant map
\begin{align}\label{upstairs-datum-appx}
    \mu':E'\to C^\infty(\sC'^\circ\times C,\Omega^{0,1}_{\sC'^\circ/\sM'}\boxtimes_\bC N)
\end{align}
such that its restriction along $i'\times\varphi:C'\hookrightarrow\sC'\times C$ induces a surjective map $E'\to\coker(\varphi^*D_\lambda)'$. Here, $\sC'^\circ\subset\sC'$ denotes the complement of the (fiberwise) nodal locus of $\pi'$ and the sections in the image of $\mu'$ are required to have proper support over $\sM'$. For more details on how to construct \eqref{upstairs-datum-appx}, the reader is referred to the proof of \cite[Lemma 9.2.9]{Pardon-VFC}.

Now, consider the space $K$ of $(t\in P,\sigma\in C^\infty(C,N),e\in E)$ satisfying
\begin{align}\label{downstairs-thick-appx}
    D_{t}\sigma = \mu(e)
\end{align}
with its natural projection $K\to P$. By the choice of $\mu$, \eqref{downstairs-thick-appx} describes the kernel of a family of surjective (thickened) Cauchy--Riemann operators (near $\lambda$). Thus, using a standard implicit function theorem argument, we see that $K$ defines a vector bundle over $P$ near $\lambda$ and there is a (forgetful) linear bundle map from $K\to\underline{E}$, where $\underline{E}$ is the trivial bundle with fiber $E$. Similarly, consider the space $K'$ of tuples of the form $((s',\psi)\in\sM'(C),t\in P,\sigma'\in C^\infty(\sC'_{s'},\psi^*N),e\in E,e'\in E')$ satisfying
\begin{align}\label{upstairs-thick-appx}
    (\psi^*D_{t})\sigma' = \psi^*(\mu(e)) + \mu'(e')(\cdot,\psi(\cdot))
\end{align}
with its natural projection $K'\to\sM'(C)\times P$. By the choice of $\mu$ and $\mu'$, \eqref{upstairs-thick-appx} describes the kernel of a family of surjective (thickened) Cauchy--Riemann operators. Thus, standard arguments involving the implicit function theorem and (linear) gluing analysis (e.g. following \cite[Appendix B]{Pardon-VFC} or \cite[Theorem 1.2(2)]{zinger2011comparison}) show that $K'$ defines a vector bundle over $\sM'(C)\times P$ near $((\bullet,\varphi\circ i'^{-1}),\lambda)$ and there is a (forgetful) linear bundle map $K'\to\underline{E}\oplus\underline{E'}$. Moreover, $K'$ and this bundle map respect the action of $\Gamma'$.

Denoting the projection $\sM'(C)\times P\to P$ by $\pi$, we now have a continuous fiberwise linear map of two-term of complexes of vector bundles
\begin{center}
\begin{equation}\label{quotient-diagram-appx}
\begin{tikzcd}
    \pi^*K \arrow[r] \arrow[d] & {\underline{E}} \arrow[d] \\
    K' \arrow[r] & {\underline{E}\oplus\underline{E'}}
\end{tikzcd}
\end{equation}
\end{center}
described as follows. The horizontal arrows are the linear bundle maps defined earlier, while the right vertical arrow is the natural inclusion. The left vertical arrow is defined by setting $e' = 0$ and pulling back $\sigma\in C^\infty(C,N)$ to $\sigma' = \psi^*\sigma\in C^\infty(\sC_{s'},\psi^*N)$. Both vertical arrows in \eqref{quotient-diagram-appx} are injective maps of vector bundles and passing to the quotient yields
\begin{align}
    L:K'/\pi^*K\to\underline {E'}.
\end{align}
$L$ is a $\Gamma'$-equivariant linear bundle map whose fiberwise kernel and cokernel are identified with the left sides of \eqref{no-new-ker-appx} and \eqref{new-coker-appx} respectively. We deduce that the complement of (the pre-image of) $\hat\Delta$ is open (since the condition defining it amounts to $L$ being injective) and, over this open set, $\coker L$ forms a $\Gamma'$-equivariant vector bundle. This provides an orbi-bundle structure on $\clN^{d,h}_{C,\clD}$ locally near any point $([\varphi:C'\to C],\lambda)$ of $\overline{\clM}_h(C,d)\times P$ at which \eqref{no-new-ker-appx} is satisfied. The rank of this orbi-bundle can be computed by noting that $\text{ind}(D_\lambda) = 0$ and $\text{ind}(\varphi^*D_\lambda) = -2r$.

To compare the local orbi-bundle presentations of $\clN^{d,h}_{C,\clD}$ obtained by this method for two different sets of choices, we embed both into the ``doubly thickened" versions of \eqref{downstairs-thick-appx}--\eqref{upstairs-thick-appx}. This allows us to construct the local orbi-bundle isomorphisms needed to patch $\clN^{d,h}_{C,\clD}$ globally over the locus where \eqref{no-new-ker-appx} is satisfied. For more details on how to define a common thickening to dominate two given thickenings, the reader is referred to \cite[Definition 9.2.3]{Pardon-VFC}. This completes the proof that $\hat\Delta$ is closed and that $\clN^{d,h}_{C,\clD}$ is a well-defined rank $2r$ orbi-bundle over its complement.

\begin{remark}
    Since we appealed to (linear) gluing analysis following \cite[Appendix B]{Pardon-VFC}, we only obtain a continuous orbi-bundle structure on $\clN^{d,h}_{C,\clD}$. Nevertheless, this construction suffices for our topological arguments. However, over the locus $(\clM_h(C,d)\times P)\setminus\hat\Delta$ involving only stable maps to $C$ with smooth domains it is easy to see from the above argument (which, in this case, only requires the implicit function theorem and no gluing analysis) that $\clN^{d,h}_{C,\clD}$ actually has a natural smooth orbi-bundle structure provided $\clD$ is a smooth family parametrized by a smooth manifold $P$.
\end{remark}

\bibliography{references}

\end{document}